\numberwithin{equation}{section}
\newtheorem{theorem}{Theorem}[section]
\newtheorem{lemma}[theorem]{Lemma}
\newtheorem{corollary}[theorem]{Corollary}
\newtheorem{proposition}[theorem]{Proposition}
\theoremstyle{definition}
\newtheorem{definition}[theorem]{Definition}
\newtheorem{remark}[theorem]{Remark}
\DeclareMathOperator{\re}{Re}
\DeclareMathOperator{\im}{Im}
\DeclareMathOperator{\Arg}{Arg}
\DeclareMathOperator{\sign}{sign}
\DeclareMathOperator{\dist}{dist}
\DeclareMathOperator{\Int}{Int}
\DeclareMathOperator{\Cl}{Cl}
\DeclareMathOperator{\esssupp}{ess\,supp}
\DeclareMathOperator{\ind}{\mathds{1}}
\newcommand{\hp}{\mathds{H}}
\newcommand{\pvint}{\operatorname{pv}\!\int}
\newcommand{\laplace}{\mathcal{L}}
\newcommand{\sub}{\subseteq}
\newcommand{\pr}{\mathds{P}}
\newcommand{\ex}{\mathds{E}}
\newcommand{\C}{\mathds{C}}
\newcommand{\R}{\mathds{R}}
\newcommand{\ph}{\varphi}
\newcommand{\eps}{\varepsilon}
\newcommand{\thet}{\vartheta}
\newcommand{\ro}{\varrho}
\newcommand{\exprv}{\mathbf{e}}
\renewcommand{\le}{\leqslant}
\renewcommand{\ge}{\geqslant}
\newcommand{\ul}{\underline}
\newcommand{\ol}{\overline}
\NewDocumentCommand{\formula}{ssom}{%
 \IfBooleanTF{#1}{%
  \IfBooleanTF{#2}{%
   \IfValueTF{#3}%
    {\begin{align}\label{#3}\begin{gathered}#4\end{gathered}\end{align}}%
    {\begin{gather}#4\end{gather}}%
  }{%
   \IfValueTF{#3}%
    {\begin{align}\label{#3}\begin{aligned}#4\end{aligned}\end{align}}%
    {\begin{gather*}#4\end{gather*}}%
  }%
 }{%
  \IfValueTF{#3}%
   {\begin{align}\label{#3}#4\end{align}}%
   {\begin{align*}#4\end{align*}}%
 }%
}
\newcommand{\ignore}[1]{}
\begin{document}

\title[Suprema of Lévy processes with completely monotone jumps]{Suprema of Lévy processes with completely monotone jumps: spectral-theoretic approach}
\author{Mateusz Kwaśnicki}
\thanks{Work supported by the Polish National Science Centre (NCN) grants no.\@ 2015/19/B/ST1/01457 and 2019/33/B/ST1/03098}
\address{Mateusz Kwaśnicki \\ Department of Pure Mathematics \\ Wrocław University of Science and Technology \\ ul. Wybrzeże Wyspiańskiego 27 \\ 50-370 Wrocław, Poland}
\email{mateusz.kwasnicki@pwr.edu.pl}
\date{\today}
\keywords{Lévy process; supremum functional; transition density; heat kernel; spectral theory; generalised eigenfunction; non-self-adjoint operator}
\subjclass[2020]{
47A70; 
47A68; 
60G51} 

\begin{abstract}
We study spectral-theoretic properties of non-self-adjoint operators arising in the study of one-dimensional Lévy processes with completely monotone jumps with a one-sided barrier. With no further assumptions, we provide an integral expression for the bivariate Laplace transform of the transition density $p_t^+(x, y)$ of the killed process in $(0, \infty)$, and under a minor regularity condition, a generalised eigenfunction expansion is given for the corresponding transition operator $P_t^+$. Assuming additionally appropriate growth of the characteristic exponent, we prove a generalised eigenfunction expansion of the transition density $p_t^+(x, y)$. Under similar conditions, we additionally show integral formulae for the cumulative distribution functions of the infimum and supremum functionals $\underline{X}_t$ and $\overline{X}_t$. The class of processes covered by our results include many stable and stable-like Lévy processes, as well as many processes with Brownian components. Our results recover known expressions for the classical risk process, and provide similar integral formulae for some other simple examples of Lévy processes.
\end{abstract}

\maketitle

%
%

\section{Introduction}
\label{sec:intro}

\subsection{Motivation}

Spectral theory proved to be an important tool in the study of Markov processes. In the simplest discrete setting, it is one of the standard tools in the theory of Markov chains. For a brief introduction to the case of discrete time processes in continuous state space, and a summary of related literature, we refer to Chapter~22 of~\cite{dmps}. When both time and the state space are continuous, spectral theory of Markov processes is strongly linked with functional inequalities and the analysis of transition probabilities, or heat kernels; see, for example, \cite{davies,dv,wang}. On the analysis side, this connection provides a very intuitive and powerful way to work with operators arising naturally in harmonic analysis, theory of PDEs and quantum physics; we refer to~\cite{bh,dv,doob,osekowski} for further discussion and references.

A vast number of works apply spectral theory to the study of symmetric (or self-dual) Markov processes. In this case the underlying operators are self-adjoint. When considering non-symmetric processes, however, one typically has to deal with operators that are no longer normal, let alone self-adjoint. The lack of general spectral theory of such operators makes spectral-theoretic approach to non-symmetric Markov processes extremely difficult. The only exceptions known to the author include Markov chains with discrete state space, and only a few articles dealing with Markov processes in continuous space; see~\cite{cpsv,kk:stable,mp,mucha,ogura:cb,ogura:cbi,ps:cauchy,ps:laguerre,ps:spectral,psz,pv}.

The main purpose of the present article is to develop spectral theory for a relatively wide class of non-symmetric Markov processes: Lévy processes with completely monotone jumps (defined below) killed upon leaving a half-line. Our primary motivation is to study eigenfunction expansions of the transition density (the heat kernel) and related objects, and to derive explicit, or at least semi-explicit expressions for them. This work is meant to be a `proof of concept' rather than a complete theory, so we are not aiming at the greatest generality possible; nevertheless, the class of Lévy processes covered by our results includes many examples that appear frequently in literature. We show that even though the transition operators are not normal, they allow a spectral decomposition similar to the one provided by the spectral theorem for normal operators. We also apply the same technique to study the distribution of the supremum and infimum functionals of Lévy processes with completely monotone jumps, the fundamental objects in the fluctuation theory of Lévy processes.

Our results extend the previous work~\cite{kk:stable} by Alexey Kuznetsov and the author for stable processes. Although the arguments used here and in~\cite{kk:stable} share a number of similar ideas (Wiener--Hopf factorisation, heavy use of complex-analytic tools), there are essential differences that we would like to highlight here. Scale invariance played an essential role in~\cite{kk:stable}: it allowed to work with one generalised eigenfunction and its dilations rather than with a continuous family of generalised eigenfunctions, and it also eliminated the need for complicated contours of integration. Additionally, the key objects in~\cite{kk:stable} were defined in terms of known special functions. Here we need to work with rather intricate integral expressions and bivariate functions, and a detailed analysis of the appropriate contour of integration is an essential step in our development.

The present work is closely related to the article~\cite{ps:orbit} by Pierre Patie and Rohan Sarkar, where self-similar Markov processes in $(0, \infty)$ are studied using Mellin transform methods. In this case generalised eigenfunctions are again dilations of each other; in contrast to~\cite{kk:stable}, however, they are no longer given in terms of known special functions. Instead, the authors of~\cite{ps:orbit} describe the Mellin transform of generalised eigenfunctions in terms of Bernstein-gamma functions, which are holomorphic solutions of a certain function equation. The theory of Bernstein-gamma functions was developed by Pierre Patie and Mladen Savov in~\cite{ps:bernstein}, and the functional equation was first considered by the same authors and Juan Carlos Pardo in~\cite{pps}.

Our results partially resemble the statements in~\cite{ps:orbit}, but in a different context of Lévy processes: the only processes that are covered by both works are strictly stable Lévy processes, studied already in~\cite{kk:stable}. Additionally, we use Laplace transform methods and the theory of \emph{Rogers functions} (closely related to Nevanlinna--Pick functions), while the main tools for self-similar Markov processes are the Mellin transform and Bernstein-gamma functions.

Another partially related work is the article~\cite{ps:laguerre} by Pierre Patie and Mladen Savov, which builds upon another paper~\cite{ps:cauchy} by the same authors. In these two documents the authors provide generalised eigenfunction expansion for \emph{generalised Laguerre semigroups}, which correspond to certain ergodic space-time transformations of positive self-similar Markov processes. Unlike in the present paper and in~\cite{ps:orbit}, however, in~\cite{ps:cauchy,ps:laguerre} the spectrum is discrete, and there are countably many eigenfunctions and co-eigenfunctions.

There is one more line of research that should be mentioned here, focused on continuous state branching processes with or without immigration. Spectral theory for the corresponding operators was developed by Yukio Ogura in~\cite{ogura:cb,ogura:cbi}, and significantly refined recently by Marie Chazal, Ronnie L.~Loeffen and Pierre Patie in~\cite{clp}. These works again focus on cases when the eigenfunction expansion involves countably many eigenvalues and eigenfunctions, but~\cite{ogura:cb,ogura:cbi} also include some results about generalised eigenfunction expansions, to some extent similar to those discussed here.

The present work builds upon the results obtained in~\cite{kwasnicki:sym,kmr} in the symmetric case, and the fluctuation theory developed in~\cite{kwasnicki:rogers}. The development for the half-line is, to some extent, parallel to the theory in $\R \setminus \{0\}$, studied in~\cite{jk,kwasnicki:point,mucha}.

The author hopes that the present article will stimulate the development of spectral theory for non-normal operators arising in the field of Lévy and Markov processes, and possibly also beyond this scope. Additionally, the results can be of interest for probabilists: the explicit expressions that we find below may lead to certain monotonicity properties of the densities of first-passage times (just as it was the case for symmetric processes, see~\cite{kmr}), and they might be applicable for numerical methods.

We state our results using probabilistic language, in terms of transition densities and passage times of appropriate stochastic processes. We remark, however, that the transition density (or heat kernel) $p_t^+(x, y)$ can be defined without using probability; see Remark~\ref{rem:analysis}. Furthermore, the proofs of our main results are purely analytic. Thus, statements and proofs of Theorems~\ref{thm:heat}, \ref{thm:heat:laplace} and~\ref{thm:heat:spectral} require essentially no knowledge of probability.

\subsection{Main results}

In order to formulate the main theorems, we need a number of definitions. Let $X_t$ be a \emph{Lévy process}: a stochastic process with independent and stationary increments, and càdlàg paths. Throughout the article we assume that $X_t$ has \emph{completely monotone jumps}: the Lévy measure of $X_t$ has a density function $\nu(x)$ such that
\formula{
 \text{both $\nu(x)$ and $\nu(-x)$ are completely monotone functions of $x > 0$.}
}
This class of processes was introduced by L.C.G.~Rogers in 1983, see~\cite{rogers}, in the context of Wiener--Hopf factorisation of Lévy processes, and recently revisited by the author in~\cite{kwasnicki:rogers}. In the present article we extend the methods and results of the latter paper to find spectral-type integral expressions for the distribution functions of the supremum and infimum functionals of $X_t$, as well as for the transition density $p_t^+(x, y)$ of $X_t$ in the half-line $(0, \infty)$. The supremum and infimum functionals are defined by
\formula{
 \overline{X}_t & = \sup\{X_s : s \in [0, t]\} , & \underline{X}_t & = \inf\{X_s : s \in [0, t]\} ,
}
while if $\pr^x$ is the probability corresponding to the process $X_t$ started at $X_0 = x$ and $\tau_{(0, \infty)} = \inf\{t \ge 0 : X_t \le 0\}$ is the first exit time from $(0, \infty)$, then $p_t^+(x, y)$ satisfies
\formula{
 \pr^x(X_t \in E, \, t < \tau_{(0, \infty)}) & = \int_E p_t^+(x, y) dy
}
for every Borel set $E$ and $x > 0$. Detailed definitions and further properties are discussed in Sections~\ref{sec:pre} and~\ref{sec:heat}, and existing literature on the subject is described later in this section.

\medskip

Throughout the paper, by $f(\xi)$ we denote the characteristic exponent of $X_t$, defined by the formula
\formula{
 \ex^0 e^{i \xi X_t} & = e^{-t f(\xi)}
}
for $t \ge 0$ and $\xi \in \R$. By a result of Rogers, $f(\xi)$ extends to a holomorphic function in the right complex half-plane $\{\xi \in \C : \re \xi > 0\}$ --- coined \emph{Rogers function} in~\cite{kwasnicki:rogers} --- and it is shown in~\cite{kwasnicki:rogers} that the equation $\im f(\xi) = 0$ defines a curve $\Gamma$ in the right complex half-plane --- the \emph{spine} of $f$ --- which can be parameterised as
\formula{
 \Gamma & = \{\zeta(r) : r \in Z\} ,
}
with $|\zeta(r)| = r$ and $Z$ an open subset of $(0, \infty)$. Furthermore, the real-valued function
\formula{
 \lambda(r) & = f(\zeta(r))
}
is continuous and increasing with $r$. This description is slightly simplified, and we refer to Section~\ref{sec:rogers} for a detailed exposition. Here we mention that if $X_t$ is symmetric, then $\Gamma$ is the horizontal ray $(0, \infty)$ and $\lambda(r) = f(r)$: we have $Z = (0, \infty)$ and $\zeta(r) = r$. We also make the following remark: in our main results we impose certain conditions on the shape of $\Gamma$, which are satisfied when $\Gamma$ is contained in the sector $\{\xi \in \C : \lvert\Arg \xi\rvert \le \tfrac{\pi}{2} - \eps\}$ for some $\eps > 0$; that is, when $Z = (0, \infty)$ and $\lvert\Arg \zeta(r)\rvert \le \tfrac{\pi}{2} - \eps$ for $r > 0$. However, apparently this condition cannot be easily expressed directly in terms of the characteristics of the process $X_t$; see Section~\ref{sec:ex:stable-like} for a closely related discussion.

The expressions in our main theorems involve certain functions $F_+(r; y)$ and $F_-(r; x)$, where $r \in Z$ and $x, y > 0$. These functions play the role of generalised eigenfunctions of the generator of the process $X_t$ killed at the first exit time from $(0, \infty)$, with corresponding generalised eigenvalues $\lambda(r)$. They are rigorously introduced in Definition~\ref{def:eig}; here we are satisfied with the following description: we have
\formula{
 F_+(r; y) & = e^{b(r) y} \sin(a(r) y + c_+(r)) - G_+(r; y) , \\
 F_-(r; x) & = e^{-b(r) x} \sin(a(r) x + c_-(r)) - G_-(r; x) ,
}
where $a(r) + i b(r) = \zeta(r)$, $c_+(r), c_-(r) \in [0, \pi)$, and $G_+(r; y)$ and $G_-(r; x)$ are certain completely monotone functions of $x, y > 0$. We stress that while $G_+(r; y)$ and $G_-(r; x)$ are smooth, positive, bounded and integrable functions of $x, y > 0$, the other terms $e^{b(r) y} \sin(a(r) y + c_+(r))$ and $e^{-b(r) y} \sin(a(r) y + c_-(r))$ are oscillatory, and unless $\zeta(r)$ is real, one of them has exponential growth at infinity.

The Laplace transforms of $F_+(r; y)$ and $F_-(r; x)$, denoted by $\laplace F_+(r; \eta)$ and $\laplace F_-(r; \xi)$, are well-defined when $\re \eta > \max\{b(r), 0\}$ and $\re \xi > \max\{-b(r), 0\}$, respectively. Both Laplace transforms extend analytically to $\xi, \eta \in (0, \infty)$. We denote these extensions by the same symbols, and furthermore we write $\laplace F_+(r; 0^+)$ and $\laplace F_-(r; 0^+)$ for the corresponding right limits at $0$.

The following are the main results of this article. In all of them we assume that $X_t$ is a non-deterministic Lévy process with completely monotone jumps.

\begin{theorem}
\label{thm:extrema}
Let $\eps \in (0, \tfrac{\pi}{2})$, $\beta \in (1, 1 + \eps / (\tfrac{\pi}{2} - \eps))$, $\delta \in [0, \tfrac{\pi}{2})$ and $\ro > 0$.
\begin{enumerate}[label=\textrm{(\alph*)}]
\item\label{thm:extrema:a} Suppose that
\formula**{
\label{eq:extrema:sup:a2}
 \limsup_{r \to \infty} \Arg \zeta(r) < \frac{\pi}{2} - \eps , \\
\label{eq:extrema:sup:a3}
 \sup \{ \Arg f(-i e^{i \delta} r) : r \in (0, \ro) \} < 0
}
(see Figure~\ref{fig:extrema}(a)), and that for every $t > 0$ there is a constant $C$ such that for $s > 0$ we have
\formula[eq:extrema:sup:a4]{
 \int_Z e^{s \max\{ \im \zeta(r), 0 \} - t \lambda(r)} \lvert\zeta'(r)\rvert dr & \le e^{C (1 + s)^\beta} .
}
Then
\formula[eq:extrema:sup]{
 \pr^0(\overline{X}_t < y) & = \frac{2}{\pi} \int_Z e^{-t \lambda(r)} F_+(r; y) \laplace F_-(r; 0^+) \lvert\zeta'(r)\rvert dr
}
for $t \ge 0$ and $y > 0$.
\item\label{thm:extrema:b} Similarly, suppose that
\formula**{
\label{eq:extrema:inf:a2}
 \liminf_{r \to \infty} \Arg \zeta(r) > -\frac{\pi}{2} + \eps , \\
\label{eq:extrema:inf:a3}
 \inf \{ \Arg f(i e^{-i \delta} r) : r \in (0, \ro) \} > 0 ,
}
and that for every $t > 0$ there is a constant $C$ such that for $s > 0$ we have
\formula[eq:extrema:inf:a4]{
 \int_Z e^{s \max\{ -\im \zeta(r), 0 \} - t \lambda(r)} \lvert\zeta'(r)\rvert dr & \le e^{C (1 + s)^\beta} .
}
Then
\formula[eq:extrema:inf]{
 \pr^0(\underline{X}_t > -x) & = \frac{2}{\pi} \int_Z e^{-t \lambda(r)} \laplace F_+(r; 0^+) F_-(r; x) \lvert\zeta'(r)\rvert dr
}
for $t \ge 0$ and $x > 0$.
\end{enumerate}
\end{theorem}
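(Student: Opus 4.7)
The two parts are dual to each other under $X \mapsto \hat X = -X$: the dual process has completely monotone jumps, and $\pr^0(\overline X_t < y) = \hat\pr^0(\underline{\hat X}_t > -y)$. Under negation the spine $\Gamma$ is complex-conjugated while $Z$, $\lambda(r)$ and $|\zeta'(r)|$ are preserved, and the generalised eigenfunctions $F_+$ and $F_-$ swap. A direct check shows that the hypotheses of (a) for $X$ are exactly those of (b) for $\hat X$, so it suffices to prove (b) and apply this symmetry.

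For (b), the plan is to start from the spectral expansion of the transition density from Theorem~\ref{thm:heat:spectral},
\formula*{
 p_t^+(x,y) = \tfrac{2}{\pi} \int_Z e^{-t \lambda(r)} F_+(r;y) F_-(r;x) |\zeta'(r)| dr ,
}
and integrate in $y \in (0,\infty)$. By translation invariance the left-hand side of~\eqref{eq:extrema:inf} equals $\pr^x(t < \tau_{(0,\infty)}) = \int_0^\infty p_t^+(x,y) dy$. Swapping the order of integration gives the claimed identity, provided that $\int_0^\infty F_+(r;y) dy$ can be identified with $\laplace F_+(r;0^+)$ via analytic continuation of the Laplace transform.

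The main obstacle is justifying the exchange of integrals. When $\zeta(r)$ has non-vanishing imaginary part, $F_+(r;\cdot)$ contains oscillatory terms with exponential growth in $y$, so the integral $\int_0^\infty F_+(r;y) dy$ does not converge absolutely and the ``Laplace transform at $0^+$'' has to be interpreted as an analytic continuation. The hypotheses of (b) are tailored precisely for this: \eqref{eq:extrema:inf:a2} keeps the spine inside a sector of opening strictly less than $\tfrac{\pi}{2}$ at infinity, giving room to deform the $y$-contour into a direction where $F_+(r;y)$ decays exponentially; \eqref{eq:extrema:inf:a3} handles the analogous behaviour near the origin of $\Gamma$; and~\eqref{eq:extrema:inf:a4} is an order-$\beta$ bound on the integrand after a shift of size $s$. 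The restriction $\beta < 1 + \eps/(\tfrac{\pi}{2} - \eps)$ is exactly the Phragmén--Lindelöf threshold for growth that can be beaten by exponential decay on a contour tilted inside a sector of opening $\tfrac{\pi}{2} - \eps$. Carrying out the contour-deformation and Fubini estimates rigorously will be the core technical work; once complete, the proof of (b) assembles as above, and (a) follows by the duality from the first paragraph.
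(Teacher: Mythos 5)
Your reduction of part (a) to part (b) via the dual process $\hat{X}_t = -X_t$ is correct and is exactly what the paper does. The proposed proof of part (b), however, has a genuine gap: you start from the pointwise expansion of $p_t^+(x,y)$ in Theorem~\ref{thm:heat:spectral}/\ref{thm:heat} and integrate in $y$, but the hypotheses of Theorem~\ref{thm:extrema}\ref{thm:extrema:b} are strictly weaker than those of Theorem~\ref{thm:heat}. Condition~\eqref{eq:extrema:inf:a2} is one-sided: it allows $\Arg \zeta(r) \to \tfrac{\pi}{2}$, and~\eqref{eq:extrema:inf:a4} controls only $\max\{-\im\zeta(r),0\}$, not $\lvert\im\zeta(r)\rvert$ as in~\eqref{eq:heat:a2}. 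So the starting expansion is simply not available in the generality claimed --- the classical risk process and the one-sided stable examples in Section~\ref{sec:ex} are covered by Theorem~\ref{thm:extrema} but not by Theorem~\ref{thm:heat}. Even where Theorem~\ref{thm:heat} does apply, the interchange $\int_0^\infty dy \leftrightarrow \int_Z dr$ is obstructed exactly as the introduction warns: when $\im\zeta(r)>0$ the function $F_+(r;y)$ grows like $e^{y\,\im\zeta(r)}$, so $\int_0^\infty F_+(r;y)\,dy$ is not $\laplace F_+(r;0^+)$ in any ordinary sense, and no hypothesis of part (b) controls $\max\{\im\zeta(r),0\}$. Deforming the $y$-contour cannot repair this: after the $r$-integration $p_t^+(x,y)$ is defined only for real $y$, and before it one would need convergence of the $r$-integral for complex $y$, which again requires the two-sided condition.

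The paper's route avoids forming $\int_0^\infty p_t^+(x,y)\,dy$ from the pointwise expansion altogether. It keeps a genuine Laplace kernel $e^{-\xi y}$ with $\xi>0$ in the $y$-variable and uses a rapidly decaying holomorphic test function $v(x)$ only in the $x$-variable (Proposition~\ref{prop:lpt:uv}, a one-sided variant of Proposition~\ref{prop:pt:uv} that needs only~\eqref{eq:extrema:inf:a2}); it then inverts the $x$-transform to get $\int_{(0,\infty)} e^{-\xi y} p_t^+(x,dy)$ as an integral over $Z_f$ of $\laplace F_+(r;\xi) F_-(r;x)$ (Corollary~\ref{cor:lpt:log}); and finally it passes to the limit $\xi\to 0^+$ by dominated convergence (Corollary~\ref{cor:inf:log}). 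It is in this last step --- and not in any contour deformation near the origin of $\Gamma$, as you suggest --- that hypothesis~\eqref{eq:extrema:inf:a3} is used: via Lemmas~\ref{lem:r:growth} and~\ref{lem:r:balance:quot} it yields a power-type bound on $F_-(r;x)$ for small $r$, and together with Lemma~\ref{lem:eig:lbound} this gives a $\xi$-uniform integrable majorant. To complete your proof you would need to replace the integration-over-$y$ step by an argument of this kind.
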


\begin{theorem}
\label{thm:heat}
Let $\eps \in (0, \tfrac{\pi}{2})$, $\beta \in (1, 1 + \eps / (\tfrac{\pi}{2} - \eps))$. Suppose that
\formula[eq:heat:a1]{
 \limsup_{r \to \infty} \, \lvert\Arg \zeta(r)\rvert & < \frac{\pi}{2} - \eps
}
(see Figure~\ref{fig:extrema}(b)), and that for every $t > 0$ there is a constant $C$ such that for $s > 0$ we have
\formula[eq:heat:a2]{
 \int_Z e^{s \lvert\im \zeta(r)\rvert - t \lambda(r)} \lvert\zeta'(r)\rvert dr & \le e^{C (1 + s)^\beta} .
}
Then
\formula[eq:heat]{
 p_t^+(x, y) & = \frac{2}{\pi} \int_Z e^{-t \lambda(r)} F_+(r; y) F_-(r; x) \lvert\zeta'(r)\rvert dr
}
for $t \ge 0$ and $x, y > 0$.
\end{theorem}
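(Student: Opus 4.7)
The plan is to promote the operator-level generalised eigenfunction expansion supplied by Theorem~\ref{thm:heat:spectral} into a pointwise identity between the kernel $p_t^+(x, y)$ and the integral on the right-hand side of~\eqref{eq:heat}. The argument decomposes into three ingredients: absolute convergence of the spectral integral, a Fubini-type exchange of integrations against test functions, and a continuity argument that upgrades an almost-everywhere identity to a pointwise one.

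The first step is to establish, for $r \in Z$ and $x, y > 0$, the pointwise bound
\formula{
 |F_+(r; y)\, F_-(r; x)| & \le C(r) \bigl(1 + e^{|\im \zeta(r)|\, \max(x, y)}\bigr) ,
}
with $C(r)$ of only mild polynomial size in $r$. This follows from the decomposition of $F_\pm$ recalled in the introduction: the oscillatory parts $e^{\pm b(r) y} \sin(a(r) y + c_\pm(r))$ are dominated by the stated exponential, while the completely monotone remainders $G_\pm(r; \cdot)$ are uniformly bounded in $x, y$ with controllable $r$-dependence coming from Definition~\ref{def:eig}. Applied with $s = \max(x, y)$, hypothesis~\eqref{eq:heat:a2} then delivers absolute and locally uniform convergence of the right-hand side of~\eqref{eq:heat} for every $t > 0$, together with joint continuity of the resulting function in $(x, y)$.

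The second step is to test the operator-level spectral expansion from Theorem~\ref{thm:heat:spectral} against functions $f, g \in C_c^\infty((0, \infty))$, which yields an identity of the form
\formula{
 \int_0^\infty g(x) (P_t^+ f)(x) dx & = \frac{2}{\pi} \int_Z e^{-t \lambda(r)} \biggl( \int_0^\infty F_-(r; x) g(x) dx \biggr) \biggl( \int_0^\infty F_+(r; y) f(y) dy \biggr) |\zeta'(r)| dr .
}
The compact support of $f$ and $g$ combined with the bound established above makes Fubini's theorem applicable, and the right-hand side rearranges into $\int_0^\infty \int_0^\infty g(x) f(y) K(x, y) dy dx$, where $K(x, y)$ coincides with the right-hand side of~\eqref{eq:heat}. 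Comparing with the kernel representation $(P_t^+ f)(x) = \int_0^\infty p_t^+(x, y) f(y) dy$ and letting $f, g$ vary, one concludes that $p_t^+(x, y) = K(x, y)$ for almost every $(x, y)$; joint continuity of both sides then upgrades this to pointwise equality for every $x, y > 0$ and $t > 0$.

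The main obstacle is the very first step. The oscillatory factors in $F_\pm$ are controlled routinely, but isolating the $r$-dependence of the completely monotone remainders $G_\pm$ tightly enough that~\eqref{eq:heat:a2} actually produces convergence requires careful use of Definition~\ref{def:eig} and the fine estimates for Rogers functions from Section~\ref{sec:rogers}. Here condition~\eqref{eq:heat:a1} is essential: combined with $|\zeta(r)| = r$, it confines $|\im \zeta(r)|$ to at most $r \cos \eps$, leaving exactly the margin that the subexponential rate $\beta < 1 + \eps / (\tfrac{\pi}{2} - \eps)$ in~\eqref{eq:heat:a2} needs in order to dominate the growth of the oscillatory contribution after optimising over $r$. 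With this estimate in hand, the subsequent Fubini and continuity steps are routine.
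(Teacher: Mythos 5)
Your overall architecture — start from Theorem~\ref{thm:heat:spectral}, exchange integrals by Fubini using the bound $|F_+(r;y)F_-(r;x)| \le C e^{\lvert\im\zeta(r)\rvert(x+y)}$ together with~\eqref{eq:heat:a2}, identify the kernels almost everywhere, and finish by a two-sided continuity argument — is exactly the paper's. But the identification step as you state it fails: you propose to test the spectral expansion against $f, g \in C_c^\infty((0,\infty))$, and compactly supported smooth functions are \emph{not} admissible in Theorem~\ref{thm:heat:spectral}. That theorem requires $u$ and $v$ to extend holomorphically to the sector $\{\lvert\Arg z\rvert < \tfrac{\pi}{2} - \eps\}$ with decay $e^{-C|z|\log(1+|z|)}$ there; a nonzero holomorphic function cannot have compact support, and the paper explicitly warns that the admissible class ``does not include any compactly supported functions.'' So the identity you would like to test against a separating family is simply not available for that family, and this is precisely the non-routine part of the deduction.

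The fix, which is what the paper does in Corollary~\ref{cor:pt:log}, is to use the admissible test functions $u(y) = \exp(-\xi y^\beta)$ and $v(x) = \exp(-\eta x^\beta)$: the constraint $\beta < 1 + \eps/(\tfrac{\pi}{2}-\eps)$ is exactly what makes $\re(z^\beta) > 0$ throughout the sector, so these satisfy the hypotheses of Theorem~\ref{thm:heat:spectral}, and in the variables $\tilde x = x^\beta$, $\tilde y = y^\beta$ the resulting identity is an equality of bivariate Laplace transforms, whence the kernels agree almost everywhere by Laplace-transform uniqueness. (Your interpretation of the role of~\eqref{eq:heat:a1} and $\beta$ is also slightly off: the margin $\beta < 1 + \eps/(\tfrac{\pi}{2}-\eps)$ is needed to make these test functions admissible, not to ``dominate the growth of the oscillatory contribution''; the domination in the Fubini step comes from the elementary inequality $(1+x+y)^\beta \le 3^\beta(1+x^\beta+y^\beta)$ with $\xi,\eta$ large.) A secondary omission: you take the existence and joint continuity of $p_t^+(x,y)$ for granted, whereas part of the content of the theorem is that $p_t^+(x,dy)$ has a continuous density; the paper derives this from the integrability of $e^{-tf}$ (a consequence of~\eqref{eq:heat:a1}--\eqref{eq:heat:a2} via Proposition~\ref{prop:r:bound}), Fourier inversion for $p_t$, and the Dynkin--Hunt formula.
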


\begin{figure}
\centering
\begin{tabular}{cc}
\begin{tikzpicture}
\footnotesize
\coordinate (X) at (3,0);
\coordinate (Y) at (0,3);
\coordinate (Xn) at (-1,0);
\coordinate (Yn) at (0,-3);
\coordinate (O) at (0,0);
\coordinate (E) at (0.6,3);
\coordinate (R) at (0.3,1.5);
\coordinate (T) at (0.4,-1.2);
\coordinate (P) at (0.2,-3);
\phantom{\node[below, olive] at (P) {$\Arg f < 0$};}
\fill[fill=teal!50!white, draw=teal] (E) -- (R) arc (78.69:90:1.5297) -- (Y);
\draw[dotted, teal] (O) -- (R);
\node[above, teal] at (E) {$\Arg f > 0$};
\pic["$\eps$",draw,angle eccentricity=1.1,angle radius=2cm] {angle=E--O--Y};
\draw[very thick, violet] (O) -- (T) node[right] {$\sup \Arg f < 0$};
\pic["$\delta$",draw,angle eccentricity=1.2,angle radius=1cm] {angle=Yn--O--T};
\draw[->] (Xn) -- (X) node[above] {$\re$};
\draw[->] (Yn) -- (Y) node[left] {$\im$};
\end{tikzpicture}
&
\begin{tikzpicture}
\footnotesize
\coordinate (X) at (3,0);
\coordinate (Y) at (0,3);
\coordinate (Xn) at (-1,0);
\coordinate (Yn) at (0,-3);
\coordinate (O) at (0,0);
\coordinate (E) at (0.6,3);
\coordinate (R) at (0.3,1.5);
\coordinate (En) at (0.6,-3);
\coordinate (Rn) at (0.3,-1.5);
\fill[fill=teal!50!white, draw=teal] (En) -- (Rn) arc (-78.69:-90:1.5297) -- (Yn);
\draw[dotted, teal] (O) -- (Rn);
\node[below, teal] at (En) {$\Arg f < 0$};
\fill[fill=teal!50!white, draw=teal] (E) -- (R) arc (78.69:90:1.5297) -- (Y);
\draw[dotted, teal] (O) -- (R);
\node[above, teal] at (E) {$\Arg f > 0$};
\pic["$\eps$",draw,angle eccentricity=1.1,angle radius=2cm] {angle=E--O--Y};
\pic["$\eps$",draw,angle eccentricity=1.1,angle radius=2cm] {angle=Yn--O--En};
\draw[->] (Xn) -- (X) node[above] {$\re$};
\draw[->] (Yn) -- (Y) node[left] {$\im$};
\end{tikzpicture}
\\
(a)&(b)
\end{tabular}
\caption{(a) Setting for Theorem~\ref{thm:extrema}\ref{thm:extrema:a}. (b) Setting for Theorems~\ref{thm:heat} and~\ref{thm:heat:spectral}.}
\label{fig:extrema}
\end{figure}
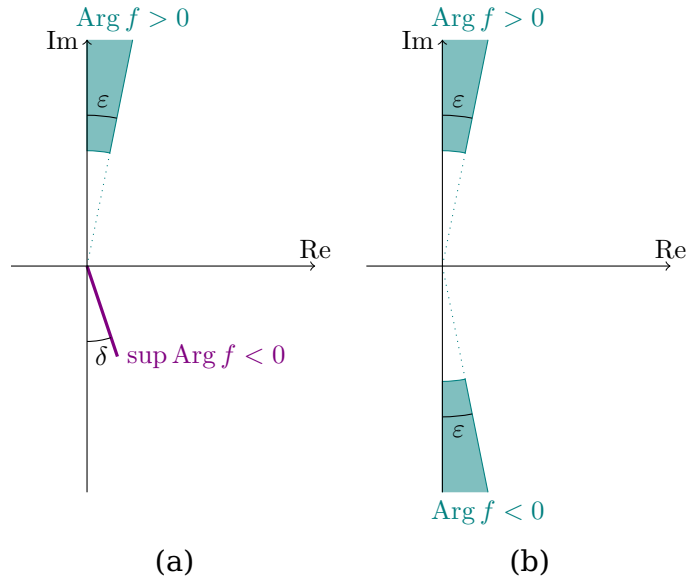

\begin{remark}
\label{rem:analysis}
The transition density $p_t^+(x, y)$ is the Dirichlet heat kernel in $(0, \infty)$, with zero exterior condition in $(-\infty, 0]$, for the \emph{Lévy operator} $L$ defined for smooth, compactly supported function $u$ by the formula
\formula{
 L u(x) & = \tfrac{1}{2} \sigma^2 u''(x) + \mu u'(x) + \int_{-\infty}^\infty \bigl(u(x + y) - u(x) - y u'(x) \ind_{(-1, 1)}(y)\bigr) \nu(y) dy ,
}
where $\sigma \ge 0$, $\mu \in \R$ and both $\nu(z)$ and $\nu(-z)$ are completely monotone functions on $(0, \infty)$. The Fourier symbol of $L$ is equal to the characteristic exponent $f(\xi)$ of the Lévy process, and it is given by the Lévy--Khintchine formula
\formula{
 f(\xi) & = \tfrac{1}{2} \sigma^2 \xi^2 - i \mu \xi + \int_{-\infty}^\infty \bigl(1 - e^{i \xi x} + i \xi x \ind_{(-1, 1)}(x)\bigr) \nu(x) dx .
}
The operator $L$ is the generator of the Lévy process $X_t$. Noteworthy, estimates of Dirichlet heat kernels of Lévy processes attracted significant attention over the past decade, and explicit expressions for the Dirichlet heat kernel are rarely available.
\end{remark}

\begin{remark}
\label{rem:probability}
Our formulae for the distribution of the supremum and infimum functionals in Theorem~\ref{thm:extrema} are, in general, relatively complicated: they include several integrals, and they involve implicitly defined quantities, such as $\zeta(r)$. Nevertheless, for some relatively simple Lévy processes with completely monotone jumps one can simplify these expressions significantly, and obtain an explicit integral formula for $\pr^0(\overline{X}_t < y)$ or $\pr^0(\underline{X}_t > -x)$. Some examples are given in Section~\ref{sec:ex}. Thus, we extend the (surprisingly short) list of Lévy processes for which numerically tractable expressions for distribution functions of $\overline{X}_t$ or $\underline{X}_t$ are available. Other contributions to this list known to the author include: Brownian motion~\cite{levy}, Brownian motion with drift~\cite{schrodinger,smoluchowski,cm}, symmetric Poisson process~\cite{bd}, Poisson process with drift~\cite{gp,pyke}, symmetric 1-stable Lévy process (or Cauchy process)~\cite{darling}, classical risk process~\cite{asmussen,aa}, one-sided stable Lévy processes~\cite{skorokhod,heyde,bingham,bdp,doney-1,gj}, and virtually all stable Lévy processes~\cite{hk}. Additional examples are provided by specialising Takács's formula~\cite{cramer,prabhu,takacs} to particular one-sided Lévy processes.
\end{remark}

Assumptions~\eqref{eq:extrema:sup:a2}, \eqref{eq:extrema:inf:a2} and~\eqref{eq:heat:a1} should be regarded as (rather mild) geometric constraints on the curve $\Gamma$. While assumptions~\eqref{eq:extrema:sup:a3} and~\eqref{eq:extrema:inf:a3} also impose geometric conditions on $\Gamma$, they additionally require that the decay of $|f(\xi)|$ near~$0$ is faster than some power of $|\xi|$; see Lemma~\ref{lem:r:growth}. Assumptions~\eqref{eq:extrema:sup:a4}, \eqref{eq:extrema:inf:a4} and~\eqref{eq:heat:a2} are another growth conditions on $|f(\xi)|$, but this time for large $\xi$. They are automatically satisfied if $|f(\xi)|$ grows at least as fast as $|\xi|$ raised to a power greater than $1$; we refer to Section~\ref{sec:ex} for further discussion and examples.

The assumptions in the above theorems may appear inexplicit. In practice, however, they turn out to be easily applicable and relatively general. In particular, Theorems~\ref{thm:extrema} and~\ref{thm:heat} cover at least some typical examples of Lévy processes with completely monotone jumps: a wide class of processes with non-zero Brownian component; symmetric Lévy processes with completely monotone jumps under mild growth condition on $|f(\xi)|$ at infinity; all strictly stable Lévy processes with index $\alpha > \tfrac{3}{2}$, and some strictly stable Lévy processes with index $\alpha \in (1, \tfrac{3}{2}]$; and many stable-like Lévy processes. These examples are discussed in Section~\ref{sec:ex}.

\medskip

Although apparently our results hold true in a slightly wider context (for example, the main results of the present paper are proved in~\cite{kk:stable} for all stable processes with index $\alpha > 1$), it is clear that Theorems~\ref{thm:extrema} and~\ref{thm:heat} do not extend directly to all Lévy processes with completely monotone jumps. Indeed, if, for example, $X_t$ is a non-symmetric strictly stable Lévy process with index $\alpha \le 1$, then the integral in~\eqref{eq:heat}, as well as one of the integrals in~\eqref{eq:extrema:sup} and~\eqref{eq:extrema:inf}, diverges. Once again we refer to Section~\ref{sec:ex} for more examples and a detailed discussion.

It is easy to see that
\formula{
 \pr^0(\underline{X}_t > -x) & = \pr^x(\underline{X}_t > 0) = \int_0^\infty p_t^+(x, y) dy ,
}
and, by Hunt's switching identity (Theorem~II.5 in~\cite{bertoin}), similarly
\formula{
 \pr^0(\overline{X}_t < y) & = \int_0^\infty p_t^+(x, y) dx .
}
This suggests that Theorem~\ref{thm:extrema} is a simple consequence of Theorem~\ref{thm:heat}. Due to exponential growth of $F_+(r; y)$ or $F_-(r; x)$, however, this does not seem to be the case, and our proofs follow a different path. On the other hand, these two results clearly have a common root. In fact, both are derived from the following general result, in which, for notational convenience, we extend the definition of $\zeta(r)$ to all $r > 0$ in such a way that it is continuous on $(0, \infty)$ and piecewise linear on $(0, \infty) \setminus Z$. We stress that here we impose no restrictions on the Lévy process $X_t$ other than it is non-deterministic and it has completely monotone jumps.

\begin{theorem}
\label{thm:heat:laplace}
Let $t \ge 0$. For every $\xi, \eta > 0$ such that $\Arg \zeta(\xi) < \tfrac{\pi}{2}$ and $\Arg \zeta(\eta) > -\tfrac{\pi}{2}$, we have
\formula[eq:heat:laplace]{
 \int_0^\infty \int_0^\infty e^{-\eta x - \xi y} p_t^+(x, y) dy dx & = \frac{2}{\pi} \int_Z e^{-t \lambda(r)} \laplace F_+(r; \xi) \laplace F_-(r; \eta) \lvert\zeta'(r)\rvert dr .
}
\end{theorem}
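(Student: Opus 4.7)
The plan is to start from a closed expression for the double Laplace transform of $p_t^+$ at an independent exponential time, then invert that Laplace transform in the time variable, and finally deform the resulting contour onto the spine $\Gamma$. Concretely, for $q > 0$ and an independent exponential variable $\exprv_q$ of rate $q$, I would first express
\formula{
 q \int_0^\infty e^{-qt} \biggl(\int_0^\infty \!\!\int_0^\infty e^{-\eta x - \xi y} p_t^+(x,y)\, dy\, dx\biggr) dt
}
as an algebraic combination of Wiener--Hopf factors of $q + f$, using a Pecherskii--Rogozin-type identity combined with Hunt's switching identity. In the Rogers-function setting, a closely related identity is central to~\cite{kwasnicki:rogers} and should be directly adaptable. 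The resulting expression is holomorphic in $q \in \C \setminus \lambda(Z)$, since the only values where the Wiener--Hopf factors are singular are those $q = \lambda(r)$ with $r \in Z$ coming from the image of $\Gamma$ under $f$.

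Next, I would invert in $q$ via a Bromwich integral along a vertical line to the right of the cut $\lambda(Z)$, and then deform the contour onto a loop wrapping this cut. Because $\lambda : Z \to \lambda(Z)$ is an increasing homeomorphism with $\lambda(r) = f(\zeta(r))$, the substitution $q = \lambda(r)$ parameterises the collapsed contour by $r \in Z$; the chain rule produces the factor $|\zeta'(r)| dr$. The integrand along this contour is, by construction, a difference of boundary values of the Wiener--Hopf factors of $f - \lambda(r)$ from the two sides of the cut, evaluated at $\xi$ and $\eta$. By the very definition of the generalised eigenfunctions (Definition~\ref{def:eig}), these boundary-value differences coincide, up to the $2/\pi$ normalisation, with $\laplace F_+(r;\xi)$ and $\laplace F_-(r;\eta)$ respectively, which closes the identification.

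The principal obstacle is the contour deformation in the previous paragraph. Since Theorem~\ref{thm:heat:laplace} imposes no growth condition on $f$ (unlike Theorems~\ref{thm:extrema} and~\ref{thm:heat}), the deformation cannot rely on pointwise heat-kernel bounds; one must work purely at the level of the Laplace transform in $q$. I would justify the contour collapse by bounding the Wiener--Hopf factors on large semicircles in the $q$-plane and near $q = 0$ using only the Rogers-function structure of $f$ from~\cite{kwasnicki:rogers}, and verify that no additional residues are picked up. The conditions $\Arg \zeta(\xi) < \tfrac{\pi}{2}$ and $\Arg \zeta(\eta) > -\tfrac{\pi}{2}$ enter precisely at this point: they ensure that $\xi$ and $\eta$ sit on the appropriate sides of $\Gamma$ so that no unwanted singularities of $\laplace F_\pm(r;\cdot)$ are crossed during the deformation, while the piecewise-linear extension of $\zeta$ across $(0,\infty)\setminus Z$ plays no role here because the integrand concentrates on $Z$ once the contour has been collapsed.
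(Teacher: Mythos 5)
Your overall architecture does match the paper's: a Pecherskii--Rogozin identity for the tri-variate Laplace transform (Proposition~\ref{prop:pt:pr}; note the paper needs only the independence of $\ul{X}_{\exprv}$ and $X_{\exprv}-\ul{X}_{\exprv}$, not Hunt's switching identity), inversion in the temporal variable by collapsing onto the negative real axis, and identification of the resulting density. On the inversion step, the obstacle you single out --- justifying the Bromwich contour collapse by bounding the Wiener--Hopf factors on large semicircles in the $q$-plane --- is avoided entirely in the paper: by Proposition~\ref{prop:r:wh:bd} the function $\Psi_f(\xi,\eta,\sigma)$ is a Stieltjes function of $\sigma$, so it automatically admits the representation $\tfrac{b}{\sigma}+c+\tfrac{1}{\pi}\int(\sigma+s)^{-1}\mu(ds)$ with $\mu$ recovered from boundary values, and one only has to check $b=c=0$ and $\mu(\{\lambda_f(\infty)\})=0$. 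This is precisely where the completely-monotone-jumps hypothesis does its work, and your proposed route would re-derive by hand what the Stieltjes structure gives for free.

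The genuine gap is in your final step. You assert that the boundary-value differences across the cut coincide with $\laplace F_+(r;\xi)$ and $\laplace F_-(r;\eta)$ ``by the very definition of the generalised eigenfunctions''. They do not: Definition~\ref{def:eig} expresses $\laplace F_\pm(r;\cdot)$ through the Wiener--Hopf factors $f^\pm(r;\cdot)$ of the difference-quotient Rogers function $f(r;z)=(z-\zeta_f(r))(z+\overline{\zeta_f(r)})/(f(z)-\lambda_f(r))$, whereas the boundary value of $\Psi_f(\xi,\eta,-\sigma+i\tau)$ as $\tau\to 0^+$ is the exponential of a Hilbert transform of the phase $\psi_f(\xi,\eta,\lambda_f^{-1}(s))$. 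Bridging these two descriptions is the technical core of Section~\ref{sec:heat} and cannot be waved through: one needs (i) local H\"older continuity of $\zeta_f$ and $\lambda_f^{-1}$ (Propositions~\ref{prop:zeta:holder} and~\ref{prop:lambda:holder}) merely to apply Sokhotski--Plemelj and obtain a pointwise boundary value; (ii) an integration by parts converting the resulting principal-value integral into a contour integral of $\log|f(z)-\lambda_f(r)|$ over $\Gamma_f^\star$; and (iii) Proposition~\ref{prop:r:quot:bd}, which evaluates that contour integral as $g(\xi_1)g(\xi_2)/(f^+(r;-i\xi_1)f^-(r;i\xi_2))$ and whose proof is itself lengthy (it splits $\log|f-\lambda_f(r)|$ into several factors, applies Proposition~\ref{prop:r:bd2} to two of them and Lemma~\ref{lem:aux:int} to the rest). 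Without this chain your argument does not close. You do correctly identify the role of the hypotheses $\Arg\zeta_f(\xi)<\tfrac{\pi}{2}$ and $\Arg\zeta_f(\eta)>-\tfrac{\pi}{2}$: they place $i\xi\in D_f^+$ and $-i\eta\in D_f^-$, which selects the product case of Proposition~\ref{prop:r:quot:bd}.
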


We remark that the above result also covers the case when the distribution of $X_t$ contains an atom, and in this case $p_t^+(x, y) dy$ needs to be understood appropriately. We also point out that the sets of admissible $\xi$ and admissible $\eta$ in Theorem~\ref{thm:heat:laplace} always contain the (non-empty) open set $Z$, and that the union of these sets is $(0, \infty)$.

\medskip

Theorem~\ref{thm:heat:laplace} can be viewed as a spectral-type decomposition of the transition operators of $X_t$ in $(0, \infty)$. More precisely, let
\formula{
 P_t^+ u(x) & = \int_0^\infty p_t^+(x, y) u(y) dy
}
whenever the integral converges. Furthermore, write
\formula{
 \langle u, v \rangle & = \int_0^\infty u(x) v(x) dx
}
whenever the integral is finite. If we define $u(y) = e^{-\xi y}$ and $v(x) = e^{-\eta x}$, then the assertion of Theorem~\ref{thm:heat:laplace} can be formally written as
\formula{
 \langle v, P_t^+ u \rangle & = \frac{2}{\pi} \int_Z e^{-t \lambda(r)} \langle v, F_-(r; \cdot) \rangle \langle F_+(r; \cdot), u \rangle \lvert\zeta'(r)\rvert dr ,
}
except that the integrals on the right-hand side:
\formula{
 \langle v, F_-(r; \cdot) \rangle & = \int_0^\infty F_-(r; x) e^{-\eta x} dx = \laplace F_-(r; \eta) , \\
 \langle F_+(r; \cdot), u \rangle & = \int_0^\infty F_+(r; y) e^{-\xi y} dy = \laplace F_+(r; \xi)
}
are not necessarily well-defined. A rigorous statement of that kind requires a more careful choice of test functions $u(y)$ and $v(x)$, and an additional assumption.

\begin{theorem}
\label{thm:heat:spectral}
Let $\eps > 0$, and suppose that
\formula[eq:heat:spectral:a1]{
 \limsup_{r \to \infty} \, \lvert\Arg \zeta(r)\rvert & < \frac{\pi}{2} - \eps
}
(see Figure~\ref{fig:extrema}(b)). Let $t \ge 0$, and let $u(y)$ and $v(x)$ be holomorphic functions in the region $\{z \in \C : \lvert\Arg z\rvert < \tfrac{\pi}{2} - \eps\}$, which are real-valued on $(0, \infty)$, and which satisfy
\formula[eq:heat:spectral:a2]{
 |u(y)| & \le C_1 e^{-C_2 |y| \log(1 + |y|)} , & |v(x)| & \le C_1 e^{-C_2 |x| \log(1 + |x|)}
}
for some constants $C_1$ and $C_2$ and all $x$ and $y$ in the region $\{z \in \C : \lvert\Arg z\rvert < \tfrac{\pi}{2} - \eps\}$. Then
\formula[eq:heat:spectral]{
 \langle v, P_t^+ u \rangle & = \frac{2}{\pi} \int_Z e^{-t \lambda(r)} \langle v, F_-(r; \cdot) \rangle \langle F_+(r; \cdot), u \rangle \lvert\zeta'(r)\rvert dr .
}
\end{theorem}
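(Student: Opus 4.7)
The plan is to reduce Theorem~\ref{thm:heat:spectral} to the bivariate Laplace-transform identity of Theorem~\ref{thm:heat:laplace} by representing the test functions $u$ and $v$ as superpositions of exponentials via Bromwich-type contour integrals. Since $u$ is holomorphic and super-exponentially decaying in the sector $\{z : \lvert\Arg z\rvert < \tfrac{\pi}{2} - \eps\}$, its one-sided Laplace transform $\hat u(s) = \int_0^\infty e^{-s y} u(y) dy$ extends to an entire function of $s$. Rotating the contour of integration from $(0, \infty)$ to a ray $\{t e^{i\theta} : t > 0\}$ inside the sector, permitted by Cauchy's theorem and the super-exponential decay of $u$, produces quantitative bounds on $\hat u(s)$ in suitable directions of the $s$-plane. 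The Bromwich inversion formula together with the substitution $s = -\xi$ then yields
\formula{
 u(y) & = \frac{1}{2\pi i} \int_\gamma e^{-\xi y} \hat u(-\xi) d\xi
}
for $y > 0$, where $\gamma$ is a contour in the right half-plane $\{\re \xi > 0\}$, for instance the vertical line $\re \xi = c_0$ with $c_0 > 0$ small. An analogous representation $v(x) = \tfrac{1}{2\pi i} \int_{\gamma'} e^{-\eta x} \hat v(-\eta) d\eta$ holds for $v$ with a contour $\gamma'$ in $\{\re \eta > 0\}$.

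Substituting both representations into $\langle v, P_t^+ u \rangle = \int_0^\infty \int_0^\infty v(x) p_t^+(x, y) u(y) dx dy$ and changing the order of integration, the inner double integral becomes the bivariate Laplace transform of $p_t^+$ evaluated at complex $(\xi, \eta)$. Next I would verify that both sides of the identity of Theorem~\ref{thm:heat:laplace} extend holomorphically to $(\xi, \eta)$ with $\re \xi, \re \eta > 0$: the left-hand side is trivially so, while the right-hand side relies on the holomorphic extension of $\laplace F_\pm(r; \cdot)$ to the right half-plane, as noted in the preamble, combined with locally uniform convergence of the $r$-integral. Inserting the extended identity and then moving the $r$-integral outside yields
\formula{
 \langle v, P_t^+ u \rangle & = \frac{2}{\pi} \int_Z e^{-t \lambda(r)} I_+(r) I_-(r) \lvert\zeta'(r)\rvert dr ,
}
where $I_+(r) = \tfrac{1}{2\pi i} \int_\gamma \hat u(-\xi) \laplace F_+(r; \xi) d\xi$ and $I_-(r) = \tfrac{1}{2\pi i} \int_{\gamma'} \hat v(-\eta) \laplace F_-(r; \eta) d\eta$. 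Expanding $\laplace F_+(r; \xi)$ as $\int_0^\infty e^{-\xi y} F_+(r; y) dy$ in the definition of $I_+(r)$ and exchanging the order of integration once more identifies $I_+(r)$ with $\int_0^\infty F_+(r; y) u(y) dy = \langle F_+(r; \cdot), u \rangle$, and analogously $I_-(r) = \langle v, F_-(r; \cdot) \rangle$, which establishes~\eqref{eq:heat:spectral}.

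The principal technical obstacle is justifying the several interchanges of integration above, which requires quantitative bounds on all three ingredients. On a vertical contour, $\hat u(-\xi)$ and $\hat v(-\eta)$ decay only like $O(1/\lvert\im \xi\rvert)$ and $O(1/\lvert\im \eta\rvert)$ respectively, with rates controlled by how far one can rotate the $y$- and $x$-contours within the sector of holomorphy; the Laplace transforms $\laplace F_\pm(r; \cdot)$ decay like $O(1/\lvert\xi\rvert)$ on the same contours, with constants that must be controlled uniformly as $r$ varies over compact subsets of $Z$. The sector hypothesis~\eqref{eq:heat:spectral:a1} plays an essential role throughout: it implies the pointwise bound $\lvert\im \zeta(r)\rvert \le r \cos \eps$, preventing the exponential parts of $F_\pm(r; \cdot)$ from growing arbitrarily fast, and it ensures that the rotations of the $y$- and $x$-contours needed to bound $\hat u(-\xi), \hat v(-\eta)$ remain strictly inside the sector $\{\lvert\Arg z\rvert < \tfrac{\pi}{2} - \eps\}$ where the super-exponential decay of $u$ and $v$ is available. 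Once these estimates are in place, the Fubini applications and the analytic continuation of Theorem~\ref{thm:heat:laplace} both follow, and the rest of the argument is bookkeeping.
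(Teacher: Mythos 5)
Your overall architecture — represent $u$ and $v$ as contour integrals of exponentials, invoke Theorem~\ref{thm:heat:laplace} at complex arguments, and exchange integrals — is the same as the paper's (Proposition~\ref{prop:pt:uv} runs the identical argument, just starting from the right-hand side of~\eqref{eq:heat:spectral} rather than the left). However, as written the proposal has a genuine gap at its final step, and the choice of contour is where it fails. You place $\gamma$ on a fixed vertical line $\re \xi = c_0 > 0$ and then propose to identify $I_+(r) = \tfrac{1}{2\pi i}\int_\gamma \hat u(-\xi) \laplace F_+(r;\xi)\,d\xi$ with $\langle F_+(r;\cdot),u\rangle$ by expanding $\laplace F_+(r;\xi) = \int_0^\infty e^{-\xi y}F_+(r;y)\,dy$ and applying Fubini. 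That Laplace representation is valid only for $\re\xi > \max\{0,\im\zeta(r)\}$; the meromorphic extension $\laplace F_+(r;\cdot)$ has poles at $-i\zeta(r)$ and $i\overline{\zeta(r)}$, both on the line $\re\xi = \im\zeta(r)$, and $\im\zeta(r)$ is unbounded over $Z$ (your own bound $\lvert\im\zeta(r)\rvert \le r\cos\eps$ shows it grows linearly). So for every fixed contour there are infinitely many $r$ for which the poles lie on or to the right of $\gamma$ and the $y$-integral $\int_0^\infty e^{-\xi y}F_+(r;y)\,dy$ diverges outright on $\gamma$; the interchange is not merely unjustified but false there. The paper resolves exactly this by starting from an $r$-dependent contour $-2r + i\R$ (via Plancherel) and deforming to a fixed wedge $\tilde\Gamma$ opening into the left half-plane, checking that the two poles are never crossed — that check is precisely where the sector hypothesis~\eqref{eq:heat:spectral:a1} enters, not merely as a growth bound on $\im\zeta(r)$.

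A second, related problem: on a vertical contour none of your multiple integrals converges absolutely. You correctly note that $\hat u(-\xi)$ decays only like $1/\lvert\xi\rvert$ there, and $e^{-\xi y}$ contributes no decay in $\xi$ along a vertical line, so $\int_\gamma \lvert\hat u(-\xi)\rvert\,\lvert d\xi\rvert = \infty$ and the Fubini steps in both directions break down. The wedge contour is again what saves the paper's argument: along $\tilde\Gamma$ the factor $e^{y\re\xi}$ decays exponentially, giving the absolutely convergent bound $\int_{\tilde\Gamma} e^{y\re\xi}(1+\lvert\xi\rvert)^{-1}\lvert d\xi\rvert \le C e^{-3py}\log(e+y^{-1})/(1+y)$. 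Finally, you need the outer integral over all of $Z$ (not compact subsets) to converge, and since the theorem includes $t=0$ the factor $e^{-t\lambda(r)}$ cannot help; the paper gets convergence from the bounds $\lvert I_\pm(r)\rvert \le C\cos\thet(r)^{1/2}\log r/(r-1)$ combined with the rectifiability estimate of Proposition~\ref{prop:r:real}\ref{it:r:real:d}, which controls $\int \lvert\zeta'(r)\rvert\,dr$ over dyadic annuli. These estimates are the substance of the proof and would need to be supplied to close your argument.
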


Theorem~\ref{thm:heat:laplace} is proved in Section~\ref{sec:heat} as Proposition~\ref{prop:pt:lap}. The above result follows then by an involved argument, which requires various contour deformations; see Proposition~\ref{prop:pt:uv}. On the other hand, Theorem~\ref{thm:heat} is deduced from Theorem~\ref{thm:heat:spectral} in a rather straightforward way: we use Fubini's theorem to change the order of the three integrals on the right-hand side of~\eqref{eq:heat:spectral} and then apply a density argument. This is done in Section~\ref{sec:inf:pt}, where Theorem~\ref{thm:heat} is restated as Corollary~\ref{cor:pt:log}. Theorem~\ref{thm:extrema} is proved in Section~\ref{sec:inf:inf}, and it requires a minor modification of Theorem~\ref{thm:heat:spectral} given in Proposition~\ref{prop:lpt:uv}.

\begin{remark}
\label{rem:nonnormal}
The operators $P_t^+$ are closely related to the Wiener--Hopf factorisation of convolution operators. More precisely, the generator $L^+$ of the semigroup of operators $P_t^+$ (or its inverse $(L^+)^{-1}$, known as the potential operator) is the \emph{Wiener--Hopf operator} for the Lévy operator $L$ introduced in Remark~\ref{rem:analysis} (or its inverse $L^{-1}$).

At least in the following two cases: (a)~when $X_t$ is a compound Poisson process (and then $L$ is a bounded operator on $L^2(\R)$); or (b)~when $X_t$ is killed at a positive rate (this case is not covered in the introduction, but it is allowed in Section~\ref{sec:pre}; then $L^{-1}$ is a bounded operator on $L^2(\R)$), spectral properties of $L^+$ or $(L^+)^{-1}$ are relatively well-understood. In particular, it is known that if $X_t$ is not symmetric, then $L^+$ (in case~(a)) or $(L^+)^{-1}$ (in case~(b)) is not a normal operator on $L^2((0, \infty))$, as either this operator or its adjoint has an uncountable family of eigenfunctions. This means that for no $t > 0$ is $P_t^+$ a normal operator on $L^2((0, \infty))$. We refer the reader to~\cite{as} for these properties, further discussion and references.

While it is natural to expect that the operators $P_t^+$ are not normal for general non-symmetric Lévy processes with completely monotone jumps, such a result does not seem to be available in literature; see also Remark~\ref{rem:eigenfunctions}.
\end{remark}

\subsection{Spectral-theoretic motivations}

As mentioned in the first part of this section, our main results are one of the first examples of spectral decomposition of operators associated to non-symmetric Markov processes. Here we extend this discussion by providing additional context.

Suppose first that the operators $P_t$, $t \ge 0$, form a strongly continuous semigroup of finite-dimensional self-adjoint contractions. Then $P_t$ take a diagonal form in the basis of eigenvectors of $P_t$ (which do not depend on $t$). More precisely, there is a complete orthonormal set of eigenvectors $F_n$, $n = 1, \ldots, N$, of the operators $P_t$, with corresponding eigenvalues of the form $e^{-t \lambda_n}$, such that
\formula[eq:spectral:series:1]{
 \langle v, P_t u \rangle & = \sum_{n = 1}^N e^{-t \lambda_n} \langle v, F_n \rangle \langle F_n, u \rangle
}
for all vectors $u$ and $v$; here $\langle \cdot, \cdot \rangle$ is the usual (complex) inner product. The same result is true if $P_t$ is a strongly continuous semigroup of compact self-adjoint contractions on a Hilbert space of infinite dimension; in this case $N = \infty$ in~\eqref{eq:spectral:series:1}, and $\lim_{n \to \infty} \lambda_n = \infty$.

In the context of general strongly continuous semigroups of bounded self-adjoint contractions on a Hilbert space $L^2(X, m)$, a similar spectral resolution of $P_t$ is provided by the classical spectral theorem. This is a much more abstract result, compared to the case of compact operators. The spectral theorem takes the following more explicit form, essentially due to Gårding~\cite{garding}, when $P_t$ are \emph{Carleman operators}, that is, when the kernel function $p_t(x, y)$ is square-integrable with respect to $y$ for almost every $x$ (or, equivalently, due to the Chapman--Kolmogorov equation, $p_{2 t}(x, x) < \infty$ for almost every $x$; see~\cite{hs} for a general account on Carleman operators). There is a set $Z$, a $\sigma$-finite measure $\mu(dr)$ on $Z$, a measurable real-valued function $\lambda(r)$ on $Z$, and a family of \emph{generalised eigenfunctions} $F_r(x)$, with $r \in Z$ and $x \in X$, with the following property: for every $t > 0$ we have
\formula{
 p_t(x, y) & = \int_Z e^{-t \lambda(r)} F_r(x) F_r(y) \mu(dr)
}
for almost all $x, y \in X$. If we denote $\langle u, v \rangle = \int_X u(x) v(x) m(dx)$ whenever the integral is finite, then also
\formula[eq:spectral:int:1]{
 \langle v, P_t u \rangle & = \int_Z e^{-t \lambda(r)} \langle v, F_r \rangle \langle F_r, u \rangle \mu(dr)
}
for all bounded functions $u$, $v$ which vanish outside of a set of finite measure $m$. Here $F_r$ are real-valued, they need not belong to $L^2(X, m)$, but nevertheless we have $P_t F_r(x) = e^{-t \lambda(r)} F_r(x)$ for almost all $x \in X$ with respect to $m$, for all $t > 0$ and all $r \in Z$; this explains the name \emph{generalised eigenfunction}. The above description is a reformulation of the results of Getoor, see Section~7 in~\cite{getoor}.

The picture is much less clear when we drop the assumption that $P_t$ are self-adjoint, and we allow $P_t$ to be arbitrary (not even normal) contractions. In the finite-dimensional case, if $P_t$ are diagonalisable, one can still write
\formula[eq:spectral:series:2]{
 \langle v, P_t u \rangle & = \sum_{n = 1}^N e^{-t \overline{\lambda_n}} \langle v, F_n^- \rangle \langle F_n^+, u \rangle ,
}
where $F_n^-$ are eigenvectors of $P_t$, and $F_n^+$ are co-eigenvectors of $P_t$ (that is, eigenvectors of the adjoint operator); more generally, Jordan normal form can be used. However, even in the case of compact operators on an infinite-dimensional Hilbert space, the author is aware of no general result similar to the ones given above. In particular, even when the operators $P_t$ have linearly dense families of eigenvectors $F_n^-$ and co-eigenvectors $F_n^+$, convergence of the series~\eqref{eq:spectral:series:2} is problematic. Virtually nothing seems to be known in general, when $P_t$ are not assumed to be compact. The results of~\cite{kk:stable} for strictly stable Lévy processes, of~\cite{ps:orbit,ps:cauchy,ps:laguerre} for positive self-similar Markov processes, and of~\cite{ogura:cb,ogura:cbi} for continuous state branching processes were already discussed in the introduction, while discrete counterparts of positive self-similar Markov processes are discussed in this context in~\cite{mps}. Other loosely related works include: non-normal perturbations of self-adjoint operators~\cite{ramm}, operators with `small' commutators~\cite{campbell,delvalle}, and certain differential operators~\cite{browder,dtv,ps:spectral}; see also~\cite{sjostrand:lecture,sjostrand:non-self-adjoint,te} for a more general discussion.

When $X_t$ is non-symmetric, the operators $P_t^+$ considered in Theorem~\ref{thm:heat:spectral} are, in general, not normal (see Remark~\ref{rem:nonnormal}). Therefore, Theorem~\ref{thm:heat:spectral} is a rare example of spectral resolution of a non-normal operator. We emphasise that it fits into the above picture: by Theorem~\ref{thm:heat:spectral}, for an appropriate class of functions $u$ and $v$, we have
\formula[eq:spectral:int:2]{
 \langle v, P_t^+ u \rangle & = \int_Z e^{-t \lambda(r)} \langle v, F_r^- \rangle \langle F_r^+, u \rangle \mu(dr) ,
}
with $Z$ and $\lambda(r)$ as in Theorem~\ref{thm:heat:spectral}, with $\mu(dr) = \tfrac{2}{\pi} \lvert\zeta'(r)\rvert dr$, and with generalised eigenfunctions $F^-_r(x) = F_-(r; x)$ and generalised co-eigenfunctions $F^+_r(y) = F_+(r; y)$. 

\medskip

It should be emphasised that the spectral resolution described in Theorem~\ref{thm:heat:spectral} is likely not unique. Here it is instructive to consider the transition semigroup of $X_t$ in $\R$ rather than in $(0, \infty)$. Let $p_t(x)$ be the density function of the distribution of $X_t$ with respect to $\pr^0$, and let
\formula{
 P_t u(x) & = \int_{-\infty}^\infty p_t(y - x) u(y) dy = \ex^x u(X_t)
}
whenever the integral converges. If $e^{-t f(\xi)}$ is an integrable function of $\xi \in \R$ for every $t > 0$, then, by the Fourier inversion formula, transition densities $p_t(y - x)$ indeed exist, and we have
\formula{
 p_t(y - x) & = \frac{1}{2 \pi} \int_{-\infty}^\infty e^{-t f(r)} e^{i r x} e^{-i r y} dr .
}
If we write $F_r(x) = e^{i r x}$, and if we denote by $\langle u, v \rangle = \int_{-\infty}^\infty u(x) \overline{v(x)} dx$ the usual inner product in $L^2(\R)$ (with a complex conjugate: now all functions are complex-valued), then it follows that
\formula[eq:spectral:heat:1]{
 \langle v, P_t u \rangle & = \frac{1}{2 \pi} \int_{-\infty}^\infty e^{-t f(r)} \langle v, F_r \rangle \langle F_r, u \rangle dr
}
if, say, $u$ and $v$ are smooth and compactly supported. This follows the pattern~\eqref{eq:spectral:int:1} discussed above (note that $P_t$ are normal operators, so $\lambda(r) = f(r)$ and $F_r(x) = e^{-i r x}$ are now complex-valued). However, under mild assumptions, we can deform the contour of integration on the right-hand side to the contour $\Gamma$ discussed above, and get a similar expression that only involve real-valued functions. Indeed, after a short calculation that we omit here, we arrive at
\formula*[eq:spectral:heat:2]{
 \langle v, P_t u \rangle & = \frac{1}{\pi} \int_Z e^{-t \lambda(r)} \langle v, F_{1,r}^- \rangle \langle F_{1,r}^+, u \rangle \lvert\zeta'(r)\rvert dr \\
 & \qquad + \frac{1}{\pi} \int_Z e^{-t \lambda(r)} \langle v, F_{2,r}^- \rangle \langle F_{2,r}^+, u \rangle \lvert\zeta'(r)\rvert dr ,
}
where we denoted
\formula{
 F_{1,r}^\pm(x) & = e^{\pm y \im \zeta(r)} \cos (y \re \zeta(r) \mp \thet(r)), \\
 F_{2,r}^\pm(x) & = e^{\pm y \im \zeta(r)} \sin (y \re \zeta(r) \mp \thet(r)) ,
}
with $\thet(r) = \tfrac{1}{2} \Arg \zeta'(r)$. This is again of the same form as in~\eqref{eq:spectral:int:2}, with $Z$ replaced by $\{1, 2\} \times Z$, with $\lambda(j, r) = \lambda(r)$ and $\mu(dj, dr) = \tfrac{1}{\pi} (\delta_1(dj) + \delta_2(dj)) \lvert\zeta'(r)\rvert dr$. This example shows that the spectral resolution of the form~\eqref{eq:spectral:int:2} is not unique, even if the operators $P_t$ are normal.

It is not clear in general which choice of $F_r^+(y)$ and $F_r^-(x)$ is more appropriate. In the above example it seems more natural to consider the decomposition given in~\eqref{eq:spectral:heat:1}: in this expression the eigenfunctions and co-eigenfunctions coincide, and additionally they are bounded functions. On the other hand, expressions in~\eqref{eq:spectral:heat:2} only involves real-valued quantities. Our Theorem~\ref{thm:heat:spectral} provides an analogue of the latter formula for the transition semigroup $P_t^+$ of the process $X_t$ in $(0, \infty)$. In this case, that seems to be an optimal choice: a perfect analogue of~\eqref{eq:spectral:heat:1} is not possible because the operators $P_t^+$ are not normal, and apparently it is not possible to choose a more regular (for example: bounded) family of generalised eigenfunctions.

\medskip

In some cases, described in Theorem~\ref{thm:heat}, the class of admissible functions $u$ and $v$ in~\eqref{eq:spectral:int:2} (or in Theorem~\ref{thm:heat:spectral}) is sufficiently rich in order to get an expression for the kernel $p_t^+(x, y)$ of $P_t^+$. Note, however, that this class does not include any compactly supported functions, and it is apparently a difficult question under what assumptions~\eqref{eq:spectral:int:2} can be extended to (sufficiently regular) compactly supported functions $u$ and $v$.

\subsection{Potential spectral-theoretic implications}

At a purely formal level, Theorem~\ref{thm:heat:spectral} can be regarded as a similarity (or intertwining) relation between the operator $P_t^+$ with kernel $p_t^+(x, dy)$, defined on the class of admissible functions $u$ described in the statement of the proposition, and the multiplication operator with symbol $e^{-t \lambda(r)}$. This is to be understood as follows.

Let $U$ denote the class of functions $u(y)$ or $v(x)$ satisfying the assumptions of Theorem~\ref{thm:heat:spectral}, and define
\formula{
 \Pi_+ u(r) & = \int_0^\infty F_+(r; y) u(y) dy , & \Pi_- v(r) & = \int_0^\infty F_-(r; x) v(x) dx
}
whenever $u(y)$ and $v(x)$ are in $U$. Then Proposition~\ref{prop:pt:uv} can be stated as
\formula{
 \int_0^\infty u(x) P_t^+ v(x) dx & = \frac{2}{\pi} \int_Z e^{-t \lambda(r)} \Pi_+ u(r) \Pi_- v(r) \lvert\zeta'(r)\rvert dr .
}
In particular, for $t = 0$, we find that
\formula{
 \int_0^\infty u(x) v(x) dx & = \frac{2}{\pi} \int_Z \Pi_+ u(r) \Pi_- v(r) \lvert\zeta'(r)\rvert dr .
}

Let us consider $\Pi_-$ and $\Pi_+$ as densely defined unbounded operators from the Hilbert space $V = L^2((0, \infty), dx)$ to the Hilbert space $W = L^2(Z, \lvert\zeta'(r)\rvert dr)$, with domain $U \subseteq V$ (see Remark~\ref{rem:pi}). Denote by $Q_t$ the multiplication operator $Q_t h(r) = e^{-t \lambda(r)} h(r)$ on $W$. We thus have
\formula{
 \langle u,  P_t^+ v \rangle_V & = \frac{2}{\pi} \langle \Pi_+ u, Q_t \Pi_- v \rangle_W
}
and
\formula{
 \langle u,  v \rangle_V & = \frac{2}{\pi} \langle \Pi_+ u, \Pi_- v \rangle_W .
}
The latter equality implies that for every $v \in U$, the function $\Pi_- v$ is in the domain of the adjoint of $\Pi_+$, and $\Pi_+^* \Pi_- v = v$. Thus, by the former equality,
\formula{
 \langle u, P_t^+ \Pi_+^* \Pi_- v \rangle_V & = \langle u, P_t^+ v \rangle_V \\
 & = \frac{2}{\pi} \langle \Pi_+ u, Q_t \Pi_- v \rangle_W = \frac{2}{\pi} \langle u, \Pi_+^* Q_t \Pi_- v \rangle_V .
}
This proves the following partial similarity relation:
\formula[eq:af:sim:1]{
 P_t^+ \Pi_+^* & = \Pi_+^* Q_t && \text{on $\Pi_- U$,}
}
which becomes more meaningful if $\Pi_- U$ is dense in $W$. Exactly the same reasoning shows that $\Pi_-^* \Pi_+ u = u$ for every $u \in U$, and so
\formula{
 \langle \Pi_-^* \Pi_+ u, P_t^+ v \rangle_W & = \langle \Pi_+ u, Q_t \Pi_- v \rangle_W && \text{for $u, v \in U$.}
}
If we were able to prove that $\Pi_+ U$ is dense in $W$, $\Pi_-$ is closable, and $P_t^+ v$ lies in the domain of the closure $\overline{\Pi}_-$ of $\Pi_-$, then we would obtain another partial similarity relation
\formula[eq:af:sim:2]{
 \Pi_- P_t^+ & = Q_t \Pi_- && \text{on $U$.}
}
However, we end this discussion here, and we leave a more detailed study of the above similarity relations for a future work. We also refer to~\cite{patie} for a thorough discussion of related ideas for normal operators on a more abstract level.

We conclude this part with the following observation.

\begin{remark}
\label{rem:eigenfunctions}
Consider $r \in Z$. If $\im \zeta(r) < 0$, then $F_+(r; y)$ is in $L^2((0, \infty))$ as a function of $y$, while if $\im \zeta(r) > 0$, then $F_-(r; x)$ is in $L^2((0, \infty))$ as a function of $x$. It is natural to expect that in the latter case, $F_-(r; x)$ is a true (not generalised) eigenfunction of the operators $P_t^+$, and similarly in the former case $F_+(r; y)$ is a true co-eigenfunction of $P_t^+$. In particular, this would imply that $P_t^+$ is never a normal operator unless the process is symmetric; see Remark~\ref{rem:nonnormal}. However, the results obtained in the present article do not seem to immediately imply that $F_+(r; y)$ or $F_-(r; x)$ is necessarily a (co-)eigenfunction whenever it is square integrable, and we postpone a detailed analysis of spectral properties of $P_t^+$ to a future work.

Let us also note that if $\im \zeta(r) \le 0$, then $F_+(r; y)$ is a bounded function of $y$, while if $\im \zeta(r) \ge 0$, then $F_-(r; x)$ is a bounded function of $x$, and it is natural to expect that these functions are then (co-)eigenfunctions of the operators $P_t^+$ on $L^\infty((0, \infty))$. This is known to be the case for symmetric processes; see~\cite{kwasnicki:sym,kmr}.
\end{remark}

\subsection{Idea of the proof}

The proof of our main theorems is rather lengthy, so for the convenience of the reader, below we outline the main ideas. The argument is given in Section~\ref{sec:heat}, where Theorem~\ref{thm:heat:laplace} is proved, and Section~\ref{sec:inf}, containing the proof of the other main results.

\medskip

In Section~\ref{sec:heat}, we begin with the following well-known identity essentially due to Pecherskii and Rogozin:
\formula{
 \int_0^\infty \int_0^\infty \int_0^\infty e^{-\sigma t - \eta x - \xi y} p_t^+(x, y) dx dy dt & = \frac{1}{\xi + \eta} \, \frac{1}{\kappa^+(\sigma, \xi) \kappa^-(\sigma, \eta)} \, ;
}
see Proposition~\ref{prop:pt:pr}. Here $\kappa^+(\sigma, \xi)$ and $\kappa^-(\sigma, \eta)$ are the Wiener--Hopf factors corresponding to $X_t$, and by the Baxter--Donsker formula we have
\formula{
 \frac{\kappa^+(\sigma, \xi)}{\kappa^+(\sigma, 0)} & = \exp\biggl(\frac{1}{2 \pi} \int_{-\infty}^\infty \biggl(\frac{1}{\xi + i z} - \frac{1}{i z}\biggr) \log(\sigma + f(z)) dz\biggr) \\
 \frac{\kappa^-(\sigma, \xi)}{\kappa^-(\sigma, 0)} & = \exp\biggl(\frac{1}{2 \pi} \int_{-\infty}^\infty \biggl(\frac{1}{\xi - i z} - \frac{1}{-i z}\biggr) \log(\sigma + f(z)) dz\biggr) .
}
(the above formula requires that $f(\xi) / \xi$ is integrable near $\xi = 0$, but let us ignore this technicality here). In~\cite{kwasnicki:rogers}, the contour of integration with respect to $z$ in the above Baxter--Donsker formulae was deformed from $\R$ to $\Gamma$, the spine of $f(\xi)$. This led to a more convenient expression for the Wiener--Hopf factors, and in our case it allows us to write the (tri-variate) Laplace transform of $p_t^+(x, y)$ as
\formula{
 \int_0^\infty \int_0^\infty \int_0^\infty e^{-\sigma t - \eta x - \xi y} p_t^+(x, y) dx dy dt & = \frac{1}{\sigma (\xi + \eta)} \, \exp\biggl(-\frac{1}{\pi} \int_0^{\infty} \psi(r) \, \frac{ds}{\sigma + \lambda(r)} \biggr) .
}
for an appropriate function $\psi$; see~\eqref{eq:pt:bd}. We recall that the spine $\Gamma$ is the line along which the holomorphic extension of $f(\xi)$ takes real values, $\zeta(r)$ is the parameterisation of $\Gamma_r$ such that $|\zeta(r)| = r$, and $\lambda(r) = f(\zeta(r))$ are the values taken by $f$ along the spine $\Gamma$. We emphasise that the derivation of the above variant of Baxter--Donsker formula crucially depends on the properties of the holomorphic extension of $f(\xi)$, which is a consequence of our assumptions that the Lévy process $X_t$ has completely monotone jumps (or, equivalently, that $f(\xi)$ is a Rogers function).

It is now rather straightforward to inverse the Laplace transform with respect to the temporal variable $t$. Indeed: it suffices to use inverse Fourier--Laplace transform on the right-hand side, and deform the contour of integration to Hankel's contour (the one that goes from $-\infty$ to $0$ on the bottom side of the real axis, turns around $0$, and goes back to $-\infty$ along the top side of the real axis). The corresponding result is given in Proposition~\ref{prop:pt:laplace}.

A much more challenging task is to identify the expressions obtained by the above procedure in terms of the Laplace transforms of the generalised eigenfunctions $F_+(r; y)$ and $F_-(r; x)$. In order to do so in Proposition~\ref{prop:pt:quot}, we need to show appropriate regularity of $\zeta(r)$ (Proposition~\ref{prop:zeta:holder}) and $\lambda(r)$ (Proposition~\ref{prop:lambda:holder}), as well as prove an auxiliary result about the Wiener--Hopf factors of the Rogers function $f(\xi)$ (Proposition~\ref{prop:r:quot:bd}). The last property mentioned above has a very natural and seemingly simple statement, and a surprisingly technical proof.

Strictly speaking, Proposition~\ref{prop:pt:quot} is stated in terms of the Wiener--Hopf factors $f^+(r; \xi)$ and $f^-(r; \eta)$ of the Rogers function
\formula{
 f(r; \xi) & = \frac{(\xi - \zeta(r)) (\xi + \overline{\zeta(r)})}{f(\xi) - \lambda(r)} \, .
}
In Definition~\ref{def:eig} we define the generalised eigenfunctions $F_+(r; y)$ and $F_-(r; x)$ by providing a formula for their Laplace transforms in terms of $f^+(r; \xi)$ and $f^-(r; \eta)$. Next, a minor extension of Proposition~\ref{prop:pt:quot} to complex (rather than real) $\xi, \eta$ is stated in terms of $\laplace F_+(r; \xi)$ and $\laplace F_-(r; \eta)$ in Proposition~\ref{prop:pt:lap}, and this is tantamount to Theorem~\ref{thm:heat:laplace}. In the final part of Section~\ref{sec:heat}, we give two auxiliary estimates of the eigenfunctions (Lemma~\ref{lem:eig:est}) and their Laplace transforms (Lemma~\ref{lem:eig:lbound}).

\medskip

In Section~\ref{sec:inf:pt}, we prove Theorems~\ref{thm:heat:spectral} and~\ref{thm:heat}. As this is quite technical, we begin with a simplified variant of Theorem~\ref{thm:heat}, given as Proposition~\ref{prop:pt:bounded}. For the proof of the general result, we need an auxiliary estimate of Laplace transforms of test functions $u(y)$ and $v(x)$ (Lemma~\ref{lem:uv:est}). Theorem~\ref{thm:heat:spectral} is restated as Proposition~\ref{prop:pt:uv}. The rather lengthy and technical proof essentially boils down to contour deformation arguments and various estimates.

As explained above, Theorem~\ref{thm:heat}, restated as Corollary~\ref{cor:pt:log}, follows now easily by Fubini's theorem and a density argument; the details are somewhat complicated, though. In particular, we need to show continuity of both sides of~\eqref{eq:heat} in order to have equality everywhere rather than almost everywhere.

The proof of Theorem~\ref{thm:extrema} is given in Section~\ref{sec:inf:inf}. Since it is similar to the proof of Theorem~\ref{thm:heat}, we only sketch some parts of the argument. There are, however, important differences. In fact, we study first the integral
\formula[eq:inf:lpt]{
 & \int_0^\infty e^{-\xi y} p_t^+(x, y) dy
}
for $\xi > 0$, and only then we consider the limit as $\xi \to 0^+$ in order to recover the expression for
\formula{
 \pr(\underline{X}_t > -x) & = \int_0^\infty p_t^+(x, y) dy .
}
The first part requires a result halfway between Theorems~\ref{thm:heat:laplace} and~\ref{thm:heat:spectral}, given in Proposition~\ref{prop:lpt:uv}. The expression for the Laplace transform~\eqref{eq:inf:lpt} is given in Corollary~\ref{cor:lpt:log}, and the limit as $\xi \to 0^+$ is discussed in Corollary~\ref{cor:inf:log}, which is a reformulation of Theorem~\ref{thm:extrema}\ref{thm:extrema:b}.

\medskip

Some our results involve more than one Rogers function. For this reason, with some exceptions, we generally indicate the dependence on $f$ by writing it explicitly in the subscript. Thus, in the remaining part of the text we tend to write, for example, $\Gamma_f, Z_f, \zeta_f(r), \lambda_f(r), F_{f+}(r; y), F_{f-}(r; x)$, instead of $\Gamma, Z, \zeta(r), \lambda(r), F_+(r; y), F_-(r; x)$ used in the introduction.

\medskip

We conclude the introduction with a brief description of the structure of the paper. The remaining part of the article consists of five sections. In Preliminaries, we fix the notation and discuss the notions of Stieltjes and complete Bernstein functions. Section~\ref{sec:rogers} is devoted to Rogers function, that is, holomorphic extensions of characteristic exponents of Lévy processes with completely monotone jumps. We recall the relevant results from~\cite{kwasnicki:rogers} and prove additional auxiliary lemmas. The contents of Sections~\ref{sec:heat} and~\ref{sec:inf}, where proofs of our main results are given, is discussed above. We conclude the article with a number of examples in Section~\ref{sec:ex}.

\subsection*{Acknowledgment}

The author expresses his gratitude to Pierre Patie and Mladen Savov for their comments to a preliminary version of this article, and to Zbigniew Palmowski for pointing out known formulae for the classical risk process. The author also thanks two anonymous referees for exceptionally helpful reports and many valuable suggestions.

%
%

\section{Preliminaries}
\label{sec:pre}

\subsection{Basic notation}

We use the standard notation $\R$ and $\C$ for the sets of real and complex numbers, and $\re \xi$ and $\im \xi$ for the real and imaginary part of $\xi \in \C$. We use $\Arg \xi \in (-\pi, \pi)$ for the principal argument of $\xi \in \C \setminus (-\infty, 0]$, and $\log \xi = \log |\xi| + i \Arg \xi$ for the principal branch of the complex logarithm. We denote by
\formula{
 \hp & = \{ \xi \in \C : \re \xi > 0 \}
}
the right complex half-plane. We write $i \R$ for the imaginary axis, we let $[\xi, \eta]$ be the interval in the complex plane with endpoints $\xi, \eta \in \C$, and if $\xi \in \C$ and $\thet \in \R$, then $[\xi, e^{i \thet} \infty)$ denotes the ray $\xi + r e^{i \thet}$, where $r \in [0, \infty)$.

To avoid confusion with complex conjugation, we denote the closure of a set $A$ by $\Cl A$. The interior and boundary of $A$ are denoted by $\Int A$ and $\partial A$, respectively.

Following~\cite{kwasnicki:rogers}, we generally use $x, y$ for spatial variables, $\eta, \xi$ for the corresponding Fourier or Laplace variables, $t \ge 0$ for a temporal variable, and $\sigma$ for the corresponding Laplace variable. Whenever this causes no confusion, we write explicitly the arguments of a function or a measure, for example, we usually write `function $f(\xi)$' rather than `function $f$', or `process $X_t$' rather than `process $X$'.

\subsection{Completely monotone, Stieltjes and complete Bernstein functions}

We briefly recall the definitions and basic properties of three classes of functions commonly used throughout the article.

A function $f(x)$ on $(0, \infty)$ is said to be \emph{completely monotone} if $f$ is smooth and $(-1)^n f^{(n)}(x) \ge 0$ for all $x \in (0, \infty)$ and $n = 0, 1, \ldots$, where $f^{(n)}$ is the $n$th derivative of $f$; see Chapter~1 in~\cite{ssv}. By Bernstein's theorem, $f$ is completely monotone if and only if $f(x)$ is the Laplace transform of a Borel measure on $[0, \infty)$:
\formula[eq:cm:int]{
 f(x) & = \int_{[0, \infty)} e^{-s x} \mu(ds)
}
for a Borel measure $\mu$ on $[0, \infty)$ such that the above integral is finite for every $x > 0$. Completely monotone functions thus extend to holomorphic functions in $\hp$.

A function $f(\xi)$, defined initially on $(0, \infty)$, is a \emph{Stieltjes function} if there are constants $b, c \ge 0$ and a Borel measure $\mu$ on $(0, \infty)$ such that $\int_{(0, \infty)} \min(1, s^{-1}) \mu(ds) < \infty$ and
\formula[eq:s:int]{
 f(\xi) & = \frac{b}{\xi} + c + \frac{1}{\pi} \int_{(0, \infty)} \frac{1}{\xi + s} \, \mu(ds)
}
for all $\xi \in \C \setminus (-\infty, 0]$; see Chapter~2 in~\cite{ssv}. Note that if $c = 0$ and $\mu(\{0\}) = b$, then $f(\xi)$ is the Laplace transform of the completely monotone function given by the right-hand side of~\eqref{eq:cm:int}. Thus, up to addition by a non-negative constant, Stielties functions are Laplace transforms of Laplace transforms of Borel measures. Note that Stieltjes functions automatically extend to holomorphic functions on $\C \setminus (-\infty, 0]$. If $f(\xi)$ is given by~\eqref{eq:s:int}, then
\formula{
 b & = \lim_{\xi \to 0^+} \xi f(\xi) , & c & = \lim_{\xi \to \infty} f(\xi) ,
}
and, in the sense of vague convergence of measures on $(0, \infty)$,
\formula{
 \mu(ds) & = \lim_{t \to 0^+} (-\im f(-s + i t) ds) .
}

A function $f(\xi)$, defined initially on $(0, \infty)$, is a \emph{complete Bernstein function}, if there are constants $b, c \ge 0$ and a Borel measure $\mu$ on $(0, \infty)$ such that $\int_{(0, \infty)} \min(s^{-1}, s^{-2}) \mu(ds) < \infty$ and
\formula[eq:cbf:int]{
 f(\xi) & = b \xi + c + \frac{1}{\pi} \int_{(0, \infty)} \frac{\xi}{\xi + s} \, \frac{\mu(ds)}{s}
}
for all $\xi \in \C \setminus (-\infty, 0]$; see Chapter~6 in~\cite{ssv}. As before, every complete Bernstein function extends to a holomorphic function in $\C \setminus (-\infty, 0]$. For a function $f(\xi)$ given by~\eqref{eq:cbf:int}, we have
\formula{
 b & = \lim_{\xi \to 0^+} f(\xi) , & c & = \lim_{\xi \to \infty} \frac{f(\xi)}{\xi} \, ,
}
and, in the sense of vague convergence of measures on $(0, \infty)$,
\formula{
 \mu(ds) & = \lim_{t \to 0^+} (\im f(-s + i t) ds) .
}

We remark that $f(\xi)$ is a complete Bernstein function if and only if $f(\xi) / \xi$ is a Stieltjes function, and if $f$ is not identically zero, then $f(\xi)$ is a complete Bernstein function if and only if $1 / f(\xi)$ is a Stieltjes function. Additionally, a holomorphic function $f(\xi)$ in $\C \setminus (-\infty, 0]$ is a complete Bernstein function if and only if $f(\xi) \ge 0$ for $\xi \in (0, \infty)$ and $\im f(\xi) \ge 0$ for all $\xi \in \C$ such that $\im \xi > 0$.

In particular, if $f(\xi)$ is a complete Bernstein function, then $\xi / f(\xi)$ is a complete Bernstein function, and hence $\im (\xi / f(\xi)) \ge 0$ when $\im \xi > 0$, and $\im (\xi / f(\xi)) \le 0$ when $\im \xi < 0$. In other words,
\formula*[eq:cbf:arg]{
 0 & \le \Arg f(\xi) \le \Arg \xi && \text{when $\im \xi > 0$,} \\
 0 & \ge \Arg f(\xi) \ge \Arg \xi && \text{when $\im \xi < 0$.}
}

For a detailed discussion of the classes of functions introduced above, we refer to~\cite{ssv}. A summary of properties related to the present context is given in~\cite{kwasnicki:rogers}.

\subsection{Lévy processes}

A \emph{Lévy process} is a stochastic process $X_t$ with independent and stationary increments, and càdlàg paths. More formally, we assume that $X_t$, where $t \in [0, \infty)$, is a collection of random variables such that the distribution of the increment $X_{s + t} - X_s$ does not depend on $s \ge 0$, increments over disjoint intervals are independent random variables, and the paths $t \mapsto X_t$ are right-continuous and have left limits.

It is customary to assume that $X_0 = 0$ with probability one. However, we will work with an arbitrary starting point $X_0 = x \in \R$, and denote the corresponding probability and expectation by $\pr^x$ and $\ex^x$. We additionally allow $X_t$ to be \emph{killed} at a uniform rate $c \ge 0$. By this we mean that we augment the state space $\R$ by an additional \emph{cemetery point} $\partial$, and, given $s > 0$, with probability $1 - e^{c s}$ we have $X_{s + t} = \partial$ for all $t \in [0, \infty)$.

A Lévy process is completely determined by its \emph{characteristic exponent}: a function $f(\xi)$, defined initially on $\R$, such that
\formula{
 \ex^0 e^{i \xi X_t} & = e^{-t f(\xi)} .
}
The characteristic exponent is given by the \emph{Lévy--Khintchine formula}: we have
\formula{
 f(\xi) & = a \xi^2 - i b \xi + c + \int_{\R \setminus \{0\}} \bigl(1 - e^{i \xi z} + i \xi z \ind_{(-1, 1)}(z)\bigr) \nu(dx) ,
}
where $a \ge 0$ is the \emph{Gaussian coefficient}, $b \in \R$ corresponds to the \emph{drift} of the process, $c \ge 0$ is the \emph{killing rate}, and the \emph{Lévy measure} $\nu(dx)$ satisfies the integrability condition $\int_{\R \setminus \{0\}} \min\{1, x^2\} \nu(dx) < \infty$ and describes the intensity of jumps of $X_t$.

We remark that most authors denote the Gaussian coefficient $a$ by $\tfrac{1}{2} \sigma^2$, and some use the symbols $\mu$ and $q$ for the drift term $b$ and the killing rate $c$. There is no common notation for Lévy measure, various references use, for example, $\nu$, $\pi$, $\Pi$ or $J$. In most applications one has $c = 0$, and in many other situations assuming that $c = 0$ leads to no loss of generality. This will be the case here, and in Sections~\ref{sec:heat} and~\ref{sec:inf} we will restrict our attention to non-killed Lévy processes with $c = 0$.

If the Lévy measure $\nu(dx)$ has a density function $\nu(x)$ such that $\nu(x)$ and $\nu(-x)$ are completely monotone functions of $x \in (0, \infty)$, then we say that $X_t$ has \emph{completely monotone jumps}. Modifying the drift coefficient $b$ appropriately, we can rewrite the Lévy--Khintchine formula as
\formula{
 f(\xi) & = a \xi^2 - i b \xi + c + \int_{-\infty}^\infty \bigl(1 - e^{i \xi x} + i \xi (1 - e^{-|x|}) \sign x \bigr) \nu(x) dx ,
}
and by using Bernstein's theorem for $\nu(x)$ and $\nu(-x)$, we find that
\formula{
 f(\xi) & = a \xi^2 - i b \xi + c + \frac{1}{\pi} \int_{\R \setminus \{0\}} \biggl(\frac{\xi}{\xi + i s} + \frac{i \xi \sign s}{1 + |s|}\biggr) \frac{\mu(ds)}{|s|} \, ,
}
where $\mu$ is a Borel measure on $\R \setminus \{0\}$ such that $\int_{\R \setminus \{0\}} |s|^{-3} \min\{1, s^2\} \mu(ds) < \infty$; see Theorem~3.3 in~\cite{kwasnicki:rogers}.

In particular, it follows that characteristic exponents of Lévy processes with completely monotone jumps extend to holomorphic functions in $\C \setminus i \R$, which we call \emph{Rogers functions}, following~\cite{kwasnicki:rogers}. We refer to that article for a more detailed discussion of Lévy processes with completely monotone jumps, and to Section~\ref{sec:rogers} for a summary of properties of Rogers functions.

Recall that the \emph{transition probabilities} of $X_t$ are defined by $p_t(x, A) = \pr^x(X_t \in A)$. Since $X_t$ has stationary increments, the transition probabilities are translation invariant, in the sense that $p_t(x, A) = p_t(0, A - x)$. We denote by $\tau_{(0, \infty)}$ the first exit time from $(0, \infty)$, and by $p_t(x, A)$ the transition probabilities of the \emph{killed process} $X_t$ on $(0, \infty)$:
\formula{
 p_t^+(x, A) & = \pr^x(X_t \in A, \, t < \tau_{(0, \infty)}) .
}
If $p_t(x, A)$ or $p_t^+(x, A)$ has a density function, we denote it by the same symbol $p_t(x, y) = p_t(0, y - x)$ and $p_t^+(x, y)$. In analysis, $p_t(x, y)$ is said to be the \emph{heat kernel} for the generator $L$ of $X_t$, and $p_t^+(x, y)$ is the heat kernel in $(0, \infty)$ with zero (or Dirichlet) condition in $(-\infty, 0]$; see Remark~\ref{rem:analysis}.

By $\overline{X}_t = \sup\{X_s : s \in [0, t]\}$ and $\underline{X}_t = \inf\{X_s : s \in [0, t]\}$ we denote the supremum and infimum functionals. The events $\tau_{(0, \infty)} > t$ and $\underline{X}_t > 0$ differ by a set of probability zero, and thus
\formula{
 \pr^0(\underline{X}_t > -x) & = \pr^x(\underline{X}_t > 0) = \pr^x(\tau_{(0, \infty)} > t) = p_t^+(x, (0, \infty)) .
}

As it is customary, we denote by $\kappa^+(\sigma, \xi)$, $\kappa^-(\sigma, \eta)$ the bivariate Laplace exponents of ladder processes corresponding to $X_t$, defined by 
\formula*[eq:kappa]{
 \kappa^+(\sigma, \xi) & = \exp\biggl(\int_0^\infty \int_{(0, \infty)} \frac{e^{-t} - e^{-\sigma t - \xi x}}{t} \, \pr(X_t \in dx) dt\biggr) , \\
 \kappa^-(\sigma, \xi) & = \exp\biggl(\int_0^\infty \int_{(-\infty, 0)} \frac{e^{-t} - e^{-\sigma t + \xi x}}{t} \, \pr(X_t \in dx) dt\biggr) ;
}
see~\cite{fristedt,pr,rogozin}. These functions are closely related to Wiener--Hopf factorisation of the characteristic exponent $f(\xi)$, and thus they are often called the \emph{Wiener--Hopf factors} of the process $X_t$. The above definition, with inner integrals over $(-\infty, 0)$ and $(0, \infty)$, follows the convention used in~\cite{kwasnicki:rogers}, where the missing integral over $\{0\}$ is denoted by
\formula{
 \kappa^\circ(\sigma) & = \exp\biggl(\int_0^\infty \frac{e^{-t} - e^{-\sigma t}}{t} \, \pr(X_t = 0) dt\biggr) .
}
Of course, $\kappa^\circ(\sigma) = 1$, unless $X_t$ is a compound Poisson process (or, equivalently, $f$ is bounded on $\R$).

We only use the functions $\kappa^+(\sigma, \xi)$ and $\kappa^-(\sigma, \eta)$ in Proposition~\ref{prop:pt:pr}, where we apply the \emph{Pecherski--Rogozin identities} (see~\cite{fristedt,pr,rogozin})
\formula[eq:pr]{
 \ex^0 \exp(-\xi \ol{X}_S) & = \frac{\kappa^+(\sigma, 0)}{\kappa^+(\sigma, \xi)} \, , & \ex^0 \exp(\eta \ul{X}_S) & = \frac{\kappa^-(\sigma, 0)}{\kappa^-(\sigma, \eta)}
}
where $\re \xi \ge 0$, $\re \eta \ge 0$, and the factorisation identity (see formula~(2.7) in~\cite{kwasnicki:rogers})
\formula[eq:pr:fact]{
 \kappa^\circ(\sigma) \kappa^+(\sigma, -i \xi) \kappa^-(\sigma, i \xi) & = \frac{\sigma + f(\xi)}{1 + f(0)} \, ,
}
where $\sigma > 0$ and $\xi \in \R$. Then we immediately replace the Wiener--Hopf factors $\kappa^+(\sigma, \xi)$ and $\kappa^-(\sigma, \eta)$ of $X_t$ with the analytical Wiener--Hopf factors $f_\sigma^+(\xi)$ and $f_\sigma^-(\eta)$ of the function $f_\sigma(\xi) = \sigma + f(\xi)$. In particular, we will never need the definition~\eqref{eq:kappa}. For this reason, we stop our discussion of $\kappa^+(\sigma, \xi)$ and $\kappa^-(\sigma, \eta)$ here, and we refer to~\cite{kwasnicki:rogers} and the references given there for further details.

\subsection{Auxiliary estimates}

The following simple inequality is used a few times below. If $\thet = \lvert\Arg (\xi / \eta)\rvert$, then
\formula{
 |\xi - \eta|^2 & = |\xi|^2 + |\eta|^2 - 2 \re(\xi \overline{\eta}) \\
 & = |\xi|^2 + |\eta|^2 - 2 |\xi \eta| \cos \thet \\
 & = (\sin \tfrac{\thet}{2})^2 (|\xi| + |\eta|)^2 + (\cos \tfrac{\thet}{2})^2 (|\xi| - |\eta|)^2 \\
 & \ge (\sin \tfrac{\thet}{2})^2 (|\xi| + |\eta|)^2 .
}
Thus,
\formula[eq:triangle]{
 |\xi - \eta| & \ge (|\xi| + |\eta|) \sin \tfrac{\thet}{2}
}
We also note that if $\xi \in \C \setminus (-\infty, 0]$, then
\formula{
 \frac{\re \xi}{|\xi|} & = \cos \Arg \xi ,
}
and that
\formula[eq:coscos]{
 \cos(\alpha + \tfrac{\pi}{4}) \cos(\alpha - \tfrac{\pi}{4}) & = \tfrac{1}{2} \cos(2 \alpha) .
}

%
%

\section{Rogers functions}
\label{sec:rogers}

The term \emph{Rogers function} was introduced in the unpublished paper~\cite{kwasnicki:pre} as a name for a class of functions introduced by L.C.G.~Rogers in~\cite{rogers}. The core of~\cite{kwasnicki:pre} appeared recently in~\cite{kwasnicki:rogers}, which contains a detailed analysis of the Wiener--Hopf factorisation of Rogers functions. Below we recall the definition and some properties of Rogers functions, and we prove several auxiliary lemmas.

\subsection{Definition and basic properties}

We begin with the definition of a Rogers function.

\begin{definition}[Definition~3.2 in~\cite{kwasnicki:rogers}]
A function $f(\xi)$ is a \emph{Rogers function} if it is a holomorphic function in the right complex half-plane $\hp = \{\xi \in \C : \re \xi > 0\}$ such that $\re (f(\xi) / \xi) \ge 0$ whenever $\re \xi > 0$.
\end{definition}

A Rogers function $f(\xi)$ is said to be \emph{non-zero} if $f(\xi)$ is not identically equal to zero in $\hp$; $f(\xi)$ is \emph{non-constant} if $f(\xi)$ is not a constant function in $\hp$; finally, $f(\xi)$ is \emph{non-degenerate} if $f(\xi)$ is not of the form $i b \xi$ for some $b \in \R$.

Every characteristic function of a Lévy process with completely monotone jumps (possibly killed at a uniform rate) extends to a Rogers function, and every Rogers function corresponds in this way to some Lévy process with completely monotone jumps. More precisely, we have the following equivalent characterisations of the class of Rogers functions.

\begin{theorem}[Theorem~3.3 in~\cite{kwasnicki:rogers}]
\label{thm:rogers}
Suppose that $f(\xi)$ is a continuous function on~$\R$, satisfying $f(-\xi) = \overline{f(\xi)}$ for all $\xi \in \R$. The following conditions are equivalent:
\begin{enumerate}[label=\rm (\alph*)]
\item\label{it:r:a}
$f(\xi)$ extends to a Rogers function;
\item\label{it:r:b}
$f(\xi)$ is the characteristic exponent of a Lévy process with completely monotone jumps, possibly killed at a uniform rate;
\item\label{it:r:c}
we have
\formula[eq:r:int]{
 f(\xi) & = a \xi^2 - i b \xi + c + \frac{1}{\pi} \int_{\R \setminus \{0\}} \biggl(\frac{\xi}{\xi + i s} + \frac{i \xi \sign s}{1 + |s|}\biggr) \frac{\mu(ds)}{|s|}
}
for all $\xi \in \R$, where $a \ge 0$, $b \in \R$, $c \ge 0$ and $\mu(ds)$ is a Borel measure on $\R \setminus \{0\}$ such that $\int_{\R \setminus \{0\}} |s|^{-3} \min\{1, s^2\} \mu(ds) < \infty$;
\item\label{it:r:d}
either $f(\xi) = 0$ for all $\xi \in \R$ or
\formula[eq:r:exp]{
 f(\xi) & = c \exp\biggl(\frac{1}{\pi} \int_{-\infty}^\infty \biggl(\frac{\xi}{\xi + i s} - \frac{1}{1 + |s|}\biggr) \frac{\ph(s)}{|s|} \, ds\biggr)
}
for all $\xi \in \R$, where $c > 0$ and $\ph(s)$ is a Borel function on $\R$ with values in $[0, \pi]$.
\end{enumerate}
\end{theorem}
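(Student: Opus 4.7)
I plan to establish the cycle $\text{(c)}\Rightarrow\text{(b)}\Rightarrow\text{(a)}\Rightarrow\text{(c)}$ together with a separate equivalence $\text{(a)}\Leftrightarrow\text{(d)}$ for non-zero $f$. The symmetry $f(-\xi)=\overline{f(\xi)}$ on $\R$ guarantees that any holomorphic extension to $\hp$ extends further to $\C\setminus i\R$ by Schwarz reflection, so I may work freely with the extension to $\hp$ throughout.

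For $\text{(c)}\Rightarrow\text{(b)}$, I would split $\mu$ into its restrictions $\mu_\pm$ to $(0,\infty)$ and $(-\infty,0)$, and set
\[
 \nu(x) = \frac{1}{\pi}\int_{(0,\infty)} e^{-sx}\,\frac{\mu_+(ds)}{s^2} \quad (x>0), \qquad \nu(-x) = \frac{1}{\pi}\int_{(0,\infty)} e^{-sx}\,\frac{\mu_-(-ds)}{s^2} \quad (x>0).
\]
By Bernstein's theorem, $\nu(x)$ and $\nu(-x)$ are completely monotone on $(0,\infty)$. Using Fubini with the identity $\int_0^\infty (1-e^{i\xi x})e^{-sx}\,dx = s^{-1}-(s-i\xi)^{-1}$, the representation (c) rewrites as the Lévy--Khintchine formula with Gaussian coefficient $a$, killing rate $c$, Lévy density $\nu$, and a suitably adjusted drift; the integrability $\int|s|^{-3}\min\{1,s^2\}\mu(ds)<\infty$ becomes $\int\min\{1,x^2\}\nu(x)\,dx<\infty$. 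For $\text{(b)}\Rightarrow\text{(a)}$, the same calculation run backwards from Bernstein's theorem applied to the completely monotone Lévy density yields (c) from (b); the Rogers condition $\re(f(\xi)/\xi)\ge 0$ on $\hp$ is then verified term-by-term, using $\re(1/(\xi+is)) = \re\xi/|\xi+is|^2 \ge 0$ weighted by the positive measure $\mu(ds)/|s|$.

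The main obstacle is the hard direction $\text{(a)}\Rightarrow\text{(c)}$. Here $g(\xi) = f(\xi)/\xi$ is holomorphic on $\hp$ with $\re g \ge 0$; after conformally transferring via $\xi\mapsto -i\xi$ to the upper half-plane, the Riesz--Herglotz representation for Pick (Nevanlinna) functions with non-negative imaginary part yields
\[
 g(\xi) = \alpha\xi + \gamma + \int_\R\Bigl(\frac{1}{\xi+is} + \frac{is}{1+s^2}\Bigr)\sigma(ds),
\]
with $\alpha,\gamma\ge 0$ and a positive measure $\sigma$ satisfying $\int(1+s^2)^{-1}\sigma(ds)<\infty$. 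Multiplying by $\xi$, extracting the atom of $\sigma$ at $s=0$ (which furnishes the constant $c$), and setting $\mu(ds) = \pi|s|\sigma(ds)$ on $\R\setminus\{0\}$ should recover (c). The delicate step is reconciling the natural Herglotz cutoff $is/(1+s^2)$ with the cutoff $i\sign s/(1+|s|)$ in (c): their difference is bounded in $s$ and integrable against $\sigma$, so it contributes a constant multiple of $i\xi$ that is absorbed into the drift $-ib\xi$; the integrability condition on $\mu$ then follows from the behaviour of $\sigma$ near $0$ and $\infty$ once the factor $|s|$ is incorporated.

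For $\text{(a)}\Leftrightarrow\text{(d)}$, assume $f\not\equiv 0$ and set aside the degenerate case $f(\xi) = -ib\xi$ separately. The minimum principle forces $\re(f/\xi) > 0$ strictly in $\hp$, so $f$ has no zeros there, and $\log f$ is well-defined and holomorphic on $\hp$. The Rogers condition translates to $\im\bigl(\log f(\xi)-\log\xi\bigr)\in[-\tfrac{\pi}{2},\tfrac{\pi}{2}]$, so $\log f - \log\xi$ is a holomorphic function on $\hp$ with bounded imaginary part. Its Poisson representation (with boundary density $\varphi(s)\in[0,\pi]$ realised as the nontangential limit of $\im\log f$ on $i\R$) yields~\eqref{eq:r:exp}. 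Conversely, any $f$ built from~\eqref{eq:r:exp} with $\varphi\in[0,\pi]$ satisfies $\Arg(f(\xi)/\xi)\in[-\tfrac{\pi}{2},\tfrac{\pi}{2}]$ on $\hp$ by a direct Poisson-integral computation, hence $\re(f/\xi)\ge 0$.
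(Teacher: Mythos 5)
This theorem is imported verbatim from Theorem~3.3 of~\cite{kwasnicki:rogers}; the present paper contains no proof of it, so your proposal can only be measured against the standard argument of that reference. Your architecture is the right one and matches it in structure: Bernstein's theorem combined with the Lévy--Khintchine formula for \ref{it:r:b}$\Leftrightarrow$\ref{it:r:c}, term-by-term positivity of $\re(\,\cdot\,/\xi)$ for \ref{it:r:c}$\Rightarrow$\ref{it:r:a}, the Herglotz/Nevanlinna--Pick representation of $f(\xi)/\xi$ for \ref{it:r:a}$\Rightarrow$\ref{it:r:c}, and the bounded-argument Poisson--Schwarz representation of $\log f - \log \xi$ for \ref{it:r:a}$\Leftrightarrow$\ref{it:r:d}. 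Your treatment of the cutoff discrepancy between $i s/(1+s^2)$ and $i \sign s/(1+|s|)$, and the converse estimate $\lvert\Arg(f(\xi)/\xi)\rvert \le \tfrac{\pi}{2}$ from~\eqref{eq:r:exp} via $\int_\R |\xi + i s|^{-2} ds = \pi/\re\xi$, are both sound.

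There is one concrete error, in the \ref{it:r:c}$\Rightarrow$\ref{it:r:b} step: the factor $s^{-2}$ in your definition of $\nu$ must be deleted. Your own identity $\int_0^\infty (1 - e^{i\xi x}) e^{-sx}\,dx = s^{-1} - (s - i\xi)^{-1} = s^{-1}\,\xi/(\xi + is)$ shows that the kernel $\frac{\xi}{\xi+is}\,\frac{\mu_+(ds)}{s}$ appearing in~\eqref{eq:r:int} is reproduced exactly when $\nu(x) = \frac{1}{\pi}\int_{(0,\infty)} e^{-sx}\,\mu_+(ds)$; with $\mu_+(ds)/s^2$ in place of $\mu_+(ds)$ you would obtain $\mu_+(ds)/s^3$ instead of $\mu_+(ds)/s$, so the Lévy process you construct would not have characteristic exponent $f$. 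Worse, the hypothesis $\int_{\R\setminus\{0\}} |s|^{-3}\min\{1,s^2\}\,\mu(ds) < \infty$ only gives $\int_{(0,1)} s^{-1}\mu_+(ds) < \infty$, so $\int_{(0,1)} s^{-2}\mu_+(ds)$ may diverge and your $\nu(x)$ need not even be finite. With the corrected $\nu$, the equivalence $\int_0^\infty \min\{1,x^2\}\,\nu(\pm x)\,dx \asymp \int s^{-3}\min\{1,s^2\}\,\mu_\pm$ gives precisely the stated integrability, as you intend. A second, smaller point: in the Herglotz representation of $g = f/\xi$ the additive constant is purely imaginary, not a non-negative real $\gamma$; this matters, because a genuine term $\gamma\xi$ with $\gamma > 0$ in $f$ could not be matched to~\eqref{eq:r:int}, whereas the purely imaginary constant is exactly what feeds the drift $-ib\xi$ (a constant positive real part of $g$ is carried by the absolutely continuous part of the Herglotz measure, not by the constant term).
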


We say that~\eqref{eq:r:int} is a \emph{Stieltjes representation} of a Rogers function $f(\xi)$, while~\eqref{eq:r:exp} gives an \emph{exponential representation} of a (non-zero) Rogers function $f(\xi)$. If $f(\xi)$ is a non-zero Rogers function with exponential representation~\eqref{eq:r:exp}, then we define the \emph{domain} of $f(\xi)$ by the formula
\formula{
 D_f & = \C \setminus (-i \esssupp \ph) ,
}
where $\esssupp \ph$ denotes the essential support of $\ph$, that is, the smallest closed set $A$ such that $\ph = 0$ almost everywhere on $\R \setminus A$; see formula~(3.3) in~\cite{kwasnicki:rogers}. By Remark~3.4(a) in~\cite{kwasnicki:rogers}, $f(\xi)$ extends to a holomorphic function in $D_f$, denoted by the same symbol $f(\xi)$, and this extension satisfies $f(-\overline{\xi}) = \overline{f(\xi)}$.

We will use the following results from~\cite{kwasnicki:rogers}. The first one was given there without proof, so we include the details below.

\begin{proposition}[Propostion~3.12 in~\cite{kwasnicki:rogers}]
\label{prop:r:prop}
For all Rogers functions $f(\xi)$ and $g(\xi)$:
\begin{enumerate}[label=\rm (\alph*)]
\item\label{it:r:prop:a} $\xi^2 f(1 / \xi)$ is a Rogers function;
\item\label{it:r:prop:b} $\xi^2 / f(\xi)$ and $1 / f(1 / \xi)$ are Rogers functions if $f(\xi)$ is non-zero;
\item\label{it:r:prop:d} $g(\xi) f(\xi / g(\xi))$ is a Rogers function if $g(\xi)$ is non-zero;
\item\label{it:r:prop:g} $a f(b \xi) + c$ is a Rogers function if $a, b, c \ge 0$.
\end{enumerate}
\end{proposition}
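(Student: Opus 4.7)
The plan is to verify, for each candidate function $h(\xi)$, the two defining properties of a Rogers function: holomorphy on $\hp$ and the inequality $\re(h(\xi)/\xi) \ge 0$ on $\hp$. Two elementary observations drive every argument: (i)~the involution $\xi \mapsto 1/\xi$ preserves $\hp$, since $\re(1/\xi) = (\re \xi)/|\xi|^2$; (ii)~if $w \in \C$ satisfies $w \ne 0$ and $\re w \ge 0$, then $\re(1/w) = (\re w)/|w|^2 \ge 0$.

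Part (a) is immediate: with $h(\xi) = \xi^2 f(1/\xi)$ and $\eta = 1/\xi \in \hp$, the function is holomorphic on $\hp$ by (i), and $h(\xi)/\xi = f(\eta)/\eta$ has non-negative real part because $f$ is Rogers. For (b), I first argue that a non-zero Rogers function cannot vanish anywhere on $\hp$: the function $g(\xi) = f(\xi)/\xi$ is holomorphic with $\re g \ge 0$, so $|e^{-g}| \le 1$ on $\hp$; any interior zero of $g$ would give $|e^{-g}| = 1$ at an interior point, forcing $e^{-g}$ to be constant by the maximum modulus principle, hence $f \equiv 0$, a contradiction. Thus $\xi^2/f(\xi)$ is holomorphic on $\hp$, and observation~(ii) applied to $w = f(\xi)/\xi$ yields $\re(\xi/f(\xi)) \ge 0$, so that $\xi^2/f(\xi)$ is Rogers. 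The second claim of (b) follows by applying the first half of (b) to the non-zero Rogers function $\xi^2 f(1/\xi)$ provided by part (a), since $\xi^2/(\xi^2 f(1/\xi)) = 1/f(1/\xi)$.

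For (d), set $u(\xi) = \xi/g(\xi)$; this is holomorphic on $\hp$ because $g$ is non-vanishing there, and (ii) applied to $g(\xi)/\xi$ gives $\re u(\xi) \ge 0$. The minimum principle applied to the harmonic function $\re(g/\xi)$ shows that the strict inequality $\re u > 0$ holds unless $g$ is degenerate, i.e., $g(\xi) = i b \xi$ for some $b \in \R$. In the non-degenerate case, $u$ maps $\hp$ into $\hp$, so $f \circ u$ is holomorphic on $\hp$, and the algebraic identity $h(\xi)/\xi = (g(\xi)/\xi) f(u(\xi)) = f(u(\xi))/u(\xi)$ delivers the required inequality. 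Part (g) is routine: for $b > 0$, we have $\re((a f(b\xi)+c)/\xi) = a b \, \re(f(b\xi)/(b\xi)) + c \, \re(1/\xi) \ge 0$; for $b = 0$, the expression reduces to the non-negative constant $a f(0) + c$, non-negative because the Stieltjes representation~\eqref{eq:r:int} evaluated at $\xi = 0$ yields $f(0) = c \ge 0$.

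The one place requiring extra care is the degenerate subcase $g(\xi) = i b \xi$ with $b \ne 0$ in part (d): then $u(\xi) = -i/b$ is a constant lying on the imaginary axis, outside $\hp$, so $f(u(\xi))$ is only meaningful after extending $f$ holomorphically to the domain $D_f$ of Remark~3.4(a) in~\cite{kwasnicki:rogers}. Assuming $-i/b \in D_f$, the symmetry $f(-\overline{\xi}) = \overline{f(\xi)}$ forces $i f(-i/b)$ to be real, whence $h(\xi) = (i b) f(-i/b) \xi$ is of the degenerate form $i b' \xi$ and is Rogers by inspection. I expect this boundary edge case to be the only genuine technical nuisance; the other parts reduce transparently to the two bookkeeping identities (i) and (ii) above.
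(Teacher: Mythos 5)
Your proof is correct and follows essentially the same route as the paper's: each part reduces to the identity $\re(h(\xi)/\xi) = \re(f(\eta)/\eta)$ for a suitable substitution $\eta$, together with the observation that $\re w \ge 0$ implies $\re(1/w) \ge 0$. You are in fact more careful than the paper, which simply asserts $\re(\xi/g(\xi)) > 0$ for non-zero $g$ and does not address non-vanishing in part (b) or the degenerate case $g(\xi) = i b \xi$ in part (d); your only slip is in that edge case, where the symmetry gives that $f(-i/b)$ itself (not $i f(-i/b)$) is real, which is exactly what makes $h(\xi) = i b f(-i/b)\, \xi$ of the form $i b' \xi$ with $b'$ real.
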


\begin{proof}
Suppose that $\re (f(\xi) / \xi) \ge 0$ and $\re (g(\xi) / \xi) \ge 0$ when $\re \xi > 0$, and define $f_1(\xi) = \xi^2 f(1 / \xi)$, $f_2(\xi) = \xi^2 / f(\xi)$, $f_3(\xi) = 1 / f(1 / \xi)$, $f_4(\xi) = g(\xi) f(\xi / g(\xi)$ and $f_5(\xi) = a f(b \xi) + c$. If $\re \xi > 0$, then $\re (1 / \xi) > 0$, and hence
\formula{
 \re \frac{f_1(\xi)}{\xi} & = \re \frac{f(1 / \xi)}{1 / \xi} \ge 0 .
}
If $f(\xi)$ is non-zero, then additionally
\formula{
 \re \frac{f_2(\xi)}{\xi} & = \re \frac{\xi}{f(\xi)} \ge 0 , & \re \frac{f_3(\xi)}{\xi} & = \re \frac{1 / \xi}{f(1 / \xi)} \ge 0 .
}
Similarly, if $g(\xi)$ is non-zero, then $\re(\xi / g(\xi)) > 0$, and so
\formula{
 \re \frac{f_4(\xi)}{\xi} & = \re \frac{f(\xi / g(\xi))}{\xi / g(\xi)} \ge 0 .
}
Finally, $\re(f_5(\xi) / \xi) = a b \re(f(b \xi) / (b \xi)) + c \re(1 / \xi) \ge 0$.
\end{proof}

\begin{proposition}[Proposition~3.14 in~\cite{kwasnicki:rogers}]
\label{prop:r:limit}
If $f(\xi)$ is a Rogers function, then the limit $f(0^+) = \lim_{\xi \to 0^+} f(\xi)$ exists. More precisely, if $f(\xi)$ has Stieltjes representation~\eqref{eq:r:int}, then $f(0^+) = c$, and if $f(\xi)$ has the exponential representation~\eqref{eq:r:exp}, then
\formula[eq:r:limit:zero]{
 f(0^+) & = c \exp \biggl( -\frac{1}{\pi} \int_0^\infty \frac{1}{1 + |s|} \, \frac{\ph(s)}{|s|} \, ds \biggr) ,
}
where we understand that $\exp(-\infty) = 0$. Similarly, $f(\infty^-) = \lim_{\xi \to \infty} f(\xi)$ exists, and if $f(\xi)$ has the exponential representation~\eqref{eq:r:exp}, then
\formula[eq:r:limit:inf]{
 f(\infty^-) & = c \exp \biggl( \frac{1}{\pi} \int_0^\infty \frac{|s|}{1 + |s|} \, \frac{\ph(s)}{|s|} \, ds \biggr) ,
}
where we understand that $\exp(\infty) = \infty$
\end{proposition}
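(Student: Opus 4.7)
If $f \equiv 0$ both limits vanish and the formulae hold with $c=0$, so assume $f$ is non-zero; then both representations \eqref{eq:r:int} and \eqref{eq:r:exp} are at my disposal. The plan for $f(0^+)$ is a two-step argument: first, use \eqref{eq:r:int} to show $f(0^+)$ exists as a non-negative real; second, compute its modulus via \eqref{eq:r:exp}. The $f(\infty^-)$ case is then handled by the same scheme combined with an involution trick.

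For the first step, the two summands under the Stieltjes integral combine to $\tfrac{\xi(1+i\xi\sign s)}{(\xi+is)(1+|s|)}$, which vanishes pointwise as $\xi\to 0^+$ and, for $\xi\in(0,1]$, is bounded in modulus by a $\xi$-independent function integrable against $\mu(ds)/|s|$: one can use $2/|s|$ on $\{|s|\le 1\}$ and $2/|s|^3$ on $\{|s|>1\}$, both integrable by the moment condition $\int |s|^{-3}\min\{1,s^2\}\mu(ds) < \infty$. Dominated convergence yields $f(0^+) = c$, in particular a non-negative real. For the second step, write
\[
\log\bigl|f(\xi)/c\bigr| = \frac{1}{\pi}\int_{-\infty}^\infty\!\left(\frac{\xi^2}{\xi^2+s^2} - \frac{1}{1+|s|}\right)\frac{\varphi(s)}{|s|}\,ds, \qquad \xi > 0.
\]
The non-negative quantity $\tfrac{\xi^2}{\xi^2+s^2}\tfrac{\varphi(s)}{|s|}$ is monotone in $\xi$ (decreasing to $0$ as $\xi\to 0^+$), so monotone convergence gives $\log|f(0^+)/c| = -\tfrac{1}{\pi}\int\tfrac{\varphi(s)}{(1+|s|)|s|}\,ds \in [-\infty,0]$, and combined with the non-negativity from Step~1 this yields the stated formula (with the convention $e^{-\infty}=0$). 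For $f(\infty^-)$ the parallel argument applies: the same monotone quantity now \emph{increases} to $\tfrac{\varphi(s)}{|s|}$ as $\xi\to\infty$, giving $\log|f(\infty^-)/c| = \tfrac{1}{\pi}\int\tfrac{\varphi(s)}{1+|s|}\,ds \in [0,\infty]$; existence of $f(\infty^-)$ in $[0,\infty]$ follows from applying the already-proved $f(0^+)$ statement to the Rogers function $1/f(1/\xi)$ (valid by Proposition~\ref{prop:r:prop}\ref{it:r:prop:b}), whose limit at $0^+$ is $1/f(\infty^-)$.

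\textbf{Main obstacle.} The monotone-convergence mechanism handles $|f(\xi)|$ effortlessly, but the principal difficulty is ensuring that the full complex-valued limits of $f$ --- not just the moduli --- exist and are non-negative reals (or $+\infty$); this is precisely why the Stieltjes representation is indispensable for $f(0^+)$, and the involution $\xi\mapsto 1/\xi$ for $f(\infty^-)$. A minor bookkeeping nuisance is reconciling the $\int_0^\infty$ form of the formulae in the statement with the $\int_{-\infty}^\infty$ form arising directly from \eqref{eq:r:exp}; this is resolved using the $s\mapsto -s$ symmetry of the kernels $\tfrac{1}{(1+|s|)|s|}$ and $\tfrac{1}{1+|s|}$, which lets one absorb $\varphi(-s)$ into $\varphi(s)$ on $[0,\infty)$.
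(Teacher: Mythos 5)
The paper offers no proof of this proposition: it is imported verbatim from Proposition~3.14 of \cite{kwasnicki:rogers}, so there is no in-paper argument to compare against. Your proof is essentially correct and self-contained. Dominated convergence on the Stieltjes representation (via the combined kernel $\xi(1+i\xi\sign s)/((\xi+is)(1+|s|))$ together with the moment condition on $\mu$) does show that the complex limit $f(0^+)$ exists and equals the non-negative constant $c$ of \eqref{eq:r:int}; taking real parts of $\log f$ in \eqref{eq:r:exp} and letting $\xi^2/(\xi^2+s^2)$ decrease to $0$ identifies $|f(0^+)|$, and reality from the first step upgrades this to $f(0^+)$ itself; the involution $\xi\mapsto 1/f(1/\xi)$ (legitimate by Proposition~\ref{prop:r:prop}\ref{it:r:prop:b} once $f$ is non-zero) correctly transfers existence to the limit at infinity, where positivity again comes for free. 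Two small points deserve attention. First, the monotone-convergence step should be run as an increasing MCT on $h_{1}-h_{\xi}$ (resp.\ $h_{\xi}-h_{1}$), where $h_\xi(s)=\tfrac{\xi^2}{\xi^2+s^2}\tfrac{\ph(s)}{|s|}$, measured against the absolutely convergent integral at the fixed value $\xi=1$: the integral $\int h_\xi$ may be $+\infty$ on its own, so the "decreasing MCT" cannot be invoked directly; the fix is routine but should be said. Second, your derivation produces $\int_{-\infty}^{\infty}$ in both limit formulae, whereas the statement displays $\int_0^\infty$; the appearance of $|s|$ under an integral over $(0,\infty)$ indicates that the displayed range is a transcription slip and that the full-line version you derive is the intended one. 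Your proposed remedy of "absorbing $\ph(-s)$ into $\ph(s)$" is not actually available, since $\ph$ is pinned down by the non-symmetric kernel $\xi/(\xi+is)$ in \eqref{eq:r:exp} and cannot be symmetrised without changing $f$; the honest resolution is simply to state the formulae with $\int_{-\infty}^{\infty}$.
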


\begin{proposition}[Proposition~3.17 in~\cite{kwasnicki:rogers}]
\label{prop:r:bound}
If $f(\xi)$ is a Rogers function and $r > 0$, then
\formula[eq:r:bound]{
 \frac{1}{\sqrt{2}} \, \frac{|\xi|^2}{r^2 + |\xi|^2} \biggl(\frac{\re \xi}{|\xi|}\biggr) |f(r)| \le |f(\xi)| & \le \sqrt{2} \, \frac{r^2 + |\xi|^2}{r^2} \biggl(\frac{|\xi|}{\re \xi}\biggr) |f(r)|
}
when $\re \xi > 0$.
\end{proposition}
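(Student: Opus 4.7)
The plan is to recognise that $g(\xi) = f(\xi)/\xi$ is, modulo a few degenerate cases, a holomorphic self-map of the right half-plane $\hp$, and to apply the integrated Schwarz--Pick inequality to this $g$. From this I would first derive the upper bound, and then deduce the lower bound as a corollary via the fact that $1/f(1/\xi)$ is again a Rogers function (Proposition~\ref{prop:r:prop}\ref{it:r:prop:b}).

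For the upper bound, assume that $f$ is nonzero and not of the form $f(\xi) \equiv c$ or $f(\xi) = i b \xi$; the inequality is easily checked by inspection in these excluded cases. Under this assumption, $g(\xi) = f(\xi)/\xi$ is holomorphic and nonconstant on $\hp$ with $\re g \ge 0$. The open mapping theorem then forces $g(\hp) \subseteq \hp$, so the integrated Schwarz--Pick inequality for self-maps of the right half-plane applies and yields, for every $\xi \in \hp$ and $r > 0$,
\[
\left|\frac{g(\xi) - g(r)}{g(\xi) + \overline{g(r)}}\right| \le \left|\frac{\xi - r}{\xi + r}\right| =: |z|.
\]
Inverting this Möbius relation and applying the triangle inequality to the numerator and the reverse triangle inequality to the denominator of the resulting expression for $g(\xi)$ gives the standard Harnack-type consequence
\[
\frac{|g(\xi)|}{|g(r)|} \le \frac{1 + |z|}{1 - |z|}.
\]

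The right-hand side simplifies by elementary algebra: rewriting $(1+|z|)/(1-|z|) = (|\xi+r| + |\xi-r|)^2 / (|\xi+r|^2 - |\xi-r|^2)$, using the polarisation identities $|\xi + r|^2 + |\xi - r|^2 = 2(|\xi|^2 + r^2)$ and $|\xi + r|^2 - |\xi - r|^2 = 4 r \re \xi$, and applying $(a+b)^2 \le 2(a^2 + b^2)$ to $a = |\xi + r|$ and $b = |\xi - r|$, I obtain $(1 + |z|)/(1 - |z|) \le (r^2 + |\xi|^2)/(r \re \xi)$. Multiplying through by $|\xi|/r$ to pass back from $g$ to $f$ yields the claimed upper bound (in fact with the factor $\sqrt{2}$ improved to $1$; the slack in the statement is a convenience).

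For the lower bound, invoke Proposition~\ref{prop:r:prop}\ref{it:r:prop:b}: if $f$ is nonzero then $h(\xi) = 1/f(1/\xi)$ is also a Rogers function. Applying the upper bound just proved to $h$ at the points $\xi$ and $r$, and then substituting $\xi \mapsto 1/\xi$, $r \mapsto 1/r$ (using $\re(1/\xi) = \re \xi / |\xi|^2$), converts the upper bound for $h$ into exactly the stated lower bound for $f$. The main technical point is the finicky algebraic manipulation that recasts the Schwarz--Pick conclusion in the form $(r^2 + |\xi|^2)/(r \re \xi)$; everything else is either a direct application of classical complex analysis or a routine substitution. One must also dispose of the degenerate cases ($f \equiv 0$, $f$ a positive real constant, or $f(\xi) = i b \xi$) by direct inspection before appealing to the open mapping theorem, but each of these is immediate.
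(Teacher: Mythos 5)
Your argument is correct. Note first that the paper itself offers no proof of this statement: it is imported verbatim from Proposition~3.17 of~\cite{kwasnicki:rogers}, so there is no in-paper argument to compare against, and what you have produced is a clean, self-contained proof. The route is sound: $g(\xi)=f(\xi)/\xi$ maps $\hp$ into its closure, hence (if non-constant) into $\hp$ itself, the Schwarz--Pick inequality $\bigl|\tfrac{g(\xi)-g(r)}{g(\xi)+\overline{g(r)}}\bigr|\le\bigl|\tfrac{\xi-r}{\xi+r}\bigr|$ applies since the pseudo-hyperbolic distance on $\hp$ is exactly $|\xi-\eta|/|\xi+\overline{\eta}|$, and your algebra ($|\xi+r|^2-|\xi-r|^2=4r\re\xi$, $|\xi+r|^2+|\xi-r|^2=2(r^2+|\xi|^2)$, $(a+b)^2\le 2(a^2+b^2)$) correctly turns $(1+|z|)/(1-|z|)$ into $(r^2+|\xi|^2)/(r\re\xi)$, giving the upper bound with constant $1$ rather than $\sqrt2$. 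Two remarks. First, the lower bound needs no detour through $1/f(1/\xi)$: the same Möbius inversion gives the two-sided Harnack bound $\tfrac{1-|z|}{1+|z|}\le|g(\xi)|/|g(r)|\le\tfrac{1+|z|}{1-|z|}$, and estimating $\tfrac{1-|z|}{1+|z|}=\tfrac{|\xi+r|^2-|\xi-r|^2}{(|\xi+r|+|\xi-r|)^2}\ge\tfrac{r\re\xi}{r^2+|\xi|^2}$ yields the stated lower bound directly; your substitution argument is also valid, since a non-zero Rogers function never vanishes on $\hp$ (by the same open-mapping observation applied to $g$), so $1/f(1/\xi)$ is well defined. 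Second, your case split is slightly misaligned: the case that obstructs the open mapping theorem is not $f\equiv c$ (for which $g(\xi)=c/\xi$ is non-constant) but $f(\xi)=c\xi$ with $c$ constant; when $\re c>0$ the image $g(\hp)=\{c\}$ still lies in $\hp$ and the Schwarz--Pick inequality holds trivially, and when $\re c=0$ you are in the case $f(\xi)=ib\xi$ that you do check by hand. Neither point affects the validity of the proof.
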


If $h(\xi)$ is a complete Bernstein function, then $f(\xi) = h(\xi^2)$ is a Rogers function. In this case the above estimate applied to $f(\xi)$ reads as follows.

\begin{corollary}
\label{cor:cbf:bound}
If $h(\xi)$ is a complete Bernstein function, $r > 0$ and $\xi \in \C \setminus (-\infty, 0]$, then
\formula[eq:cbf:bound]{
 \frac{1}{\sqrt{2}} \, \frac{|\xi|}{r + |\xi|} \biggl(\frac{\re \sqrt{\xi}}{\lvert\sqrt{\xi}\rvert}\biggr) h(r) \le |h(\xi)| & \le \sqrt{2} \, \frac{r + |\xi|}{r} \biggl(\frac{\lvert\sqrt{\xi}\rvert}{\re \sqrt{\xi}}\biggr) h(r) .
}
\end{corollary}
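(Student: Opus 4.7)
The plan is to apply Proposition~\ref{prop:r:bound} to the Rogers function $f(\xi) = h(\xi^2)$, exactly as hinted in the paragraph preceding the corollary. That $f(\xi) = h(\xi^2)$ is a Rogers function is stated there explicitly; it is also easily verified from~\eqref{eq:cbf:arg}: for $\re \xi > 0$ and $\im \xi > 0$, we have $\Arg \xi^2 = 2 \Arg \xi \in (0, \pi)$, so $0 \le \Arg h(\xi^2) \le 2 \Arg \xi$, and therefore $\lvert \Arg(f(\xi) / \xi) \rvert = \lvert \Arg h(\xi^2) - \Arg \xi \rvert \le \Arg \xi < \tfrac{\pi}{2}$, i.e., $\re(f(\xi)/\xi) \ge 0$. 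The case $\im \xi < 0$ is handled symmetrically, and the positive real axis case is immediate since $h$ is nonnegative there.

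With $f$ identified as a Rogers function, the remainder is just a change of variable. For $\xi \in \C \setminus (-\infty, 0]$ let $\sqrt{\xi}$ denote the principal branch of the square root, so that $\re \sqrt{\xi} > 0$, and for $r > 0$ let $\sqrt{r}$ denote the positive square root. Applying~\eqref{eq:r:bound} to $f$, with $\xi$ replaced by $\sqrt{\xi}$ and $r$ replaced by $\sqrt{r}$, and using the identifications $\lvert\sqrt{\xi}\rvert^2 = \lvert\xi\rvert$, $(\sqrt{r})^2 = r$, $f(\sqrt{\xi}) = h(\xi)$, and $f(\sqrt{r}) = h(r) \ge 0$ (the latter by nonnegativity of complete Bernstein functions on $(0, \infty)$), one obtains precisely~\eqref{eq:cbf:bound}. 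The degenerate case in which $h$ is identically zero is trivial, as both sides vanish. There is no substantial obstacle here; the corollary is a direct substitution consequence of Proposition~\ref{prop:r:bound}.
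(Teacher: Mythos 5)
Your proof is correct and takes exactly the paper's route: the corollary is obtained by applying Proposition~\ref{prop:r:bound} to the Rogers function $f(\xi) = h(\xi^2)$ with $\xi$ and $r$ replaced by $\sqrt{\xi}$ and $\sqrt{r}$, which is all the paper itself does. Your explicit verification via~\eqref{eq:cbf:arg} that $h(\xi^2)$ is a Rogers function, and your separate treatment of the identically zero case, fill in details the paper leaves implicit.
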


The following result is an extension of Proposition~3.18 in~\cite{kwasnicki:rogers}, which corresponds to the case $f(\xi) \in (0, \infty)$ and $p = 0$. Since the proof is exactly the same as that of Proposition~3.18 in~\cite{kwasnicki:rogers}, we omit it.

\begin{proposition}
\label{prop:r:spine:bound}
Suppose that $f(\xi)$ is a non-zero Rogers function and $p \ge 0$. Let $\xi \in D_f$, and suppose that $\Arg f(\xi) \le p \re \xi / |\xi|$ if $\im \xi \ge 0$, or $\Arg f(\xi) \ge -p \re \xi / |\xi|$ if $\im \xi \le 0$. Then
\formula[eq:r:spine:prime]{
 \biggl| \frac{f'(\xi)}{f(\xi)} \biggr| & \le \frac{\pi + 2 p}{|\xi|} \, ,
}
and, for some $c > 0$,
\formula[eq:r:spine:log]{
 \lvert\log f(\xi)\rvert & \le \lvert\log c\rvert + \sqrt{2 (\pi + 2 p)} \, \frac{1 + |\xi|}{\sqrt{|\xi|}} \, .
}
More precisely, $c$ is the constant in the exponential representation~\eqref{eq:r:exp} of $f(\xi)$, and with the notation of~\eqref{eq:r:exp}, we have
\formula[eq:r:spine:bound]{
 \frac{1}{\pi} \int_{-\infty}^\infty \biggl| \frac{1}{\xi + i s} - \frac{1}{1 + |s|} \biggr| \frac{\ph(s)}{|s|} \, ds \le \sqrt{2 (\pi + 2 p)} \, \frac{1 + |\xi|}{\sqrt{|\xi|}} \, .
}
\end{proposition}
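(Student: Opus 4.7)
The plan is to reproduce the proof of Proposition~3.18 in~\cite{kwasnicki:rogers}, where the case $\Arg f(\xi) = 0$ (equivalently, $p = 0$ with $\xi$ on the spine of $f$) was treated. All three conclusions flow from the exponential representation~\eqref{eq:r:exp}: writing $\log f(\xi) = \log c + g(\xi)$ with
\begin{align*}
g(\xi) & = \frac{1}{\pi}\int_{-\infty}^\infty \left(\frac{\xi}{\xi+is} - \frac{1}{1+|s|}\right) \frac{\ph(s)}{|s|}\, ds,
\end{align*}
differentiating under the integral gives $f'(\xi)/f(\xi) = g'(\xi) = (1/\pi) \int is/(\xi+is)^2 \cdot \ph(s)/|s|\, ds$, so~\eqref{eq:r:spine:prime} amounts to $|\xi g'(\xi)| \le \pi + 2p$, while~\eqref{eq:r:spine:log} follows from~\eqref{eq:r:spine:bound} together with the triangle inequality $|\log f(\xi)| \le |\log c| + |g(\xi)|$.

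The heart of the argument is to translate the pointwise hypothesis on $\Arg f(\xi)$ into a bound on weighted integrals of $\ph$ against Poisson kernels. Writing $\xi = a + ib$ with $a = \re\xi > 0$, a direct computation shows
\begin{align*}
\Arg f(\xi) = \im g(\xi) & = -\frac{a}{\pi}\int_{-\infty}^\infty \frac{\sign s}{a^2+(b+s)^2}\, \ph(s)\, ds,
\end{align*}
so the hypothesis (say, for $\im\xi \ge 0$) controls this \emph{signed} Poisson-type integral of $\ph$ by $\pi p/|\xi|$. Combining it with the trivial bound $\ph(s) \in [0, \pi]$ and the normalisation $(1/\pi)\int_{-\infty}^\infty a/(a^2+(b+s)^2)\, ds = 1$ produces a bound on the \emph{unsigned} weighted integral of $\ph$ of order $\tfrac{1}{2}\pi + p\, a/|\xi|$. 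Substituting this via the pointwise inequality $|\xi||s|/|\xi+is|^2 \le 2\, a/(a^2+(b+s)^2)$ yields~\eqref{eq:r:spine:prime}.

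For~\eqref{eq:r:spine:bound}, the starting point is the algebraic identity
\begin{align*}
\frac{1}{\xi+is} - \frac{1}{1+|s|} & = \frac{1+|s|-\xi-is}{(\xi+is)(1+|s|)}\, .
\end{align*}
A Cauchy--Schwarz inequality pairs this integrand against a weight built from $a/(a^2+(b+s)^2)$ and $1/(1+|s|)$: one factor is controlled by the weighted $\ph$-integral obtained in the previous step, producing the square root $\sqrt{2(\pi+2p)}$, while the other factor is computed explicitly, producing the geometric term $(1+|\xi|)/\sqrt{|\xi|}$. The main technical point, and the only place where the extension from $p = 0$ to $p > 0$ is nontrivial, is the asymmetric nature of the hypothesis on $\Arg f(\xi)$: the one-sided pointwise bound has to be combined with the a priori bound $|\Arg f(\xi) - \Arg\xi| \le \pi/2$ (valid for any Rogers function, since $f(\xi)/\xi$ maps $\hp$ into its closure) in order to produce the symmetric weighted estimate on $\ph$ used above.
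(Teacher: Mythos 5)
Your overall strategy is the right one: it is precisely the route of Proposition~3.18 in~\cite{kwasnicki:rogers}, to which the paper defers, namely differentiate the exponential representation, identify $\Arg f(\xi)$ with a signed Poisson-type integral of $\ph$, convert the one-sided hypothesis into a bound on the unsigned integral $\tfrac{1}{\pi}\int_{-\infty}^{\infty}\ph(s)\,|\xi+is|^{-2}\,ds$, and finish \eqref{eq:r:spine:bound} by Cauchy--Schwarz. However, the pivotal quantitative step does not close as written. The pointwise inequality you invoke, $|\xi|\,|s|/|\xi+is|^2\le 2a/(a^2+(b+s)^2)$, reduces (since $|\xi+is|^2=a^2+(b+s)^2$) to $|\xi|\,|s|\le 2a$, which fails badly exactly in the delicate regime where $\xi$ is near the imaginary axis. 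Likewise, the intermediate estimate you extract from the hypothesis together with $\ph\in[0,\pi]$ and the Poisson normalisation controls only the $a$-weighted integral $\tfrac{a}{\pi}\int\ph(s)|\xi+is|^{-2}\,ds$ (by roughly $\pi+p\,a/|\xi|$), whereas \eqref{eq:r:spine:prime} needs the $|\xi|$-weighted integral, and $|\xi|/\re\xi$ is unbounded. The a~priori bound $|\Arg f(\xi)-\Arg\xi|\le\tfrac{\pi}{2}$ that you propose as the bridge cannot repair this: it never produces the decay $1/|\xi|$.

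The missing ingredient is a one-sided geometric estimate on the ``good'' half-line. For $b=\im\xi\ge 0$ and $s\ge 0$ one has $(b+s)^2\ge b^2+s^2$, hence $|\xi+is|^2\ge|\xi|^2+s^2$ and $\tfrac{1}{\pi}\int_0^\infty\ph(s)|\xi+is|^{-2}\,ds\le\int_0^\infty(|\xi|^2+s^2)^{-1}ds=\tfrac{\pi}{2|\xi|}$. The hypothesis $\Arg f(\xi)\le p\re\xi/|\xi|$, read through your formula for $\im\log f(\xi)$, then transfers this control to the half-line $s<0$ up to an additive $p/|\xi|$, giving $\tfrac{1}{\pi}\int_{-\infty}^{\infty}\ph(s)|\xi+is|^{-2}\,ds\le(\pi+2p)/|\xi|$; this yields \eqref{eq:r:spine:prime} directly and supplies the first Cauchy--Schwarz factor for \eqref{eq:r:spine:bound}, the second factor being $\bigl(\int\ph(s)(1+|s|)^{-2}\,ds\bigr)^{1/2}\le\sqrt{2\pi}$ (for $\im\xi\le 0$ the roles of the two half-lines are exchanged). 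Finally, for the Cauchy--Schwarz step to make sense the integrand in \eqref{eq:r:spine:bound} must be $\lvert\xi/(\xi+is)-1/(1+|s|)\rvert$, as in Proposition~3.18 of~\cite{kwasnicki:rogers}, whose numerator $s(\xi\sign s-i)$ vanishes at $s=0$ and cancels the $1/|s|$; with the form $\lvert 1/(\xi+is)-1/(1+|s|)\rvert$, which your algebraic identity reproduces from the statement, the integral is in general divergent at $s=0$.
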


The \emph{spine} of $f(\xi)$ is the system of curves
\formula{
 \Gamma_f & = \{\xi \in \hp : f(\xi) \in (0, \infty)\} ;
}
see Definition~4.1 in~\cite{kwasnicki:rogers}. Regularity of the spine is described by the following two results. The author recently learned that part~\ref{it:r:real:d} of Proposition~\ref{prop:r:real} was in fact first observed in~\cite{dh}, with a better constant, and an optimal constant was found in~\cite{steinerberger}.

\begin{proposition}[Theorem~4.2 in~\cite{kwasnicki:rogers}]
\label{prop:r:real}
Let $f(\xi)$ be a non-constant Rogers function. There is a unique continuous complex-valued function $\zeta_f(r)$ on $(0, \infty)$ such that the following assertions hold:
\begin{enumerate}[label=\rm (\alph*)]
\item\label{it:r:real:a} We have $|\zeta_f(r)| = r$ and $\Arg \zeta_f(r) \in [-\tfrac{\pi}{2}, \tfrac{\pi}{2}]$ for all $r > 0$.
\item\label{it:r:real:b} If $\re \xi > 0$ and $r = |\xi|$, then
\formula{
 \sign \im f(\xi) & = \sign (\Arg \xi - \Arg \zeta_f(r)) .
}
\item\label{it:r:real:c}
The spine $\Gamma_f$ is the union of pairwise disjoint simple real-analytic curves, which begin and end at the imaginary axis or at infinity. Furthermore, $\Gamma_f$ has parameterisation
\formula[eq:r:spine]{
 \Gamma_f & = \{\zeta_f(r) : r \in Z_f\} , 
}
where
\formula[eq:r:z]{
 Z_f & = \{r \in (0, \infty) : \Arg \zeta_f(r) \in (-\tfrac{\pi}{2}, \tfrac{\pi}{2})\} .
}
\item\label{it:r:real:d}
For every $r > 0$, the spine $\Gamma_f$ restricted to the annular region $r \le |\xi| \le 2 r$ is a system of rectifiable curves of total length at most $C r$, where one can take $C = 300$. Furthermore, if $\zeta_f(r) = r e^{i \thet(r)}$ for $r \in Z_f$, then
\formula[eq:r:real:curv]{
 |r (r \thet'(r))'| & \le \frac{9 ((r \thet'(r))^2 + 1)}{\cos \thet(r)}
}
for $r \in Z_f$.
\end{enumerate}
\end{proposition}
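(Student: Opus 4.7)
The plan is to locate the spine pointwise on each circle $|\xi| = r$, organise these points into a continuous map $\zeta_f(r)$, and then extract the geometric content of (c) and (d) from derivative bounds on $f'/f$ available through the exponential representation. The defining property $\re(f(\xi)/\xi) \ge 0$ on $\hp$ is equivalent to $|\Arg f(\xi) - \Arg \xi| \le \tfrac{\pi}{2}$; specialising to $\xi = r e^{i\phi}$, this forces $\Arg f(ir) \in [0, \pi]$ and $\Arg f(-ir) \in [-\pi, 0]$. Thus the continuous function $\phi \mapsto \im f(r e^{i\phi})$ changes sign on $[-\tfrac{\pi}{2}, \tfrac{\pi}{2}]$ and by the intermediate value theorem vanishes at some $\phi_0$, so I would set $\zeta_f(r) = r e^{i\phi_0}$. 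For uniqueness and the strict sign statement in (b) I would differentiate: $\frac{d}{d\phi} \Arg f(r e^{i\phi}) = \re(\xi f'(\xi)/f(\xi))$ at $\xi = r e^{i\phi}$; using the exponential representation of Theorem~\ref{thm:rogers}(d) to rewrite $\xi f'(\xi)/f(\xi)$ as an integral against $\ph(s)/|s|$, I would check that this real part is strictly positive on $(-\tfrac{\pi}{2}, \tfrac{\pi}{2})$, the only exceptions being the degenerate cases $f \equiv c$ and $f(\xi) = i b \xi$ that are excluded by assumption. Strict monotonicity of $\phi \mapsto \Arg f(r e^{i\phi})$ then yields both uniqueness of $\phi_0$ and the full sign formula in (b).

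\textbf{Part (c).} The spine is the zero set of $\im f$ in $\hp$. The positivity of $\re(\xi f'/f)$ established above shows $f'(\xi) \ne 0$ along $\Gamma_f$, so by the implicit function theorem $\Gamma_f$ is locally a real-analytic $1$-submanifold. The parameterisation $\Gamma_f = \{\zeta_f(r) : r \in Z_f\}$ is then immediate from parts (a), (b), since each $r \in Z_f$ gives a unique interior point of $\Gamma_f$ on the circle of radius $r$, and conversely every point of $\Gamma_f$ has argument in $(-\tfrac{\pi}{2}, \tfrac{\pi}{2})$. Continuity of $\zeta_f$ follows from uniqueness together with a compactness argument: any subsequential limit of $\zeta_f(r_n)$ as $r_n \to r$ lies on $|\xi| = r$ with $\im f = 0$, so must equal $\zeta_f(r)$; the same argument shows that $Z_f$ is open. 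The connected components of $\Gamma_f$ are then the images under $\zeta_f$ of connected components of $Z_f$, and at the endpoints of such a component $\zeta_f$ either approaches the imaginary axis (where $\Arg \zeta_f \to \pm \tfrac{\pi}{2}$) or escapes to $0$ or $\infty$.

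\textbf{Part (d), and main obstacle.} Writing $\zeta_f(r) = r e^{i \thet(r)}$ on $Z_f$, differentiating the implicit relation $\im f(r e^{i\thet(r)}) = 0$ once in $r$ expresses $r \thet'(r)$ as a ratio of the real and imaginary parts of $\xi f'(\xi)/f(\xi)$ at $\xi = \zeta_f(r)$. On the spine $\Arg f = 0$, so Proposition~\ref{prop:r:spine:bound} with $p = 0$ yields $|\xi f'(\xi)/f(\xi)| \le \pi$ along $\Gamma_f$, which controls $|r \thet'(r)|$. The length of $\Gamma_f$ in $\{r \le |\xi| \le 2r\}$ is then $\int \sqrt{1 + (\rho \thet'(\rho))^2}\, d\rho$ over the relevant subset of $[r, 2r]$, and a careful geometric argument---accounting for the possibility that several components of $\Gamma_f$ enter the annulus---produces the linear bound $C r$. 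The curvature inequality \eqref{eq:r:real:curv} is obtained analogously by differentiating the implicit relation twice in $r$, expressing $r (r \thet'(r))'$ as a rational combination of $\xi f'/f$ and $\xi^2 f''/f$ at $\zeta_f(r)$, and then invoking bounds on $\xi^2 f''/f$ derivable along the same lines as \eqref{eq:r:spine:prime}; the factor $1/\cos \thet(r)$ on the right arises naturally as $|\xi|/\re \xi$ at $\xi = \zeta_f(r)$. I expect the main obstacle to be sharp control of the numerical constants $C = 300$ and $9$: the structural ingredients of (d) are in place once parts (a)--(c) and the exponential representation estimates are available, but optimising constants is delicate, and indeed the paper notes that $C$ can be improved via the methods of~\cite{dh, steinerberger}.
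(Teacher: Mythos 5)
This proposition is not proved in the paper at all: it is imported verbatim as Theorem~4.2 of~\cite{kwasnicki:rogers}, so there is no in-paper argument to compare against and I can only assess your proposal on its own terms. The central step of your parts (a)--(b) fails. You claim that $\re\bigl(\xi f'(\xi)/f(\xi)\bigr)>0$ for $\xi=re^{i\phi}$ with $\phi\in(-\tfrac{\pi}{2},\tfrac{\pi}{2})$, so that $\phi\mapsto\Arg f(re^{i\phi})$ is strictly increasing. This is false, and the paper's own classical risk process $f(\xi)=i\xi^2/(\xi+i)$ is a counterexample: there $\xi f'(\xi)/f(\xi)=2-\xi/(\xi+i)$, whose real part equals $(r^2+3\im\xi+2)/|\xi+i|^2$, and for $r=\sqrt 2$ this is negative on the nonempty open subarc where $\sin\phi<-(r^2+2)/(3r)=-2\sqrt2/3$. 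So $\Arg f$ is not monotone along circles, and your uniqueness and sign arguments collapse. The correct mechanism is different: the exponential representation gives $\Arg f(\xi)=-\pi^{-1}(\re\xi)\,A(\xi)$ with $A(\xi)=\int_{-\infty}^{\infty}\sign(s)\,|\xi+is|^{-2}\ph(s)\,ds$, and on the circle $|\xi|=r$ one has $|\xi+is|^2=r^2+2s\im\xi+s^2$, so $A$ is a decreasing (strictly, unless $\ph=0$ a.e.) function of $\im\xi$; since $\re\xi>0$ on the open arc, $\im f$ changes sign exactly once, from $-$ to $+$. Equivalently, your derivative identity does hold \emph{at} the zeros of $\Arg f$, where it reduces to $-\pi^{-1}(\re\xi)^2$ times the $\im\xi$-derivative of $A$, which is positive; that local statement is what one actually needs. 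Note also that $f(\xi)=ib\xi$ is non-constant and is \emph{not} excluded by the hypotheses; it is simply the case $Z_f=\emptyset$, $\zeta_f(r)=\mp ir$.

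Part (d) has a second, independent gap. The bound $|\xi f'(\xi)/f(\xi)|\le\pi$ on $\Gamma_f$ cannot ``control $|r\thet'(r)|$'' pointwise: in the same example $\thet'(r)=-1/\sqrt{4-r^2}$, so $r\thet'(r)\to-\infty$ as $r\to 2^-$ while the modulus bound remains in force; a bound on the modulus of $\xi f'/f$ says nothing about its argument, which is what $\arctan(r\thet'(r))$ equals along the spine. The length estimate is therefore necessarily an integrated statement about the level curve $\{\Arg f=0\}$ of a bounded harmonic function, which is exactly the content of the De Carli--Hudson/Steinerberger results the paper cites; the logarithmic-derivative bound alone does not yield it. Likewise the curvature bound~\eqref{eq:r:real:curv} requires quantitative control of $\xi^2 f''(\xi)/f(\xi)$ on the spine, which Proposition~\ref{prop:r:spine:bound} does not supply and which your sketch does not derive. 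Parts (a)--(c) can be repaired along the lines indicated above; part (d) is essentially unproved.
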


\begin{proposition}[Theorem~4.3 in~\cite{kwasnicki:rogers}]
\label{prop:r:lambda}
Suppose that $f(\xi)$ is a non-constant Rogers function.
\begin{enumerate}[label=\rm (\alph*)]
\item\label{it:r:lambda:a} For every $r \in (0, \infty) \setminus \partial Z_f$ we have $\zeta_f(r) \in D_f$.
\item\label{it:r:lambda:b} The function $\lambda_f(r)$, defined for $r \in (0, \infty) \setminus \partial Z_f$ by
\formula[eq:r:lambda]{
 \lambda_f(r) & = f(\zeta_f(r)) ,
}
extends in a unique way to a continuous, strictly increasing function of $r \in (0, \infty)$, and $\lambda_f'(r) > 0$ for every $r \in (0, \infty) \setminus \partial Z_f$.
\item\label{it:r:lambda:c} We have $\lambda(0^+) = f(0^+)$, and $\lambda(\infty^-) = f(\infty^-)$.
\end{enumerate}
\end{proposition}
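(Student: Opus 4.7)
If $r \in Z_f$ then $\zeta_f(r) \in \hp \subseteq D_f$ and there is nothing to show. For $r \in (0,\infty) \setminus \Cl Z_f$, openness of this set combined with continuity of $\zeta_f$ places $r$ in a component $I$ on which $\Arg \zeta_f$ is identically $\pm\tfrac{\pi}{2}$; say $\zeta_f(r') = ir'$ for all $r' \in I$. I would then show that $\ph$ vanishes a.e.\ on $-I$ in the exponential representation~\eqref{eq:r:exp}, which gives $iI \subseteq D_f$. The plan is a Schwarz-type reflection argument: the identity $f(-\overline{\xi}) = \overline{f(\xi)}$ extends $f$ across $iI$, while Proposition~\ref{prop:r:real}\ref{it:r:real:b} forces $\im f$ to take opposite signs on the two half-plane sides of any $ir' \in iI$ transversally, a configuration consistent with the integral representation only when $\ph$ vanishes on $-I$.

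\textbf{Part (b).} The function $\lambda_f$ is real-valued on its domain: on $Z_f$ by definition of $\Gamma_f$, and on each imaginary component $I$ because $f$ is real on $iI \subseteq D_f \cap i\R$ by the reflection identity. Continuity on $(0,\infty) \setminus \partial Z_f$ is then immediate from part~(a) and the continuity of $f$ on $D_f$. For strict monotonicity on $Z_f$, differentiate: $A := \lambda_f'(r) = f'(\zeta_f(r))\,\zeta_f'(r)$ is real since $\lambda_f$ is real on an open interval. By the Cauchy--Riemann equations, $A$ equals the rate at which $\im f$ grows across $\Gamma_f$ in the counterclockwise normal direction to the spine, and this is strictly positive by the sign description in Proposition~\ref{prop:r:real}\ref{it:r:real:b}. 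On a component $I$ of $(0,\infty) \setminus \Cl Z_f$ with $\zeta_f(r) = ir$, I would differentiate the exponential representation to obtain
\formula{
 \frac{\lambda_f'(r)}{\lambda_f(r)} & = \frac{1}{\pi} \int_{-\infty}^\infty \frac{\sign(s)\,\ph(s)}{(r+s)^2} \, ds ,
}
with a symmetric formula in the $-i\R$ case. Continuity at $\partial Z_f$ then follows from continuity of $\zeta_f$ together with local boundedness of $f$ on $D_f$ (via Proposition~\ref{prop:r:bound}), and strict monotonicity across $\partial Z_f$ from matching of piecewise strictly increasing continuous pieces.

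\textbf{Part (c).} A dominated-convergence argument in the exponential representation~\eqref{eq:r:exp} at $\xi = \zeta_f(r)$ does the job: as $r \to 0^+$, $\zeta_f(r) \to 0$ with $\re \zeta_f(r) \ge 0$, so the factor $\zeta_f(r)/(\zeta_f(r) + is)$ converges pointwise to $0$ for $s \ne 0$; Proposition~\ref{prop:r:spine:bound} supplies a dominating bound, and Proposition~\ref{prop:r:limit} identifies the limit as $f(0^+)$. The case $r \to \infty$ is symmetric, with $\zeta_f(r)/(\zeta_f(r)+is) \to 1$.

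\textbf{Main obstacle.} The most delicate step is establishing $\lambda_f' > 0$ on the imaginary-axis components $I$: the integral displayed above is not manifestly of one sign, and its positivity must be inferred indirectly, using the vanishing of $\ph$ on $-I$ (part~(a)) and continuous matching of $\lambda_f'$ with the already-known positive derivative on the spine components adjacent to $I$.
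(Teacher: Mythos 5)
First, a point of reference: the paper does not prove this proposition at all — it is imported verbatim as Theorem~4.3 of \cite{kwasnicki:rogers} — so your proposal can only be measured against the statement itself, not against an in-paper argument. Your architecture is sensible: the case split in (a), the Cauchy--Riemann sign argument for $\lambda_f'>0$ on $Z_f$, and the logarithmic-derivative identity $\lambda_f'(r)/\lambda_f(r)=\pi^{-1}\int_{-\infty}^{\infty}\sign(s)\,\ph(s)\,(r+s)^{-2}\,ds$ on an imaginary-axis component (which I checked against~\eqref{eq:r:exp}) are all correct ingredients. But the step you yourself flag as the main obstacle is genuinely unresolved, and the proposed repair does not work: continuity of $\lambda_f'$ together with positivity on the spine components adjacent to $I$ controls $\lambda_f'$ only at the endpoints of $I$, and a $C^1$ function can have positive derivative at both endpoints of an interval and negative derivative inside, so ``matching of piecewise strictly increasing pieces'' proves nothing about the interior of $I$. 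The irony is that the Cauchy--Riemann argument you already use on $Z_f$ closes this gap verbatim: on a component with $\zeta_f(r)=ir$, part (a) makes $f$ holomorphic and real on $iI$, so $i f'(ir)=\lambda_f'(r)$ is real and, by the Cauchy--Riemann equations, equals the directional derivative of $\im f$ at $ir$ in the direction $-1$; by Proposition~\ref{prop:r:real}\ref{it:r:real:b} and the reflection identity $f(-\overline{\xi})=\overline{f(\xi)}$ one has $\im f<0$ immediately to the right of $iI$ and $\im f>0$ immediately to the left, whence $\lambda_f'(r)\ge 0$. Strictness — which you also assert without argument on $Z_f$ — still requires ruling out $f'(\zeta_f(r))=0$; a critical point of $f$ on the level set $\{\im f=0\}$ would force that level set to branch into at least four arcs, contradicting the simple-curve description in Proposition~\ref{prop:r:real}\ref{it:r:real:c}.

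The second genuine gap is the extension across $\partial Z_f$. Your justification, ``continuity of $\zeta_f$ together with local boundedness of $f$ on $D_f$'', is inapplicable precisely where it is needed: for $r_0\in\partial Z_f$ the point $\zeta_f(r_0)$ in general does \emph{not} belong to $D_f$ (this is exactly why the proposition excludes $\partial Z_f$ in part (a)), so $f$ need not be defined, let alone locally bounded, near $\zeta_f(r_0)$. Monotonicity gives the existence of one-sided limits of $\lambda_f$ at $r_0$ in $[0,\infty]$ for free, but you must still show they are finite and equal, and you must account for the fact that $\partial Z_f$ is merely closed with empty interior (it need not be discrete), so the extension is not simply a finite concatenation of increasing pieces. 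Finally, in part (c) the dominated convergence step is not supported by what you cite: Proposition~\ref{prop:r:spine:bound} bounds the integral for each \emph{fixed} $\xi$ by a quantity of order $(1+|\xi|)/\sqrt{|\xi|}$, which is not a single integrable majorant as $\zeta_f(r)\to 0$ or $\infty$; and since $\zeta_f(r)$ may approach the imaginary axis, the comparison of $|f(\zeta_f(r))|$ with $|f(r)|$ via Proposition~\ref{prop:r:bound} degenerates through the factor $\cos\Arg\zeta_f(r)$, so the identification of the limits with $f(0^+)$ and $f(\infty^-)$ needs a genuinely more careful argument (for instance, combining the monotonicity of $\lambda_f$ with one-sided bounds where $\cos\Arg\zeta_f(r)$ is bounded away from zero).
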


We define the \emph{symmetrised spine} $\Gamma_f^\star$ to be the union of the spine $\Gamma_f$, its mirror image $-\overline{\Gamma}_f$, and the endpoints of $\Gamma_f$ (which necessarily lie on the imaginary axis); see Section~4.2 in~\cite{kwasnicki:rogers}. The symmetrised spine (or, more precisely, its closure) divides the complex plane into two regions (see Figure~\ref{fig:dom}):
\formula{
 D_f^+ & = \Int \{r e^{i \alpha} \in \C : r \ge 0 , \alpha \in [\thet(r), \pi - \thet(r)]\} , \\
 D_f^- & = \Int \{r e^{i \alpha} \in \C : r \ge 0 , \alpha \in [-\pi - \thet(r), \thet(r)]\} ;
}
again, see Section~4.2 in~\cite{kwasnicki:rogers}. On the complement of the imaginary axis, these sets are simply the regions where $\im f(\xi)$ is positive or negative, respectively:
\formula{
 D_f^+ \setminus i \R & = \{\xi \in \C \setminus i \R : \im f(\xi) > 0\} , \\
 D_f^- \setminus i \R & = \{\xi \in \C \setminus i \R : \im f(\xi) < 0\} .
}

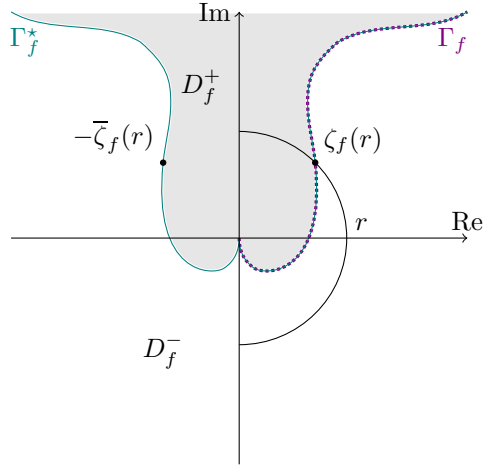
\begin{figure}
\centering
\begin{tikzpicture}
\footnotesize
\coordinate (X) at (3,0);
\coordinate (Y) at (0,3);
\coordinate (Xn) at (-3,0);
\coordinate (Yn) at (0,-3);
\coordinate (O) at (0,0);
\coordinate (E) at (1,2);
\coordinate (zeta) at (1,1);
\coordinate (nzeta) at (-1,1);
\coordinate (modzetax) at (1.4142,0);
\coordinate (modzetay) at (0,1.4142);
\draw[white, very thick, fill=gray!20!white] (O) .. controls (-0.00,-0.15) and (-0.05,-0.3) .. (-0.2,-0.4) .. controls (-0.35,-0.5) and (-1.15,-0.5) .. (-1,1) .. controls (-0.97,1.3) and (-0.9,1.5) .. (-0.9,1.8) .. controls (-0.9,2.1) and (-0.95,2.3) .. (-1.3,2.6) .. controls (-1.8,2.9) and (-2.5,2.7) .. (-3,3) -- (Y);
\draw[white, very thick, fill=gray!20!white] (O) .. controls (0.00,-0.15) and (0.05,-0.3) .. (0.2,-0.4) .. controls (0.35,-0.5) and (1.15,-0.5) .. (1,1) .. controls (0.97,1.3) and (0.9,1.5) .. (0.9,1.8) .. controls (0.9,2.1) and (0.95,2.3) .. (1.3,2.6) .. controls (1.8,2.9) and (2.5,2.7) .. (3,3) -- (Y);
\draw[very thick, violet, densely dotted] (O) .. controls (0.00,-0.15) and (0.05,-0.3) .. (0.2,-0.4) .. controls (0.35,-0.5) and (1.15,-0.5) .. (1,1) .. controls (0.97,1.3) and (0.9,1.5) .. (0.9,1.8) .. controls (0.9,2.1) and (0.95,2.3) .. (1.3,2.6) .. controls (1.8,2.9) and (2.5,2.7) .. (3,3);
\draw[teal] (O) .. controls (0.00,-0.15) and (0.05,-0.3) .. (0.2,-0.4) .. controls (0.35,-0.5) and (1.15,-0.5) .. (1,1) .. controls (0.97,1.3) and (0.9,1.5) .. (0.9,1.8) .. controls (0.9,2.1) and (0.95,2.3) .. (1.3,2.6) .. controls (1.8,2.9) and (2.5,2.7) .. (3,3);
\draw[teal] (O) .. controls (-0.00,-0.15) and (-0.05,-0.3) .. (-0.2,-0.4) .. controls (-0.35,-0.5) and (-1.15,-0.5) .. (-1,1) .. controls (-0.97,1.3) and (-0.9,1.5) .. (-0.9,1.8) .. controls (-0.9,2.1) and (-0.95,2.3) .. (-1.3,2.6) .. controls (-1.8,2.9) and (-2.5,2.7) .. (-3,3);
\draw (modzetay) arc (90:-90:1.4142);
\filldraw (zeta) circle[radius=1pt] node[above right] {$\zeta_f(r)$};
\filldraw (nzeta) circle[radius=1pt] node[above left] {$-\overline{\zeta}_f(r)$};
\node[above right] at (modzetax) {$r$};
\node[violet, below right] at (2.5,2.9) {$\Gamma_f$};
\node[teal, below left] at (-2.5,2.9) {$\Gamma_f^\star$};
\node at (-0.5,2) {$D_f^+$};
\node at (-1,-1.5) {$D_f^-$};
\draw[->] (Xn) -- (X) node[above] {$\re$};
\draw[->] (Yn) -- (Y) node[left] {$\im$};
\end{tikzpicture}
\caption{The spine $\Gamma_f$ (purple dotted line), the symmetrised spine $\Gamma_f^\star$ (teal line), and the regions $D_f^+$ (gray) and $D_f^-$ (white).}
\label{fig:dom}
\end{figure}

\begin{proposition}[Theorem~2 in~\cite{rogers}; Theorem~5.1 in~\cite{kwasnicki:rogers}]
\label{prop:r:wh}
A function $f(\xi)$ holomorphic on $\hp$ is a non-zero Rogers function if and only if it admits a Wiener--Hopf factorisation
\formula[eq:r:wh:fact]{
 f(\xi) & = f^+(-i \xi) f^-(i \xi)
}
for some non-zero complete Bernstein functions $f^+(\xi)$, $f^-(\xi)$ and for all $\xi \in \hp$, or, equivalently, $\xi \in D_f$. The factors $f^+(\xi)$ and $f^-(\xi)$ are defined uniquely, up to multiplication by a constant: for a given pair of Wiener--Hopf factors $f^+(\xi)$ and $f^-(\xi)$, all other pairs are of the form $C f^+(\xi)$ and $C^{-1} f^-(\xi)$ for $C > 0$.
\end{proposition}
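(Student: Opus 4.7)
The plan is to prove the two implications separately, then handle uniqueness.

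For the implication from factorisation to Rogers function, suppose $f(\xi) = f^+(-i\xi) f^-(i\xi)$ with $f^\pm$ non-zero complete Bernstein. Since $\pm i\xi \in \C \setminus (-\infty, 0]$ whenever $\xi \in \hp$, the factorisation defines $f$ as a holomorphic, non-vanishing function on $\hp$. To check $\re(f(\xi)/\xi) \ge 0$, equivalent to $\lvert \Arg f(\xi) - \Arg \xi \rvert \le \tfrac{\pi}{2}$, I consider $\xi$ with $\im \xi > 0$: then $-i\xi$ lies in the lower half-plane with $\Arg(-i\xi) = \Arg \xi - \tfrac{\pi}{2}$, while $i\xi$ lies in the upper with $\Arg(i\xi) = \Arg \xi + \tfrac{\pi}{2}$. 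The bounds~\eqref{eq:cbf:arg} for complete Bernstein functions give
\[
 \Arg f^+(-i\xi) \in [\Arg \xi - \tfrac{\pi}{2}, 0] \quad \text{and} \quad \Arg f^-(i\xi) \in [0, \Arg \xi + \tfrac{\pi}{2}],
\]
and summing yields the required estimate. The case $\im \xi < 0$ is symmetric, and $\xi \in (0, \infty)$ follows by continuity, the factors being real-positive there.

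For the converse, I would start from the exponential representation~\eqref{eq:r:exp} of $f$, with constant $c > 0$ and weight $\ph \colon \R \to [0, \pi]$. Splitting the integral at $s = 0$, substituting $s \mapsto -s$ in the negative part, and using the elementary identities $\tfrac{-i\xi}{-i\xi + s} = \tfrac{\xi}{\xi + is}$ and $\tfrac{i\xi}{i\xi + s} = \tfrac{\xi}{\xi - is}$, one writes $\log f(\xi) = \log c + I_+(-i\xi) + I_-(i\xi)$, where
\[
 I_\pm(\eta) = \frac{1}{\pi} \int_0^\infty \biggl( \frac{\eta}{\eta + s} - \frac{1}{1 + s} \biggr) \frac{\ph(\pm s)}{s} \, ds.
\]
Choosing any $c^\pm > 0$ with $c^+ c^- = c$ and defining $f^\pm(\eta) = c^\pm \exp(I_\pm(\eta))$, the factorisation $f(\xi) = f^+(-i\xi) f^-(i\xi)$ holds by construction. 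Since both weight functions $s \mapsto \ph(\pm s)$ take values in $[0, \pi]$ on $(0, \infty)$, the exponential representation of complete Bernstein functions (Theorem~6.10 in~\cite{ssv}) identifies $f^\pm$ as non-zero complete Bernstein functions.

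For uniqueness, suppose $f_1^+(-i\xi) f_1^-(i\xi) = f_2^+(-i\xi) f_2^-(i\xi)$ with all four factors non-zero complete Bernstein. As complete Bernstein functions have no zeros outside $(-\infty, 0]$, the ratios $g = f_1^+/f_2^+$ and $h = f_1^-/f_2^-$ are holomorphic and non-vanishing on $\C \setminus (-\infty, 0]$. The identity $g(-i\xi) h(i\xi) \equiv 1$ on $\hp$ permits analytic continuation of $g(-i\xi)$ across each half of the imaginary axis via $1/h(i\xi)$, producing a nowhere-zero entire extension (the possible singularity at $\xi = 0$ being removable thanks to the polynomial bounds below). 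Corollary~\ref{cor:cbf:bound} supplies two-sided polynomial bounds on $g$ and $1/g$ throughout $\C$; a non-vanishing entire function with polynomial bounds above and below must be constant, so $g \equiv C$ and $h \equiv 1/C$ for some $C > 0$.

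The main obstacle will be the uniqueness argument: one must carefully track the sectorial polynomial upper and lower bounds from Corollary~\ref{cor:cbf:bound}, verify they survive the analytic continuation to all of $\C$, and rule out an essential singularity at $\xi = 0$, before a Liouville-type conclusion applies. A secondary technicality, folded into a citation, is the exponential representation of complete Bernstein functions underlying the construction of $f^\pm$.
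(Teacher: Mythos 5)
The paper does not prove this proposition itself --- it is quoted from Rogers' Theorem~2 and Theorem~5.1 of~\cite{kwasnicki:rogers} --- so there is no in-paper argument to compare against line by line. That said, your two existence directions are correct: the argument-addition step via~\eqref{eq:cbf:arg} (noting that the sum of the two arguments stays in $(-\pi,\pi)$, so it really is the argument of the product) gives $\lvert\Arg f(\xi)-\Arg\xi\rvert\le\tfrac{\pi}{2}$, and your splitting of the exponential representation~\eqref{eq:r:exp} at $s=0$ produces exactly the factors recorded in the paper as~\eqref{eq:r:wh}, which are complete Bernstein by the exponential representation of that class.

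The gap is in the uniqueness step, precisely where you flag the difficulty but then resolve it incorrectly. Writing $G(\xi)=g(-i\xi)=1/h(i\xi)$, two issues remain. First, the identity $g(-i\xi)h(i\xi)=1$ is given only on $\hp$, and $\C\setminus i\R$ is disconnected; to glue the two representations into one function on $\C\setminus\{0\}$ you must also check agreement on the left half-plane, which requires the reflection property $f^\pm(\overline{w})=\overline{f^\pm(w)}$ of complete Bernstein functions (real on $(0,\infty)$). Second, and more seriously, Corollary~\ref{cor:cbf:bound} (patched over the two overlapping sectors where each representation's factor $\cos(\tfrac12\Arg(\mp i\xi))$ is bounded below) yields only $c\,|\xi|(1+|\xi|)^{-2}\le|G(\xi)|\le C\,(1+|\xi|)^{2}|\xi|^{-1}$. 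Near $\xi=0$ this permits a simple pole or a simple zero, so the singularity is \emph{not} removable ``thanks to the polynomial bounds below'': the Liouville-type argument only forces $G(\xi)=C\xi^{n}$ with $n\in\{-1,0,1\}$, and $n=\pm1$ is genuinely realised by pairs of complete Bernstein functions (e.g.\ $f_1^+=1$, $f_2^+(w)=w$). To exclude $n=\pm1$ you need an input you never invoke: $G$ is positive on both halves of the imaginary axis, since $G(it)=g(t)>0$ and $G(-it)=1/h(t)>0$ for $t>0$, whereas $C\xi^{\pm1}$ cannot be positive on both rays. With that observation added (and the left-half-plane gluing justified), your uniqueness argument closes; as written, it does not.
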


By formula~(5.2) in~\cite{kwasnicki:rogers}, if $f(\xi)$ has the exponential representation~\eqref{eq:r:exp}, then the Wiener--Hopf factors are given by
\formula*[eq:r:wh]{
 f^+(\xi) & = c_+ \, \exp \biggl(\frac{1}{\pi} \int_0^\infty \biggl(\frac{\xi}{\xi + s} - \frac{1}{1 + s}\biggr) \frac{\ph(s)}{s} \, ds\biggr) , \\
 f^-(\xi) & = c_- \, \exp \biggl(\frac{1}{\pi} \int_0^\infty \biggl(\frac{\xi}{\xi + s} - \frac{1}{1 + s}\biggr) \frac{\ph(-s)}{s} \, ds\biggr) ,
}
where $c_+, c_- > 0$ satisfy $c_+ c_- = c$. We stress that expressions of the form $f^+(\xi) / f^+(\eta)$, $f^-(\xi) / f^-(\eta)$ and $f^+(\xi) f^-(\eta)$ do not depend on the choice of the pair of Wiener--Hopf factors in Proposition~\ref{prop:r:wh}.

The next result is Theorem~5.7 in~\cite{kwasnicki:rogers} applied with $R = 0$ to the Rogers function $f_\sigma(\xi) = \sigma + f(\xi)$, combined with Lemma~6.1 in~\cite{kwasnicki:rogers}.

\begin{proposition}[Theorem~5.7 and Lemma~6.1 in~\cite{kwasnicki:rogers}]
\label{prop:r:wh:bd}
If $f(\xi)$ is a non-constant Rogers function such that $f(0^+) = 0$, $\sigma > 0$ and $f_\sigma(\xi) = \sigma + f(\xi)$, then for $\xi, \eta > 0$ we have
\formula{
 f_\sigma^+(\xi) f_\sigma^-(\eta) & = \sigma \exp\biggl(\frac{1}{\pi} \int_0^\infty \bigl( \Arg(\zeta_f(r) + i \eta) - \Arg(\zeta_f(r) - i \xi) \bigr) \, \frac{d\lambda_f(r)}{\sigma + \lambda_f(r)} \biggr) ,
}
and the above expression defines a complete Bernstein function of $\sigma$. Similarly, if $0 < \xi < \eta$, then
\formula{
 \frac{f_\sigma^+(\xi)}{f_\sigma^+(\eta)} & = \exp\biggl(\frac{1}{\pi} \int_0^\infty \bigl(\Arg(\zeta_f(r) - i \eta) - \Arg(\zeta_f(r) - i \xi)\bigr) \, \frac{d\lambda_f(r)}{\sigma + \lambda_f(r)} \biggr) , \\
 \frac{f_\sigma^-(\xi)}{f_\sigma^-(\eta)} & = \exp\biggl(\frac{1}{\pi} \int_0^\infty \bigl(\Arg(\zeta_f(r) + i \xi) - \Arg(\zeta_f(r) + i \eta)\bigr) \, \frac{d\lambda_f(r)}{\sigma + \lambda_f(r)} \biggr) ,
}
and both expressions define a complete Bernstein function of $\sigma$.
\end{proposition}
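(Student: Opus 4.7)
The plan is to derive the formula by starting from the exponential representation~\eqref{eq:r:wh} of the Wiener--Hopf factors applied to $f_\sigma(\xi) = \sigma + f(\xi)$, and then deforming the contour of integration from the imaginary axis to the symmetrised spine $\Gamma_f^\star$. Concretely, by~\eqref{eq:r:wh} we have
\formula{
 \log \frac{f_\sigma^+(\xi)}{f_\sigma^+(\eta)} & = \frac{1}{\pi} \int_0^\infty \biggl(\frac{\xi}{\xi + s} - \frac{\eta}{\eta + s}\biggr) \frac{\ph_\sigma(s)}{s} \, ds
}
with $\ph_\sigma(s) = \Arg f_\sigma(-i s)$ on the positive imaginary axis (with an analogous formula for $f_\sigma^-$), so $\log(f_\sigma^+(\xi)/f_\sigma^+(\eta))$ is a Cauchy-type integral of $\ph_\sigma$ along $i (0, \infty)$. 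An integration by parts rewrites this in terms of $d\ph_\sigma(s)$, or equivalently, after unfolding to a contour integral over the full imaginary axis, in terms of $d\log f_\sigma(-is)$.

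The key step is to deform the contour from $i\R$ to the symmetrised spine $\Gamma_f^\star$. This is legitimate because $f_\sigma(\xi)$ is holomorphic and non-vanishing in $D_f$ (here $\sigma > 0$ provides a uniform positive lower bound on $|f_\sigma|$), $\Gamma_f$ and the imaginary axis together bound the two regions $D_f^+$ and $D_f^-$ described after Proposition~\ref{prop:r:lambda}, and the tails at $0$ and at $\infty$ are controlled by Propositions~\ref{prop:r:bound} and~\ref{prop:r:spine:bound}, which give $|\log f_\sigma(\xi)| = O((1+|\xi|)/\sqrt{|\xi|})$ along rays in $D_f$. After deformation, the spine is parameterised by $\zeta_f(r)$ with $r \in Z_f$, and on the spine $f_\sigma(\zeta_f(r)) = \sigma + \lambda_f(r)$ is real and positive; hence $d\log f_\sigma$ along $\Gamma_f$ simply becomes the real measure $d\lambda_f(r)/(\sigma + \lambda_f(r))$.

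The second step is an algebraic identification of the resulting kernel. After deformation, each contribution becomes a sum of integrals of the form $\int_0^\infty \log(\zeta_f(r) \mp i\xi) \, d\log f_\sigma$ evaluated on both branches of $\Gamma_f^\star$, and the imaginary parts of these logarithms combine to produce the $\Arg(\zeta_f(r) + i\eta) - \Arg(\zeta_f(r) - i\xi)$ kernel; an analogous computation gives the quotient formulas. The constant prefactor $\sigma$ in the product formula is pinned down by letting $\xi, \eta \to \infty$ and using $f_\sigma^+(\xi) f_\sigma^-(\eta) / (\xi \eta) \to 1$ combined with the asymptotics $\Arg(\zeta_f(r) \pm i t) \to \pm\tfrac{\pi}{2}$ as $t \to \infty$, so that the exponent tends to $\int_0^\infty d\lambda_f(r)/(\sigma + \lambda_f(r)) = \log((\sigma + \lambda_f(\infty^-))/\sigma)$ up to a telescoping correction. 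The hardest point here is to check that the contour deformation and the interchange of limits are justified uniformly; this is the step where the detailed curvature control~\eqref{eq:r:real:curv} from Proposition~\ref{prop:r:real}\ref{it:r:real:d} enters, together with the bounds in Proposition~\ref{prop:r:spine:bound}.

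Finally, to prove the complete Bernstein property in $\sigma$, I would invoke the exponential representation of complete Bernstein functions (the analogue of Theorem~\ref{thm:rogers}\ref{it:r:d}): it is enough to check that the kernels $\Arg(\zeta_f(r) + i\eta) - \Arg(\zeta_f(r) - i\xi)$, $\Arg(\zeta_f(r) - i\eta) - \Arg(\zeta_f(r) - i\xi)$, and $\Arg(\zeta_f(r) + i\xi) - \Arg(\zeta_f(r) + i\eta)$ take values in $[0, \pi]$ as $r$ varies over $(0, \infty)$ (which follows geometrically from $\Arg \zeta_f(r) \in [-\tfrac{\pi}{2}, \tfrac{\pi}{2}]$, i.e.\ Proposition~\ref{prop:r:real}\ref{it:r:real:a}, together with $\xi, \eta > 0$ and the ordering $\xi < \eta$ in the quotient formulas), and then perform a change of variables $s = \lambda_f(r)$ to bring the exponents into the standard Stieltjes form of $(\sigma + s)^{-1} \psi(s) ds$ with $\psi$ bounded by $\pi$. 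I expect the main obstacle throughout to be the contour deformation together with the associated uniform bounds, since the geometry of $\Gamma_f^\star$ may be complicated when $Z_f \ne (0, \infty)$ and the spine has multiple connected components; the convention that $\zeta_f$ is extended piecewise linearly on $(0, \infty) \setminus Z_f$ handles the bookkeeping on the `flat' pieces of the symmetrised spine, where $d\lambda_f$ vanishes and the corresponding pieces contribute nothing.
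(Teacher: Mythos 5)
The paper does not actually prove this proposition: it is imported verbatim from the earlier work \cite{kwasnicki:rogers} (Theorem~5.7 applied with $R=0$ to $f_\sigma = \sigma + f$, combined with Lemma~6.1 there), so there is no in-paper argument to compare against. Your sketch is a reasonable reconstruction of the strategy used in that cited proof --- integrate by parts in the exponential representation, unfold to a contour integral over $i\R$, deform to the symmetrised spine $\Gamma_f^\star$ where $\log f_\sigma$ has real boundary values, and read off $d\lambda_f(r)/(\sigma+\lambda_f(r))$ --- and the same machinery is spelled out in this paper in the proof of Proposition~\ref{prop:pt:quot}, so the route is the expected one.

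Two points in your outline are genuinely off, though. First, you never use the hypothesis $f(0^+)=0$, but it is exactly what makes the multiplicative constant equal to $\sigma$: the clean normalisation is at $\xi,\eta\to 0^+$, where the kernel $\Arg(\zeta_f(r)+i\eta)-\Arg(\zeta_f(r)-i\xi)$ tends to $0$ pointwise while $f_\sigma^+(0^+)f_\sigma^-(0^+)=f_\sigma(0^+)=\sigma$; your proposed normalisation at $\xi,\eta\to\infty$ is delicate (and requires an extra limiting argument) whenever $f(\infty^-)=\infty$, which is the typical case. Second, the sign of the quotient kernels is wrong as you state it: for $\re\zeta>0$ the map $t\mapsto\Arg(\zeta-it)$ is decreasing on $(0,\infty)$, so for $0<\xi<\eta$ the kernel $\Arg(\zeta_f(r)-i\eta)-\Arg(\zeta_f(r)-i\xi)$ lies in $[-\pi,0]$, not $[0,\pi]$. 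The complete Bernstein property then comes from the fact that $\exp\bigl(-\tfrac{1}{\pi}\int_0^\infty\tfrac{\rho(s)}{\sigma+s}\,ds\bigr)$ with $0\le\rho\le\pi$ and $\rho(s)/s$ integrable at $0$ is a complete Bernstein function of $\sigma$, with the prefactor $\sigma=\exp(\log\sigma)$ absorbed as the case $\rho\equiv\pi$ in the product formula; making this precise is the content of Lemma~6.1 of \cite{kwasnicki:rogers}, which your final paragraph glosses over.
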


\subsection{Extensions and auxiliary results}

We will need a few more properties of Rogers functions. The first one is a simple observation.

\begin{proposition}
\label{prop:r:inc}
If $f(\xi)$ is a Rogers function, then $|f(\xi)|$ is a non-decreasing function of $\xi > 0$.
\end{proposition}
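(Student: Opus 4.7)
The plan is to reduce the claim for a general Rogers function to a statement about complete Bernstein functions via the Wiener--Hopf factorisation of Proposition~\ref{prop:r:wh}, and then to verify the reduced statement by a direct computation.

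If $f$ is identically zero the claim is trivial. Otherwise, Proposition~\ref{prop:r:wh} provides non-zero complete Bernstein factors $f^+, f^-$ with $f(\xi) = f^+(-i\xi) f^-(i\xi)$ on $\hp$. At $\xi = x > 0$ this yields $|f(x)| = |f^+(-ix)| \cdot |f^-(ix)|$, and the reality identity $\overline{f^\pm(\bar\eta)} = f^\pm(\eta)$ (which follows immediately from the Stieltjes representation~\eqref{eq:cbf:int}) gives $|f^\pm(\mp ix)| = |f^\pm(\pm ix)|$. Since a product of non-negative non-decreasing functions is non-decreasing, it therefore suffices to show that for every complete Bernstein function $h$, the map $y \mapsto |h(iy)|$ is non-decreasing on $(0, \infty)$.

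To prove this reduced claim, I would apply~\eqref{eq:cbf:int} to write $h(iy) = R(y) + i I(y)$, where both $R(y)$ and $I(y)$ are non-negative and given by explicit integrals of $y^2/(s(s^2+y^2))$ and $y/(s^2+y^2)$ against the positive Stieltjes measure $d\mu(s)$, plus non-negative contributions from the coefficients $b$, $c$. Differentiating $|h(iy)|^2 = R(y)^2 + I(y)^2$, the quantity $R R' + I I'$ splits into three groups: a linear part involving only $b$, $c$ and elementary non-negative integrals; a mixed term $bB + byB'$ (where $B(y) := \pi(I(y) - by)$) that telescopes via the identity $y(s^2+y^2) + y(s^2-y^2) = 2 y s^2$ into $\tfrac{2by}{\pi}\int s^2/(s^2+y^2)^2 d\mu(s) \geq 0$; and a quadratic term which, expressed as a double integral against $d\mu(s_1)\, d\mu(s_2)$ and symmetrised in $s_1 \leftrightarrow s_2$, has integrand proportional to $4 y^2 s_1 s_2 + 2 s_1^2 s_2^2 + 2 y^4 (s_1^2 - s_1 s_2 + s_2^2)/(s_1 s_2)$, which is manifestly non-negative because $s_1^2 - s_1 s_2 + s_2^2 > 0$ for $s_1, s_2 > 0$. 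Summing the three groups gives $\tfrac{d}{dy}|h(iy)|^2 \geq 0$, as required.

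The main obstacle I anticipate is identifying the correct symmetric form for the quadratic term: without symmetrisation in $s_1 \leftrightarrow s_2$, the integrand contains $B'(y) = \int (s^2 - y^2)/(s^2+y^2)^2 d\mu(s)$, which has indefinite sign, so positivity of the full expression only emerges after averaging. A secondary technical point, easily handled by truncating $\mu$ to compact support and passing to the limit via continuity of $h(iy)$ in $\mu$, is the possible divergence of the formal double integral when $\int d\mu(s)/s$ is infinite.
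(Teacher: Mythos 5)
Your argument is correct, but it takes a genuinely different route from the paper's. The paper proves the proposition in two lines from the exponential representation~\eqref{eq:r:exp}: for $\xi > 0$ one has $\log |f(\xi)| = \log c + \tfrac{1}{\pi} \int_{-\infty}^\infty \bigl(\tfrac{\xi^2}{\xi^2 + s^2} - \tfrac{1}{1 + |s|}\bigr) \tfrac{\ph(s)}{|s|}\, ds$, and since $\ph \ge 0$ and $\xi^2/(\xi^2+s^2)$ is non-decreasing in $\xi$, so is the whole expression. You instead reduce via the Wiener--Hopf factorisation of Proposition~\ref{prop:r:wh} to the claim that $y \mapsto |h(iy)|$ is non-decreasing for every complete Bernstein function $h$, and verify that claim by differentiating $R^2 + I^2$; I checked your three groups of terms and the symmetrised quadratic integrand $4y^2 s_1 s_2 + 2 s_1^2 s_2^2 + 2y^4(s_1^2 - s_1 s_2 + s_2^2)/(s_1 s_2)$ is indeed what comes out, and is non-negative, so the computation goes through (and your convergence worry is in fact vacuous: the standing integrability condition $\int \min(s^{-1}, s^{-2})\,\mu(ds) < \infty$ already makes all the single integrals, hence their products, absolutely convergent). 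The trade-off: your route rests on the factorisation theorem, which is a far deeper input than the exponential representation, and the computational lemma it reduces to admits the same one-line proof as the paper's — a non-zero complete Bernstein function has an exponential representation $h(\xi) = e^{a} \exp\bigl(\int_0^\infty (\tfrac{1}{1+t} - \tfrac{1}{\xi + t}) \gamma(t)\, dt\bigr)$ with $\gamma \in [0,1]$, whence $\log|h(iy)| = a + \int_0^\infty (\tfrac{1}{1+t} - \tfrac{t}{t^2 + y^2}) \gamma(t)\, dt$ is visibly non-decreasing — so the hand computation, while valid, is doing avoidable work. What your version buys is the standalone observation about $|h(iy)|$ for complete Bernstein functions, proved without appeal to any exponential representation.
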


\begin{proof}
Suppose that $f(\xi)$ is a non-zero Rogers function with exponential representation~\eqref{eq:r:exp}. Since $\re(\xi / (\xi + i s)) = \xi^2 / (\xi^2 + s^2)$ for $\xi > 0$ and $s \in \R$, we have
\formula{
 \log |f(\xi)| & = \re \Arg f(\xi) = \log c + \frac{1}{\pi} \int_{-\infty}^\infty \biggl(\frac{\xi^2}{\xi^2 + s^2} - \frac{1}{1 + |s|}\biggr) \frac{\ph(s)}{|s|} \, ds
}
for $\xi > 0$, and the right-hand side is clearly a non-decreasing function of $\xi$.
\end{proof}

Our next result extends Theorem~5.5 in~\cite{kwasnicki:rogers}. Its proof is exactly the same as that of Theorem~5.5 in~\cite{kwasnicki:rogers}, with one modification: where originally Proposition~3.18 in~\cite{kwasnicki:rogers} is used to justify the use of Fubini's theorem, one should apply its extension given in Proposition~\ref{prop:r:spine:bound} above. We omit the details.

\begin{proposition}
\label{prop:r:bd2}
If $f(\xi)$ is a non-constant and non-degenerate Rogers function, $h(\xi)$ is a non-zero Rogers function, $\xi_1, \xi_2 \in D_f^+ \cup D_f^-$, and for some constant $C$ we have $\lvert\Arg h(\xi)\rvert \le C \re \xi / |\xi|$ for all $\xi \in \Gamma_f$, then
\formula*[eq:r:bd2]{
 \hspace*{7em} & \hspace*{-7em} \frac{1}{2 \pi i} \int_{\Gamma_f^\star} \biggl( \frac{1}{z - \xi_1} - \frac{1}{z - \xi_2} \biggr) \log h(z) dz \\
 & = \begin{cases}
  \log h^+(-i \xi_1) - \log h^+(-i \xi_2) & \text{if $\xi_1, \xi_2 \in D_f^+$,} \\ 
  \log h^-(i \xi_2) - \log h^-(i \xi_1) & \text{if $\xi_1, \xi_2 \in D_f^-$,} \\ 
  \log h^+(-i \xi_1) + \log h^-(i \xi_2) & \text{if $\xi_1 \in D_f^+$, $\xi_2 \in D_f^-$;}
 \end{cases}
}
in particular, the integral is absolutely convergent.
\end{proposition}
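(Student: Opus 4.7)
The plan is to adapt the proof of Theorem~5.5 in~\cite{kwasnicki:rogers}, substituting the $p$-parameterised estimate of Proposition~\ref{prop:r:spine:bound} for the $p=0$ version (Proposition~3.18 of~\cite{kwasnicki:rogers}) used there. I would start from the Wiener--Hopf factorisation of Proposition~\ref{prop:r:wh}: writing $h(z) = h^+(-iz)\, h^-(iz)$, one decomposes
\[
\log h(z) = \log h^+(-iz) + \log h^-(iz).
\]
Because $h^\pm$ are complete Bernstein functions (holomorphic on $\C \setminus (-\infty, 0]$), the first summand extends holomorphically to $D_f^+$ and the second to $D_f^-$, with the symmetrised spine $\Gamma_f^\star$ serving as the common boundary.

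Next, I would verify absolute convergence of the integral in~\eqref{eq:r:bd2}. The hypothesis $\lvert\Arg h(\xi)\rvert \le C\re\xi/|\xi|$ on $\Gamma_f$, together with the symmetry $h(-\overline{\xi}) = \overline{h(\xi)}$, extends the bound to the whole of $\Gamma_f^\star$, and Proposition~\ref{prop:r:spine:bound} applied to $h$ with $p = C$ then gives
\[
\lvert\log h(z)\rvert \le \lvert\log c\rvert + \sqrt{2(\pi + 2C)}\,\frac{1 + |z|}{\sqrt{|z|}} \qquad \text{for } z \in \Gamma_f^\star.
\]
Combined with the $O(|z|^{-2})$ decay of $1/(z-\xi_1) - 1/(z-\xi_2)$ at infinity and the length estimate in Proposition~\ref{prop:r:real}\ref{it:r:real:d}, a dyadic summation yields absolute integrability. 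The heart of the proof is then a contour-closing argument: for the $\log h^+(-iz)$ piece I would close $\Gamma_f^\star$ (oriented as the positively-oriented boundary of $D_f^+$) by a large arc in $D_f^+$. Since $h^+$ is a complete Bernstein function and $D_f^+$ avoids the branch cut of $h^+(-i\,\cdot\,)$, one has $\lvert\log h^+(-iz)\rvert = O(\log|z|)$ along that arc, so its contribution vanishes, and the residue theorem yields $\log h^+(-i\xi_j)$ with sign $+1$ for $\xi_1$ and $-1$ for $\xi_2$ whenever $\xi_j \in D_f^+$. Treating $\log h^-(iz)$ symmetrically by closing in $D_f^-$ (where the induced orientation is reversed) produces the analogous residues with the sign flip visible in the second case of~\eqref{eq:r:bd2}. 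Combining the two halves recovers all three cases exactly as stated.

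The main technical obstacle, as in~\cite{kwasnicki:rogers}, is to justify the contour deformation near the endpoints of $\Gamma_f$ on $i\R$, where $\log h^\pm$ may develop mild singularities: one has to indent $\Gamma_f^\star$ by small arcs around these endpoints, bound their contribution using Proposition~\ref{prop:r:spine:bound}, and let the indentation radius tend to zero. The second delicate point is a Fubini-type interchange that allows one to substitute the Stieltjes representations of $\log h^\pm$ into the contour integral; this is precisely where the $p$-parameterised bound of Proposition~\ref{prop:r:spine:bound} supplies the uniform domination required to accommodate the asymmetric hypothesis $\lvert\Arg h(\xi)\rvert \le C\re\xi/|\xi|$ in place of $h(\xi) \in (0,\infty)$.
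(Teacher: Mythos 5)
Your proposal is correct and follows essentially the same route as the paper, which simply observes that the proof of Theorem~5.5 in~\cite{kwasnicki:rogers} carries over verbatim once the Fubini step is justified by Proposition~\ref{prop:r:spine:bound} (the $p$-parameterised extension of Proposition~3.18 there) instead of the $p=0$ case. Your reconstruction of that proof — Wiener--Hopf splitting of $\log h$, the spine estimate for absolute convergence, contour closing in $D_f^+$ and $D_f^-$ with the orientation-induced sign flip, and the endpoint indentations — is consistent with the omitted details.
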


If $Z_f = (0, \infty)$, then $\zeta_f(r)$ and $\lambda_f^{-1}(s)$ are smooth functions. In the general case, we will need the following two technical results.

\begin{proposition}
\label{prop:zeta:holder}
If $f(\xi)$ is a non-constant Rogers function, then $\zeta_f(r)$ is a locally Hölder continuous function of $r \in (0, \infty)$, with exponent $\tfrac{1}{30}$.
\end{proposition}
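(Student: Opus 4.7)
My approach is to reduce the Hölder regularity of $\zeta_f$ to that of the argument function $\theta(r) := \Arg \zeta_f(r)$, and then to combine the curvature and length bounds of Proposition~\ref{prop:r:real}(d) to obtain a modulus of continuity for $\theta$. Since $|\zeta_f(r)| = r$ by Proposition~\ref{prop:r:real}(a), the polar identity
\[
|\zeta_f(r) - \zeta_f(s)|^2 = (r-s)^2 + 2rs\bigl(1 - \cos(\theta(r)-\theta(s))\bigr) \le (r-s)^2 + rs\,|\theta(r)-\theta(s)|^2
\]
reduces the problem to showing that $\theta$ is locally Hölder continuous with exponent $1/30$. By Proposition~\ref{prop:r:real}, $\theta$ is real-analytic on the open set $Z_f$ (where $|\theta| < \tfrac{\pi}{2}$), while $\theta \equiv \pm\tfrac{\pi}{2}$ on each component of $(0,\infty)\setminus Z_f$. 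Combined with continuity of $\theta$, this reduces the estimate to pairs $r,s$ lying inside a single connected component $(a,b)$ of $Z_f$.

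On such a component, set $u(r)=r\theta'(r)$ and substitute $\tau = \log r$, so that $d\theta/d\tau = u$ and the curvature bound \eqref{eq:r:real:curv} takes the form
\[
\biggl|\frac{du}{d\tau}\biggr| \le \frac{9(u^2+1)}{\cos\theta}, \qquad\text{equivalently,}\qquad \biggl|\frac{d}{d\tau}\arctan u\biggr| \le \frac{9}{\cos\theta}.
\]
Translating the total length bound of Proposition~\ref{prop:r:real}(d) into the same variable yields
\[
\int_{\log r}^{\log(2r)} \sqrt{1+u(\tau)^2}\,d\tau \le 300
\]
on every dyadic block. These two estimates govern the two mechanisms by which $\theta$ can vary rapidly on a short interval: directly through a large value of $u$, or indirectly through a sharp turn of the spine near the imaginary axis, where $\cos\theta \to 0$.

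The plan is then to bound $|\theta(r)-\theta(s)|$ by a dyadic decomposition argument. On portions of $[r,s]$ where $u$ stays below some threshold $M$, one has $|d\theta/d\tau| \le M$, which produces a Lipschitz estimate in $\tau$ (hence a Lipschitz estimate in $r$ on any compact subinterval). On the remaining portions, where $u > M$, the length bound forces the total $\tau$-measure to be at most $O(1/M)$, and the Riccati-type inequality for $\arctan u$ controls how rapidly $\cos\theta$ can decay toward $0$ before $u$ is forced to turn around. Optimising the threshold $M$ as a function of $|r-s|$ and integrating these differential inequalities should yield an estimate of the form $|\theta(r) - \theta(s)| \le C|r-s|^{1/30}$, with the specific exponent $1/30$ emerging from the numerical balance between the constants $9$ and $300$ appearing in the curvature and length bounds. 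The main obstacle is carrying out this bookkeeping cleanly and uniformly on compact subintervals of $(0,\infty)$, and in particular controlling the contributions from neighbourhoods of $\partial Z_f$ where $\theta$ approaches $\pm\tfrac{\pi}{2}$; there an auxiliary energy functional involving $\cos\theta$ and $u$ seems needed to integrate the Riccati inequality all the way up to the boundary.
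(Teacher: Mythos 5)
Your reductions are sound and match the paper's setup: passing to $\theta(r) = \Arg\zeta_f(r)$ via the polar identity, working in the logarithmic variable, and rewriting \eqref{eq:r:real:curv} as $\lvert d(\arctan u)/d\tau\rvert \le 9/\cos\theta$ with $u = d\theta/d\tau$. But the core of the proof is missing, and the mechanism you propose in its place does not close. The threshold/dyadic argument controls the variation of $\theta$ on $\{u \le M\}$ by $M\lvert\tau_r-\tau_s\rvert$, but on the exceptional set $\{u > M\}$ the length bound only gives $\int_{\{u>M\}} \lvert u\rvert\,d\tau \le 300$ per dyadic block, i.e.\ the variation of $\theta$ there is bounded by $300$ --- no better than the trivial bound $\lvert\theta\rvert \le \tfrac{\pi}{2}$. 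Small measure of the bad set does not imply small oscillation of $\theta$ on it, because $u$ has no a priori pointwise bound there, and the Riccati inequality cannot supply one near $\partial Z_f$, precisely because $\cos\theta \to 0$ there. So the ``optimisation over $M$'' as described yields nothing, and the ``auxiliary energy functional'' you defer to is exactly the missing idea rather than a technicality.

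What the paper actually does: set $\Theta(R) = \Arg\zeta_f(e^R)$, $p = 9\pi+1$, and $\Phi = (1 - \tfrac{2}{\pi}\Theta)^p$ on $\{\Theta \ge 0\}$. Using $\cos\Theta > 1 - \tfrac{2}{\pi}\Theta$ for $\Theta \in (0,\tfrac{\pi}{2})$, the curvature bound forces $\Phi'' > 0$ wherever $\lvert\Phi'\rvert \ge \tfrac{2p}{\pi}$; this convexity propagates a large derivative of $\Phi$ forward or backward to the nearest zero of $\Theta$, at distance at most $\tfrac{\pi}{2p}$. At zeros of $\Theta$ the derivative is under control because zeros with $\lvert\Theta'\rvert \ge 1$ are at least $\tfrac{1}{2}$ apart (Lemma~7.3 of \cite{kwasnicki:rogers}), so $\sup\{\lvert\Theta'(R)\rvert : \Theta(R)=0,\ R\in[A,B]\}$ is finite. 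Hence $\Phi$ is locally Lipschitz and $\Theta$ is locally H\"older with exponent $1/p = 1/(9\pi+1) > 1/30$. Note in particular that the exponent comes from $9\pi+1$ alone, not from any balance between $9$ and $300$; the total-length bound in Proposition~\ref{prop:r:real}\ref{it:r:real:d} is not used in this proof. Your route would need an analogue of this barrier/convexity argument, or of the separation of large-derivative zeros, to control $u$ near $\partial Z_f$; without it the proof does not go through.
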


\begin{proof}
We use the notation and results of Section~7 in~\cite{kwasnicki:rogers}. We write $r = e^R$, and we let $\Theta(R) = \Arg \zeta_f(e^R)$. By Lemma~7.1 in~\cite{kwasnicki:rogers}, we have
\formula[eq:zeta:holder:71]{
 |\Theta''(R)| & \le \frac{9 ((\Theta'(R))^2 + 1)}{\cos \Theta(R)}
}
whenever $e^R \in Z_f$, or, equivalently, when $\Theta(R) \in (-\tfrac{\pi}{2}, \tfrac{\pi}{2})$. Furthermore, by Lemma~7.3 in~\cite{kwasnicki:rogers}, if $\Theta(R_1) = \Theta(R_2) = 0$ and $|\Theta'(R_1)| \ge 1$, then $|R_2 - R_1| > \tfrac{1}{2}$.

Local Hölder continuity of $\zeta_f(r) = e^{i \Theta(\log r)}$ on $(0, \infty)$ is equivalent to local Hölder continuity of $\Theta(R)$ on $\R$. Below we prove the latter property. We begin with the following observation: for every interval $[A, B]$, the set of points $R \in [A, B]$ such that $\Theta(R) = 0$ and $|\Theta'(R)| \ge 1$ is finite (because every pair of such points is more than $\tfrac{1}{2}$ apart), and hence
\formula{
 M(A, B) & = \sup\{|\Theta'(R)| : R \in [A, B], \, \Theta(R) = 0\}
}
is finite for every $A$ and $B$ such that $\Theta(R) = 0$ for some $R \in [A, B]$. We set $M(A, B) = 0$ if $\Theta(R) \ne 0$ for $R \in [A, B]$. Let $p = 9 \pi + 1$. Our goal is to prove that $\Theta(R)$ is Hölder continuous on $[A, B]$ with exponent $\tfrac{1}{p}$, with constant in the Hölder condition determined by $M(A, B)$.

We consider an auxiliary function
\formula{
 \Phi(R) & = (1 - \tfrac{2}{\pi} \Theta(R))^p .
}
We claim that $\Phi(R)$ is locally Lipschitz continuous on the set $\{R \in \R : \Theta(R) \ge 0\}$. Let
\formula{
 \tilde{Z}_f^+ & = \{R \in \R : \Theta(R) \in (0, \tfrac{\pi}{2})\} .
}
Observe that $\cos \Theta(R) > (1 - \tfrac{2}{\pi} \Theta(R))$ for $R \in \tilde{Z}_f^+$, while if $|\Theta'(R)| \ge 1$, then $(\Theta'(R))^2 + 1 \le 2 (\Theta'(R))^2$. Thus, formula~\eqref{eq:zeta:holder:71} implies that if $R \in \tilde{Z}_f^+$ and $|\Theta'(R)| \ge 1$, then
\formula{
 |\Theta''(R)| & < \frac{18 (\Theta'(R))^2}{1 - \tfrac{2}{\pi} \Theta(R)} \, .
}
In this case,
\formula{
 \Phi''(R) & = \frac{4 p (p - 1)}{\pi^2} \, (\Theta'(R))^2 (1 - \tfrac{2}{\pi} \Theta(R))^{p - 2} - \frac{2 p}{\pi} \, \Theta''(R) (1 - \tfrac{2}{\pi} \Theta(R))^{p - 1} \\
 & > \frac{4 p (p - 1)}{\pi^2} \, (\Theta'(R))^2 (1 - \tfrac{2}{\pi} \Theta(R))^{p - 2} - \frac{36 p}{\pi} \, (\Theta'(R))^2 (1 - \tfrac{2}{\pi} \Theta(R))^{p - 2} = 0 .
}
On the other hand,
\formula{
 \Phi'(R) & = -\frac{2 p}{\pi} \, \Theta'(R) (1 - \tfrac{2}{\pi} \Theta(R))^{p - 1} ,
}
so that if $|\Phi'(R)| \ge \tfrac{2 p}{\pi}$, then $|\Theta'(R)| \ge 1$. The above observations show that
\formula{
 \text{if $R \in \tilde{Z}_f^+$ and $|\Phi'(R)| \ge \tfrac{2 p}{\pi}$, then $\Phi''(R) > 0$.}
}
By the above property, if $\Phi'(R_1) \ge \tfrac{2 p}{\pi}$ for some $R_1 \in \tilde{Z}_f^+$, then $\Phi'(R) \ge \Phi'(R_1)$ in some right neighbourhood of $R_1$, and it follows that there is $R_2$ such that $[R_1, R_2) \sub \tilde{Z}_f^+$, $\Theta(R_2) = 0$ (or, equivalently, $\Phi(R_2) = 1$), and $\Phi''(R) > 0$ for $R \in [R_1, R_2)$. Consequently, $|\Phi'(R_1)| \le |\Phi'(R_2)|$. Similarly, if $\Phi'(R_1) \le -\tfrac{2 p}{\pi}$ for some $R_1 \in \tilde{Z}_f^+$, then $\Phi''(R) > 0$ for $R \in (R_2, R_1]$ for some $R_2$ such that $\Theta(R_2) = 0$, and again $|\Phi'(R_1)| \le |\Phi'(R_2)|$. Furthermore, in both cases $|R_2 - R_1| \le \tfrac{\pi}{2 p}$. It follows that for every interval $[A, B]$ we have
\formula{
 \sup \{|\Phi'(R)| : R \in \tilde{Z}_f^+ \cap [A, B]\} & \le \max\{\tfrac{2 p}{\pi}, M(A - \tfrac{\pi}{2 p}, B + \tfrac{\pi}{2 p})\} .
}
Since $\Phi$ is continuous on $\mathbb R$, we find that $\Phi(R)$ is locally Lipschitz continuous on the closure of $\tilde{Z}_f^+$, as claimed. Consequently, $\Theta(R)$ is locally Hölder continuous with exponent $\tfrac{1}{p}$ on the closure of $\tilde{Z}_f^+$.

A very similar argument involving another auxiliary function $(1 + \tfrac{2}{\pi} \Theta(R))^p$ shows that $\Theta(R)$ is locally Hölder continuous with exponent $\tfrac{1}{p}$ also on the closure of $\tilde{Z}_f^- = \{R \in \R : \Theta(R) \in (-\tfrac{\pi}{2}, 0)\}$, and, consequently, $\Theta(R)$ is locally Hölder continuous with exponent $\tfrac{1}{p}$ on the closure of $\tilde{Z}_f = \{R \in \R : \Theta(R) \in (-\tfrac{\pi}{2}, \tfrac{\pi}{2}) \setminus \{0\}\}$. Since $\Theta(R)$ is continuous on $\R$ and piecewise constant on the complement of $\tilde{Z}_f$, we conclude that $\Theta(R)$ is locally Hölder continuous on all of $\R$, and the proof is complete.
\end{proof}

We conjecture that in fact $\zeta_f(r)$ is locally Hölder continuous with exponent $\tfrac{1}{2}$.

\begin{proposition}
\label{prop:lambda:holder}
If $f(\xi)$ is a non-constant Rogers function, then $\lambda_f^{-1}(s)$ is a locally Hölder continuous function of $s \in (\lambda_f(0^+), \lambda_f(\infty))$, with exponent $\tfrac{1}{3}$.
\end{proposition}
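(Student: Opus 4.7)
It suffices to prove the reverse inequality
\[
\lambda_f(r_2) - \lambda_f(r_1) \ge c (r_2 - r_1)^3
\]
on every compact subinterval $[A, B] \subset (0, \infty)$, with a constant $c = c(A, B) > 0$. Since $Z_f$ is an open subset of $(0, \infty)$, and by Proposition~\ref{prop:r:lambda}\ref{it:r:lambda:b} $\lambda_f$ is smooth with $\lambda_f' > 0$ on $(0, \infty) \setminus \partial Z_f$, the estimate is trivial on any compact subset of $Z_f \cup ((0, \infty) \setminus \Cl Z_f)$. The only obstruction occurs near points $r_0 \in \partial Z_f$, at which $\zeta_f(r_0) = \pm i r_0$ and $\cos \Arg \zeta_f(r) \to 0$ as $r \to r_0$, making $\lambda_f'$ potentially degenerate.

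My plan is to mirror the ODE argument used for Proposition~\ref{prop:zeta:holder}. Passing to logarithmic coordinates $R = \log r$, I set $\Theta(R) = \Arg \zeta_f(e^R)$ and $\Lambda(R) = \log \lambda_f(e^R)$. Differentiating the identity $\lambda_f(r) = f(\zeta_f(r))$ along the spine, and invoking Proposition~\ref{prop:r:spine:bound} with $p = 0$ (which is legal because $\Arg f = 0$ on $\Gamma_f$), one obtains $|\xi f'(\xi)/f(\xi)| \le \pi$ on $\Gamma_f$; combined with the curvature inequality~\eqref{eq:r:real:curv} for $\Theta$, this yields a second-order differential inequality for $\Lambda$ in terms of $\Theta$, $\Theta'$, and $\cos \Theta$, analogous to the one exploited for $\Theta$ alone in the proof of Proposition~\ref{prop:zeta:holder}.

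Then, exactly as in that proof, I would introduce an auxiliary function of the form $\Phi(R) = \Lambda'(R) \cos^q \Theta(R)$ for an appropriate exponent $q$, and show by a monotonicity/convexity argument that $\Phi$ stays locally bounded on $\tilde Z_f = \{R : \Theta(R) \in (-\tfrac{\pi}{2}, \tfrac{\pi}{2})\}$. Using Lemma~7.3 of~\cite{kwasnicki:rogers} to preclude accumulation of zeros of $\Theta$ with large velocity (in the same bookkeeping role it played for Proposition~\ref{prop:zeta:holder}), this translates into a quantitative lower bound of the form $\lambda_f'(r) \ge c (r - r_0)^2$ on each one-sided neighbourhood of every $r_0 \in \partial Z_f$. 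Integrating this lower bound gives $\lambda_f(r) - \lambda_f(r_0) \ge \tfrac{c}{3} (r - r_0)^3$, and a standard countable covering of $[A, B]$ by such one-sided neighbourhoods together with the already-trivial bound inside $Z_f$ then yields the required reverse cubic inequality.

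The main obstacle is calibrating the exponent $q$ in the auxiliary function so that exactly the Hölder exponent $\tfrac{1}{3}$ (rather than a smaller one) comes out. This amounts to balancing the rate at which $\cos \Theta$ vanishes near $\partial \tilde Z_f$ against the contribution of $(\Theta')^2$ in the differential inequality for $\Lambda$; the numerology of Proposition~\ref{prop:zeta:holder}, where the exponent $\tfrac{1}{9\pi + 1}$ arises from precisely such a trade-off, suggests that the present, much simpler exponent $\tfrac{1}{3}$ will follow from a similar but considerably lighter computation. Once this calibration is performed, the remainder of the argument is a mechanical adaptation of the convexity bookkeeping already carried out for $\Theta$.
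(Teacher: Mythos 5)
Your proposal is a programme rather than a proof, and it diverges substantially from the paper's argument; more importantly, its two key steps have genuine gaps. First, the heart of the proposition is a \emph{lower} bound on the growth of $\lambda_f$, i.e.\ (in your logarithmic coordinates) a lower bound on $\Lambda'(R) = \re\bigl(\zeta f'(\zeta)/f(\zeta)\bigr)\bigl(1 + (r\thet'(r))^2\bigr)$ along the spine, but every tool you invoke points the wrong way: Proposition~\ref{prop:r:spine:bound} with $p=0$ gives the \emph{upper} bound $|\xi f'(\xi)/f(\xi)| \le \pi$, and the curvature inequality~\eqref{eq:r:real:curv} is an \emph{upper} bound on $|\Theta''|$. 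Neither produces a lower bound on $\re(\zeta f'(\zeta)/f(\zeta))$, which can and does degenerate at $\partial Z_f$ (the paper's example $f(\xi)=\xi^2(i\xi+14)(i\xi+\tfrac12)^{-1}(-i\xi+2)^{-1}$ near $\xi=2i$ shows the exponent $\tfrac13$ is sharp, so $\lambda_f'$ genuinely vanishes to second order there). The "second-order differential inequality for $\Lambda$" and the calibration of $q$ in $\Lambda'\cos^q\Theta$ --- which you yourself flag as the main obstacle --- are therefore not a routine adaptation of Proposition~\ref{prop:zeta:holder}; they are the entire content of the result, and nothing in your outline shows they can be obtained.

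Second, even granting the local bound $\lambda_f'(r) \ge c\,(r-r_0)^2$ near each $r_0 \in \partial Z_f$, the patching step fails as stated. The set $\partial Z_f$ is merely the boundary of an open set and need not be finite or even discrete, so "a standard countable covering by one-sided neighbourhoods" does not reduce a general pair $r_1<r_2$ to a single boundary point. Worse, cubic lower bounds do not chain: splitting $[r_1,r_2]$ into $n$ pieces and summing $\lambda_f(s_{i+1})-\lambda_f(s_i) \ge c(s_{i+1}-s_i)^3$ yields only $c(r_2-r_1)^3/n^2$. The paper avoids both problems by never localising: it works directly with the exponential representation~\eqref{eq:r:exp}, writes $\log\lambda_f(r_2)-\log\lambda_f(r_1)$ as one explicit integral against $\ph(s)$, corrects it by multiples of integrals that vanish when $\re\zeta_f(r_j)>0$ (and have a favourable sign when $\zeta_f(r_j)\in i\R$), and reads off a manifestly nonnegative integrand whose numerator dominates $|\zeta_f(r_2)-\zeta_f(r_1)|^4 \ge (r_2-r_1)^4$. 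That two-point inequality holds for \emph{arbitrary} $r_1<r_2$ in a compact interval, with no case analysis near $\partial Z_f$ and no differential inequality. I would encourage you to either carry out your ODE scheme in full (in which case the two issues above must be resolved explicitly) or to switch to a global integral-representation argument of the paper's type.
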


\begin{proof}
Denote $\Phi(r) = \log \lambda_f(r)$. We first prove that $\Phi(r)$ increases fast enough.

For $r$ in $(0, \infty) \setminus \partial Z_f$, which is a dense subset of $(0, \infty)$, we have $\zeta_f(r) \in D_f$ (by Proposition~\ref{prop:r:lambda}\ref{it:r:lambda:a}), and hence $\Phi(r) = \log f(\zeta_f(r))$. By the exponential representation~\eqref{eq:r:exp}, for $\xi \in D_f$ we have
\formula{
 \log f(\xi) & = \log c + \frac{1}{\pi} \int_{-\infty}^\infty \biggl(\frac{\xi}{\xi + i s} - \frac{1}{1 + |s|}\biggr) \frac{\ph(s)}{|s|} \, ds .
}
If $\xi = x + i y$ with $x \ge 0$ and $y \in \R$, then an elementary calculation shows that
\formula{
 \re \log f(\xi) & = \log c + \frac{1}{\pi} \int_{-\infty}^\infty \biggl(\frac{-(y + s) \sign s}{x^2 + (y + s)^2} + \frac{1}{1 + |s|}\biggr) \ph(s) ds , \\
 \im \log f(\xi) & = -\frac{1}{\pi} \int_{-\infty}^\infty \frac{x \sign s}{x^2 + (y + s)^2} \, \ph(s) ds .
}
We set $\xi = \zeta_f(r)$ for some $r \in (0, \infty) \setminus \partial Z_f$. If $r \in Z_f$, then $x > 0$, and consequently $x^{-1} \im \log f(\xi) = 0$, so that
\formula[eq:lambda:holder:aux1]{
 \frac{1}{\pi} \int_{-\infty}^\infty \frac{\sign s}{x^2 + (y + s)^2} \, \ph(s) ds & = 0 \qquad \text{if $x > 0$.}
}
In the other case, if $r \in (0, \infty) \setminus \Cl Z_f$, we have $x = 0$, and the integral in~\eqref{eq:lambda:holder:aux1} need not be equal to zero. However, we claim that
\formula[eq:lambda:holder:aux2]{
 \frac{y}{\pi} \int_{-\infty}^\infty \frac{\sign s}{x^2 + (y + s)^2} \, \ph(s) ds & \ge 0 \qquad \text{if $x = 0$}.
}
Indeed, observe that either $\xi = i r$ or $\xi = -i r$. In the former case, we have $\im \log f(i e^{-i \eps} r) = \Arg f(i e^{-i \eps} r) \le 0$ for $\eps \in (0, \pi)$ (by Proposition~\ref{prop:r:real}\ref{it:r:real:b}), and hence
\formula{
 0 & \le -\lim_{\eps \to 0^+} \frac{\im \log f(i e^{-i \eps} r)}{\re(i e^{-i \eps} r)} \\
 & = \lim_{\eps \to 0^+} \frac{1}{\pi} \int_{-\infty}^\infty \frac{\sign s}{(r \sin \eps)^2 + (r \cos \eps + s)^2} \, \ph(s) ds = \frac{1}{\pi} \int_{-\infty}^\infty \frac{\sign s}{(r + s)^2} \, \ph(s) ds
}
(we used the dominated convergence theorem in the final step). Formula~\eqref{eq:lambda:holder:aux2} follows. A similar argument can be given in the other case, when $\xi = -i r$, and our claim is proved.

We now fix $r_1, r_2 \in (0, \infty) \setminus \partial Z_f$ such that $r_1 < r_2$, and we apply the above expression for $\re \log f(\xi)$ to $\xi = \zeta_f(r_1) = x_1 + i y_1$ and to $\xi = \zeta_f(r_2) = x_2 + i y_2$. This leads to
\formula{
 \Phi(r_2) - \Phi(r_1) & = \log f(\zeta_f(r_2)) - \log f(\zeta_f(r_1)) \\
 & = \frac{1}{\pi} \int_{-\infty}^\infty \biggl(\frac{y_1 + s}{x_1^2 + (y_1 + s)^2} - \frac{y_2 + s}{x_2^2 + (y_2 + s)^2}\biggr) \ph(s) \sign s \, ds \\
 & = \frac{1}{\pi} \int_{-\infty}^\infty \frac{x_2^2 (y_1 + s) - x_1^2 (y_2 + s) + (y_2 - y_1) (y_1 + s) (y_2 + s) }{(x_1^2 + (y_1 + s)^2) (x_2^2 + (y_2 + s)^2)} \, \ph(s) \sign s \, ds .
}
We define
\formula{
 \alpha_1 & = \frac{y_1 (x_1^2 + y_1^2 + x_2^2 + y_2^2) - 2 y_2 (x_1^2 + y_1^2)}{x_2^2 + y_2^2 - x_1^2 - y_1^2} , \\
 \alpha_2 & = \frac{y_2 (x_1^2 + y_1^2 + x_2^2 + y_2^2) - 2 y_1 (x_2^2 + y_2^2)}{x_2^2 + y_2^2 - x_1^2 - y_1^2} .
}
Note that if $x_1 = 0$, then
\formula{
 \frac{\alpha_1}{y_1} & = \frac{y_1^2 + x_2^2 + y_2^2 - 2 y_1 y_2}{x_2^2 + y_2^2 - y_1^2} \ge 0 ,
}
so that $\alpha_1$ and $y_1$ have equal sign. Thus, by~\eqref{eq:lambda:holder:aux2},
\formula{
 \frac{\alpha_1}{\pi} \int_{-\infty}^\infty \frac{\sign s}{x_1^2 + (y_1 + s)^2} \, \ph(s) ds & \ge 0 .
}
If $x_1 > 0$, then the integral in the above expression is zero by~\eqref{eq:lambda:holder:aux1}, and so the above inequality holds trivially. In a similar way,
\formula{
 \frac{\alpha_2}{\pi} \int_{-\infty}^\infty \frac{\sign s}{x_2^2 + (y_2 + s)^2} \, \ph(s) ds & \ge 0 .
}
It follows that
\formula{
 \Phi(r_2) - \Phi(r_1) & \ge \Phi(r_2) - \Phi(r_1) - \frac{\alpha_1}{\pi} \int_{-\infty}^\infty \frac{\sign s}{x_1^2 + (y_1 + s)^2} \, \ph(s) ds \\
 & \hspace*{11em} - \frac{\alpha_2}{\pi} \int_{-\infty}^\infty \frac{\sign s}{x_2^2 + (y_2 + s)^2} \, \ph(s) ds .
}
By an explicit (but tedious) calculation, we find that
\formula{
 & \Phi(r_2) - \Phi(r_1) - \frac{\alpha_1}{\pi} \int_{-\infty}^\infty \frac{\sign s}{x_1^2 + (y_1 + s)^2} \, \ph(s) ds - \frac{\alpha_2}{\pi} \int_{-\infty}^\infty \frac{\sign s}{x_2^2 + (y_2 + s)^2} \, \ph(s) ds \\
 & \qquad = \frac{1}{\pi} \int_{-\infty}^\infty \frac{(x_2^2 - x_1^2)^2 + 2 (x_1^2 + x_2^2) (y_2 - y_1)^2 + (y_2 - y_1)^4}{(x_2^2 + y_2^2 - x_1^2 - y_1^2) (x_1^2 + (y_1 + s)^2) (x_2^2 + (y_2 + s)^2)} \, |s| \ph(s) ds .
}
Finally, we have
\formula{
 (x_2^2 - x_1^2)^2 + 2 (x_1^2 + x_2^2) (y_2 - y_1)^2 + (y_2 - y_1)^4 & \ge ((x_2 - x_1)^2 + (y_2 - y_1)^2)^2 \\
 & = |\zeta_f(r_2) - \zeta_f(r_1)|^4 \ge (r_2 - r_1)^4
}
and $x_1^2 + y_1^2 = r_1^2$, $x_2^2 + y_2^2 = r_2^2$. By combining the above observations, we arrive at
\formula*[eq:lambda:holder:aux3]{
 \Phi(r_2) - \Phi(r_1) & \ge \frac{(r_2 - r_1)^4}{r_2^2 - r_1^2} \, \frac{1}{\pi} \int_{-\infty}^\infty \frac{1}{(x_1^2 + (y_1 + s)^2) (x_2^2 + (y_2 + s)^2)} \, |s| \ph(s) ds \\
 & \ge \frac{(r_2 - r_1)^3}{r_1 + r_2} \, \frac{1}{\pi} \int_{-\infty}^\infty \frac{1}{4 (r_1^2 + s^2) (r_2^2 + s^2)} \, |s| \ph(s) ds .
}
This is the desired lower bound for the growth of $\Phi(r)$. Recall that here $r_1, r_2 \in (0, \infty) \setminus \partial Z_f$, and $r_1 < r_2$. However, $\Phi(r) = \log \lambda_f(r)$ is continuous, and therefore~\eqref{eq:lambda:holder:aux3} extends to arbitrary $r_1, r_2 \in (0, \infty)$ such that $r_1 < r_2$.

Fix an interval $[R_1, R_2] \sub (0, \infty)$. If $R_1 \le r_1 < r_2 \le R_2$, then, by~\eqref{eq:lambda:holder:aux3},
\formula{
 \lambda_f(r_2) - \lambda_f(r_1) & = e^{\Phi(r_1)} (e^{\Phi(r_2) - \Phi(r_1)} - 1) \\
 & \ge e^{\Phi(R_1)} (\Phi(r_2) - \Phi(r_1)) \ge C(f, R_1, R_2) (r_2 - r_1)^3
}
for some constant $C(f, R_1, R_2) > 0$ (depending on $f$ and the interval $[R_1, R_2]$). This estimate is equivalent to Hölder continuity of $\lambda_f^{-1}(s)$ on $[\lambda_f(R_1), \lambda_f(R_2)]$ with exponent $\tfrac{1}{3}$.
\end{proof}

The exponent $\tfrac{1}{3}$ is sharp, as it is easily seen by inspecting the example $f(\xi) = \xi^2 (i \xi + 14) (i \xi + \tfrac{1}{2})^{-1} (-i \xi + 2)^{-1}$ near $\xi = 2 i$; see Figure~1(f) in~\cite{kwasnicki:rogers}.

\subsection{Difference quotients}

If $f(\xi)$ is a complete Bernstein function and $\zeta \in (0, \infty)$, then the difference quotient $(\xi - \zeta) / (f(\xi) - f(\zeta))$ (extended continuously at $\xi = \zeta$) is a complete Bernstein function of $\xi$. This property played an important role in~\cite{kwasnicki:sym}. The following result provides an analogous statement for Rogers functions.

\begin{proposition}
\label{prop:r:quot}
If $f(\xi)$ is a Rogers function and $\re \zeta > 0$, then
\formula[eq:r:quot:asym]{
 g(\xi) & = \frac{\xi^2}{(\xi - \zeta) (\xi + \overline{\zeta})} \biggl(f(\xi) - \re f(\zeta) - \frac{i \xi + \im \zeta}{\re \zeta} \, \im f(\zeta)\biggr) ,
}
defined for $\xi \in \hp \setminus \{\zeta\}$ and extended continuously at $\xi = \zeta$, is a Rogers function. In particular, if $f(\xi)$ is a non-constant Rogers function and $\zeta \in \Gamma_f$, then
\formula[eq:r:quot]{
 h(\xi) & = \frac{(\xi - \zeta) (\xi + \overline{\zeta})}{f(\xi) - f(\zeta)} \, ,
}
where $\xi \in \hp \setminus \{\zeta\}$, extended continuously at $\xi = \zeta$ so that $h(\zeta) = (2 \re \zeta) / f'(\zeta)$, is a Rogers function. Similarly, if $\zeta \in i \R$ is an accumulation point of $\Gamma_f$ and $|\zeta| = r \in [0, \infty)$, then
\formula[eq:r:quot:ir]{
 h(\xi) & = \frac{(\xi - \zeta)^2}{f(\xi) - \lambda_f(r)}
}
defines a Rogers function of $\xi \in \hp$.
\end{proposition}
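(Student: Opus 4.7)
\medskip

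\noindent\textbf{Strategy.} Parts~(2) and~(3) follow from part~(1) with modest effort, so the substance is part~(1). For part~(1), the plan is to exploit the $\R$-linearity of the map $f \mapsto g$ — both $\re f(\zeta)$ and $\im f(\zeta)$ are $\R$-linear functionals of $f$ — to decompose $f$ via the Stieltjes representation from Theorem~\ref{thm:rogers}\ref{it:r:c}, and to verify the Rogers-function property of $g$ on each elementary piece.

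\medskip

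\noindent\textbf{Plan for part~(1).} Set $p = \re \zeta > 0$, $q = \im \zeta$, and $\psi(\xi) = f(\xi) - \re f(\zeta) - (i\xi + q)\im f(\zeta)/p$, so that $g(\xi) = \xi^2 \psi(\xi)/((\xi - \zeta)(\xi + \overline{\zeta}))$. Using $(\xi - \zeta)(\xi + \overline{\zeta}) = \xi^2 - 2iq\xi - |\zeta|^2$, direct calculation yields the following. For $f(\xi) = a \xi^2$ with $a \ge 0$, one finds $\psi(\xi) = a(\xi - \zeta)(\xi + \overline{\zeta})$, hence $g(\xi) = a \xi^2$. For $f(\xi) = c$, $f(\xi) = -ib\xi$, or $f(\xi) = i\xi \sign(s)/(1+|s|)$, one has $\psi \equiv 0$ and $g \equiv 0$. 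For $f(\xi) = \xi/(\xi + is)$ with $s \in \R \setminus \{0\}$, the key algebraic identity $is\xi^2 + 2qs\xi - is|\zeta|^2 = is(\xi-\zeta)(\xi+\overline{\zeta})$ simplifies $\psi$ and produces $g(\xi) = is\xi^2/(D(\xi + is))$ with $D = p^2 + (q+s)^2 > 0$, whence $\re(g(\xi)/\xi) = s^2 (\re \xi)/(D|\xi+is|^2) \ge 0$ on $\hp$. Integrating these three templates against $\mu(ds)/|s|$ (the interchange being justified by the Stieltjes integrability $\int |s|^{-3} \min\{1, s^2\} \mu(ds) < \infty$ combined with the $1/D$ and $1/|\xi + is|$ decay in the non-trivial integrand) assembles the general $g$, which is holomorphic on $\hp$ (the zero of $\psi$ at $\zeta$ cancels the factor $\xi - \zeta$, while $\xi + \overline{\zeta}$ never vanishes for $\re \xi > 0$) and satisfies $\re(g(\xi)/\xi) \ge 0$ throughout $\hp$; hence $g$ is a Rogers function.

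\medskip

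\noindent\textbf{Plan for parts~(2) and~(3).} For part~(2), $\zeta \in \Gamma_f$ forces $\im f(\zeta) = 0$, so part~(1) gives $g(\xi) = \xi^2(f(\xi) - f(\zeta))/((\xi - \zeta)(\xi + \overline{\zeta}))$, a non-zero Rogers function since $f$ is non-constant. By the exponential representation of Theorem~\ref{thm:rogers}\ref{it:r:d}, non-zero Rogers functions have no zeros on $\hp$, so $h(\xi) = \xi^2/g(\xi)$ is holomorphic on $\hp$, and Proposition~\ref{prop:r:prop}\ref{it:r:prop:b} then declares it a Rogers function; the value $h(\zeta) = 2 \re \zeta / f'(\zeta)$ is read from first-order Taylor expansions of numerator and denominator. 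For part~(3), pick $\zeta_n \in \Gamma_f$ with $\zeta_n \to \zeta$ (possible since $\zeta$ is an accumulation point of $\Gamma_f$); each $h_n(\xi) = (\xi - \zeta_n)(\xi + \overline{\zeta_n})/(f(\xi) - f(\zeta_n))$ is Rogers by part~(2). Since $\zeta \in i\R$ gives $\overline{\zeta_n} \to -\zeta$, and $f(\zeta_n) = \lambda_f(|\zeta_n|) \to \lambda_f(r)$ by continuity of $\lambda_f$ (Proposition~\ref{prop:r:lambda}), we obtain $h_n \to h$ pointwise on $\hp$; the denominator $f(\xi) - \lambda_f(r)$ does not vanish on $\hp$ because $f^{-1}(\lambda_f(r)) \cap \hp \subset \Gamma_f$ while $\zeta_f(r) = \zeta \in i\R$ (as $r \in \partial Z_f$). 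The Rogers condition $\re(h_n(\xi)/\xi) \ge 0$ is preserved by the pointwise limit, so $h$ is a Rogers function.

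\medskip

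\noindent\textbf{Main obstacle.} The decisive combinatorial step is recognising the affine correction $\re f(\zeta) + (i\xi + q)\im f(\zeta)/p$ as the unique real-linear polynomial in $\xi$ that both agrees with $f(\zeta)$ at $\xi = \zeta$ and, when subtracted from each elementary Rogers piece $\xi/(\xi+is)$, leaves a numerator divisible by $(\xi-\zeta)(\xi+\overline{\zeta})$, so that $\psi$ kills the intended pole at $\xi = \zeta$ in $g$ without producing a spurious pole at $\xi = -\overline{\zeta}$ from any individual term. Once this correction is pinned down, the algebraic identity in case~(iii) and the integrability estimate needed to pass to the integral become routine verifications.
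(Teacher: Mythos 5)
Your proposal is correct and follows essentially the same route as the paper: both work from the Stieltjes representation~\eqref{eq:r:int}, establish via the same algebraic identity that the affine correction leaves a numerator divisible by $(\xi-\zeta)(\xi+\overline{\zeta})$ (the paper packages this by first absorbing the correction into a modified drift $\tilde b$ and reading off a Stieltjes representation of $g$ with measure $|s|\,\mu(ds)/|\zeta+is|^2$, rather than checking $\re(g(\xi)/\xi)\ge 0$ piece by piece), and then deduce parts~(2) and~(3) exactly as you do, via $h=\xi^2/g$ and a limit of Rogers functions along $\zeta_n\in\Gamma_f$ with $\zeta_n\to\zeta$.
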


\begin{proof}
Suppose that $f(\xi)$ has the Stieltjes representation~\eqref{eq:r:int}. By a simple calculation,
\formula{
 f(\xi) & = a (\xi^2 - 2 i \xi \im \zeta) - i \tilde{b} \xi + c + \frac{1}{\pi} \int_{\R \setminus \{0\}} \biggl(\frac{\xi}{\xi + i s} + \frac{i \xi s}{|\zeta + i s|^2}\biggr) \frac{\mu(ds)}{|s|}
}
for some $\tilde{b} \in \R$; namely, $\tilde{b} = b - 2 a \im \zeta + \tfrac{1}{\pi} \int_{\R \setminus \{0\}} (s / |\zeta + i s|^2 - \sign s / (1 + |s|)) |s|^{-1} \mu(ds)$. If we set $\xi = \zeta$, we obtain
\formula{
 f(\zeta) & = a (\zeta^2 - 2 i \zeta \im \zeta) - i \tilde{b} \zeta + c + \frac{1}{\pi} \int_{\R \setminus \{0\}} \frac{\zeta (\overline{\zeta} - i s) + i \zeta s}{|\zeta + i s|^2} \, \frac{\mu(ds)}{|s|} \\
 & = a |\zeta|^2 - i \tilde{b} \zeta + c + \frac{1}{\pi} \int_{\R \setminus \{0\}} \frac{|\zeta|^2}{|\zeta + i s|^2} \frac{\mu(ds)}{|s|} .
}
In particular, $\im f(\zeta) = -\tilde{b} \re \zeta$. After a short calculation (we omit the details), the above two equalities lead to
\formula{
 f(\xi) - \re f(\zeta) - \frac{i \xi + \im \zeta}{\re \zeta} \, \im f(\zeta) & = a (\xi^2 - 2 i \xi \im \zeta - |\zeta|^2) \\
 & \hspace*{-3em} + \frac{1}{\pi} \int_{\R \setminus \{0\}} \biggl(\frac{\xi}{\xi + i s} + \frac{i \xi s}{|\zeta + i s|^2} - \frac{|\zeta|^2}{|\zeta + i s|^2}\biggr) \frac{\mu(ds)}{|s|} \, .
}
By another simple calculation,
\formula{
 \frac{\xi}{\xi + i s} + \frac{i \xi s}{|\zeta + i s|^2} - \frac{|\zeta|^2}{|\zeta + i s|^2} & = \frac{\xi |\zeta + i s|^2 + (i \xi s - |\zeta|^2) (\xi + i s)}{(\xi + i s) |\zeta + i s|^2} \\
 & = \frac{i s (\xi^2 - 2 i \xi \im \zeta - |\zeta|^2)}{(\xi + i s) |\zeta + i s|^2} \, ,
}
and $\xi^2 - 2 i \xi \im \zeta - |\zeta|^2 = (\xi - \zeta) (\xi + \overline{\zeta})$. It follows that
\formula*[eq:r:quot:aux1]{
 \hspace*{3em} & \hspace*{-3em} f(\xi) - \re f(\zeta) - \frac{i \xi + \im \zeta}{\re \zeta} \, \im f(\zeta) \\
 & = (\xi - \zeta) (\xi + \overline{\zeta}) \biggl(a + \frac{1}{\pi} \int_{\R \setminus \{0\}} \frac{i s}{\xi + i s} \, \frac{\mu(ds)}{|s| |\zeta + i s|^2}\biggr) .
}
Therefore,
\formula*[eq:r:quot:aux2]{
 g(\xi) & = a \xi^2 + \frac{1}{\pi} \int_{\R \setminus \{0\}} \frac{i s \xi^2}{\xi + i s} \, \frac{\mu(ds)}{|s| |\zeta + i s|^2} \\
 & = a \xi^2 + \frac{1}{\pi} \int_{\R \setminus \{0\}} \biggl(\frac{\xi}{\xi + i s} + \frac{i \xi}{s}\biggr) \frac{|s| \mu(ds)}{|\zeta + i s|^2} \\
 & = a \xi^2 - i B \xi + \frac{1}{\pi} \int_{\R \setminus \{0\}} \biggl(\frac{\xi}{\xi + i s} + \frac{i \xi \sign s}{1 + |s|}\biggr) \frac{|s| \mu(ds)}{|\zeta + i s|^2}
}
for an appropriate $B \in \R$; namely, $B = \tfrac{1}{\pi} \int_{\R \setminus \{0\}} (\sign s / (1 + |s|) - 1 / s) |s| |\zeta + i s|^{-2} \mu(ds)$. The right-hand side is the Stieltjes representation~\eqref{eq:r:int} of a Rogers function, and therefore $g$ is a Rogers function. The first assertion of the lemma is proved.

If $\zeta \in \Gamma_f$, then $\im f(\zeta) = 0$, and hence $h(\xi) = \xi^2 / g(\xi)$ is a Rogers function. This proves the second statement of the lemma. The last one follows by the following observation: if $\zeta$ is an accumulation point of $\Gamma_f$, then there is a sequence $\zeta_n \in \Gamma_f$ which converges to $\zeta$, and the corresponding Rogers functions $h_n(\xi)$ converge locally uniformly in $\hp$ to the function $h(\xi)$ given by~\eqref{eq:r:quot:ir}. It remains to note that a locally uniform limit of Rogers functions is a Rogers function.
\end{proof}

\begin{remark}
\label{rem:r:quot}
If $f(\xi)$ is a non-constant Rogers function and $f(\zeta) \in (0, \infty)$ for some $\zeta \in \hp$, then $f(\xi)$ is non-degenerate, and from~\eqref{eq:r:quot:aux2} it follows that also $g(\xi)$ and $h(\xi)$ are non-degenerate.
\end{remark}

When $r \in \Cl Z_f$ and $\zeta = \zeta_f(r)$ in Proposition~\ref{prop:r:quot}, then we denote the function $h(\xi)$ defined there (in~\eqref{eq:r:quot}) by $f(r; \xi)$. In other words, we let
\formula[eq:r:fr]{
 f(r; \xi) & = \frac{(\xi - \zeta_f(r)) (\xi + \overline{\zeta_f(r)})}{f(\xi) - \lambda_f(r)}
}
for $\xi \in \hp \setminus \{\zeta\}$, where $\lambda_f(r) = f(\zeta_f(r))$, and if $r \in Z_f$, then we additionally set
\formula{
 f(r; \zeta_f(r)) & = \frac{2 \re \zeta_f(r)}{f'(\zeta_f(r))} \, .
}

The next result specializes Proposition~\ref{prop:r:quot} for Rogers functions of the form $h(-i \xi)$ or $h(i \xi)$, where $h(\xi)$ is a complete Bernstein function. A special case corresponding to $\zeta \in (0, i \infty)$ was proved in~\cite{kwasnicki:sym}.

\begin{proposition}[see Lemma~2.20 in~\cite{kwasnicki:sym}]
\label{prop:cbf:quot}
If $h(\xi)$ is a complete Bernstein function and $\im \zeta > 0$, then, for some Stieltjes function $g(\xi)$,
\formula{
 \frac{h(\xi)}{(\xi - \zeta) (\xi - \overline{\zeta})} & = \frac{1}{2 i \im \zeta} \biggl(\frac{h(\zeta)}{\xi - \zeta} - \frac{h(\overline{\zeta})}{\xi - \overline{\zeta}}\biggr) - g(\xi)
}
for $\xi \in D_h$. Furthermore, the constants $b$ and $c$ in the Stieltjes representation~\eqref{eq:s:int} of $g(\xi)$ are equal to $0$.
\end{proposition}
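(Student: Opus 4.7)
The plan is to compute $g(\xi)$, defined implicitly by the proposed identity, using the integral representation of the complete Bernstein function $h$, and to verify that the result is a Stieltjes function of the claimed form. First I would observe that the right-hand side extends continuously to $D_h \setminus \{\zeta, \overline{\zeta}\}$ with removable singularities at $\zeta$ and $\overline{\zeta}$: the residue of the bracketed term at $\xi = \zeta$ is $h(\zeta)/(2 i \im \zeta) = h(\zeta)/(\zeta - \overline{\zeta})$, which matches the residue of $h(\xi)/((\xi - \zeta)(\xi - \overline{\zeta}))$ at $\xi = \zeta$; the situation at $\overline{\zeta}$ is analogous. Hence $g$ is well-defined and holomorphic on $D_h$.

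Next, I would substitute the representation~\eqref{eq:cbf:int}, written as $h(\xi) = a \xi + b + \frac{1}{\pi} \int_{(0, \infty)} \frac{\xi}{\xi + s} \frac{\mu(ds)}{s}$, and treat the affine and integral pieces separately. For the affine term, the partial fractions identity
\[
 \frac{a \xi + b}{(\xi - \zeta)(\xi - \overline{\zeta})} = \frac{1}{2 i \im \zeta} \left( \frac{a \zeta + b}{\xi - \zeta} - \frac{a \overline{\zeta} + b}{\xi - \overline{\zeta}} \right)
\]
shows that $a \xi + b$ contributes zero to $g$. For each fixed $s > 0$, reading off the residues at $\zeta$, $\overline{\zeta}$ and $-s$ gives
\[
 \frac{\xi}{(\xi - \zeta)(\xi - \overline{\zeta})(\xi + s)} = \frac{\zeta/(\zeta + s)}{(\zeta - \overline{\zeta})(\xi - \zeta)} - \frac{\overline{\zeta}/(\overline{\zeta} + s)}{(\zeta - \overline{\zeta})(\xi - \overline{\zeta})} - \frac{s/|\zeta + s|^2}{\xi + s},
\]
and the first two terms, after integration against $\mu(ds)/s$, are precisely the contributions entering the bracketed term of $g$ from the integral part of $h$. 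These contributions therefore cancel, and applying Fubini's theorem (justified below) yields
\[
 g(\xi) = \frac{1}{\pi} \int_{(0, \infty)} \frac{\mu(ds)}{|\zeta + s|^2 (\xi + s)}.
\]

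This is exactly the Stieltjes representation~\eqref{eq:s:int} with constants $b = c = 0$ and measure $|\zeta + s|^{-2} \mu(ds)$. To confirm admissibility of this measure, note that $|\zeta + s|^{-2}$ is bounded by $(\im \zeta)^{-2}$ near $s = 0$ and behaves like $s^{-2}$ at $s = \infty$; combined with the assumed $\int_{(0,\infty)} \min(s^{-1}, s^{-2}) \mu(ds) < \infty$, this yields $\int_{(0,\infty)} \min(1, s^{-1}) |\zeta + s|^{-2} \mu(ds) < \infty$, the integrability required in~\eqref{eq:s:int}. The same bounds validate Fubini in the previous step. I do not anticipate any serious obstacle: the only non-routine step is observing the cancellation of the partial-fraction residues at $\zeta$ and $\overline{\zeta}$, and this follows immediately once the decompositions above are written out.
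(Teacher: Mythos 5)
Your proof is correct, and it arrives at exactly the same explicit formula $g(\xi) = \tfrac{1}{\pi}\int_{(0,\infty)} |\zeta+s|^{-2}(\xi+s)^{-1}\mu(ds)$ as the paper. The route differs only in organization: the paper does not redo the partial fractions here, but instead sets $f(\xi) = h(-i\xi)$ and specializes the identity~\eqref{eq:r:quot:aux1} already established in the proof of Proposition~\ref{prop:r:quot} (with $\xi$, $\zeta$ replaced by $i\xi$, $i\overline{\zeta}$), then simplifies; the positivity and boundedness of $s/|\zeta+s|^2$ then gives the Stieltjes property and $b=c=0$, just as in your last paragraph. Your version is self-contained and arguably more transparent — the residue bookkeeping at $\zeta$, $\overline{\zeta}$ and $-s$, and the observation that the affine part of $h$ contributes nothing to $g$, is precisely the computation hidden inside~\eqref{eq:r:quot:aux1} — at the cost of repeating algebra the paper has already done once in the Rogers-function setting. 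Two small remarks: what you invoke as ``Fubini'' is really just additivity of the three absolutely convergent $s$-integrals after the pointwise partial-fraction identity, and your integrability check for the measure $|\zeta+s|^{-2}\mu(ds)$ (bounded near $0$, decaying like $s^{-2}$ at infinity, against the assumption $\int \min(s^{-1},s^{-2})\,\mu(ds)<\infty$) is exactly what is needed for~\eqref{eq:s:int}. No gaps.
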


\begin{proof}
Suppose that $h(\xi)$ has the Stieltjes representation~\eqref{eq:cbf:int} and $\im \zeta > 0$. Then $f(\xi) = h(-i \xi)$ is a Rogers function with Stieltjes representation
\formula{
 f(\xi) & = -i b \xi + c + \frac{1}{\pi} \int_{(0, \infty)} \frac{\xi}{\xi + i s} \, \frac{\mu(ds)}{s} \\
 & = -i \tilde{b} \xi + c + \frac{1}{\pi} \int_{(0, \infty)} \biggl(\frac{\xi}{\xi + i s} + \frac{i \xi s}{|\zeta + i s|^2}\biggr) \frac{\mu(ds)}{s}
}
for some $\tilde{b} \in \R$. Identity~\eqref{eq:r:quot:aux1} from the proof of Proposition~\ref{prop:r:quot}, with $\xi$ and $\zeta$ replaced by $i \xi$ and $i \overline{\zeta}$, leads to
\formula{
 h(\xi) - \re f(\overline{\zeta}) - \frac{-\xi + \re \zeta}{\im \zeta} \, \im f(\overline{\zeta}) & = (i \xi - i \overline{\zeta}) (i \xi - i \zeta) \, \frac{1}{\pi} \int_{(0, \infty)} \frac{i s}{i \xi + i s} \, \frac{\mu(ds)}{|s| |i \overline{\zeta} + i s|^2}
}
for $\xi \in \C \setminus (-\infty, 0]$. After simplification, this gives
\formula{
 h(\xi) - \frac{(\xi - \overline{\zeta}) h(\zeta) - (\xi - \zeta) h(\overline{\zeta})}{2 i \im \zeta} & = -(\xi - \overline{\zeta}) (\xi - \zeta) \, \frac{1}{\pi} \int_{(0, \infty)} \frac{1}{\xi + s} \, \frac{\mu(ds)}{|\zeta + s|^2} \, .
}
Hence,
\formula{
 \frac{1}{2 i \im \zeta} \biggl(\frac{h(\zeta)}{\xi - \zeta} - \frac{h(\overline{\zeta})}{\xi - \overline{\zeta}}\biggr) - \frac{h(\xi)}{(\xi - \zeta) (\xi - \overline{\zeta})} & = \frac{1}{\pi} \int_{(0, \infty)} \frac{1}{\xi + s} \, \frac{s}{|\zeta + s|^2} \, \frac{\mu(ds)}{s} \, ,
}
and because $s / |\zeta + s|^2$ is positive and bounded, the right-hand side defines a Stieltjes function of $\xi$.
\end{proof}

We conclude this part with an application of Proposition~\ref{prop:r:bd2} which will play a crucial role in our development. The proof of this result is unexpectedly complicated, and requires the following auxiliary lemma.

\begin{lemma}
\label{lem:aux:int}
If $f(\xi)$ is a non-constant and non-degenerate Rogers function, $\zeta \in \Gamma_f$,
\formula{
 h(\xi) & = \frac{(\xi - \zeta) (\xi + \overline{\zeta})}{(\xi + i |\zeta|)^2} \, ,
}
and $\xi_1, \xi_2 \in D_f^+ \cup D_f^-$, then
\formula[eq:aux:int:1]{
 \frac{1}{2 \pi i} \int_{\Gamma_f^\star} \biggl( \frac{1}{z - \xi_1} - \frac{1}{z - \xi_2} \biggr) \log h(z) dz & = \ind_{D_f^+}(\xi_1) \log h(\xi_1) - \ind_{D_f^+}(\xi_2) \log h(\xi_2) .
}
Similarly, if 
\formula{
 h(\xi) & = \frac{(\xi - \zeta) (\xi + \overline{\zeta})}{(\xi - i |\zeta|)^2} \, ,
}
and $\xi_1, \xi_2 \in D_f^+ \cup D_f^-$, then
\formula[eq:aux:int:2]{
 \frac{1}{2 \pi i} \int_{\Gamma_f^\star} \biggl( \frac{1}{z - \xi_1} - \frac{1}{z - \xi_2} \biggr) \log h(z) dz & = \ind_{D_f^-}(\xi_2) \log h(\xi_2) - \ind_{D_f^-}(\xi_1) \log h(\xi_1) .
}
\end{lemma}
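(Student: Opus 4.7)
My plan is to establish both identities by direct application of Cauchy's residue theorem; Proposition~\ref{prop:r:bd2} is not available because $h$ is not a Rogers function (it has a zero at $\zeta \in \hp$). The key point is that $\log h$ nevertheless admits a single-valued analytic branch on whichever of the two regions $D_f^+$ and $D_f^-$ avoids the double pole of $h$.

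For \eqref{eq:aux:int:1}, the zeros of $h$ at $\zeta$ and $-\bar\zeta$ both lie on $\Gamma_f^\star$, while the double pole $-i|\zeta|$ sits strictly in the interior of $D_f^-$: since $\zeta \in \Gamma_f$ forces $|\zeta| \in Z_f$ and $\Arg\zeta \in (-\tfrac{\pi}{2}, \tfrac{\pi}{2})$, the point at angle $-\tfrac{\pi}{2}$ falls in the interior of $D_f^-$ according to the polar parameterization of Proposition~\ref{prop:r:real}. Thus $h$ is analytic and nowhere vanishing on the simply connected domain $D_f^+$ (its polar description makes it homeomorphic to a strip). A short calculation yields $h(\xi) = 1 + O(1/\xi)$ at infinity, allowing me to fix a single-valued branch of $\log h$ on $D_f^+$ by requiring $\log h(z)\to 0$ as $|z|\to\infty$; this branch extends continuously to $\Gamma_f^\star\setminus\{\zeta,-\bar\zeta\}$, with integrable logarithmic singularities at the two excluded points.

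I then close $\Gamma_f^\star$ by a large arc through $D_f^+$. The kernel is $O(|z|^{-2})$ and $\log h(z) = O(|z|^{-1})$, so the arc of radius $R$ contributes $O(R^{-2})$, which vanishes; the logarithmic singularities at $\zeta,-\bar\zeta$ are handled by standard small-semicircular indentations, each giving $O(\eps\lvert\log\eps\rvert)\to 0$. The residue theorem applied on the resulting closed contour (oriented so that $D_f^+$ lies on its left, consistent with the convention of Proposition~\ref{prop:r:bd2}) yields the right-hand side of \eqref{eq:aux:int:1}: the only interior poles of the integrand are at $\xi_1$ and $\xi_2$, with residues $\log h(\xi_1)$ and $-\log h(\xi_2)$, each contributing only if that point lies in $D_f^+$. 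Identity \eqref{eq:aux:int:2} then follows by the mirror argument — the pole now lies at $i|\zeta|$ in the interior of $D_f^+$, so $\log h$ is analytic on $D_f^-$, and closing $\Gamma_f^\star$ through $D_f^-$ reverses the induced orientation on the shared boundary, producing the minus signs in \eqref{eq:aux:int:2}. The main delicate point throughout will be careful orientation-tracking and confirming simple connectedness when the spine has several components, but both should follow from the explicit structure of $\Gamma_f^\star$ given by Proposition~\ref{prop:r:real}.
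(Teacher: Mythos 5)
Your strategy coincides with the paper's: treat $h$ as an explicit rational function, fix a branch of $\log h$ that is analytic off the circular arc carrying the double pole, close the contour with a large arc whose contribution is $O(R^{-2})$, indent at the logarithmic singularities $\zeta$ and $-\overline{\zeta}$, and read off the residues at $\xi_1,\xi_2$. The problem is the step you defer to the last sentence, which is in fact the crux. You propose to "confirm simple connectedness" of $D_f^+$ and then close $\Gamma_f^\star$ by a single large arc; but $D_f^+$ need not be simply connected, nor even connected. If $\Arg \zeta_f(r) = \tfrac{\pi}{2}$ on a nondegenerate interval of radii, $D_f^+$ is pinched into several components (and your normalisation of the branch at infinity no longer determines $\log h$ on the bounded ones); if $\Arg \zeta_f(r) = -\tfrac{\pi}{2}$ on such an interval, $D_f^+$ contains a full annulus around the origin and is not simply connected. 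In either situation $\Gamma_f^\star$ is a system of curves with endpoints on $i\R$, not a single simple arc that, together with a circle of radius $R$, bounds $D_f^+ \cap \{|z| < R\}$, and the integral over $\Gamma_f^\star$ in \eqref{eq:aux:int:1} (which is really the parameterised integral over $r \in (-Z_f) \cup Z_f$) is not literally a boundary integral. So the argument as written is complete only when $Z_f = (0,\infty)$.

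The paper's proof supplies exactly the missing device: the truncated contours $\Gamma_\thet^\star$, $\thet < \tfrac{\pi}{2}$, obtained by replacing $\zeta_f(r)$ with $r e^{\pm i\thet}$ wherever $\lvert\Arg \zeta_f(r)\rvert > \thet$. Each $\Gamma_\thet^\star$ \emph{is} a simple curve bounding a simply connected region $D_\thet^+$, on which your residue computation goes through verbatim; one then lets $\thet \to \tfrac{\pi}{2}^-$, using $|\zeta_\thet'(r)| \le \lvert\zeta_f'(r)\rvert$ and dominated convergence, and finally observes that in the limiting parameterised integral the contributions of $(0,\infty) \setminus Z_f$ and $(-\infty,0) \setminus (-Z_f)$ cancel, which is what turns the limit into the integral over $\Gamma_f^\star$ as it appears in the statement. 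A secondary point worth making explicit in your write-up: to identify your normalised branch with the $\log h(\xi_j)$ on the right-hand side you should note that $h$ maps the complement of the arc of $\{|\xi| = |\zeta|\}$ through $-i|\zeta|$ (respectively $+i|\zeta|$) into $\C \setminus (-\infty,0]$, so the branch cut sits inside $\Cl D_f^-$ (respectively $\Cl D_f^+$) and the principal branch is the one being integrated; monodromy plus normalisation at infinity alone does not give this identification on every component.
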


\begin{figure}
\centering
\begin{tikzpicture}
\footnotesize
\coordinate (X) at (3,0);
\coordinate (Y) at (0,3);
\coordinate (Xn) at (-3,0);
\coordinate (Yn) at (0,-3);
\coordinate (O) at (0,0);
\coordinate (E) at (1,2);
\coordinate (zeta) at (1,1);
\coordinate (nzeta) at (-1,1);
\draw[densely dotted, olive] (1.5,-3) -- (O) -- (1.5,3);
\draw[densely dotted, olive] (-1.5,-3) -- (O) -- (-1.5,3);
\draw[very thick, densely dotted, olive] (O) -- (0.2,-0.4) .. controls (0.35,-0.5) and (1.15,-0.5) .. (1,1) .. controls (0.97,1.3) and (0.9,1.5) .. (0.9,1.8) -- (1.3,2.6) .. controls (1.8,2.9) and (2.5,2.7) .. (3,3);
\draw[very thick, densely dotted, olive] (O) -- (-0.2,-0.4) .. controls (-0.35,-0.5) and (-1.15,-0.5) .. (-1,1) .. controls (-0.97,1.3) and (-0.9,1.5) .. (-0.9,1.8) -- (-1.3,2.6) .. controls (-1.8,2.9) and (-2.5,2.7) .. (-3,3);
\draw[teal] (O) .. controls (0.00,-0.15) and (0.05,-0.3) .. (0.2,-0.4) .. controls (0.35,-0.5) and (1.15,-0.5) .. (1,1) .. controls (0.97,1.3) and (0.9,1.5) .. (0.9,1.8) .. controls (0.9,2.1) and (0.95,2.3) .. (1.3,2.6) .. controls (1.8,2.9) and (2.5,2.7) .. (3,3);
\draw[teal] (O) .. controls (-0.00,-0.15) and (-0.05,-0.3) .. (-0.2,-0.4) .. controls (-0.35,-0.5) and (-1.15,-0.5) .. (-1,1) .. controls (-0.97,1.3) and (-0.9,1.5) .. (-0.9,1.8) .. controls (-0.9,2.1) and (-0.95,2.3) .. (-1.3,2.6) .. controls (-1.8,2.9) and (-2.5,2.7) .. (-3,3);
\draw[thick, violet] (zeta) arc (45:-225:1.4142);
\filldraw (zeta) circle[radius=1pt] node[above right] {$\zeta$};
\filldraw (nzeta) circle[radius=1pt] node[above left] {$-\overline{\zeta}$};
\node[teal, left] at (1,3) {$\Gamma_f^\star$};
\node[olive, right] at (1.5,3) {$\Gamma_\thet^\star$};
\pic["$\thet$",draw,angle eccentricity=1.3,angle radius=0.5cm] {angle=X--O--E};
\draw[->] (Xn) -- (X) node[above] {$\re$};
\draw[->] (Yn) -- (Y) node[left] {$\im$};
\end{tikzpicture}
\caption{Setting for the proof of Lemma~\ref{lem:aux:int}. Function $h$ has a branch cut at the purple arc. Contour $\Gamma_f^\star$ (teal line) is approximated by the contour $\Gamma_\thet^\star$ (olive dotted line).}
\label{fig:aux:int}
\end{figure}

\begin{proof}
Both identities are proved essentially in the same way, so we only discuss the case of~\eqref{eq:aux:int:1}. The argument is in fact very similar to the proof of Lemma~5.4 in~\cite{kwasnicki:rogers}, so we omit some details. As in that result, by continuity, with no loss of generality we may assume that $\xi_1$ and $\xi_2$ do not lie on the imaginary axis.

Observe that $\log h(\xi) = \log((\xi - \zeta) (\xi + \overline{\zeta}) / (\xi + i |\zeta|)^2)$ is a holomorphic function of $\xi$ with a branch cut along the arc of the circle $|\xi| = r$ with endpoints $\zeta$ and $-\overline{\zeta}$ which also contains $-i |\zeta|$ (see Figure~\ref{fig:aux:int}). We note that for the other function $h(\xi)$ in the statement of the lemma, the one appearing in~\eqref{eq:aux:int:2}, the function $\log h(\xi)$ is holomorphic with a branch cut along the complementary arc, that is, the other arc of the circle $|\xi| = r$ with endpoints $\zeta$ and $-\overline{\zeta}$.

For a given $\thet \in (0, \tfrac{\pi}{2})$ we consider the curve $\Gamma_\thet^\star$ which can be informally defined to be the curve $\Gamma_f^\star$ truncated at lines $\Arg(\pm \xi) = \pm \thet$. More formally, $\Gamma_\thet^\star$ is a curve parameterised by the function $\zeta_\thet(r)$ of $r \in \R$, which for $r > 0$ is given by
\formula{
 \zeta_\thet(r) & =
 \begin{cases}
  \zeta_f(r) & \text{if $\lvert\Arg \zeta_f(r)\rvert \le \thet$,} \\
  r e^{i \thet} & \text{if $\Arg \zeta_f(r) > \thet$,} \\
  r e^{-i \thet} & \text{if $\Arg \zeta_f(r) < -\thet$,}
 \end{cases}
}
and which satisfies $\zeta_\thet(-r) = -\overline{\zeta_\thet(r)}$ and $\zeta_\thet(0) = 0$. (This is a minor modification of the definition of the curve $\Gamma_p^\star$ in the proof of Lemma~5.4 in~\cite{kwasnicki:rogers}.) We also define $D_\thet^+$ to be the region above $\Gamma_\thet^\star$:
\formula{
 D_\thet^+ & = \{ r e^{i \alpha} : r > 0 , \, \alpha \in (\Arg \zeta_\thet(r), \pi - \Arg \zeta_\thet(r)) \} .
}
Observe that $\Gamma_\thet^\star$ is a simple curve, and it is the boundary of the region $D_\thet^+$. Furthermore, if $\thet > \Arg \zeta$, then $\log h(\xi)$ is a holomorphic function of $\xi \in D_\thet^+$, continuous on $(\Cl D_\thet^+) \setminus \{\zeta, -\overline{\zeta}\}$, bounded at infinity, and with logarithmic growth near $\xi = \zeta$ and $\xi = -\overline{\zeta}$. Therefore, a standard application of the residue theorem in $\{\xi \in D_\thet^+ : |\xi| < R, \, |\xi - \zeta| > \eps, \, |\xi + \overline{\zeta}| > \eps\}$ and a limiting procedure as $\eps \to 0^+$ and $R \to \infty$ lead us to
\formula{
 \frac{1}{2 \pi i} \int_{\Gamma_\thet^\star} \biggl( \frac{1}{z - \xi_1} - \frac{1}{z - \xi_2} \biggr) \log h(z) dz & = \ind_{D_\thet^+}(\xi_1) \log h(\xi_1) - \ind_{D_\thet^+}(\xi_2) \log h(\xi_2)
}
whenever $\xi_1, \xi_2$ do not lie on $\Gamma_\thet^\star$. Finally, for $\thet \in (0, \tfrac{\pi}{2})$ large enough, we have $\ind_{D_\thet^+}(\xi_j) = \ind_{D_f^+}(\xi_j)$ for $j = 1, 2$. Let $\zeta_f(0) = 0$ and $\zeta_f(-r) = -\overline{\zeta_f(r)}$. Then $|\zeta_\thet'(r)| \le \lvert\zeta_f'(r)\rvert$ for almost all $r \in \R$, and hence, by the dominated convergence theorem, the integrals
\formula{
 \hspace*{7em} & \hspace*{-7em} \frac{1}{2 \pi i} \int_{\Gamma_\thet^\star} \biggl( \frac{1}{z - \xi_1} - \frac{1}{z - \xi_2} \biggr) \log h(z) dz \\
 & = \frac{1}{2 \pi i} \int_{-\infty}^\infty \biggl( \frac{1}{\zeta_\thet(r) - \xi_1} - \frac{1}{\zeta_\thet(r) - \xi_2} \biggr) \log h(\zeta_\thet(r)) \zeta_\thet'(r) dr
}
converge to the integral
\formula[eq:aux:int:aux]{
 & \frac{1}{2 \pi i} \int_{-\infty}^\infty \biggl( \frac{1}{\zeta_f(r) - \xi_1} - \frac{1}{\zeta_f(r) - \xi_2} \biggr) \log h(\zeta_f(r)) \zeta_f'(r) dr .
}
Finally, note that $\zeta_f(r)$ for $r \in (-Z_f) \cup Z_f$ is a parameterisation of $\Gamma_f^\star$, and the integral restricted to $r \in (-\infty, 0) \setminus (-Z_f)$ cancels with the integral restricted to $(0, \infty) \setminus Z_f$. We conclude that the integral in~\eqref{eq:aux:int:aux} is equal to
\formula{
 & \frac{1}{2 \pi i} \int_{\Gamma_f^\star} \biggl( \frac{1}{z - \xi_1} - \frac{1}{z - \xi_2} \biggr) \log h(z) dz ,
}
and formula~\eqref{eq:aux:int:1} follows.
\end{proof}

\begin{proposition}
\label{prop:r:quot:bd}
Let $f(\xi)$ be a non-constant and non-degenerate Rogers function and $r \in Z_f$. Denote by $f(r; \xi)$ the Rogers function $h(\xi)$ defined in Proposition~\ref{prop:r:quot}, as in~\eqref{eq:r:fr}, and let $f^+(r; \xi)$ and $f^-(r; \xi)$ denote the Wiener--Hopf factors of $f(r; \xi)$. Let $g(\xi)$ denote the branch of $\sqrt{-(\xi - \zeta_f(r)) (\xi + \overline{\zeta_f(r)})}$ which is a holomorphic function in $D_f^+ \cup D_f^-$, equal to $|\xi - \zeta_f(r)|$ on $D_f^+ \cap (0, i \infty)$ and on $D_f^- \cap (-i \infty, 0)$. If $\xi_1, \xi_2 \in D_f^+ \cup D_f^-$, then
\formula*[eq:r:quot:bd]{
 \hspace*{5em} & \hspace*{-5em} \exp \biggl( \frac{1}{2 \pi i} \int_{\Gamma_f^\star} \biggl( \frac{1}{z - \xi_1} - \frac{1}{z - \xi_2} \biggr) \log |f(z) - \lambda_f(r)| dz \biggr) \\
 & = \begin{cases}
  \displaystyle \frac{g(\xi_1) f^+(r; -i \xi_2)}{g(\xi_2) f^+(r; -i \xi_1)}
   & \text{if $\xi_1, \xi_2 \in D_f^+$,} \\[1em] 
  \displaystyle \frac{g(\xi_2) f^-(r; i \xi_1)}{g(\xi_1) f^-(r; i \xi_2)}
   & \text{if $\xi_1, \xi_2 \in D_f^-$,} \\[1em]
  \displaystyle \frac{g(\xi_1) g(\xi_2)}{f^+(r; -i \xi_1) f^-(r; i \xi_2)}
   & \text{if $\xi_1 \in D_f^+$, $\xi_2 \in D_f^-$.}
 \end{cases}
}
\end{proposition}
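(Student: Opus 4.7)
The starting point is the factorisation
\formula{
 f(\xi) - \lambda_f(r) & = -\frac{g(\xi)^2}{f(r; \xi)} \qquad \text{for $\xi \in D_f^+ \cup D_f^-$,}
}
which follows at once from the definition~\eqref{eq:r:fr} of $f(r; \xi)$ together with the identity $g(\xi)^2 = -(\xi - \zeta_f(r))(\xi + \ol{\zeta_f(r)})$. Passing to moduli and logarithms,
\formula{
 \log |f(z) - \lambda_f(r)| & = 2 \log |g(z)| - \log |f(r; z)| ,
}
so that the contour integral on the LHS of~\eqref{eq:r:quot:bd} decomposes as $2 I_g - I_f$, where $I_g$ and $I_f$ are the analogous integrals with $\log |g(z)|$ and $\log |f(r; z)|$ in place of $\log |f(z) - \lambda_f(r)|$.

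To evaluate $I_f$, I invoke Proposition~\ref{prop:r:bd2} with $h = f(r; \cdot)$: by Proposition~\ref{prop:r:quot} and Remark~\ref{rem:r:quot}, $f(r; \cdot)$ is a non-degenerate Rogers function, and one verifies the angular bound on $\Gamma_f$ required by Proposition~\ref{prop:r:bd2} directly from the explicit form~\eqref{eq:r:fr} of $f(r; \xi)$ together with the fact that $f(z) - \lambda_f(r) \in \R$ along $\Gamma_f$. That proposition produces the Wiener--Hopf factors $f^\pm(r; \cdot)$ appearing on the RHS of~\eqref{eq:r:quot:bd}. Since Proposition~\ref{prop:r:bd2} is stated in terms of $\log f(r; z)$ rather than $\log |f(r; z)|$, the transition is effected by writing $\log f(r; z) = \log |f(r; z)| + i \Arg f(r; z)$ and exploiting both the Schwarz-type symmetry $f(r; -\ol{z}) = \ol{f(r; z)}$ and the invariance of $\Gamma_f^\star$ under $z \mapsto -\ol{z}$ to control the $i \Arg f(r; z)$ contribution.

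To evaluate $I_g$, I apply Lemma~\ref{lem:aux:int} to the two functions $h_+(z) = -g(z)^2/(z+ir)^2$ and $h_-(z) = -g(z)^2/(z-ir)^2$ from that lemma, supplemented by elementary residue-type computations for the Cauchy integrals of $\log |z \pm ir|$ over $\Gamma_f^\star$. The lemma yields $I_g$ as an explicit expression in the boundary values of $g$ at $\xi_1$ and $\xi_2$, with signs and indicator functions that precisely reflect which of $D_f^\pm$ contains each $\xi_j$. Combining $2 I_g - I_f$ and exponentiating then produces the three cases announced in~\eqref{eq:r:quot:bd}.

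The \textbf{main obstacle} is the careful bookkeeping of imaginary contributions: the piece $i \Arg f(r; z)$ that appears when relating $\log f(r; z)$ to $\log |f(r; z)|$; the constant $i \pi$ hidden in the identity $g(z)^2 = -(z - \zeta_f(r))(z + \ol{\zeta_f(r)})$; and the sign change of $g$ upon crossing its branch cut along $\Gamma_f^\star$. These three sources of imaginary terms must conspire to cancel in the combination $2 I_g - I_f$, leaving exactly the complex factors needed to match the RHS of~\eqref{eq:r:quot:bd}. I expect this cancellation to emerge only after a fairly delicate parameterisation of $\Gamma_f^\star$ by $\zeta_f^\star(s)$ with $\zeta_f^\star(-s) = -\ol{\zeta_f^\star(s)}$, combined with the Schwarz symmetry of all the relevant functions.
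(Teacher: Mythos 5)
Your overall architecture (split $\log|f(z)-\lambda_f(r)|$ via $f(z)-\lambda_f(r) = -g(z)^2/f(r;z)$, feed the difference-quotient part into Proposition~\ref{prop:r:bd2} and the quadratic part into Lemma~\ref{lem:aux:int}) matches the paper's, but the step where you ``invoke Proposition~\ref{prop:r:bd2} with $h = f(r;\cdot)$'' contains a genuine gap: the hypothesis $\lvert\Arg h(\xi)\rvert \le C\,\re\xi/|\xi|$ for $\xi \in \Gamma_f$ is \emph{false} for $h = f(r;\cdot)$. Indeed, for $\xi \in \Gamma_f$ with $|\xi| > r$ and $\lvert\Arg\xi\rvert > \lvert\Arg\zeta_f(r)\rvert$ one has $f(\xi) - \lambda_f(r) > 0$ while $(\xi - \zeta_f(r))(\xi + \overline{\zeta_f(r)})$ lies close to the negative real axis, so it is $\Arg(-f(r;\xi))$, not $\Arg f(r;\xi)$, that is bounded by $C\,\re\xi/|\xi|$; as $\xi$ approaches the imaginary axis along $\Gamma_f$ the right-hand side of the hypothesis tends to $0$ while $\lvert\Arg f(r;\xi)\rvert$ tends to $\pi$. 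This sign flip of $f(r;\cdot)$ across $|z| = r$ is precisely why the paper's proof is long: it factors $f(r;\xi) = h_0(\xi)h_1(\xi)/h_2(\xi)$ by splitting the exponential representation at $|s| = r$, so that the explicit factor $h_0(\xi) = (\xi^2+r^2)/(1+r)^2$ absorbs the sign flip while $h_1$ and $h_2$ do satisfy the angular bound (this verification is itself a nontrivial step) and can legitimately be fed into Proposition~\ref{prop:r:bd2}; $h_0$ is then recombined with $(\xi-\zeta_f(r))(\xi+\overline{\zeta_f(r)})$ into the degree-zero functions $h_3, h_4$ handled by Lemma~\ref{lem:aux:int}.

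Relatedly, your treatment of $I_g$ leaves the Cauchy-kernel integrals of $\log|z \pm ir|$ over $\Gamma_f^\star$ as ``elementary residue-type computations'', but these are not covered by Lemma~\ref{lem:aux:int}: its contour-deformation argument needs $\log h$ bounded at infinity and holomorphic in $D_f^+$ (resp.\ $D_f^-$) off the branch arc, whereas $\log(z - ir)$ grows at infinity and has its branch point at $ir$, which lies in $D_f^+$. The paper never evaluates these integrals separately; the pairing of the quadratic numerator with $(z+ir)^{-2}$ and $(z-ir)^{-2}$ is exactly what reduces everything to functions to which the cited lemma applies. Without the $h_0,h_1,h_2$ factorisation and the $h_3,h_4$ pairing, the cancellation of the $i\Arg$ and branch-cut contributions that you defer to the end does not come about, so the proof as proposed does not go through.
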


\begin{proof}
Let us write $\zeta = \zeta_f(r)$, so that $\lambda_f(r) = f(\zeta)$ and (see~\eqref{eq:r:fr})
\formula{
 f(\xi) - \lambda_f(r) & = \frac{(\xi - \zeta) (\xi + \bar{\zeta})}{f(r; \xi)} \, .
}
Roughly speaking, our goal is to write $|f(\xi) - \lambda_f(r)|$ for $\xi \in \Gamma_f^\star$ as a product of four terms, apply Proposition~\ref{prop:r:bd2} to two of them, and perform direct integration for the other two. Throughout the proof we denote by $C$ a generic positive constant, depending on parameters listed in brackets. We stress that the value of $C$ may change even within a single equation.

\emph{Step 1.}
Suppose that
\formula{
 f(r; \xi) & = c_r \exp\biggl(\frac{1}{\pi} \int_{-\infty}^\infty \biggl(\frac{\xi}{\xi + i s} - \frac{1}{1 + |s|}\biggr) \frac{\ph_r(s)}{|s|} \, ds\biggr)
}
is the exponential representation of the Rogers function $f(r; \xi)$, and define three auxiliary Rogers functions:
\formula*[eq:r:quot:bd:aux]{
 h_0(\xi) & = \exp\biggl(\frac{1}{\pi} \biggl(\int_{-\infty}^{-r} + \int_r^\infty\biggr) \biggl(\frac{\xi}{\xi + i s} - \frac{1}{1 + |s|}\biggr) \frac{\pi}{|s|} \, ds\biggr) , \\
 h_1(\xi) & = \sqrt{c_r} \exp\biggl(\frac{1}{\pi} \int_{-r}^r \biggl(\frac{\xi}{\xi + i s} - \frac{1}{1 + |s|}\biggr) \frac{\ph_r(s)}{|s|} \, ds\biggr) , \\
 h_2(\xi) & = \sqrt{c_r} \exp\biggl(\frac{1}{\pi} \biggl(\int_{-\infty}^{-r} + \int_r^\infty\biggr) \biggl(\frac{\xi}{\xi + i s} - \frac{1}{1 + |s|}\biggr) \frac{\pi - \ph_r(s)}{|s|} \, ds\biggr) .
}
By~\eqref{eq:r:wh}, with an appropriate choice of multiplicative constants, we have
\formula{
 h_0^+(-i \xi) & = \exp\biggl(\frac{1}{\pi} \int_r^\infty \biggl(\frac{\xi}{\xi + i s} - \frac{1}{1 + |s|}\biggr) \frac{\pi}{|s|} \, ds\biggr) , \\
 h_1^+(-i \xi) & = c_r^{1/4} \exp\biggl(\frac{1}{\pi} \int_0^r \biggl(\frac{\xi}{\xi + i s} - \frac{1}{1 + |s|}\biggr) \frac{\ph_r(s)}{|s|} \, ds\biggr) , \\
 h_2^+(-i \xi) & = c_r^{1/4} \exp\biggl(\frac{1}{\pi} \int_r^\infty \biggl(\frac{\xi}{\xi + i s} - \frac{1}{1 + |s|}\biggr) \frac{\pi - \ph_r(s)}{|s|} \, ds\biggr) ,
}
and
\formula{
 h_0^-(i \xi) & = \exp\biggl(\frac{1}{\pi} \int_{-\infty}^{-r} \biggl(\frac{\xi}{\xi + i s} - \frac{1}{1 + |s|}\biggr) \frac{\pi}{|s|} \, ds\biggr) , \\
 h_1^-(i \xi) & = c_r^{1/4} \exp\biggl(\frac{1}{\pi} \int_{-r}^0 \biggl(\frac{\xi}{\xi + i s} - \frac{1}{1 + |s|}\biggr) \frac{\ph_r(s)}{|s|} \, ds\biggr) , \\
 h_2^-(i \xi) & = c_r^{1/4} \exp\biggl(\frac{1}{\pi} \int_{-\infty}^{-r} \biggl(\frac{\xi}{\xi + i s} - \frac{1}{1 + |s|}\biggr) \frac{\pi - \ph_r(s)}{|s|} \, ds\biggr) .
}
Clearly,
\formula[eq:r:quot:decomp]{
 f(r; \xi) & = \frac{h_0(\xi) h_1(\xi)}{h_2(\xi)} \, , & f^+(r; \xi) & = \frac{h_0^+(\xi) h_1^+(\xi)}{h_2^+(\xi)} \, , & f^-(r; \xi) & = \frac{h_0^-(\xi) h_1^-(\xi)}{h_2^-(\xi)} \, .
}
In particular, $\log f(r; \xi) + \log h_2(\xi) = \log h_0(\xi) + \log h_1(\xi) + 2 n \pi i$ for some integer $n$. By the definition of a (non-constant) Rogers function, the complex arguments of $f(r; \xi)$ and $h_j(\xi)$, $j = 1, 2, 3$, belong to $(\Arg \xi - \tfrac{\pi}{2}, \Arg \xi + \tfrac{\pi}{2})$ when $\re \xi > 0$, so that necessarily $n = 0$. Thus,
\formula[eq:r:quot:decomp:log]{
 \log f(r; \xi) & = \log h_0(\xi) + \log h_1(\xi) - \log h_2(\xi)
}
when $\re \xi > 0$. Finally, by a short calculation (we omit the details),
\formula[eq:r:quot:bd:h0]{
 h_0^+(-i \xi) & = \frac{r - i \xi}{1 + r} , & h_0^-(i \xi) & = \frac{r + i \xi}{1 + r} , & h_0(\xi) & = \frac{\xi^2 + r^2}{(1 + r)^2} .
}

\emph{Step 2.}
We claim that Proposition~\ref{prop:r:bd2} applies both to $h_1(\xi)$ and to $h_2(\xi)$. Thus, we need to show that $\lvert\Arg h_j(\xi)\rvert \le C(f, r) \re \xi / |\xi|$ for $\xi \in \Gamma_f$ and $j = 1, 2$. If $\lvert\Arg \xi\rvert \le \lvert\Arg \zeta\rvert$, then $\re \xi / |\xi| \ge \re \zeta / |\zeta|$, and so $\lvert\Arg h_j(\xi)\rvert \le \pi \le C(\zeta) \re \xi / |\xi|$. Therefore, it suffices to consider $\xi$ such that $\lvert\Arg \xi\rvert > \lvert\Arg \zeta\rvert$.

Observe that if $\xi \in \Gamma_f$ and $|\xi| < r$, then $f(\xi) < f(\zeta)$, and hence
\formula[eq:r:quot:bd:f1]{
 \Arg f(r; \xi) & = \Arg \frac{(\xi - \zeta) (\xi + \overline{\zeta})}{f(\xi) - f(\zeta)} = \Arg((\zeta - \xi)(\overline{\zeta} + \xi)) .
}
Similarly, if $\xi \in \Gamma_f$, $|\xi| > r$ and $\lvert\Arg \xi\rvert > \lvert\Arg \zeta\rvert$, then $f(\xi) > f(\zeta)$, and
\formula[eq:r:quot:bd:f2]{
 \Arg(-f(r; \xi)) & = \Arg((\zeta - \xi)(\overline{\zeta} + \xi)) .
}
Finally, it is relatively simple to see that
\formula[eq:r:quot:bd:f3]{
 \lvert\Arg((\zeta - \xi)(\overline{\zeta} + \xi))\rvert & \le C(\zeta) \, \frac{\re \xi}{1 + |\xi|^2} \le C(\zeta) \, \frac{\re \xi}{|\xi|}
}
whenever $\re \xi > 0$ and $\lvert\Arg \xi\rvert > \lvert\Arg \zeta\rvert$. Indeed: if
\formula{
 \thet & = \lvert\Arg((\zeta - \xi)(\overline{\zeta} + \xi))\rvert = \biggl|\Arg \frac{\zeta - \xi}{\zeta + \overline{\xi}}\biggr| ,
}
then, by~\eqref{eq:triangle},
\formula{
 \sin \frac{\thet}{2} & \le \frac{|(\zeta - \xi) - (\zeta + \overline{\xi})|}{|(\zeta - \xi) + (\zeta + \overline{\xi})|} = \frac{\re \xi}{|\zeta - i \im \xi|} \le C(\zeta) \, \frac{\re \xi}{1 + (\im \xi)^2} \, .
}
However, since $\lvert\Arg \xi\rvert > \lvert\Arg \zeta\rvert$, we have $|\xi| \le C(\zeta) \lvert\im \xi\rvert$, and hence
\formula{
 \thet & \le \tfrac{\pi}{2} \sin \tfrac{\thet}{2} \le C(\zeta) \, \frac{\re \xi}{1 + |\xi|^2} \, .
}

We turn to estimates of $h_0(\xi)$, $h_1(\xi)$ and $h_2(\xi)$. Suppose that $\xi \in \Gamma_f$ and $|\xi| < r$. Then it is easy to see that $|\xi + i s|^2 \ge C(f, r) (1 + s^2)$ when $s < -r$ or $s > r$; again we omit the details. Therefore,
\formula{
 \lvert\Arg h_2(\xi)\rvert & = \lvert\im \log h_2(\xi)\rvert = \biggl|\frac{1}{\pi} \biggl(\int_{-\infty}^{-r} + \int_r^\infty\biggr) \frac{s \re \xi}{|\xi + i s|^2} \frac{\pi - \ph_r(s)}{|s|} \, ds\biggr| \\
 & \le C(f, r) \biggl(\int_{-\infty}^{-r} + \int_r^\infty\biggr) \frac{|s| \re \xi}{1 + s^2} \frac{\pi}{|s|} \, ds = C(f, r) \re \xi \le C(f, r) \, \frac{\re \xi}{|\xi|} \, .
}
By the same calculation, using the expression for $h_0$ given in~\eqref{eq:r:quot:bd:aux}, we have
\formula{
 \lvert\Arg h_0(\xi)\rvert & \le C(f, r) \, \frac{\re \xi}{|\xi|} \, .
}
By~\eqref{eq:r:quot:decomp}, \eqref{eq:r:quot:bd:f1} and~\eqref{eq:r:quot:bd:f3}, we also have
\formula{
 \lvert\Arg h_1(\xi)\rvert & = \biggl| \Arg \frac{f(r; \xi) h_2(\xi)}{h_0(\xi)} \biggr| \le C(f, r) \, \frac{\re \xi}{|\xi|} \, .
}
A very similar argument works if $\xi \in \Gamma_f$, $|\xi| > r$ and $\lvert\Arg \xi\rvert > \lvert\Arg \zeta\rvert$. In this case we have $|\xi + i s| \ge C(f, r) (1 + |\xi|^2)$ when $-r < s < r$; once again we omit the details. Thus,
\formula{
 \lvert\Arg h_1(\xi)\rvert & = \lvert\im \log h_2(\xi)\rvert = \biggl|\frac{1}{\pi} \int_{-r}^r \frac{s \re \xi}{|\xi + i s|^2} \frac{\ph_r(s)}{|s|} \, ds\biggr| \\
 & \le C(f, r) \int_{-r}^r \frac{s \re \xi}{1 + |\xi|^2} \frac{\pi}{|s|} \, ds = C(f, r) \, \frac{\re \xi}{1 + |\xi|^2} \le C(f, r) \, \frac{\re \xi}{|\xi|} \, .
}
Furthermore, it is again easy to see that
\formula{
 \lvert\Arg (-h_0(\xi))\rvert & = \lvert\Arg (-\xi^2 - r^2)\rvert \le C(f, r) \frac{\re \xi}{1 + |\xi|^2} \le C(f, r) \frac{\re \xi}{|\xi|} \, ,
}
and again we omit the details. Consequently, by~\eqref{eq:r:quot:decomp}, \eqref{eq:r:quot:bd:f2} and~\eqref{eq:r:quot:bd:f2},
\formula{
 \lvert\Arg h_2(\xi)\rvert & = \biggl| \Arg \frac{h_1(\xi) (-h_0(\xi))}{(-f(r; \xi))} \biggr| \le C(f, r) \, \frac{\re \xi}{|\xi|} \, .
}
We have thus proved that $\lvert\Arg h_j(\xi)\rvert \le C(f, r) \re \xi / |\xi|$ whenever $\xi \in \Gamma_f$, $\lvert\Arg \xi\rvert > \lvert\Arg \zeta\rvert$ and $j = 1, 2$. This implies that Proposition~\ref{prop:r:bd2} indeed applies to $h_1$ and $h_2$: for $j = 1, 2$ and $\xi_1, \xi_2 \in D_f^+ \cup D_f^-$ we have
\formula*[eq:r:quot:bd:h12]{
 \hspace*{7em} & \hspace*{-7em} \frac{1}{2 \pi i} \int_{\Gamma_f^\star} \biggl( \frac{1}{z - \xi_1} - \frac{1}{z - \xi_2} \biggr) \log h_j(z) dz \\
 & = \begin{cases}
  \log h_j^+(-i \xi_1) - \log h_j^+(-i \xi_2) & \text{if $\xi_1, \xi_2 \in D_f^+$,} \\ 
  \log h_j^-(i \xi_2) - \log h_j^-(i \xi_1) & \text{if $\xi_1, \xi_2 \in D_f^-$,} \\ 
  \log h_j^+(-i \xi_1) + \log h_j^-(i \xi_2) & \text{if $\xi_1 \in D_f^+$, $\xi_2 \in D_f^-$.}
 \end{cases}
}

\emph{Step 3.}
As in Lemma~\ref{lem:aux:int}, we define
\formula[eq:r:quot:bd:h34]{
 h_3(\xi) & = \frac{(\xi - \zeta) (\xi + \overline{\zeta})}{(\xi + i r)^2} \, , & h_4(\xi) & = \frac{(\xi - \zeta) (\xi + \overline{\zeta})}{(\xi - i r)^2} \, .
}
By that result, for $\xi_1, \xi_2 \in D_f^+ \cup D_f^-$ we have
\formula*[eq:r:quot:bd:h3]{
 \hspace*{7em} & \hspace*{-7em} \frac{1}{2 \pi i} \int_{\Gamma_f^\star} \biggl( \frac{1}{z - \xi_1} - \frac{1}{z - \xi_2} \biggr) \log h_3(z) dz \\
 & = \ind_{D_f^+}(\xi_1) \log h_3(\xi_1) - \ind_{D_f^+}(\xi_2) \log h_3(\xi_2) ,
}
and
\formula*[eq:r:quot:bd:h4]{
 \hspace*{7em} & \hspace*{-7em} \frac{1}{2 \pi i} \int_{\Gamma_f^\star} \biggl( \frac{1}{z - \xi_1} - \frac{1}{z - \xi_2} \biggr) \log h_4(z) dz \\
 & = \ind_{D_f^-}(\xi_2) \log h_4(\xi_2) - \ind_{D_f^-}(\xi_1) \log h_4(\xi_1) .
}

\emph{Step 4.}
We are ready to combine all results that we have found in the previous steps. By~\eqref{eq:r:quot:decomp:log}, for $\xi \in \Gamma_f$ such that $|\xi| < r$, we have
\formula{
 \log |f(\xi) - f(\zeta)| & = \log (f(\zeta) - f(\xi)) = \log (-(\xi - \zeta) (\xi + \overline{\zeta})) - \log f(r; \zeta) \\
 & = \log (-(\xi - \zeta) (\xi + \overline{\zeta})) - \log h_0(\xi) - \log h_1(\xi) + \log h_2(\xi) \\
 & = \log (-(\xi - \zeta) (\xi + \overline{\zeta})) - \log (\xi^2 + r^2) + \log (1 + r)^2 \\
 & \hspace*{17em} - \log h_1(\xi) + \log h_2(\xi) .
}
Observe that if $f_1(\xi)$, $f_2(\xi)$ are holomorphic functions of $\xi$ in some (connected) region, and $f_1(\xi), f_2(\xi), f_1(\xi) f_2(\xi) \in \C \setminus (-\infty, 0]$, then $\log (f_1(\xi) f_2(\xi)) - \log f_1(\xi) - \log f_2(\xi)$ is constant. Applying this to $f_1(\xi) = -(\xi - \zeta) (\xi + \overline{\zeta})$ and $f_2(\xi) = 1 / (\xi^2 + r^2)$ (we omit the details), we find that
\formula{
 \log |f(\xi) - f(\zeta)| & = \log \frac{-(\xi - \zeta) (\xi + \overline{\zeta})}{\xi^2 + r^2} + \log (1 + r)^2 - \log h_1(\xi) + \log h_2(\xi)
}
when $|\xi| < r$. The same argument applied to $f_1(\xi) = f_2(\xi) = -(\xi - \zeta) (\xi + \overline{\zeta}) / (\xi^2 + r^2)$ (again we omit the details) shows that
\formula{
 \log |f(\xi) - f(\zeta)| & = \frac{1}{2} \log \frac{(\xi - \zeta)^2 (\xi + \overline{\zeta})^2}{(\xi^2 + r^2)^2} + \log (1 + r)^2 - \log h_1(\xi) + \log h_2(\xi) \\
 & = \frac{1}{2} \log h_3(\xi) + \frac{1}{2} \log h_4(\xi) + \log (1 + r)^2 - \log h_1(\xi) + \log h_2(\xi)
}
when $|\xi| < r$. A similar calculation can be carried out when $\xi \in \Gamma_f$ and $|\xi| > r$: in this case we have
\formula{
 \log |f(\xi) - f(\zeta)| & = \log (f(\xi) - f(\zeta)) = \log ((\xi - \zeta) (\xi + \overline{\zeta})) - \log f(r; \zeta) \\
 & = \log ((\xi - \zeta) (\xi + \overline{\zeta})) - \log h_0(\xi) - \log h_1(\xi) + \log h_2(\xi) \\
 & = \log \frac{(\xi - \zeta) (\xi + \overline{\zeta})}{\xi^2 + r^2} + \log (1 + r)^2 - \log h_1(\xi) + \log h_2(\xi) \\
 & = \frac{1}{2} \log h_3(\xi) + \frac{1}{2} \log h_4(\xi) + \log (1 + r)^2 - \log h_1(\xi) + \log h_2(\xi) ;
}
once again we omit the details. In either case, we obtain the same expression:
\formula[eq:r:quot:bd:decomposition]{
 \log |f(\xi) - f(\zeta)| & = \frac{1}{2} \log h_3(\xi) + \frac{1}{2} \log h_4(\xi) + \log (1 + r)^2 - \log h_1(\xi) + \log h_2(\xi) ,
}
and all that remains is to combine this with the results of the previous steps. If $\xi_1 \in D_f^+$ and $\xi_2 \in D_f^-$, then, by~\eqref{eq:r:quot:bd:h12}, \eqref{eq:r:quot:bd:h3} and~\eqref{eq:r:quot:bd:h4},
\formula{
 \hspace*{7em} & \hspace*{-7em} \exp\biggl(\frac{1}{2 \pi i} \int_{\Gamma_f^\star} \biggl( \frac{1}{z - \xi_1} - \frac{1}{z - \xi_2} \biggr) \log |f(z) - f(\zeta)| dz\biggr) \\
 & = \exp\biggl(\frac{1}{2} \log h_3(\xi_1) + \frac{1}{2} \log h_4(\xi_2) + \log (1 + r)^2 \\
 & \hspace*{3em} - \bigl(\log h_1^+(-i \xi_1) + \log h_1^-(i \xi_2)\bigr) + \bigl(\log h_2^+(-i \xi_1) + \log h_2^-(i \xi_2)\bigr)\biggr) \\
 & = (1 + r)^2 \sqrt{h_3(\xi_1)} \sqrt{h_4(\xi_2)} \, \frac{h_2^+(-i \xi_1)}{h_1^+(-i \xi_1)} \, \frac{h_2^-(i \xi_2)}{h_1^-(i \xi_2)} \, .
}
Using~\eqref{eq:r:quot:decomp}, and then the explicit expressions~\eqref{eq:r:quot:bd:h0} for $h_0(\xi)$ and~\eqref{eq:r:quot:bd:h34} for $h_3(\xi)$ and $h_4(\xi)$, we arrive at
\formula{
 \hspace*{7em} & \hspace*{-7em} \exp\biggl(\frac{1}{2 \pi i} \int_{\Gamma_f^\star} \biggl( \frac{1}{z - \xi_1} - \frac{1}{z - \xi_2} \biggr) \log |f(z) - f(\zeta)| dz\biggr) \\
 & = (1 + r)^2 \sqrt{h_3(\xi_1)} \sqrt{h_4(\xi_2)} \, \frac{h_0^+(-i \xi_1)}{f^+(r; -i \xi_1)} \, \frac{h_0^-(i \xi_2)}{f^-(r; i \xi_2)} \\
 & = \sqrt{\frac{(\xi_1 - \zeta) (\xi_1 + \overline{\zeta})}{(\xi_1 + i r)^2}} \sqrt{\frac{(\xi_2 - \zeta) (\xi_2 + \overline{\zeta})}{(\xi_2 - i r)^2}} \, \frac{r - i \xi_1}{f^+(r; -i \xi_1)} \, \frac{r + i \xi_2}{f^-(r; i \xi_2)} \, .
}
Therefore, slightly abusing the notation,
\formula{
 \hspace*{7em} & \hspace*{-7em} \exp\biggl(\frac{1}{2 \pi i} \int_{\Gamma_f^\star} \biggl( \frac{1}{z - \xi_1} - \frac{1}{z - \xi_2} \biggr) \log |f(z) - f(\zeta)| dz\biggr) \\
 & = \frac{\sqrt{-(\xi_1 - \zeta) (\xi_1 + \overline{\zeta})} \sqrt{-(\xi_2 - \zeta) (\xi_2 + \overline{\zeta})}}{f^+(r; -i \xi_1) f^-(r; i \xi_2)} \, ,
}
as desired; strictly speaking, instead of $\sqrt{-(\xi_j - \zeta) (\xi_j + \overline{\zeta})}$ we should have written $g(\xi_j)$ on the right-hand side.

The argument is very similar if $\xi_1, \xi_2 \in D_f^+$, and we only sketch the calculation: using first~\eqref{eq:r:quot:bd:decomposition} combined with~\eqref{eq:r:quot:bd:h12}, \eqref{eq:r:quot:bd:h3} and~\eqref{eq:r:quot:bd:h4}; then~\eqref{eq:r:quot:decomp}; and finally~\eqref{eq:r:quot:bd:h0} and~\eqref{eq:r:quot:bd:h34}, we obtain
\formula{
 \hspace*{7em} & \hspace*{-7em} \exp\biggl(\frac{1}{2 \pi i} \int_{\Gamma_f^\star} \biggl( \frac{1}{z - \xi_1} - \frac{1}{z - \xi_2} \biggr) \log |f(z) - f(\zeta)| dz\biggr) \\
 & = \frac{\sqrt{h_3(\xi_1)}}{\sqrt{h_3(\xi_2)}} \, \frac{h_2^+(-i \xi_1)}{h_1^+(-i \xi_1)} \, \frac{h_1^+(-i \xi_2)}{h_2^+(-i \xi_2)} \displaybreak[0] \\
 & = \frac{\sqrt{h_3(\xi_1)}}{\sqrt{h_3(\xi_2)}} \, \frac{h_0^+(-i \xi_1)}{f^+(r; -i \xi_1)} \, \frac{f^+(r; -i \xi_2)}{h_0^+(-i \xi_2)} \displaybreak[0] \\
 & = \sqrt{\frac{(\xi_1 - \zeta) (\xi_1 + \overline{\zeta})}{(\xi_1 + i r)^2}} \sqrt{\frac{(\xi_2 + i r)^2}{(\xi_2 - \zeta) (\xi_2 + \overline{\zeta})}} \, \frac{r - i \xi_1}{f^+(r; -i \xi_1)} \, \frac{f^+(r; -i \xi_2)}{r - i \xi_2} \\
 & = \frac{\sqrt{-(\xi_1 - \zeta) (\xi_1 + \overline{\zeta})} \, f^+(r; -i \xi_2)}{\sqrt{-(\xi_2 - \zeta) (\xi_2 + \overline{\zeta})} \, f^+(r; -i \xi_1)} \, .
}
We omit essentially the same calculations for the remaining case $\xi_1, \xi_2 \in D_f^-$.
\end{proof}

\subsection{Balance conditions}

In most results, we require that for some $\eps > 0$, $\zeta_f(r)$ lies in the sector $\{\xi \in \C : \lvert\Arg \xi\rvert \le \tfrac{\pi}{2} - \eps\}$ for $r$ large enough. This \emph{balance condition} is needed in order to deform the contour of integration from $\R$ to $\Gamma_f^\star$ when the integrand is singular near the imaginary axis. However, it also implies some balance between the Wiener--Hopf factors, as stated in the following two results. Throughout this section, by $C$ we denote a generic positive constant that may depend on parameters listed in brackets, and the exact value of $C$ can be different every time it appears, even in a single expression.

\begin{lemma}
\label{lem:r:balance}
If $f(\xi)$ is a non-constant Rogers function, $0 < \eta < \xi$, $M > 0$ and
\formula{
 \Arg \frac{\zeta_f(r) - i \eta}{\zeta_f(r) - i \xi} & \le M \Arg \, \frac{\zeta_f(r) + i \xi}{\zeta_f(r) + i \eta}
}
for every $r > 0$, then
\formula{
 \frac{f^+(\xi)}{f^+(\eta)} & \le \biggl(\frac{f^-(\xi)}{f^-(\eta)}\biggr)^{\! M} .
}
Similarly, if $0 < \eta < \xi$, $M > 0$ and
\formula{
 \Arg \, \frac{\zeta_f(r) + i \xi}{\zeta_f(r) + i \eta} & \le M \Arg \frac{\zeta_f(r) - i \eta}{\zeta_f(r) - i \xi}
}
for every $r > 0$, then
\formula{
 \frac{f^-(\xi)}{f^-(\eta)} & \le \biggl(\frac{f^+(\xi)}{f^+(\eta)}\biggr)^{\! M} .
}
In particular, for every $\thet \in (0, \tfrac{\pi}{2})$ there is a constant $C(\thet) > 0$ with the following properties. If $f(\xi)$ is a non-constant Rogers function, $0 \le \ro_1 \le \ro_2 \le \infty$ and $\Arg \zeta_f(r) \le \thet$ when $\ro_1 < r < \ro_2$, then 
\formula{
 \frac{f^+(\xi)}{f^+(\eta)} \le \biggl(\frac{f^-(\xi)}{f^-(\eta)}\biggr)^{\! C(\thet)} .
}
whenever $2 \ro_1 \le \eta < \xi \le \tfrac{1}{2} \ro_2$. Similarly, if $f(\xi)$ is a non-constant Rogers function, $0 \le \ro_1 \le \ro_2 \le \infty$ and $\Arg \zeta_f(r) \ge -\thet$ when $\ro_1 < r < \ro_2$, then 
\formula{
 \frac{f^-(\xi)}{f^-(\eta)} \le \biggl(\frac{f^+(\xi)}{f^+(\eta)}\biggr)^{\! C(\thet)} .
}
whenever $2 \ro_1 \le \eta < \xi \le \tfrac{1}{2} \ro_2$.
\end{lemma}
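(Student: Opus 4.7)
All four statements of the lemma rest on a single integral representation of the Wiener--Hopf ratios $f_\tau^\pm(\xi)/f_\tau^\pm(\eta)$. The key input is Proposition~\ref{prop:r:wh:bd}, which requires $f(0^+) = 0$; for a general non-constant Rogers function $f$, I would apply it to $g(\xi) = f(\xi) - f(0^+)$. Adding a real constant preserves $\im f$ and, via the Stieltjes representation~\eqref{eq:r:int}, produces another Rogers function, so $\Gamma_g = \Gamma_f$, $\zeta_g = \zeta_f$ and $\lambda_g = \lambda_f - f(0^+)$; moreover $g_\sigma(\xi) = \sigma + g(\xi) = f_\tau(\xi)$ with $\tau = \sigma - f(0^+)$, hence $g_\sigma^\pm = f_\tau^\pm$. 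Substituting into the ratio formulae of Proposition~\ref{prop:r:wh:bd} (and using that these formulae hold for $0 < \eta < \xi$ after swapping $\xi \leftrightarrow \eta$ and flipping signs) gives, for every $\tau > 0$,
\begin{align*}
\log\frac{f_\tau^+(\xi)}{f_\tau^+(\eta)} &= \frac{1}{\pi}\int_0^\infty \bigl(\Arg(\zeta_f(r)-i\eta)-\Arg(\zeta_f(r)-i\xi)\bigr)\frac{d\lambda_f(r)}{\tau+\lambda_f(r)}, \\
\log\frac{f_\tau^-(\xi)}{f_\tau^-(\eta)} &= \frac{1}{\pi}\int_0^\infty \bigl(\Arg(\zeta_f(r)+i\xi)-\Arg(\zeta_f(r)+i\eta)\bigr)\frac{d\lambda_f(r)}{\tau+\lambda_f(r)},
\end{align*}
with both integrands non-negative because $\eta < \xi$.

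Under the first hypothesis, integrating the pointwise (in $r$) inequality gives $\log(f_\tau^+(\xi)/f_\tau^+(\eta)) \le M \log(f_\tau^-(\xi)/f_\tau^-(\eta))$; exponentiating and sending $\tau \to 0^+$ (by continuity of the Wiener--Hopf factors in $\tau$, visible from the exponential representation~\eqref{eq:r:wh}) yields the first conclusion, and the second one follows identically from the reversed integrand inequality.

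For the final part I would reduce to the general case by verifying the pointwise hypothesis with $M = C(\theta)$ for an explicit $C(\theta)$. Writing $\zeta_f(r) = a + ib$ with $a \ge 0$ and $a^2 + b^2 = r^2$, the identities
\begin{align*}
\Arg\frac{\zeta_f(r)-i\eta}{\zeta_f(r)-i\xi} &= \int_\eta^\xi \frac{a\,du}{a^2+(b-u)^2}, & \Arg\frac{\zeta_f(r)+i\xi}{\zeta_f(r)+i\eta} &= \int_\eta^\xi \frac{a\,du}{a^2+(b+u)^2}
\end{align*}
reduce the task to bounding the ratio $R(r, u) = (r^2+2bu+u^2)/(r^2-2bu+u^2) = |\zeta_f(r)+iu|^2/|\zeta_f(r)-iu|^2$ uniformly over $u \in [\eta, \xi]$ and $r > 0$. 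In the easy regimes $r \le u/2$ and $r \ge 2u$, the moduli $|\zeta_f(r) \pm iu|$ are comparable to $\max(r, u)$, so $R(r, u)$ is bounded by an absolute constant regardless of the angular constraint. The critical range is $u/2 < r < 2u$, where $\rho_1 < r < \rho_2$ (forced by $\eta \ge 2\rho_1$ and $\xi \le \rho_2/2$), so the hypothesis $\Arg \zeta_f(r) \le \theta$ applies: if $b \le 0$ then $R(r, u) \le 1$ trivially, while if $b > 0$ the bound $b \le r\sin\theta$ together with $r^2 + u^2 \ge 2ru$ gives $r^2 - 2bu + u^2 \ge 2ru(1-\sin\theta)$ and $r^2 + 2bu + u^2 \le (r+u)^2 \le 9u^2$, hence $R(r, u) \le 9/(1-\sin\theta)$. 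This intermediate regime, in which the angular constraint is essential, is the main obstacle; taking $C(\theta)$ as the maximum of the three bounds completes the proof.
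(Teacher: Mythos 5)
Your proof is correct and follows essentially the same route as the paper's: the integral representation of $\log(f^\pm(\xi)/f^\pm(\eta))$ from Proposition~\ref{prop:r:wh:bd}, the identity $\Arg\frac{\zeta_f(r)-i\eta}{\zeta_f(r)-i\xi}=\int_\eta^\xi \re\zeta_f(r)\,|\zeta_f(r)-is|^{-2}\,ds$, and a pointwise bound on $|\zeta_f(r)+is|^2/|\zeta_f(r)-is|^2$ split into the same three regimes (the paper gets $C(\thet)=1/\sin^2(\tfrac12(\tfrac\pi2-\thet))$ via~\eqref{eq:triangle} where you get $9/(1-\sin\thet)$, which is immaterial). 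Your explicit treatment of the $f(0^+)=0$ normalisation and the $\tau\to 0^+$ limit is a technicality the paper's proof passes over silently, but it does not change the argument.
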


\begin{proof}
By Theorem~5.7 in~\cite{kwasnicki:rogers} (see Proposition~\ref{prop:r:wh:bd} above), for $\xi, \eta > 0$ we have
\formula{
 \log \frac{f^+(\xi)}{f^+(\eta)} & = \frac{1}{\pi} \int_0^\infty \Arg \frac{\zeta_f(r) - i \eta}{\zeta_f(r) - i \xi} \, \frac{d\lambda_f(r)}{\lambda_f(r)} \, , \\
 \log \frac{f^-(\xi)}{f^-(\eta)} & = \frac{1}{\pi} \int_0^\infty \Arg \frac{\zeta_f(r) + i \xi}{\zeta_f(r) + i \eta} \, \frac{d\lambda_f(r)}{\lambda_f(r)} \, ,
}
and the first assertion of the lemma follows.

For the proof of the other one, we observe that if $0 < \eta < \xi$ and $r > 0$, then
\formula{
 \Arg \frac{\zeta_f(r) - i \eta}{\zeta_f(r) - i \xi} & = \im \log \frac{\zeta_f(r) - i \eta}{\zeta_f(r) - i \xi} = \int_\eta^\xi \im \frac{i}{\zeta_f(r) - i s} \, ds = \int_\eta^\xi \frac{\re \zeta_f(r)}{|\zeta_f(r) - i s|^2} \, ds ,
}
and, in a similar way,
\formula{
 \Arg \frac{\zeta_f(r) + i \xi}{\zeta_f(r) + i \eta} & = \int_\eta^\xi \frac{\re \zeta_f(r)}{|\zeta_f(r) + i s|^2} \, ds .
}
Suppose that $\Arg \zeta_f(r) \le \thet$ when $\ro_1 < r < \ro_2$. Note that if $2 \ro_1 < s < \tfrac{1}{2} \ro_2$ and either $r \ge \ro_2$ or $r \le \ro_1$, we have
\formula{
 \frac{|\zeta_f(r) + i s|}{|\zeta_f(r) - i s|} & \le \frac{s + r}{|r - s|} \le 3 .
}
On the other hand, if $2 \ro_1 < s < \tfrac{1}{2} \ro_2$ and $\ro_1 < r < \ro_2$, then, by~\eqref{eq:triangle},
\formula{
 \frac{|\zeta_f(r) + i s|}{|\zeta_f(r) - i s|} & \le \frac{s + r}{(s + r) \sin(\tfrac{1}{2} (\tfrac{\pi}{2} - \thet))} = C(\thet) .
}
It follows that if $2 \ro_1 < s < \tfrac{1}{2} \ro_2$, then for all $r > 0$ we have
\formula{
 \frac{1}{|\zeta_f(r) - i s|} & \le C(\thet) \, \frac{1}{|\zeta_f(r) + i s|} \, .
}
Integrating both sides of the above inequality, we find that if $2 \ro_1 \le \eta < \xi \le \tfrac{1}{2} \ro_2$, then for all $r > 0$ we have
\formula{
 \Arg \frac{\zeta_f(r) - i \xi}{\zeta_f(r) - i \eta} & \le C(\thet) \Arg \frac{\zeta_f(r) + i \eta}{\zeta_f(r) + i \xi} \, .
}
It remains to apply the first part of the lemma with $M = C(\thet)$ to get the desired inequality. A very similar argument applies if we assume that $\Arg \zeta_f(r) \ge - \thet$ when $\ro_1 < r < \ro_2$.
\end{proof}

\begin{corollary}
\label{cor:r:balance}
If $0 \le \ro_1 \le \ro_2 \le \infty$, $\thet \in (0, \tfrac{\pi}{2})$, and $f(\xi)$ is a non-constant Rogers function such that $\Arg \zeta_f(r) \le \thet$ when $\ro_1 < r < \ro_2$, then there are constants $C_1, C_2(\thet) > 0$ such that
\formula{
 \frac{f^-(\xi)}{f^-(\eta)} & \ge C_1 \biggl( \frac{|f(\xi)|}{|f(\eta)|} \biggr)^{\! C_2(\thet)}
}
whenever $2 \ro_1 \le \eta < \xi \le \tfrac{1}{2} \ro_2$. Similarly, if $\Arg \zeta_f(r) \ge -\thet$ when $\ro_1 < r < \ro_2$, then
\formula{
 \frac{f^+(\xi)}{f^+(\eta)} & \ge C_1 \biggl( \frac{|f(\xi)|}{|f(\eta)|} \biggr)^{\! C_2(\thet)}
}
whenever $2 \ro_1 \le \eta < \xi \le \tfrac{1}{2} \ro_2$. 
\end{corollary}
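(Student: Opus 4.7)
The plan is to reduce the estimate to a pointwise comparison between $|f(\xi)|$ and the product of the Wiener--Hopf factors $f^+(\xi) f^-(\xi)$ for real positive $\xi$, and then apply Lemma~\ref{lem:r:balance} to trade one factor for the other. This should give both statements by symmetry, so I will describe only the first; the second follows by reversing the roles of $f^+$ and $f^-$.

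First, since any $\xi > 0$ lies in $\hp$, Proposition~\ref{prop:r:wh} gives the factorisation $f(\xi) = f^+(-i\xi) f^-(i\xi)$, hence
\formula{
 |f(\xi)| & = |f^+(-i\xi)| \, |f^-(i\xi)| .
}
To relate $|f^+(-i\xi)|$ to the value $f^+(\xi) > 0$ (and similarly for $f^-$), I apply Corollary~\ref{cor:cbf:bound} with $h = f^+$ at the point $-i\xi$, choosing $r = \xi$. Since $\sqrt{-i\xi} = \sqrt{\xi}\, e^{-i\pi/4}$, we have $\re \sqrt{-i\xi}/|\sqrt{-i\xi}| = 1/\sqrt{2}$ and $|-i\xi| = \xi$, which gives
\formula{
 \tfrac{1}{4} f^+(\xi) \le |f^+(-i\xi)| \le 4 f^+(\xi) ,
}
and by the same calculation $\tfrac{1}{4} f^-(\xi) \le |f^-(i\xi)| \le 4 f^-(\xi)$. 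Multiplying, we obtain the two-sided comparison
\formula{
 \tfrac{1}{16} f^+(\xi) f^-(\xi) \le |f(\xi)| \le 16 f^+(\xi) f^-(\xi)
}
for every $\xi > 0$.

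Now I invoke Lemma~\ref{lem:r:balance}: under the hypothesis $\Arg \zeta_f(r) \le \thet$ for $\ro_1 < r < \ro_2$, there is a constant $C(\thet)$ such that for $2\ro_1 \le \eta < \xi \le \tfrac{1}{2}\ro_2$,
\formula{
 \frac{f^+(\xi)}{f^+(\eta)} & \le \biggl(\frac{f^-(\xi)}{f^-(\eta)}\biggr)^{\! C(\thet)} .
}
Multiplying both sides by the positive quantity $f^-(\xi)/f^-(\eta)$ and combining with the pointwise comparison yields
\formula{
 \frac{|f(\xi)|}{|f(\eta)|} \le 256 \, \frac{f^+(\xi) f^-(\xi)}{f^+(\eta) f^-(\eta)} \le 256 \biggl(\frac{f^-(\xi)}{f^-(\eta)}\biggr)^{\! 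C(\thet)+1} .
}
Raising to the power $1/(C(\thet)+1)$ and using that $256^{1/(C(\thet)+1)} \le 256$ (since the exponent lies in $(0, 1]$), I obtain
\formula{
 \frac{f^-(\xi)}{f^-(\eta)} & \ge \frac{1}{256} \biggl(\frac{|f(\xi)|}{|f(\eta)|}\biggr)^{\! 1/(C(\thet)+1)} ,
}
which is the desired inequality with $C_1 = 1/256$ (independent of $\thet$) and $C_2(\thet) = 1/(C(\thet)+1)$. The symmetric statement is proved identically, starting from the second inequality in Lemma~\ref{lem:r:balance}.

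There is no real obstacle here; everything is an assembly of results already proved. The only point that deserves attention is checking that the constant $C_1$ can be taken independent of $\thet$, which works because the power $256^{1/(C(\thet)+1)}$ stays bounded by $256$ uniformly in $\thet$.
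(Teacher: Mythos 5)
Your proof is correct and follows essentially the same route as the paper's: both apply Corollary~\ref{cor:cbf:bound} to compare $|f^\pm(\mp i\xi)|$ with $f^\pm(\xi)$, deduce $|f(\xi)|/|f(\eta)| \le C\,(f^+(\xi)/f^+(\eta))(f^-(\xi)/f^-(\eta))$, and then use Lemma~\ref{lem:r:balance} to absorb the $f^+$ ratio into a power of the $f^-$ ratio. Your version merely tracks the explicit constants ($256$ and the exponent $1/(C(\thet)+1)$), which the paper leaves generic.
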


\begin{proof}
By Corollary~\ref{cor:cbf:bound} applied to the complete Bernstein functions $f^+(\xi)$ and $f^-(\eta)$, we have
\formula{
 \frac{|f(\xi)|}{|f(\eta)|} & = \frac{|f^+(-i \xi) f^-(i \xi)|}{|f^+(-i \eta) f^-(i \eta)|} \le C \, \frac{f^+(\xi)}{f^+(\eta)} \, \frac{f^-(\xi)}{f^-(\eta)} \, .
}
Using Lemma~\ref{lem:r:balance}, we find that if $\Arg \zeta_f(r) \le \thet$ for $r \ge \ro$, then
\formula{
 \frac{|f(\xi)|}{|f(\eta)|} & \le C \biggl(\frac{f^-(\xi)}{f^-(\eta)}\biggr)^{\! C(\thet)} \frac{f^-(\xi)}{f^-(\eta)}
}
when $2 \ro \le \eta < \xi$. This implies the first part of the corollary. The other one is proved in a very similar way.
\end{proof}

The next lemma provides a sufficient condition for a power-type behaviour of a Rogers function near $0$.

\begin{lemma}
\label{lem:r:growth}
Suppose that $f(\xi)$ is a non-constant Rogers function, $0 < \ro \le \infty$, $\eps > 0$ and $\delta \in [0, \tfrac{\pi}{2})$, and either
\formula{
 \Arg f(i e^{-i \delta} r) & \ge \eps
}
whenever $0 < r < \ro$, or
\formula{
 \Arg f(-i e^{i \delta} r) & \le -\eps
}
whenever $0 < r < \ro$ (if $\delta = 0$, we understand that $\Arg f(i e^{-i \delta} r) = \Arg f(i r) = \ph(-r)$ and $\Arg f(-i e^{i \delta} r) = \Arg f(-i r) = \ph(r)$, where $\ph(s)$ is the function in the exponential representation~\eqref{eq:r:exp} of the Rogers function $f(\xi)$). Then there is a constant $C(\eps, \delta) > 0$ such that
\formula[eq:r:growth]{
 \frac{|f(\xi)|}{|f(\eta)|} & \ge C(\eps, \delta) \biggl(\frac{\xi}{\eta}\biggr)^{\! \eps / (\pi - \delta)}
}
whenever $0 < \eta \le \xi < \ro$.
\end{lemma}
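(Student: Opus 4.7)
My plan is to translate both the hypothesis and the conclusion into convolution inequalities on the real line via the exponential representation~\eqref{eq:r:exp} of $f$, and then compare them by an integrated argument. The two assumptions of the lemma are interchangeable under the symmetry $f(\xi) \mapsto \overline{f(\bar\xi)}$, so I focus on the case $\Arg f(-ie^{i\delta}r) \leq -\eps$ for $r \in (0, \ro)$ and $\delta > 0$; the degenerate case $\delta = 0$, in which only the other assumption is admissible, will be handled separately.

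Direct computation of the imaginary part of $\log f$ in~\eqref{eq:r:exp} yields $\Arg f(-ie^{i\delta}r) = -\tfrac{\sin\delta}{\pi}\int_{\R}\tfrac{r\sign(s)\ph(s)\,ds}{|s-e^{i\delta}r|^2}$; after discarding the non-negative contribution from $s < 0$, the hypothesis becomes
\formula{
 \int_0^\infty \frac{r\,\ph(s)\,ds}{|s - e^{i\delta}r|^2} \geq \frac{\pi\eps}{\sin\delta}, \qquad r \in (0, \ro).
}
Similarly, the real-part computation gives $\log(|f(\xi)|/|f(\eta)|) = \tfrac{\xi^2-\eta^2}{\pi}\int_{\R}\tfrac{|s|\ph(s)\,ds}{(\xi^2+s^2)(\eta^2+s^2)}$, and it suffices to bound $\tfrac{\xi^2-\eta^2}{\pi}\int_0^\infty \tfrac{s\ph(s)\,ds}{(\xi^2+s^2)(\eta^2+s^2)}$ from below by $\tfrac{\eps\log(\xi/\eta)}{\pi-\delta}+\log C(\eps,\delta)$.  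Substituting $s = \eta e^u$, $r = \eta e^v$ and setting $\beta(u) := \ph(\eta e^u) \in [0,\pi]$, $L := \log(\xi/\eta)$, $K_\delta(v) := \tfrac{1}{2(\cosh v - \cos\delta)}$, $h(v) := \tfrac{1}{2\cosh^2 v}$, the identity $|\eta e^u - e^{i\delta}\eta e^v|^2 = 2\eta^2 e^{u+v}(\cosh(u-v)-\cos\delta)$ turns the hypothesis into a pointwise convolution inequality $(\beta * K_\delta)(v) \geq \pi\eps/\sin\delta$ for $v < \log(\ro/\eta)$, while the desired conclusion becomes $\int_0^L(\beta * h)(\sigma)\,d\sigma \geq \tfrac{\pi\eps L}{\pi-\delta} + \pi\log C(\eps,\delta)$.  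The key arithmetic identity is $\int_\R h = 1$ and $\int_\R K_\delta = (\pi-\delta)/\sin\delta$, so their ratio $\sin\delta/(\pi-\delta)$ matches exactly the desired exponent.

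The hardest step is the convolution comparison: the naive pointwise bound $h \leq \tfrac{\sin\delta}{\pi-\delta}K_\delta$ fails both near $v = 0$ and as $|v| \to \infty$, so the comparison must be done at the integrated level.  My plan is to integrate the hypothesis against $\chi_{[0,L]}(v)\,dv$ and apply Fubini, reducing the task to showing $\int_\R\beta(u)N_L(u)\,du \geq \tfrac{\sin\delta}{\pi-\delta}\int_\R\beta(u)M_L(u)\,du - C(\delta)$, where $M_L(u) := \int_0^L K_\delta(v-u)\,dv$ and $N_L(u) := \int_0^L h(v-u)\,dv = \tfrac{1}{2}(\tanh(L-u)+\tanh u)$.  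The difference $R_L := N_L - \tfrac{\sin\delta}{\pi-\delta}M_L$ has total integral zero (since $\int_\R N_L = L$ and $\int_\R M_L = L(\pi-\delta)/\sin\delta$), is approximately zero on the bulk $A < u < L-A$ for $A$ large enough (where both primitives are close to their plateau values $1$ and $(\pi-\delta)/\sin\delta$, respectively), and decays exponentially in the tails (since $h, K_\delta \sim e^{-|v|}$ at infinity); consequently $\int_\R|R_L|$ is bounded uniformly in $L$ by a constant $C(\delta)$.  Using $\beta \leq \pi$ then gives $\int_\R \beta R_L \geq -\pi C(\delta)$ and hence the desired inequality.  The degenerate case $\delta = 0$ reduces the hypothesis to $\ph(-r) \geq \eps$ on $(0, \ro)$ and is handled by a direct estimate of $\int_0^\infty \tfrac{s\ph(-s)\,ds}{(\xi^2+s^2)(\eta^2+s^2)}$.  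Sharpness of the exponent $\eps/(\pi-\delta)$ is confirmed by the extremal power-function $f(\xi) = c\xi^{\eps/(\pi-\delta)}$ with appropriately chosen complex constant $c$, in which both inequalities become equalities.
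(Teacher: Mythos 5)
Your proposal is correct, and it takes a genuinely different route from the paper. The paper first settles the case $\delta=0$ by an explicit multiplicative factorisation $f=gh$, where $h(\xi)=\bigl((1+\ro)i\xi/(i\xi+\ro)\bigr)^{\eps/\pi}$ carries the power-type growth and $|g(\xi)/g(\eta)|\ge 1$ follows from Proposition~\ref{prop:r:inc}; it then reduces $\delta>0$ to $\delta=0$ by composing with power functions, $h(\xi)=e^{i\delta/2}\xi^{\delta/\pi}f(e^{-i\delta/2}\xi^{1-\delta/\pi})$, using closure of the Rogers class under such compositions (Proposition~\ref{prop:r:prop}) and Proposition~\ref{prop:r:bound} to return to real arguments. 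You instead pass to logarithmic coordinates and compare the two kernels $K_\delta$ and $h$ at the integrated level; the matching of masses $\int h=1$, $\int K_\delta=(\pi-\delta)/\sin\delta$ is exactly the right mechanism, and your uniform $L^1$ bound on $R_L$ is correct --- in fact it follows cleanly from the telescoping identity $R_L(u)=D(L-u)-D(-u)$ with $D=H-\mathcal{K}$ the difference of the two (normalised) primitives, which is odd and exponentially decaying, so $\int|R_L|\le 2\int|D|$ is bounded by an absolute constant. Your argument is more self-contained (it avoids the composition trick entirely and treats all $\delta$ in one computation, yielding a constant independent of $\eps$ and, with a little care, of $\delta$), whereas the paper's route is shorter given its existing toolbox and produces as a by-product the observation~\eqref{eq:r:growth:aux} that the $\delta=0$ hypothesis self-improves to a positive-$\delta$ one, which is reused later in Lemma~\ref{lem:r:balance:quot}. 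Two cosmetic points: in the case you treat, the $\delta=0$ degeneration gives $\ph(r)\ge\eps$ rather than $\ph(-r)\ge\eps$ (the boundary value is $\Arg f(-ir)=-\ph(r)$, so the paper's own parenthetical convention has a sign slip; the two cases are in any event exchanged by $f(\xi)\mapsto\overline{f(\overline{\xi})}$); and you should note explicitly that the interval $[0,L]$ over which you integrate the hypothesis lies in the admissible range $v<\log(\ro/\eta)$ precisely because $\xi<\ro$.
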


\begin{proof}
We only consider the case $\Arg f(i e^{-i \delta} r) \ge \eps$ when $0 < r < \ro$, with $\ro$ finite. The case $\ro = \infty$ is then an immediate extension, and the case $\Arg f(-i e^{i \delta} r) \le -\eps$ when $0 < r < \ro$ is completely analogous.

We first consider the case $\delta = 0$. With the notation of the exponential representation~\eqref{eq:r:exp}, we thus have $\ph(r) \ge \eps$ when $-\ro < r < 0$. Define the auxiliary Rogers functions
\formula{
 g(\xi) & = c \exp\biggl(\frac{1}{\pi} \int_{-\infty}^\infty \biggl(\frac{\xi}{\xi + i s} - \frac{1}{1 + |s|}\biggr) \frac{\ph(s) - \eps \ind_{(-\ro, 0)}(s)}{|s|} \, ds\biggr) , \\
 h(\xi) & = \exp\biggl(\frac{1}{\pi} \int_{-\ro}^0 \biggl(\frac{\xi}{\xi + i s} - \frac{1}{1 + |s|}\biggr) \frac{\eps}{|s|} \, ds\biggr) ,
}
so that $f(\xi) = g(\xi) h(\xi)$. By a simple calculation,
\formula{
 h(\xi) & = \biggl(\frac{(1 + \ro) i \xi}{i \xi + \ro}\biggr)^{\!\eps / \pi} ,
}
and $|g(\xi) / g(\eta)| \ge 1$ when $0 < \eta \le \xi$ by Proposition~\ref{prop:r:inc}. It follows that
\formula{
 \frac{|f(\xi)|}{|f(\eta)|} & = \frac{|g(\xi)|}{|g(\eta)|} \, \frac{|h(\xi)|}{|h(\eta)|} \ge \biggl(\frac{\xi |i \eta + \ro|}{\eta |i \xi + \ro|}\biggr)^{\! \eps / \pi}
}
whenever $0 < \eta \le \xi$. It remains to note that $|i \eta + \ro|^2 / |i \xi + \ro|^2 = (\eta^2 + \ro^2) / (\xi^2 + \ro^2) \ge \tfrac{1}{2}$ when additionally $\xi < \ro$.

Let us record the following observation. If $\re \xi > 0$ and $|\xi| < \ro$, then, by a short calculation,
\formula{
 \im \frac{i \xi}{(i \xi + \ro)^2} & = \frac{(\ro^2 - |\xi|^2) \re \xi}{|i \xi + \ro|^2} \ge 0 ,
}
and so $\Arg(i \xi) \ge 2 \Arg(i \xi + \ro)$. From the definition of a Rogers function, it follows that
\formula{
 \Arg f(\xi) & = \Arg g(\xi) + \Arg h(\xi) \\
 & = \Arg \frac{g(\xi)}{\xi} + \Arg \xi + \frac{\eps}{\pi} \, (\Arg(i \xi) - \Arg(i \xi + \ro)) \\
 & \ge -\frac{\pi}{2} + \Arg \xi + \frac{\eps}{2 \pi} \Arg (i \xi) \\
 & = \biggl(1 + \frac{\eps}{2 \pi}\biggr) \Arg \xi - \biggl(1 - \frac{\eps}{2 \pi}\biggr) \frac{\pi}{2} \, .
}
In particular,
\formula[eq:r:growth:aux]{
 \Arg f(\xi) & \ge \biggl(1 - \frac{\eps}{2 \pi}\biggr) \frac{\eps}{4}
}
when $\Arg \xi = (1 - \tfrac{\eps}{2 \pi}) \tfrac{\pi}{2}$ and $|\xi| < \ro$. In other words, if the assumption of the lemma is satisfied with $\delta = 0$, then it is also satisfied for some $\delta > 0$ (with $\eps$ replaced by the right-hand side of~\eqref{eq:r:growth:aux}).

Let us now consider the case $0 < \delta < \tfrac{\pi}{2}$. We define an auxiliary function
\formula{
 h(\xi) & = e^{i \delta/2} \xi^{\delta / \pi} f(e^{-i \delta/2} \xi^{1 - \delta / \pi}) .
}
Note that $g(\xi) = e^{i \delta/2} \xi^{\delta / \pi}$ is a Rogers function, and hence, by Proposition~\ref{prop:r:prop}\ref{it:r:prop:d}, $h(\xi)$ is a Rogers function. Furthermore, if $0 < r < \ro^{\pi / (\pi - \delta)}$, we have
\formula{
 \Arg h(-i r) & = \Arg\bigl(e^{i \delta} r^{\delta / \pi} f(i e^{-i \delta} r^{1 - \delta / \pi})\bigr) \ge \delta + \eps .
}
Therefore, by the first part of the proof,
\formula{
 \frac{|h(\xi)|}{|h(\eta)|} & \ge C(\eps, \delta) \biggl(\frac{\xi}{\eta}\biggr)^{\! (\delta + \eps) / \pi}
}
whenever $0 < \eta \le \xi < \ro^{\pi / (\pi - \delta)}$. Using the definition of $h$, we find that
\formula{
 \frac{|f(e^{-i \delta/2} \xi^{1 - \delta / \pi})|}{|f(e^{-i \delta/2} \eta^{1 - \delta / \pi})|} & \ge C(\eps, \delta) \biggl(\frac{\xi}{\eta}\biggr)^{\! \eps / \pi} .
}
Finally, combining the above estimate with Proposition~\ref{prop:r:bound} (applied with $r$ and $\xi$ replaced by $\xi^{1 - \delta / \pi}$ and $e^{-i \delta/2} \xi^{1 - \delta / \pi}$ for the numerator, and by $\eta^{1 - \delta / \pi}$ and $e^{-i \delta/2} \eta^{1 - \delta / \pi}$ for the denominator), we conclude that
\formula{
 \frac{|f(\xi^{1 - \delta / \pi})|}{|f(\eta^{1 - \delta / \pi})|} & \ge C(\eps, \delta) \biggl(\frac{\xi}{\eta}\biggr)^{\! \eps / \pi}
}
when $0 < \eta \le \xi < \ro^{\pi / (\pi - \delta)}$, which is equivalent to~\eqref{eq:r:growth}.
\end{proof}

\begin{lemma}
\label{lem:r:balance:quot}
Suppose that $0 < \ro \le \infty$, $\eps > 0$ and $\delta \in [0, \tfrac{\pi}{2})$. Suppose that $f(\xi)$ is a non-constant Rogers function, and we have
\formula{
 \Arg f(i r e^{-i \delta}) & \ge \eps ,
}
whenever $0 < r < \ro$ (if $\delta = 0$, we understand that $\Arg f(-i e^{i \delta} r) = \Arg f(-i r) = \ph(r)$ and $\Arg f(i e^{-i \delta} r) = \Arg f(i r) = \ph(-r)$, where $\ph(s)$ is the function in the exponential representation~\eqref{eq:r:exp} of the Rogers function $f(\xi)$). Suppose also that
\formula{
 \inf \{ \Arg \zeta_f(r) : r \in (0, \ro) \} & > -\frac{\pi}{2} \, .
}
If $r \in Z_f$, then, for some constants $C_1(\eps, \delta), C_2(\eps, \delta), C_3(\eps, \delta, f) > 0$, the Wiener--Hopf factor $f^-(r; \xi)$ of the Rogers function $f(r; \xi)$ satisfies
\formula{
 \frac{f^-(r; \xi)}{f^-(r; \eta)} & \le C_1(\eps, \delta) \biggl( \frac{\xi}{\eta} \biggr)^{\! 1 - C_2(\eps, \delta)}
}
whenever $0 < C_3(\eps, \delta, f) r \le \eta < \xi < \tfrac{1}{2} \ro$.

Similarly, if 
\formula{
 \Arg f(-i r e^{i \delta}) & \le -\eps
}
whenever $0 < r < \ro$, and
\formula{
 \sup \{ \Arg \zeta_f(r) : r \in (0, \ro) \} & < \frac{\pi}{2} \, ,
}
then
\formula{
 \frac{f^+(r; \xi)}{f^+(r; \eta)} & \le C_1 \biggl( \frac{\xi}{\eta} \biggr)^{\! 1 - C_2(\eps, \delta)}
}
whenever $0 < C_3(\eps, \delta, f) r \le \eta < \xi \le \tfrac{1}{2} \ro$.
\end{lemma}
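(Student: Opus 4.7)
The plan is to prove the first assertion; the second is completely symmetric. The strategy is to pass through the symmetrised quantity $|f(r;\xi)| \asymp f^+(r;\xi) f^-(r;\xi)$, obtained from the Wiener--Hopf factorisation $f(r;t) = f^+(r;-it) f^-(r;it)$ together with Corollary~\ref{cor:cbf:bound} applied to the complete Bernstein functions $f^\pm(r;\cdot)$, and to separate the two factors using the balance relation of Corollary~\ref{cor:r:balance} applied to the Rogers function $f(r;\cdot)$.

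The first step extracts a power-law growth of $|f|$ near the origin: the hypothesis $\Arg f(i r e^{-i \delta}) \ge \eps$ feeds directly into Lemma~\ref{lem:r:growth} and yields $|f(\xi)|/|f(\eta)| \ge C(\eps,\delta)(\xi/\eta)^{\alpha}$ with $\alpha = \eps/(\pi - \delta) > 0$, for $0 < \eta \le \xi < \ro$. Using the definition $f(r;\xi) = (\xi - \zeta_f(r))(\xi + \overline{\zeta_f(r)})/(f(\xi) - \lambda_f(r))$ and choosing $C_3(\eps,\delta,f)$ so large that $|f(\xi)| \ge 2 \lambda_f(r)$ for $\xi \ge C_3 r$ (possible by the above growth estimate combined with Proposition~\ref{prop:r:bound}), the triangle inequality gives $|f(\xi) - \lambda_f(r)| \asymp |f(\xi)|$, while $|(\xi-\zeta_f(r))(\xi+\overline{\zeta_f(r)})| \asymp \xi^2$ for such $\xi$. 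Hence $|f(r;\xi)| \asymp \xi^2/|f(\xi)|$, and combining with the growth estimate from Lemma~\ref{lem:r:growth} yields
\begin{equation*}
\frac{|f(r;\xi)|}{|f(r;\eta)|} \le C \Big(\frac{\xi}{\eta}\Big)^{\! 2 - \alpha}, \qquad C_3 r \le \eta \le \xi \le \tfrac{1}{2} \ro.
\end{equation*}

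Next, the second hypothesis $\inf_{(0,\ro)} \Arg \zeta_f > -\tfrac{\pi}{2}$ places $\zeta_f$ in a sector bounded away from $-i\R$. I would use this together with the explicit form of $f(r;\xi)$ (in particular the fact that near $-i\R$ the argument of the numerator $(\xi - \zeta_f(r))(\xi + \overline{\zeta_f(r)})$ is pinned close to $\pi$, while by the previous step the argument of the denominator $f(\xi) - \lambda_f(r)$ tracks that of $f(\xi)$) to show that $\Gamma_{f(r;\cdot)}$ avoids a neighbourhood of $-i\R$ on a comparable range of moduli, i.e.\ $\Arg \zeta_{f(r;\cdot)}(s) \ge -\thet$ for some $\thet = \thet(\eps,\delta,f) < \tfrac{\pi}{2}$. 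The second part of Corollary~\ref{cor:r:balance} applied to $f(r;\cdot)$ then gives $f^+(r;\xi)/f^+(r;\eta) \ge C (|f(r;\xi)|/|f(r;\eta)|)^{C_2(\thet)}$, and combining this with $|f(r;t)| \asymp f^+(r;t) f^-(r;t)$ yields
\begin{equation*}
\frac{f^-(r;\xi)}{f^-(r;\eta)} \le C \Big(\frac{|f(r;\xi)|}{|f(r;\eta)|}\Big)^{\!1 - C_2(\thet)} \le C \Big(\frac{\xi}{\eta}\Big)^{\!(2 - \alpha)(1 - C_2(\thet))}.
\end{equation*}
Setting $C_2(\eps,\delta) := 1 - (2-\alpha)(1 - C_2(\thet))$ gives the claim, with this exponent being strictly positive provided $\thet$ is taken close enough to the sector bound supplied by the hypothesis so that $C_2(\thet) > (1-\alpha)/(2-\alpha)$.

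The main obstacle is the spine control in the middle paragraph: the spine of $f(r;\cdot)$ is defined only implicitly by $\im f(r;\xi) = 0$, and although $\zeta_f(r) \in \Gamma_{f(r;\cdot)}$ by construction, the two spines diverge away from this common point in general. Proving the sector bound for $\zeta_{f(r;\cdot)}$ requires a quantitative analysis of the equation $\Arg((\xi - \zeta_f(r))(\xi + \overline{\zeta_f(r)})) \equiv \Arg(f(\xi) - \lambda_f(r)) \pmod \pi$ for $\xi$ approaching $-i \R$, and ruling out such coincidences through the combined use of the sector bound on $\zeta_f$ and the power growth of $|f|$. Tracking the resulting quantitative dependence so that the final exponent $(2 - \alpha)(1 - C_2(\thet))$ lies strictly below $1$ is a secondary but necessary book-keeping step.
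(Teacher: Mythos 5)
Your overall architecture is close to the paper's --- power growth of $|f|$ from Lemma~\ref{lem:r:growth}, the comparison $|f(r;\xi)| \asymp \xi^2/|f(\xi)|$ for $\xi \ge C_3 r$, a sector bound on the spine of an auxiliary Rogers function, and then Corollary~\ref{cor:r:balance} --- but the step where you apply the balance corollary to $f(r;\cdot)$ itself breaks down quantitatively, and you have noticed the symptom without diagnosing the cause. You end with the exponent $(2-\alpha)(1-C_2(\thet))$, $\alpha = \eps/(\pi-\delta)$, and you need $C_2(\thet) > (1-\alpha)/(2-\alpha)$, a quantity that tends to $\tfrac12$ as $\alpha \to 0^+$. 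But the constant $C_2(\thet)$ delivered by Corollary~\ref{cor:r:balance} has the form $1/(1+M)$ with $M$ the constant of Lemma~\ref{lem:r:balance}, and inspecting that proof, $M$ is never small (the ratio $|\zeta_f(r)+is|/|\zeta_f(r)-is|$ is only bounded by $\max\{3, 1/\sin(\tfrac12(\tfrac{\pi}{2}-\thet))\}$, and this enters squared), so $C_2(\thet)$ is universally well below $\tfrac12$ and it \emph{decreases} as $\thet \uparrow \tfrac{\pi}{2}$; taking ``$\thet$ close to the sector bound'' makes things worse, not better. The root cause is that $|f(r;\xi)|$ grows essentially like $\xi^{2-\alpha}$, so no bound of the form $(|f(r;\xi)|/|f(r;\eta)|)^{1-C_2}$ can get below exponent $1$ unless $C_2$ exceeds roughly $\tfrac12$, which the balance machinery does not provide. (Separately, you leave the sector bound for the spine of $f(r;\cdot)$ as an acknowledged obstacle rather than proving it.)

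The paper's fix is to run the balance argument on $g(\xi) = \xi^2/f(r;\xi)$, the Rogers function of Proposition~\ref{prop:r:quot}, rather than on $f(r;\cdot)$. Since $|g(\xi)| \asymp |f(\xi)|$ for $\xi \ge C_3 r$, the ratio $|g(\xi)|/|g(\eta)|$ is bounded \emph{below} by $c(\xi/\eta)^{\alpha}$ with the small positive exponent $\alpha$; the hypothesis $\Arg \zeta_g(s) \le \tfrac{\pi}{2}-\delta$ of Corollary~\ref{cor:r:balance} is verified by showing $\Arg g(\xi) \ge 0$ on the ray $\Arg\xi = \tfrac{\pi}{2}-\delta$ for $|\xi| \ge C_3 r$ (essentially your Step-1 argument estimates, fed into Proposition~\ref{prop:r:real}\ref{it:r:real:b}); and the corollary then yields $g^-(\xi)/g^-(\eta) \ge c(\xi/\eta)^{C(\delta)\alpha}$. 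Since the Wiener--Hopf factors satisfy $f^-(r;\xi) = \xi/g^-(\xi)$, the factor $\xi/\eta$ comes out with exponent exactly $1$ and the small correction $C(\delta)\alpha$ is \emph{subtracted} from it, giving $1 - C_2(\eps,\delta)$ as required. Your two key ingredients --- the comparison of $|f(r;\xi)|$ with $\xi^2/|f(\xi)|$ and the spine control you flag as the main obstacle --- are both genuinely needed, but they must be routed through $g$ and the identity $f^\pm(r;\xi) = \xi/g^\pm(\xi)$ for the exponents to close.
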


\begin{figure}
\centering
\begin{tikzpicture}
\footnotesize
\coordinate (X) at (3,0);
\coordinate (Y) at (0,3);
\coordinate (Xn) at (-1,0);
\coordinate (Yn) at (0,-3);
\coordinate (O) at (0,0);
\coordinate (R) at (0.4159,2.4954);
\coordinate (S) at (0.4159,-2.4954);
\coordinate (T) at (0.8,2.4);
\coordinate (Ry) at (0,2.52982);
\coordinate (Sy) at (0,-2.52982);
\fill[fill=olive!50!white, draw=olive] (O) -- (S) arc (-80.54:-90:2.52982) -- (O);
\node[below right, olive] at (Sy) {$\Arg f < 0$};
\fill[fill=teal!50!white, draw=teal] (O) -- (R) arc (80.54:90:2.52982) -- (O);
\node[above right, teal] at (Ry) {$\Arg f > 0$};
\draw[very thick, violet] (O) -- (T) node[midway, right] {$\inf \Arg f \ge \eps$};
\pic["$\delta$",draw,angle eccentricity=1.25,angle radius=0.75cm] {angle=T--O--Y};
\pic["$\thet$",draw,angle eccentricity=1.45,angle radius=0.4cm] {angle=X--O--R};
\pic["$\thet$",draw,angle eccentricity=1.35,angle radius=0.5cm] {angle=S--O--X};
\draw[->] (Xn) -- (X) node[above] {$\re$};
\draw[->] (Yn) -- (Y) node[left] {$\im$};
\end{tikzpicture}
\caption{Setting for the proof of Lemma~\ref{lem:r:balance:quot}.}
\label{fig:balance}
\end{figure}
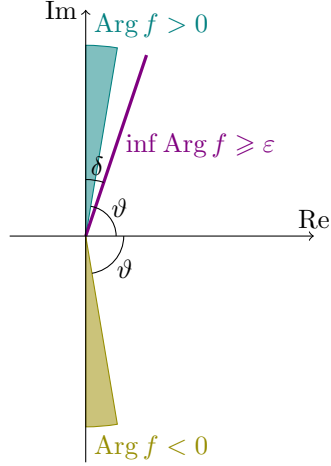

\begin{proof}
As both parts of the lemma are completely analogous, we only prove the first statement. As it was noted in the proof of Lemma~\ref{lem:r:growth} (see~\eqref{eq:r:growth:aux}), with no loss of generality we may assume that $\delta > 0$. We fix $r \in Z_f$, and we consider the Rogers function
\formula[eq:r:balance:quot:aux0]{
 g(\xi) & = \frac{\xi^2}{f(r; \xi)} = \frac{\xi^2}{(\xi - \zeta_f(r)) (\xi + \overline{\zeta_f(r)})} \, (f(\xi) - \lambda_f(r)) ,
}
introduced in Proposition~\ref{prop:r:quot}. We will show that Corollary~\ref{cor:r:balance} applies to $g(\xi)$, and then use this fact to prove the desired estimate for the Wiener--Hopf factors of $f(r; \xi)$. We will frequently use the estimate
\formula{
 \lvert\Arg (1 - \xi)\rvert & \le \lvert\log (1 - \xi)\rvert \le 2 |\xi| ,
}
valid when $|\xi| \le \tfrac{1}{2}$. We let
\formula{
 \thet & = \sup \{ \lvert\Arg \zeta_f(r)\rvert : r \in (0, \ro) \} .
}
Note that $\thet < \tfrac{\pi}{2}$.

\smallskip

\emph{Step 1.}
We first estimate $\Arg (f(\xi) - \lambda_f(r))$ when $\Arg \xi = \tfrac{\pi}{2} - \delta$ and $|\xi| / r$ is sufficiently large. Observe that if $|f(\xi)| \ge 2 \lambda_f(r)$, then 
\formula{
 \lvert\Arg (f(\xi) - \lambda_f(r)) - \Arg f(\xi)\rvert & = \biggl| \Arg \biggl( 1 - \frac{\lambda_f(r)}{f(\xi)} \biggr) \biggr| \le 2 \, \frac{|\lambda_f(r)|}{|f(\xi)|} \, .
}
In particular, if additionally $|f(\xi)| \ge 4 \eps^{-1} \lambda_f(r)$, then
\formula[eq:r:balance:quot:aux1]{
 \lvert\Arg (f(\xi) - \lambda_f(r)) - \Arg f(\xi)\rvert & \le \frac{\eps}{2} \, .
}
We now find the lower bound on $|f(\xi)|$ using Proposition~\ref{prop:r:bound} and Lemma~\ref{lem:r:growth}. Recall that $\Arg \xi = \tfrac{\pi}{2} - \delta$, $\lambda_f(r) = f(\zeta_f(r))$ and $\lvert\Arg \zeta_f(r)\rvert \le \thet$. Thus, if $0 < r \le |\xi| < \ro$, we have
\formula{
 |f(\xi)| & \ge C(\delta) f(|\xi|) \ge C(\eps, \delta) \biggl(\frac{|\xi|}{r}\biggr)^{\! \eps / (\pi - \delta)} |f(r)| \ge C(\eps, \delta, \thet) \biggl(\frac{|\xi|}{r}\biggr)^{\! \eps / (\pi - \delta)} \lambda_f(r)
}
(we applied Proposition~\ref{prop:r:bound} with $r$ and $\xi$ replaced by $|\xi|$ and $\xi$ in the former inequality, and by $\zeta_f(r)$ and $r$ in the latter one). In particular, if $\Arg \xi = \tfrac{\pi}{2} - \delta$ and $0 < C(\eps, \delta, \thet) r \le |\xi| < \ro$, then $|f(\xi)| > 4 \eps^{-1} \lambda_f(r)$, and consequently~\eqref{eq:r:balance:quot:aux1} holds.

\smallskip

\emph{Step 2.}
We now turn to the estimate of the argument of the other factor on the right-hand side of~\eqref{eq:r:balance:quot:aux0}. If $|\xi| \ge 2 r$, we have
\formula{
 \biggl| \Arg \frac{\xi^2}{(\xi - \zeta_f(r)) (\xi + \overline{\zeta_f(r)})} \biggr| & = \biggl| \Arg \biggl( 1 - \frac{\zeta_f(r)}{\xi} \biggr) + \Arg \biggl( 1 + \frac{\overline{\zeta_f(r)}}{\xi} \biggr) \biggr| \\
 & \le 2 \, \frac{|\zeta_f(r)|}{|\xi|} + 2 \, \frac{|\overline{\zeta_f(r)}|}{|\xi|} = 4 \, \frac{r}{|\xi|} \, .
}
In particular, if $|\xi| > 8 r \eps^{-1}$, then
\formula{
 \biggl| \Arg \frac{\xi^2}{(\xi - \zeta_f(r)) (\xi + \overline{\zeta_f(r)})} \biggr| & < \frac{\eps}{2} \, .
}

\smallskip

\emph{Step 3.}
By combining the estimates from the previous two steps, we find that if $\Arg \xi = \tfrac{\pi}{2} - \delta$ and $0 < C(\eps, \delta, \thet) r < |\xi| < \ro$, then
\formula{
 \Arg g(\xi) & = \Arg \frac{\xi^2}{(\xi - \zeta_f(r)) (\xi + \overline{\zeta_f(r)})} + \Arg (f(\xi) - f(\zeta_f(r))) \\
 & \ge -\frac{\eps}{2} + \biggl(\Arg f(\xi) - \frac{\eps}{2}\biggr) \ge 0 .
}
Consequently, if $s = |\xi|$, then $\Arg \xi > \Arg \zeta_g(s)$ by Proposition~\ref{prop:r:real}\ref{it:r:real:b}. Thus, $\Arg \zeta_g(s) \le \tfrac{\pi}{2} - \delta$ whenever $0 < C(\eps, \delta, \thet) r < s < \ro$. By Corollary~\ref{cor:r:balance} we find that
\formula{
 \frac{g^-(\xi)}{g^-(\eta)} & \ge C \biggl( \frac{|g(\xi)|}{|g(\eta)|} \biggr)^{\! C(\delta)}
}
whenever $0 < C(\eps, \delta, \thet) r < \eta < \xi \le \tfrac{1}{2} \ro$.

\smallskip

\emph{Step 4.}
Clearly, $f^+(r; \xi) = \xi / g^+(\xi)$ and $f^-(r; \xi) = \xi / g^-(\xi)$ are the Wiener--Hopf factors of the Rogers function $f(r; \xi) = \xi^2 / g(\xi) = f^+(r; -i \xi) f^-(r; i \xi)$. Therefore, 
\formula{
 \frac{f^-(r; \xi)}{f^-(r; \eta)} & = \frac{\xi g^-(\eta)}{\eta g^-(\xi)} \le C \, \frac{\xi}{\eta} \biggl( \frac{|g(\eta)|}{|g(\xi)|} \biggr)^{\! C(\delta)}
}
whenever $0 < C(\eps, \delta, \thet) r \le \eta < \xi \le \tfrac{1}{2} \ro$. Finally, when $0 < C(\eps, \delta, \thet) r \le \xi < \ro$, then, as in Step~1,
\formula{
 |f(\xi)| & \ge C(\eps, \delta) \biggl(\frac{\xi}{r}\biggr)^{\! \eps / (\pi - \delta)} |f(r)| \ge C(\eps, \delta, \thet) \biggl(\frac{\xi}{r}\biggr)^{\! \eps / (\pi - \delta)} \lambda_f(r) \ge 2 \lambda_f(r) , 
}
and hence
\formula{
 |g(\xi)| & = \frac{\xi^2}{|\xi - \zeta_f(r)| |\xi + \overline{\zeta_f(r)}|} \, |f(\xi) - \lambda_f(r)| \ge \frac{\xi^2}{(\xi + r)^2} \, (|f(\xi)| - \lambda_f(r)) \ge \frac{|f(\xi)|}{8} \, ,
}
and
\formula{
 |g(\xi)| & = \frac{\xi^2}{|\xi - \zeta_f(r)| |\xi + \overline{\zeta_f(r)}|} \, |f(\xi) - \lambda_f(r)| \le \frac{\xi^2}{(\xi - r)^2} \, (|f(\xi)| + \lambda_f(r)) \le 8 |f(\xi)| .
}
It follows that if $0 < C(\eps, \delta, \thet) r \le \eta < \xi \le \tfrac{1}{2} \ro$, then
\formula{
 \frac{f^-(r; \xi)}{f^-(r; \eta)} & \le C \, \frac{\xi}{\eta} \biggl( \frac{|f(\eta)|}{|f(\xi)|} \biggr)^{\! C(\delta)} .
}
It remains to again apply Lemma~\ref{lem:r:growth} to bound $|f(\eta)| / |f(\xi)|$ by $C(\eps, \delta) (\eta / \xi)^{\eps / (\pi - \delta)}$.
\end{proof}

\subsection{Notation}
\label{sec:notation}

For reader's convenience, we gather the notation used in the remaining part of the article. We always assume that $f(\xi)$ is a non-constant and non-degenerate Rogers function, and we use freely the following symbols:
\begin{itemize}
\item $D_f \sub \C$ is the domain of $f(\xi)$, equal to $\C \setminus i \R$ possibly augmented by an appropriate part of $i \R$;
\item $\Gamma_f$ is the spine of $f(\xi)$, the set of $\zeta \in \C$ such that $\re \zeta > 0$ and $f(\zeta) \in (0, \infty)$;
\item $\zeta_f(r)$ is the parameterisation of the spine $\Gamma_f$, augmented by appropriate segments of $i \R$: for every $r > 0$, $\re \zeta_f(r) = r e^{i \thet}$, where $\thet \in [-\tfrac{\pi}{2}, \tfrac{\pi}{2}]$ and $\sign \im f(e^{i \alpha} r) = \sign (\alpha - \thet)$ for every $\alpha \in (-\tfrac{\pi}{2}, \tfrac{\pi}{2})$ (see Proposition~\ref{prop:r:real}\ref{it:r:real:b});
\item $Z_f$ is a subset of $(0, \infty)$, consisting of those $r > 0$ for which $\re \zeta_f(r) > 0$; equivalently, $Z_f$ is the set of moduli of points on $\Gamma_f$;
\item $\lambda_f(r)$ describes the values of $f(\xi)$ along $\Gamma_f$: $\lambda_f(r) = f(\zeta_f(r))$ for $r \in Z_f$ (or, more generally, whenever $\zeta_f(r) \in D_f$), and $\lambda_f(r)$ is an increasing continuous function of $r > 0$;
\item $\Gamma_f^\star$ is the symmetrised spine of $f(\xi)$: the union of $\Gamma_f$, its mirror image $-\overline{\Gamma_f}$, and the endpoints of $\Gamma_f$;
\item for $r \in Z_f$, $f(r; \xi)$ is the (inverse) difference quotient of $f(\xi)$, defined by~\eqref{eq:r:fr}, and $f^+(r; \xi)$ and $f^-(r; \xi)$ are the Wiener--Hopf factors of the Rogers function $f(r; \xi)$.
\end{itemize}
For simplicity, in all proofs we drop subscript $f$ from the notation whenever this causes no confusion. As we have already done a few times above, in the remaining part of the article $C$ denotes a generic positive constant that may depend on parameters listed in brackets, and the value of $C$ may change even within a single expression.

%
%

\section{Inversion of temporal Laplace transform}
\label{sec:heat}

This section contains the first part of the proof of our main results, Theorem~\ref{thm:heat} and~Theorem~\ref{thm:extrema}. The argument is completed in the next section. We follow, with major modifications, the approach of~\cite{kk:stable}, where strictly stable Lévy processes were studied, and we heavily use the results of~\cite{kwasnicki:rogers} and Section~\ref{sec:rogers}. In particular, we use the notation summarised in Section~\ref{sec:notation}.

Except in section~\ref{sec:pr}, throughout this part we assume that $X_t$ is a (non-killed and non-constant) Lévy process with completely monotone jumps, and $f(\xi)$ is the corresponding Rogers function (the holomorphic extension of the characteristic exponent of $X_t)$. Note that $f(\xi)$ is non-zero and we have $f(0^+) = 0$, so that $\lambda_f(0^+) = 0$. Later on, we will impose additional assumptions on $f(\xi)$. In particular, we will have $\lambda_f(\infty) = \infty$, which implies that the transition kernel $p_t(x, dy)$ of $X_t$ has a density function $p_t(y - x)$, and consequently also $p_t^+(x, dy)$ has a density function $p_t^+(x, y)$.

For notational convenience, except in statements of results, we commonly drop the subscript $f$ from the notation. For example, we write $\zeta(r)$, $\lambda(r)$ and $\Gamma$ rather than $\zeta_f(r)$, $\lambda_f(r)$ and $\Gamma_f$.

\subsection{Pecherskii--Rogozin-type expression}
\label{sec:pr}

Suppose that $\sigma > 0$, $\re \xi > 0$ and $\re \eta > 0$. Our first goal is to express the tri-variate Laplace transform
\formula{
 & \int_0^\infty \int_0^\infty \int_{(0, \infty)} e^{-\sigma t - \eta x - \xi y} p_t^+(x, dy) dx dt 
}
in terms of the Wiener--Hopf factors $\kappa^+(\sigma, \xi)$ and $\kappa^-(\sigma, \eta)$. Our method is quite standard, see, for example, Section~2.1 in~\cite{kk:stable}, and it applies to an arbitrary Lévy process $X_t$. It is based on the following fundamental result in fluctuation theory of Lévy processes.

\begin{proposition}[see Theorem~6.15(i) and identity~(6.28) in~\cite{kyprianou}]
Suppose that $X_t$ is a Lévy process, $\sigma > 0$, $\exprv$ is an exponentially distributed random time with mean $1 / \sigma$, and $\exprv$ is independent from the process $X_t$. Then the random variables $\ul{X}_{\exprv} - X_0$ and $X_{\exprv} - \ul{X}_{\exprv}$ are independent, and the random variables $\ol{X}_{\exprv} - X_0$ and $X_{\exprv} - \ul{X}_{\exprv}$ have equal distribution. 
\end{proposition}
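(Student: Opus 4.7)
The plan is to combine two classical ingredients from the fluctuation theory of Lévy processes: pathwise time reversal at the horizon, and the strong Markov decomposition at the time the infimum is attained, together with the lack-of-memory property of the exponential distribution. By spatial translation invariance, we may and do assume that $X_0 = 0$ throughout.

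For the equality in distribution, I would use the standard duality by time reversal. Fix $t > 0$ and define the reversed process $\hat{X}^{(t)}_s = X_t - X_{(t-s)-}$ for $s \in [0, t]$, with $X_{0-} = 0$ by convention. A direct inspection of finite-dimensional distributions, based on the independence and stationarity of the increments of $X$ and on the absence of fixed discontinuities (so that $X_{s-} = X_s$ almost surely for each deterministic $s$), shows that $(\hat{X}^{(t)}_s)_{s \in [0, t]}$ has the same law as $(X_s)_{s \in [0, t]}$. Since for a càdlàg path the infimum over $[0, t]$ is unchanged when the values are replaced by their left limits, one finds
\[
 \sup_{s \in [0, t]} \hat{X}^{(t)}_s \;=\; X_t - \inf_{u \in [0, t]} X_{u-} \;=\; X_t - \ul{X}_t,
\]
and the duality yields $X_t - \ul{X}_t \stackrel{d}{=} \ol{X}_t$. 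Integrating against the density of $\exprv$, which is independent of $X$, gives the desired equality $X_\exprv - \ul{X}_\exprv \stackrel{d}{=} \ol{X}_\exprv$.

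For the independence claim, I would invoke the pre-infimum/post-infimum decomposition of the path of $X$ on $[0, \exprv]$. Writing
\[
 G = \inf\{s \in [0, \exprv] : X_s \wedge X_{s-} = \ul{X}_\exprv\}
\]
for the first time the infimum on $[0, \exprv]$ is attained, the key assertion is that the pre-$G$ and post-$G$ fragments are conditionally independent and that the post-$G$ fragment, run over the residual time $\exprv - G$, has the same law as $X$ run over an independent exponential horizon of parameter $\sigma$, starting from $0$. Granted this, $\ul{X}_\exprv = X_G$ is a functional of the pre-$G$ fragment alone, while $X_\exprv - \ul{X}_\exprv$ is a functional of the post-$G$ fragment together with $\exprv - G$, so the two quantities are independent.

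The chief technical obstacle is the rigorous justification of this pre-/post-infimum splitting, because $G$ is not a stopping time of the natural filtration of $X$ in any elementary sense, so that a naive appeal to the strong Markov property is unavailable. The cleanest route proceeds via Itô excursion theory applied to the strong Markov process $X - \ul{X}$: its excursions away from zero encode the post-infimum fragment, and the lack-of-memory property of the exponential distribution identifies the conditional law of what remains of the current excursion at time $\exprv$, yielding both the independence and the residual exponentiality of $\exprv - G$. A more elementary alternative approximates $X$ by embedded compound Poisson processes or random walks, invokes Feller's cycle lemma (Wendel--Spitzer) in the discrete case, and passes to the limit; this is the route taken in the proof of Theorem~6.15(i) in Kyprianou's monograph, to which we refer for the full details.
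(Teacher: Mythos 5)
The paper offers no proof of this proposition at all: it is stated purely as a citation to Kyprianou's monograph, and the text immediately proceeds to use it. Your proposal therefore goes further than the paper does. The half you prove in full — the identity in law $X_{\exprv} - \ul{X}_{\exprv} \stackrel{d}{=} \ol{X}_{\exprv}$ via time reversal at a fixed horizon $t$ followed by integration against the exponential density — is correct and complete; the observation that $\inf_{u \in [0,t]} X_{u-} = \ul{X}_t$ for a càdlàg path and that the absence of fixed discontinuities gives $\hat{X}^{(t)}_0 = 0$ a.s.\ are exactly the points that need checking, and you check them. The independence half is where the real content of the theorem lies, and there your argument is an accurate outline of the standard route (splitting at the time the infimum is attained, with the lack-of-memory property supplying the residual exponential clock and the independence of the two fragments), but it is a sketch rather than a proof: you correctly flag that $G$ is not a stopping time and that one must either invoke excursion theory for the reflected process $X - \ul{X}$ or approximate by compound Poisson processes and use the discrete Wiener--Hopf/cycle-lemma argument, and you then defer to the same reference the paper cites. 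Since the paper itself delegates the entire proposition to that reference, this is a perfectly reasonable stopping point; just be aware that as written your text establishes the equality in distribution but only reduces the independence claim to known machinery rather than establishing it.
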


For $\exprv$ as in the above proposition, we have 
\formula{
 \int_0^\infty \int_{(0, \infty)} e^{-\sigma t - \xi y} p_t^+(x, dy) dt & = \int_0^\infty e^{-\sigma t} \ex^x \bigl(\exp(-\xi X_t) \ind_{(0, \infty)}(\ul{X}_t)\bigr) dt \\
 & = \frac{1}{\sigma} \, \ex^x \bigl(\exp(-\xi X_{\exprv}) \ind_{(0, \infty)}(\ul{X}_{\exprv})\bigr) .
}
By the above proposition,
\formula{
 \ex^x\bigl(\exp(-\xi X_{\exprv}) \ind_{(0, \infty)}(\ul{X}_{\exprv})\bigr) & = \ex^x\bigl(\exp(-\xi (X_{\exprv} - \ul{X}_{\exprv})) \exp(-\xi \ul{X}_{\exprv}) \ind_{(0, \infty)}(\ul{X}_{\exprv})\bigr) \\
 & = \ex^x\bigl(\exp(-\xi (X_{\exprv} - \ul{X}_{\exprv}))\bigr) \ex^x\bigl(\exp(-\xi \ul{X}_{\exprv}) \ind_{(0, \infty)}(\ul{X}_{\exprv})\bigr) \\
 & = \ex^x\bigl(\exp(-\xi (\ol{X}_{\exprv} - x))\bigr) \ex^x\bigl(\exp(-\xi \ul{X}_{\exprv}) \ind_{(0, \infty)}(\ul{X}_{\exprv})\bigr) .
}
Translation invariance implies that
\formula{
 \ex^x\bigl(\exp(-\xi (\ol{X}_{\exprv} - x))\bigr) & = \ex^0\bigl(\exp(-\xi \ol{X}_{\exprv})\bigr)
}
and
\formula{
 \ex^x\bigl(\exp(-\xi \ul{X}_{\exprv}) \ind_{(0, \infty)}(\ul{X}_{\exprv})\bigr) & = \ex^0\bigl(\exp(-\xi \ul{X}_{\exprv}) e^{-\xi x} \ind_{(-x, \infty)}(\ul{X}_{\exprv})\bigr) .
}
It follows that
\formula{
 \int_0^\infty \int_{(0, \infty)} e^{-\sigma t - \xi y} p_t^+(x, dy) dt & = \frac{1}{\sigma} \, \ex^0\bigl(\exp(-\xi \ol{X}_{\exprv})\bigr) \ex^0\bigl(\exp(-\xi \ul{X}_{\exprv}) e^{-\xi x} \ind_{(-x, \infty)}(\ul{X}_{\exprv})\bigr)
}
Since for $y < 0$ we have
\formula{
 \int_0^\infty e^{-\eta x} e^{-\xi x} \ind_{(-x, \infty)}(y) dx & = \int_{-y}^\infty e^{-(\xi + \eta) x} dx = \frac{e^{(\xi + \eta) y}}{\xi + \eta} \, ,
}
we eventually find that
\formula{
 \int_0^\infty \int_0^\infty \int_{(0, \infty)} e^{-\sigma t - \eta x - \xi y} p_t^+(x, dy) dx dt & = \frac{1}{\sigma (\xi + \eta)} \, \ex^0\bigl(\exp(-\xi \ol{X}_{\exprv})\bigr) \ex^0\bigl(\exp(\eta \ul{X}_{\exprv})\bigr) .
}
By the Pecherskii--Rogozin identities~\eqref{eq:pr},
\formula{
 \ex^0 \exp(-\xi \ol{X}_{\exprv}) & = \frac{\kappa^+(\sigma, 0)}{\kappa^+(\sigma, \xi)} \, , & \ex^0 \exp(\eta \ul{X}_{\exprv}) & = \frac{\kappa^-(\sigma, 0)}{\kappa^-(\sigma, \eta)} \, .
}
Furthermore, by the factorisation identity~\eqref{eq:pr:fact}
\formula{
 \kappa^+(\sigma, 0) \kappa^-(\sigma, 0) & = \frac{\sigma}{\kappa^\circ(\sigma)} \, .
}
We have thus proved the following result, which seems to be rather standard, but difficult to trace in the literature in this particular form; see Theorem~18 in~\cite{doney-2} or Theorem~7.7 in~\cite{kyprianou} for related developments.

\begin{proposition}
\label{prop:pt:pr}
If $X_t$ is a Lévy process, $p_t^+(x, dy) = \pr^x(X_t \in dy, \ul{X}_t > 0)$ is its transition kernel in the half-line $(0, \infty)$, and $\kappa^+(\sigma, \xi)$, $\kappa^-(\sigma, \eta)$ and $\kappa^\circ(\sigma)$ are the corresponding Wiener--Hopf factors, then
\formula[eq:pt:pr]{
 \int_0^\infty \int_0^\infty \int_{(0, \infty)} e^{-\sigma t - \eta x - \xi y} p_t^+(x, dy) dx dt & = \frac{1}{\xi + \eta} \, \frac{1}{\kappa^\circ(\sigma) \kappa^+(\sigma, \xi) \kappa^-(\sigma, \eta)}
}
whenever $\sigma > 0$, $\re \xi > 0$ and $\re \eta > 0$.
\end{proposition}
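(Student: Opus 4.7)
The plan is to follow the standard fluctuation-theoretic route based on the Wiener--Hopf decomposition at an independent exponential time, which the remarks preceding the statement essentially lay out; my proposal is just to make that sequence of steps explicit and verify that each is legitimate.

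First, I would fix $\sigma > 0$ and introduce an exponential random variable $\exprv$ with mean $1/\sigma$, independent of $X_t$. Performing the integration in $t$ against the exponential density converts the triple Laplace transform into
\formula{
 \int_0^\infty \int_{(0,\infty)} e^{-\sigma t - \xi y} p_t^+(x,dy)\, dt & = \frac{1}{\sigma}\, \ex^x\bigl(e^{-\xi X_\exprv} \ind_{\{\ul{X}_\exprv > 0\}}\bigr),
}
using that $\{t < \tau_{(0,\infty)}\}$ and $\{\ul{X}_t > 0\}$ differ by a null set. Next I would invoke the cited Wiener--Hopf decomposition (Theorem 6.15(i) in~\cite{kyprianou}): under $\pr^0$, the random variables $\ul{X}_\exprv$ and $X_\exprv - \ul{X}_\exprv$ are independent, and $X_\exprv - \ul{X}_\exprv$ is equidistributed with $\ol{X}_\exprv$. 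Writing $X_\exprv = \ul{X}_\exprv + (X_\exprv - \ul{X}_\exprv)$ and shifting to start at $0$ by translation invariance (so that $X$ under $\pr^x$ equals $x + X$ under $\pr^0$) gives the product
\formula{
 \ex^x\bigl(e^{-\xi X_\exprv} \ind_{\{\ul{X}_\exprv > 0\}}\bigr) & = e^{-\xi x}\, \ex^0\bigl(e^{-\xi \ol{X}_\exprv}\bigr)\, \ex^0\bigl(e^{-\xi \ul{X}_\exprv} \ind_{\{\ul{X}_\exprv > -x\}}\bigr).
}

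Then I would carry out the $x$-integration by Fubini (the integrand is nonnegative, so no integrability subtlety arises). The double integral over $\{x > 0,\ y < 0,\ y > -x\}$ of $e^{-(\xi + \eta)x + \eta x}$ collapses to $(\xi + \eta)^{-1}$ after changing the order and integrating $x$ from $-y$ to $\infty$; this yields
\formula{
 \int_0^\infty \int_0^\infty \int_{(0, \infty)} e^{-\sigma t - \eta x - \xi y} p_t^+(x, dy) dx dt & = \frac{1}{\sigma(\xi+\eta)}\, \ex^0\bigl(e^{-\xi \ol{X}_\exprv}\bigr)\, \ex^0\bigl(e^{\eta \ul{X}_\exprv}\bigr).
}
The last step is to substitute the Pecherskii--Rogozin identities~\eqref{eq:pr} for the two expectations, and then use the factorisation identity~\eqref{eq:pr:fact} (with $\xi = 0$) to replace $\sigma / (\kappa^+(\sigma,0)\kappa^-(\sigma,0))$ by $\kappa^\circ(\sigma)$, producing exactly the right-hand side of~\eqref{eq:pt:pr}.

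There is no genuine obstacle, but two minor points deserve care. First, the Wiener--Hopf decomposition is normally phrased for processes started at $0$, so the translation step has to be made explicit, as above. Second, for compound Poisson processes the distribution of $X_\exprv$ may carry an atom at $0$ and $\kappa^\circ(\sigma) \ne 1$; this is precisely why the factorisation identity is invoked in its full form~\eqref{eq:pr:fact} rather than in the usual atom-free version, and it is the only place where $\kappa^\circ$ enters. Everything else is a bookkeeping Fubini argument, valid since all integrands are nonnegative.
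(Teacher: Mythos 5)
Your proposal is correct and follows essentially the same route as the paper: randomisation by an independent exponential time, the Wiener--Hopf decomposition of $X_{\exprv}$ at its infimum (Theorem~6.15(i) in~\cite{kyprianou}), translation invariance, Fubini in $x$, and then the Pecherskii--Rogozin identities~\eqref{eq:pr} together with the factorisation identity~\eqref{eq:pr:fact} at $\xi = 0$. The only blemish is the garbled exponent ``$e^{-(\xi+\eta)x+\eta x}$'' in your description of the $x$-integration, but the displayed conclusion $(\xi+\eta)^{-1}\,\ex^0(e^{-\xi\ol{X}_{\exprv}})\,\ex^0(e^{\eta\ul{X}_{\exprv}})$ is exactly the paper's.
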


\subsection{Baxter--Donsker-type expression and inversion of the Laplace transform}

By Theorem~1.1 in~\cite{kwasnicki:rogers}, the right-hand side of~\eqref{eq:pt:pr} is a Stieltjes function of~$\sigma$. This allows us to invert the Laplace transform with respect to the temporal variable ($t \mapsto \sigma$) in~\eqref{eq:pt:pr}. To this end, we use integral expressions for the Wiener--Hopf factors $\kappa^+(\sigma, \xi)$ and $\kappa^-(\sigma, \eta)$ in terms of the characteristic exponent $f(\xi)$, developed in~\cite{kwasnicki:rogers}.

Let $f_\sigma(\xi) = \sigma + f(\xi)$ for $\sigma > 0$. By formula~(6.2) in~\cite{kwasnicki:rogers}, we have
\formula{
 \kappa^\circ(\sigma) \kappa^+(\sigma, \xi) \kappa^-(\sigma, \eta) & = f_\sigma^+(\xi) f_\sigma^-(\eta)
}
when $\sigma > 0$, $\re \xi > 0$ and $\re \eta > 0$; here $f_\sigma^+(\xi)$ and $f_\sigma^-(\eta)$ are the Wiener--Hopf factors of the Rogers function $f_\sigma(\xi)$, see Proposition~\ref{prop:r:wh}. Using Proposition~\ref{prop:r:wh:bd} we find that for $\sigma, \xi, \eta > 0$,
\formula{
 \kappa^\circ(\sigma) \kappa^+(\sigma, \xi) \kappa^-(\sigma, \eta) & = \sigma \exp\biggl(\frac{1}{\pi} \int_0^\infty \psi(r) \, \frac{d\lambda(r)}{\sigma + \lambda(r)} \biggr) \\
 & = \sigma \exp\biggl(\frac{1}{\pi} \int_0^{\lambda(\infty)} \psi(\lambda^{-1}(s)) \, \frac{ds}{\sigma + s} \biggr) ,
}
where
\formula{
 \psi(r) & = \Arg(\zeta(r) + i \eta) - \Arg(\zeta(r) - i \xi) .
}
Note that $\psi(r)$ takes values in $[0, \pi]$, and $\psi(0^+) = \pi$, $\psi(\infty) = 0$; see the proof of Lemma~6.1 in~\cite{kwasnicki:rogers} for further details. It follows that
\formula*[eq:pt:bd]{
 & \int_0^\infty \int_0^\infty \int_{(0, \infty)} e^{-\sigma t - \eta x - \xi y} p_t^+(x, dy) dx dt \\
 & \qquad = \frac{1}{\sigma (\xi + \eta)} \, \exp\biggl(-\frac{1}{\pi} \int_0^{\lambda(\infty)} \psi(\lambda^{-1}(s)) \, \frac{ds}{\sigma + s} \biggr) .
}
Clearly, the right-hand side of~\eqref{eq:pt:bd} extends to a holomorphic function of $\sigma \in \C \setminus [-\lambda(\infty), 0]$. We denote this function by $\Psi(\sigma)$ (for fixed $\xi, \eta > 0$). By Proposition~\ref{prop:r:wh:bd}, $1 / \Psi(\sigma)$ is a complete Bernstein function, and hence $\Psi(\sigma)$ is a Stieltjes function. We have thus essentially proved the following result.

\begin{proposition}
\label{prop:pt:laplace}
If $X_t$ is a Lévy process with completely monotone jumps, $p_t^+(x, dy) = \pr^x(X_t \in dy, \ul{X}_t > 0)$ is its transition kernel in the half-line $(0, \infty)$, and $f(\xi)$ is the corresponding Rogers function, then
\formula[eq:pt:laplace]{
 \int_0^\infty \int_{(0, \infty)} e^{-\eta x - \xi y} p_t^+(x, dy) dx & = \frac{1}{\pi} \int_{(0, \lambda_f(\infty))} e^{-s t} \mu_f(\xi, \eta, ds)
}
for $t \ge 0$ and $\xi, \eta > 0$. Here $\mu_f(\xi, \eta, ds)$ is a nonnegative measure on $(0, \lambda_f(\infty))$, given by
\formula[eq:pt:mu]{
 \mu_f(\xi, \eta, ds) & = -\lim_{t \to 0^+} \im \Psi_f(\xi, \eta, -s + i t) ds
}
in the sense of vague convergence of measures on $[0, \lambda_f(\infty)]$, with
\formula[eq:pt:phi]{
 \Psi_f(\xi, \eta, \sigma) & = \frac{1}{\sigma (\xi + \eta)} \, \exp\biggl(-\frac{1}{\pi} \int_0^{\lambda_f(\infty)} \psi_f(\xi, \eta, \lambda_f^{-1}(s)) \, \frac{ds}{\sigma + s} \biggr)
}
for $\sigma \in \C \setminus (-\infty, 0]$, and
\formula[eq:pt:theta]{
 \psi_f(\xi, \eta, r) & = \Arg(\zeta_f(r) + i \eta) - \Arg(\zeta_f(r) - i \xi)
}
for $r > 0$.
\end{proposition}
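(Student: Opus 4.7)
The analysis preceding the statement has already done the bulk of the work: identity~\eqref{eq:pt:bd} asserts that for every $\sigma > 0$ and $\xi, \eta > 0$,
\[
 \int_0^\infty e^{-\sigma t} h(t) \, dt = \Psi_f(\xi, \eta, \sigma) ,
\]
where $h(t)$ denotes the left-hand side of~\eqref{eq:pt:laplace}. The plan is to invert this one-dimensional Laplace transform in $\sigma \mapsto t$ by recognizing $\sigma \mapsto \Psi_f(\xi, \eta, \sigma)$ as a Stieltjes function, extracting its representing measure via the boundary-value formula, and then applying Fubini's theorem.

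First I would argue that $\sigma \mapsto \Psi_f(\xi, \eta, \sigma)$ is a Stieltjes function. Since, by Proposition~\ref{prop:r:wh:bd}, $\sigma \mapsto f_\sigma^+(\xi) f_\sigma^-(\eta)$ is a complete Bernstein function of $\sigma$, and $\sigma \mapsto \sigma/(f_\sigma^+(\xi) f_\sigma^-(\eta))$ is thus a ratio of two complete Bernstein functions of the form $\sigma \mapsto \sigma/g(\sigma)$ with $g$ complete Bernstein, its reciprocal $\Psi_f(\xi, \eta, \sigma) = \sigma/((\xi + \eta) \sigma f_\sigma^+(\xi) f_\sigma^-(\eta))$ is Stieltjes. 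Hence $\Psi_f$ admits the Stieltjes representation~\eqref{eq:s:int}:
\[
 \Psi_f(\xi, \eta, \sigma) = \frac{b}{\sigma} + c + \frac{1}{\pi} \int_{(0, \infty)} \frac{1}{\sigma + s} \, \mu_f(\xi, \eta, ds) ,
\]
with the measure $\mu_f(\xi, \eta, ds)$ determined by the vague boundary-value formula stated in~\eqref{eq:pt:mu}.

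Next I would identify the constants and the support. From~\eqref{eq:pt:phi}, as $\sigma \to \infty$ the exponential factor tends to $1$ and the prefactor $1/(\sigma(\xi+\eta))$ tends to $0$, so $c = \lim_{\sigma \to \infty} \Psi_f = 0$. Since $\psi_f(\xi, \eta, 0^+) = \Arg(i\eta) - \Arg(-i\xi) = \pi$ and $\lambda_f(0^+) = 0$, the integrand in~\eqref{eq:pt:phi} behaves like $\pi/s$ near $s = 0$; a short asymptotic calculation yields $\int_0^1 \psi_f(\xi, \eta, \lambda_f^{-1}(s))/(\sigma + s)\, ds \sim \pi \log(1/\sigma)$ as $\sigma \to 0^+$, so $\sigma \Psi_f(\xi, \eta, \sigma) \to 0$ and thus $b = 0$. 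For the support: if $s > \lambda_f(\infty)$, the integrand in~\eqref{eq:pt:phi} is real-analytic in $\sigma$ in a neighbourhood of $-s$ (the inner integral in~\eqref{eq:pt:phi} stops at $\lambda_f(\infty)$), so $\Psi_f$ extends holomorphically with real boundary values across $(-\infty, -\lambda_f(\infty))$, forcing $\mu_f$ to be supported in $(0, \lambda_f(\infty))$.

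Finally, with $b = c = 0$ and $\mu_f$ supported in $(0, \lambda_f(\infty))$, I would invoke $1/(\sigma + s) = \int_0^\infty e^{-(\sigma + s) t} dt$ and apply Fubini's theorem (all integrands being non-negative) to rewrite
\[
 \Psi_f(\xi, \eta, \sigma) = \int_0^\infty e^{-\sigma t} \biggl( \frac{1}{\pi} \int_{(0, \lambda_f(\infty))} e^{-s t} \mu_f(\xi, \eta, ds) \biggr) dt .
\]
Uniqueness of the Laplace transform of locally integrable, non-negative functions of $t$ — both $h(t)$ and the inner bracket are continuous in $t \ge 0$ by dominated convergence — then yields~\eqref{eq:pt:laplace}. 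The main obstacle in this plan is the verification that $b = 0$ and $c = 0$, together with the identification of the support of $\mu_f$; all three hinge on controlling the behaviour of $\psi_f(\xi, \eta, r)$ as $r \to 0^+$ and $r \to \infty$, which in turn rests on the basic geometry of the spine recorded in Proposition~\ref{prop:r:real}.
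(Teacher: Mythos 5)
Your route is the same as the paper's: recognise $\Psi_f(\xi,\eta,\cdot)$ as a Stieltjes function via Proposition~\ref{prop:r:wh:bd}, show that the constants $b$ and $c$ in its representation~\eqref{eq:s:int} vanish, locate the support of the representing measure, then unfold $1/(\sigma+s)$ as a Laplace transform and conclude by Fubini and uniqueness. Your computations of $c=0$ and $b=0$ agree with the paper's ($b=0$ is obtained there by monotone convergence from the divergence of $\int_0^{\lambda_f(\infty)}\psi_f(\xi,\eta,\lambda_f^{-1}(s))\,s^{-1}\,ds$, which is exactly your $\pi\log(1/\sigma)$ asymptotics).

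There is, however, one genuine gap. Your support argument only shows that $\mu_f(\xi,\eta,\cdot)$ assigns no mass to $(\lambda_f(\infty),\infty)$, i.e.\ that it is concentrated on the \emph{closed} interval $(0,\lambda_f(\infty)]$: holomorphic continuation of $\Psi_f$ with real boundary values across $(-\infty,-\lambda_f(\infty))$ says nothing about an atom at the endpoint $s=\lambda_f(\infty)$, which is a boundary point of the domain of holomorphy (compare $1/(\sigma+a)$, a Stieltjes function whose measure is a point mass at $a$). This matters precisely when $\lambda_f(\infty)<\infty$, i.e.\ when $X_t$ is a compound Poisson process --- a case the proposition is explicitly meant to cover. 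The paper closes it with a separate computation: writing $\sigma_0=\lambda_f(\infty)$, one shows
\formula{
 \mu_f(\xi,\eta,\{\sigma_0\}) & = \frac{\pi}{\xi+\eta}\,\exp\biggl(-\frac{1}{\pi}\int_0^{\sigma_0}\bigl(\pi-\psi_f(\xi,\eta,\lambda_f^{-1}(s))\bigr)\,\frac{ds}{\sigma_0-s}\biggr) ,
}
and since $\psi_f(\xi,\eta,r)\to 0$ as $r\to\infty$, so that $\pi-\psi_f(\xi,\eta,\lambda_f^{-1}(s))\to\pi$ as $s\to\sigma_0^-$, the integral diverges and the atom vanishes. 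This is the mirror image at the other endpoint of your argument for $b=0$; you need to add it to make the proof complete.
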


\begin{proof}
We continue to use the simplified notation $\Psi(\sigma) = \Psi_f(\xi, \eta, \sigma)$ and $\psi(r) = \psi_f(\xi, \eta, r)$. We have already observed that $\Psi(\sigma)$ is a Stieltjes function and $\Psi(\sigma)$ is holomorphic on $\C \setminus [-\lambda(\infty), 0]$, so that $\Psi(\sigma)$ has the following Stieltjes representation~\eqref{eq:s:int}: when $\sigma \in \C \setminus [-\lambda(\infty), 0]$, we have
\formula[eq:pt:phi:int]{
 \Psi(\sigma) & = \frac{b}{\sigma} + c + \frac{1}{\pi} \int_{(0, \lambda(\infty)]} \frac{1}{\sigma + s} \, \mu(ds)
}
for some $b, c \ge 0$ and an appropriate measure $\mu(ds) = \mu_f(\xi, \eta, ds)$. In order to complete the proof, we only need to prove that $b = c = 0$ and $\mu(\{\lambda(\infty)\}) = 0$. Indeed: the Stieltjes measure $\mu(ds)$ is given by~\eqref{eq:pt:mu}, and formula~\eqref{eq:pt:laplace} follows from~\eqref{eq:pt:bd} and continuity of the left-hand side.

By~\eqref{eq:pt:phi} and the monotone convergence theorem, we have
\formula{
 b & = \lim_{\sigma \to 0^+} \sigma \Psi(\sigma) = \frac{1}{\xi + \eta} \, \exp\biggl(-\frac{1}{\pi} \int_0^{\lambda(\infty)} \psi(\lambda^{-1}(s)) \, \frac{ds}{s} \biggr) .
}
Since the measure $ds / s$ is infinite in every right neighbourhood of $0$, $\lambda^{-1}(0^+) = 0$ and $\psi(0^+) = \pi$, the above integral diverges, and, consequently, $b = 0$. Furthermore, again by~\eqref{eq:pt:phi},
\formula{
 c & = \lim_{\sigma \to \infty} \Psi(\sigma) \le \lim_{\sigma \to \infty} \frac{1}{\sigma (\xi + \eta)} = 0 ,
}
so that also $c = 0$. Finally, let $\sigma_0 = \lambda(\infty)$ and suppose that $\sigma_0 < \infty$. Then $\mu(ds)$ is concentrated on $(0, \sigma_0]$, $\Psi(-\sigma)$ is given by~\eqref{eq:pt:phi:int} for $\sigma > \sigma_0$, and we have, by the dominated convergence theorem,
\formula{
 \mu(\{\sigma_0\}) & = \lim_{\sigma \to \sigma_0^+} \int_{(0, \sigma_0]} \frac{\sigma - \sigma_0}{\sigma - s} \, \mu(ds) = \lim_{\sigma \to \sigma_0^+} (-\pi (\sigma - \sigma_0) \Psi(-\sigma)) .
}
In order to prove that $\mu(\{\sigma_0\}) = 0$, we observe that, by~\eqref{eq:pt:phi},
\formula{
 \mu(\{\sigma_0\}) & = \frac{\pi}{\xi + \eta} \, \lim_{\sigma \to \sigma_0^+} \frac{\sigma - \sigma_0}{\sigma_0} \, \exp\biggl(\frac{1}{\pi} \int_0^{\sigma_0} \psi(\lambda^{-1}(s)) \, \frac{ds}{\sigma - s} \biggr) \\
 & = \frac{\pi}{\xi + \eta} \, \lim_{\sigma \to \sigma_0^+} \frac{\sigma}{\sigma_0} \, \exp\biggl(-\frac{1}{\pi} \int_0^{\sigma_0} (\pi - \psi(\lambda^{-1}(s))) \, \frac{ds}{\sigma - s} \biggr) .
}
By the monotone convergence theorem, we find that
\formula{
 \mu(\{\sigma_0\}) & = \frac{\pi}{\xi + \eta} \, \exp\biggl(-\frac{1}{\pi} \int_0^{\sigma_0} (\pi - \psi(\lambda^{-1}(s))) \, \frac{ds}{\sigma_0 - s} \biggr) .
}
Since $\lambda^{-1}(\sigma_0^-) = \infty$ and $\psi(\infty) = 0$, the above integral diverges, and consequently $\mu(\{\sigma_0\}) = 0$.
\end{proof}

\subsection{Reformulation in terms of difference quotients}

Our goal in this section is to transform the expression for $\mu_f(\xi, \eta, ds)$ in Proposition~\ref{prop:pt:laplace} to a more manageable form. Recall that in~\eqref{eq:r:fr} we defined
\formula{
 f(r; \xi) & = \frac{(\xi - \zeta_f(r)) (\xi + \overline{\zeta_f(r)})}{f(\xi) - \lambda_f(r)}
}
whenever $r \in Z_f$, and that $f(r; \xi)$ is a Rogers function of $\xi$. We denote by $f^+(r; \xi)$ and $f^-(r; \xi)$ the Wiener--Hopf factors of this Rogers function.

\begin{proposition}
\label{prop:pt:quot}
Suppose that $X_t$ is a Lévy process with completely monotone jumps, and $f(\xi)$ is the corresponding Rogers function. Let $t \ge 0$, $\xi, \eta > 0$, and suppose that $i \xi \in D_f^+$ and $-i \eta \in D_f^-$. Then
\formula{
 & \int_0^\infty \int_{(0, \infty)} e^{-\eta x - \xi y} p_t^+(x, dy) dx \\
 & \qquad = \frac{1}{\pi} \int_{Z_f} e^{-t \lambda_f(r)} \, \frac{f^+(r; \xi)}{|\zeta_f(r) - i \xi|^2} \, \frac{f^-(r; \eta)}{|\zeta_f(r) + i \eta|^2} \, \re \zeta_f(r) d\lambda_f(r) .
}
Similarly, if $t \ge 0$, $\xi, \eta > 0$ and $i \xi, -i \eta \in D_f^+$, then
\formula{
 & \int_0^\infty \int_{(0, \infty)} e^{-\eta x - \xi y} p_t^+(x, dy) dx \\
 & \qquad = \frac{1}{\pi} \int_{Z_f} e^{-t \lambda_f(r)} \, \frac{f^+(r; \xi)}{|\zeta_f(r) - i \xi|^2} \, \frac{-1}{f^+(r; -\eta)} \, \re \zeta_f(r) d\lambda_f(r) ,
}
while if $t \ge 0$, $\xi, \eta > 0$ and $i \xi, -i \eta \in D_f^-$, then
\formula{
 & \int_0^\infty \int_{(0, \infty)} e^{-\eta x - \xi y} p_t^+(x, dy) dx \\
 & \qquad = \frac{1}{\pi} \int_{Z_f} e^{-t \lambda_f(r)} \, \frac{-1}{f^-(r; -\xi)} \, \frac{f^-(r; \eta)}{|\zeta_f(r) + i \eta|^2} \, \re \zeta_f(r) d\lambda_f(r) .
}
\end{proposition}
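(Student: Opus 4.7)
The plan is to compute the measure $\mu_f(\xi, \eta, ds)$ from Proposition~\ref{prop:pt:laplace} explicitly, and then identify the resulting integrand with the claimed expression by means of Proposition~\ref{prop:r:quot:bd}. Applying the Plemelj--Sokhotski formula to the integral in the exponential on the right-hand side of~\eqref{eq:pt:phi} as $\sigma = -s + it \to -s + i 0^+$, and then performing the substitution $s = \lambda_f(r)$ (noting that $\sin \psi_f(\xi, \eta, r) = 0$ for $r \not\in Z_f$, since $\zeta_f(r) \in i\R$ there), one obtains
\formula{
 \mu_f(\xi, \eta, d\lambda_f(r)) & = \frac{\sin \psi_f(\xi, \eta, r)}{\lambda_f(r)\,(\xi + \eta)} \exp\!\biggl(-\frac{1}{\pi}\, \pvint_0^\infty \psi_f(\xi, \eta, r')\, \frac{d\lambda_f(r')}{\lambda_f(r') - \lambda_f(r)}\biggr) d\lambda_f(r) .
}
A direct trigonometric calculation with $\zeta_f(r) = a + ib$, $a = \re \zeta_f(r) > 0$, gives
\formula{
 \sin \psi_f(\xi, \eta, r) & = \frac{(\xi + \eta)\, \re \zeta_f(r)}{|\zeta_f(r) - i\xi|\, |\zeta_f(r) + i\eta|} ,
}
so the task reduces to identifying the principal-value integral in the exponential.

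For this identification I parameterise $\Gamma_f^\star$ by $\zeta_f(r')$ on $\Gamma_f$ and $-\overline{\zeta_f(r')}$ on its mirror image, and use the reflection symmetry $f(-\bar z) = \overline{f(z)}$ together with the analogous symmetry of the rational factor to rewrite the contour integral as twice an imaginary part of a single integral over $(0, \infty)$. The key computation is the identity
\formula{
 \im\!\biggl[\frac{i(\xi + \eta)\, \zeta_f'(r')}{(\zeta_f(r') - i\xi)(\zeta_f(r') + i\eta)}\biggr] & = -\frac{d}{dr'}\, \psi_f(\xi, \eta, r') ,
}
obtained by differentiating $\Arg(\zeta_f(r') - i\xi)$ and $\Arg(\zeta_f(r') + i\eta)$ along the curve. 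Integration by parts (with boundary contribution $\log \lambda_f(r)$ arising at $r' \to 0^+$ from $\psi_f \to \pi$ and $\lambda_f \to 0$, and a vanishing contribution at $r' \to \infty$ coming from $\psi_f(\xi, \eta, r') = O(1/r')$) then yields
\formula{
 \frac{1}{\pi}\, \pvint_0^\infty \psi_f(\xi, \eta, r')\, \frac{d\lambda_f(r')}{\lambda_f(r') - \lambda_f(r)} & = \frac{1}{2\pi i} \int_{\Gamma_f^\star}\!\biggl(\frac{1}{z - i\xi} - \frac{1}{z + i\eta}\biggr) \log|f(z) - \lambda_f(r)|\, dz - \log \lambda_f(r) .
}

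In the generic case $i\xi \in D_f^+$, $-i\eta \in D_f^-$, Proposition~\ref{prop:r:quot:bd} applied with $\xi_1 = i\xi$, $\xi_2 = -i\eta$ evaluates the contour integral. A direct calculation shows $-(iy - \zeta)(iy + \bar\zeta) = (y - \im \zeta)^2 + (\re \zeta)^2 > 0$ on the imaginary axis, so $g(i\xi) = |\zeta_f(r) - i\xi|$ and $g(-i\eta) = |\zeta_f(r) + i\eta|$; combining with the previous step,
\formula{
 \exp\!\biggl(-\frac{1}{\pi}\, \pvint\biggr) & = \frac{\lambda_f(r)\, f^+(r;\xi)\, f^-(r;\eta)}{|\zeta_f(r) - i\xi|\, |\zeta_f(r) + i\eta|} ,
}
and feeding this back into the measure density produces exactly the first claimed formula. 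The remaining two cases follow by the same scheme with the other two branches of Proposition~\ref{prop:r:quot:bd}: when both $i\xi, -i\eta$ lie in $D_f^+$ (resp.\@ in $D_f^-$), the factor $f^-(r; \eta)$ gets replaced by $-1/f^+(r; -\eta)$ (resp.\@ $f^+(r; \xi)$ by $-1/f^-(r; -\xi)$), the sign arising from the analytic continuation of the branch of $g$ across the branch cut of $\sqrt{-(\xi - \zeta)(\xi + \bar\zeta)}$ when the imaginary-axis point leaves its natural half-region. The main technical difficulty is precisely this branch-tracking in the non-generic cases, together with the absolute-convergence estimates that justify Fubini's theorem and the contour-to-real-line reduction, for which the bounds in Proposition~\ref{prop:r:spine:bound} are the crucial ingredient.
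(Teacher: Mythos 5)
Your proposal reproduces the paper's own argument essentially step for step: Sokhotski--Plemelj inversion of the Stieltjes representation from Proposition~\ref{prop:pt:laplace}, the explicit formula for $\sin\psi_f$, integration by parts converting the principal-value integral into the contour integral over $\Gamma_f^\star$ via the derivative identity for $\psi_f$, and then Proposition~\ref{prop:r:quot:bd} with $\xi_1 = i\xi$, $\xi_2 = -i\eta$ (and its other branches for the non-generic cases). The only detail worth flagging is that the justification of the Plemelj step rests on the local Hölder continuity of $\zeta_f$ and $\lambda_f^{-1}$ (Propositions~\ref{prop:zeta:holder} and~\ref{prop:lambda:holder}) rather than on Proposition~\ref{prop:r:spine:bound}, and that in the two non-generic cases the extra complication is the jump of $\psi_f$ at $r = \xi$ or $r = \eta$ in addition to the branch-tracking you mention; neither point affects the correctness of your outline.
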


\begin{proof}
We continue to use the notation $\Psi(\sigma) = \Psi_f(\xi, \eta, \sigma)$, $\mu(ds) = \mu_f(\xi, \eta, ds)$ and $\psi(r) = \psi_f(\xi, \eta, r)$ introduced in the proof of Proposition~\ref{prop:pt:laplace}. 

Suppose that $i \xi \in D_f^+$ and $-i \eta \in D_f^-$. Observe that $\tilde{\psi}(z) = \Arg(z + i \eta) - \Arg(z - i \xi)$ is a differentiable function in the region $\re z \ge 0$, $z \ne i \xi$, $z \ne -i \eta$. By Propositions~\ref{prop:zeta:holder} and~\ref{prop:lambda:holder}, $\zeta(r)$ and $\lambda^{-1}(s)$ are locally Hölder continuous on $(0, \infty)$ and $(0, \lambda(\infty))$, respectively, and hence also $\psi(\lambda^{-1}(s)) = \tilde{\psi}(\zeta(\lambda^{-1}(s)))$ is Hölder continuous. Therefore, the Hilbert transform of $\psi(\lambda^{-1}(s))$ is well-defined and continuous on $(0, \lambda(\infty))$, and by the Sokhotski--Plemelj's formula we have (see, e.g., Equation~(4.8) in~\cite{gakhov})
\formula{
 \lim_{\tau \to 0^+} \frac{1}{\pi} \int_0^{\lambda(\infty)} \psi(\lambda^{-1}(s)) \, \frac{ds}{-\sigma + i \tau + s} & = \frac{1}{\pi} \pvint_0^{\lambda(\infty)} \psi(\lambda^{-1}(s)) \, \frac{ds}{s - \sigma} - i \psi(\lambda^{-1}(\sigma))
}
locally uniformly with respect to $\sigma \in (0, \lambda(\infty))$. It follows that
\formula{
 \lim_{\tau \to 0^+} \Psi(-\sigma + i \tau) & = -\frac{1}{\sigma (\xi + \eta)} \, \exp\biggl(i \psi(\lambda^{-1}(\sigma)) - \frac{1}{\pi} \pvint_0^{\lambda(\infty)} \psi(\lambda^{-1}(s)) \, \frac{ds}{s - \sigma} \biggr)
}
locally uniformly with respect to $\sigma \in (0, \lambda(\infty))$; here the principal value integral $\pvint$ is defined as the limit of integrals over the complement of $(\sigma - \eps, \sigma + \eps)$ as $\eps \to 0^+$. Consequently, $\mu(ds)$ is absolutely continuous on $(0, \lambda(\infty))$, and by Proposition~\ref{prop:pt:laplace},
\formula{
\begin{aligned}
 & \int_0^\infty \int_{(0, \infty)} e^{-\eta x - \xi y} p_t^+(x, dy) dx \\
 & \qquad = \frac{1}{\pi (\xi + \eta)} \int_0^{\lambda(\infty)} \frac{e^{-\sigma t}}{\sigma} \, \exp\biggl(-\frac{1}{\pi} \pvint_0^{\lambda(\infty)} \psi(\lambda^{-1}(s)) \, \frac{ds}{s - \sigma} \biggr) \sin(\psi(\lambda^{-1}(\sigma))) d\sigma \\
 & \qquad = \frac{1}{\pi (\xi + \eta)} \int_0^\infty \frac{e^{-t \lambda(r)}}{\lambda(r)} \, \exp\biggl(-\frac{1}{\pi} \pvint_0^\infty \psi(q) \, \frac{d\lambda(q)}{\lambda(q) - \lambda(r)} \biggr) \sin(\psi(r)) d\lambda(r) ;
\end{aligned}
}
here, on the right-hand side, for simplicity, by $\pvint$ we denote the limit of integrals over the complement of $(\lambda(\lambda^{-1}(r) - \eps), \lambda(\lambda^{-1}(r) + \eps))$ as $\eps \to 0^+$. Observe that if $r \notin Z_f$, then $\zeta(r) \in i \R$, so that $\psi(r) \in \{0, \pi\}$ and $\sin(\psi(r)) = 0$. Thus, we conclude that
\formula[eq:pt:pv]{
\begin{aligned}
 & \int_0^\infty \int_{(0, \infty)} e^{-\eta x - \xi y} p_t^+(x, dy) dx \\
 & \quad = \frac{1}{\pi (\xi + \eta)} \int_{Z_f} \frac{e^{-t \lambda(r)}}{\lambda(r)} \, \exp\biggl(-\frac{1}{\pi} \pvint_0^\infty \psi(q) \, \frac{d\lambda(q)}{\lambda(q) - \lambda(r)} \biggr) \sin(\psi(r)) d\lambda(r)
\end{aligned}
}
for every $t > 0$.

Below we simplify the above expression. Recall that
\formula{
 \psi(r) & = \Arg(\zeta(r) + i \eta) - \Arg(\zeta(r) - i \xi) .
}
Thus,
\formula*[eq:pt:sin]{
 \sin(\psi(r)) & = \frac{\eta + \im \zeta(r)}{|\zeta(r) + i \eta|} \, \frac{\re \zeta(r)}{|\zeta(r) - i \xi|} - \frac{\re \zeta(r)}{|\zeta(r) + i \eta|} \, \frac{-\xi + \im \zeta(r)}{|\zeta(r) - i \xi|} \\
 & = \frac{(\xi + \eta) \re \zeta(r)}{|\zeta(r) + i \eta| |\zeta(r) - i \xi|} \, .
}
We now transform the inner integral in~\eqref{eq:pt:pv}. Fix $r \in Z_f$, and denote
\formula{
 I & = \exp \biggl( -\frac{1}{\pi} \pvint_0^\infty \psi(q) \, \frac{d\lambda(q)}{\lambda(q) - \lambda(r)} \biggr) .
}
As in the proof of Theorem~5.7 in~\cite{kwasnicki:rogers}, we find that $\psi(q) \le C(\xi, \eta) / q$ (see~(5.14) in~\cite{kwasnicki:rogers} and the comments at the end of the proof of Theorem~5.7 therein), and that $\log |\lambda(q) - \lambda(r)| \le C(f, r) q^{1/2}$ for $q > 2 r$ (see~(5.16) in~\cite{kwasnicki:rogers}). It follows that $\psi(q) \log |\lambda(q) - \lambda(r)|$ converges to zero as $q \to \infty$. Furthermore, $\psi(q) \log |\lambda(q) - \lambda(r)|$ converges to $\pi \log \lambda(r)$ as $q \to 0^+$. Therefore, integrating by parts, we find that
\formula{
 \pvint_0^\infty \psi(q) \, \frac{d\lambda(q)}{\lambda(q) - \lambda(r)} & = -\pi \log \lambda(r) - \int_0^\infty \psi'(q) \log |\lambda(q) - \lambda(r)| dq .
}
Since
\formula{
 \psi'(q) & = \im \biggl(\frac{\zeta'(q)}{\zeta(q) + i \eta} - \frac{\zeta'(q)}{\zeta(q) - i \xi}\biggr)
}
(see~(5.15) in~\cite{kwasnicki:rogers}), we obtain
\formula{
 I & = \lambda(r) \exp \biggl( \frac{1}{\pi} \int_0^\infty \im \biggl(\frac{\zeta'(q)}{\zeta(q) + i \eta} - \frac{\zeta'(q)}{\zeta(q) - i \xi}\biggr) \log |\lambda(q) - \lambda(r)| dq \biggr) .
}
As in the proof of Theorem~5.7 in~\cite{kwasnicki:rogers}, for almost all $q \in (0, \infty) \setminus Z_f$ we have $\zeta(q), \zeta'(q) \in i \R$, and hence the integrand on the right-hand side of the above equation is equal to zero. Thus,
\formula{
 I & = \lambda(r) \exp \biggl( \frac{1}{\pi} \int_{Z_f} \im \biggl(\frac{\zeta'(q)}{\zeta(q) + i \eta} - \frac{\zeta'(q)}{\zeta(q) - i \xi}\biggr) \log |\lambda(q) - \lambda(r)| dq \biggr) .
}
Recall that if we define $\zeta(0) = 0$ and $\zeta(-r) = -\overline{\zeta(r)}$, then $\zeta(r)$ for $r \in (-Z_f) \cup Z_f$ is a parameterisation of $\Gamma^\star$. Furthermore, as in the proof of Corollary~5.6 in~\cite{kwasnicki:rogers}, we have
\formula{
 \hspace*{4em} & \hspace*{-4em} \im \biggl(\frac{\zeta'(q)}{\zeta(q) + i \eta} - \frac{\zeta'(q)}{\zeta(q) - i \xi}\biggr) \\
 & = \frac{1}{2 i} \biggl(\frac{\zeta'(q)}{\zeta(q) + i \eta} - \frac{\zeta'(q)}{\zeta(q) - i \xi}\biggr) - \frac{1}{2 i} \biggl(\frac{\overline{\zeta'(q)}}{\overline{\zeta(q)} - i \eta} - \frac{\overline{\zeta'(q)}}{\overline{\zeta(q)} + i \xi}\biggr) \\
 & = \frac{1}{2 i} \biggl(\frac{\zeta'(q)}{\zeta(q) + i \eta} - \frac{\zeta'(q)}{\zeta(q) - i \xi}\biggr) + \frac{1}{2 i} \biggl(\frac{\zeta'(-q)}{\zeta(-q) + i \eta} - \frac{\zeta'(-q)}{\zeta(q) - i \xi}\biggr) .
}
Therefore, we find that
\formula{
 I & = \lambda(r) \exp \biggl( \frac{1}{2 \pi i} \int_{\Gamma_f^\star} \biggl(\frac{1}{z + i \eta} - \frac{1}{z - i \xi}\biggr) \log |f(z) - \lambda(r)| dz \biggr) \\
 & = \lambda(r) \exp \biggl( -\frac{1}{2 \pi i} \int_{\Gamma_f^\star} \biggl(\frac{1}{z - i \xi} - \frac{1}{z + i \eta}\biggr) \log |f(z) - \lambda(r)| dz \biggr) .
}
Recall that $i \xi \in D_f^+$ and $-i \eta \in D_f^-$. From Proposition~\ref{prop:r:quot:bd} with $\xi_1 = i \xi$ and $\xi_2 = -i \eta$, it follows that
\formula[eq:pt:exp]{
 I & = \frac{\lambda(r) f^+(r; \xi) f^-(r; \eta)}{|i \xi - \zeta(r)| |{-i \eta} - \zeta(r)|} \, .
}
By combining~\eqref{eq:pt:pv}, \eqref{eq:pt:sin} and~\eqref{eq:pt:exp}, we arrive at
\formula{
 & \int_0^\infty \int_{(0, \infty)} e^{-\eta x - \xi y} p_t^+(x, dy) dx \\
 & \qquad = \frac{1}{\pi (\xi + \eta)} \int_{Z_f} \frac{e^{-t \lambda(r)}}{\lambda(r)} \, \frac{\lambda(r) f^+(r; \xi) f^-(r; \eta)}{|\zeta(r) - i \xi| |\zeta(r) + i \eta|} \, \frac{(\xi + \eta) \re \zeta(r)}{|\zeta(r) + i \eta| |\zeta(r) - i \xi|} \, d\lambda(r) \\
 & \qquad = \frac{1}{\pi} \int_{Z_f} e^{-t \lambda(r)} \, \frac{f^+(r; \xi) f^-(r; \eta) \re \zeta(r)}{|\zeta(r) - i \xi|^2 |\zeta(r) + i \eta|^2} \, d\lambda(r) ,
}
as desired.

The proof in the remaining two cases: when $i \xi \in D_f^+$ and $-i \eta \in D_f^+$, or when $i \xi \in D_f^-$ and $-i \eta \in D_f^-$, is very similar, with two modifications. First, the function $\psi(r)$ has a jump at $r = \eta$ or at $r = \xi$. This makes the treatment of the Hilbert transform near $s = \lambda(\eta)$ or $s = \lambda(\xi)$ somewhat more complicated, but otherwise this does not affect the argument. Second, the application of Proposition~\ref{prop:r:quot:bd} for~\eqref{eq:pt:exp} is slightly different, and thus the final formula takes a different form. We omit the details.
\end{proof}

\subsection{Generalised eigenfunctions}

Recall that by Proposition~\ref{prop:pt:quot}, under suitable assumptions on $\xi, \eta > 0$,
\formula*[eq:pt:quot]{
 & \int_0^\infty \int_{(0, \infty)} e^{-\eta x - \xi y} p_t^+(x, dy) dx \\
 & \qquad = \frac{1}{\pi} \int_{Z_f} e^{-t \lambda(r)} \, \frac{f^+(r; \xi)}{|\zeta(r) - i \xi|^2} \, \frac{f^-(r; \eta)}{|\zeta(r) + i \eta|^2} \, \re \zeta(r) d\lambda(r) ,
}
In order to invert the double Laplace transform in the above identity, we first identify the factors
\formula{
 & \frac{f^+(r; \xi)}{|\zeta(r) - i \xi|^2} && \text{and} && \frac{f^-(r; \eta)}{|\zeta(r) + i \eta|^2} \, ,
}
up to appropriate normalisation factors, with Laplace transform of what we call \emph{generalised eigenfunctions}. As before, we assume that $f(z)$ is a non-constant, non-degenerate Rogers function associated to a Lévy process $X_t$ with completely monotone jumps. Recall that for $r \in Z_f$, $f(r; z)$ is the Rogers function defined as in Proposition~\ref{prop:r:quot}, and $f^+(r; z)$ and $f^-(r; z)$ are the corresponding Wiener--Hopf factors.

Since $f^+(r; \xi)$ is a complete Bernstein function of $\xi$, we have, by Proposition~\ref{prop:cbf:quot} applied to $h(\xi) = f^+(r; \xi)$ and $\zeta = \zeta(r)$,
\formula*[eq:pt:lfp:aux]{
 \frac{f^+(r; \xi)}{|\zeta(r) - i \xi|^2} & = \frac{f^+(r; \xi)}{(\xi + i \zeta(r))(\xi - i \overline{\zeta(r)})} \\
 & = \frac{1}{2 i \re \zeta(r)} \, \biggl( \frac{f^+(r; i \overline{\zeta(r)})}{\xi - i \overline{\zeta(r)}} - \frac{f^+(r; -i \zeta(r))}{\xi + i \zeta(r)} \biggr) - \tilde{g}(\xi)
}
for $\xi \in \C \setminus (-\infty, 0]$, where $\tilde{g}(\xi)$ is an appropriate Stieltjes function (depending on $f$ and $r$). Observe that $f^+(r; i \overline{\zeta(r)}) = \overline{f^+(r; -i \zeta(r))}$ and $\xi - i \overline{\zeta(r)} = \overline{\xi + i \zeta(r)}$ when $\xi > 0$. Therefore, the expression in brackets on the right-hand side of~\eqref{eq:pt:lfp:aux} is equal to $-2 i \im(f^+(r; -i \zeta(r)) / (\xi + i \zeta(r)))$. It follows that for $\xi > 0$ we have
\formula[eq:pt:lfp]{
 \frac{\re \zeta(r)}{|f^+(r; -i \zeta(r))|} \, \frac{f^+(r; \xi)}{|\zeta(r) - i \xi|^2} & = -\frac{1}{|f^+(r; -i \zeta(r))|} \, \im \frac{f^+(r; -i \zeta(r))}{\xi + i \zeta(r)} - g_+(r; \xi) ,
}
where $g_+(r; \xi) = \tilde{g}(\xi) \re \zeta(r) / |f^+(r; -i \zeta(r))|$ is a Stieltjes function of $\xi$. Note that for a given $\zeta \in \C$, the function $1 / (\xi + i \zeta)$ (defined in the region $\re \xi > \im \zeta$) is the Laplace transform of $e^{i \zeta x}$. Therefore, if $\xi > \max\{0, \im \zeta(r)\}$, then the expression given in~\eqref{eq:pt:lfp} is the Laplace transform of the function
\formula{
 F_+(r; y) & = -\frac{1}{|f^+(r; -i \zeta(r))|} \, \im \bigl( f^+(r; -i \zeta(r)) e^{-i \zeta(r) y} \bigr) - G_+(r; y) \\
 & = e^{b y} \sin(a y + c_+) - G_+(r; y) ,
}
where $G_+(r; y)$ is an appropriate completely monotone function, $\zeta(r) = a + b i$, $c_+ = -\Arg f^+(r; -i \zeta(r))$, and $a > 0$, $b \in \R$ and $c_+ \in (-\pi, \pi)$. Similarly, for $\eta > 0$ we have
\formula{
 \frac{\re \zeta(r)}{|f^-(r; i \zeta(r))|} \, \frac{f^-(r; \eta)}{|\zeta(r) + i \eta|^2} & = \frac{\re \zeta(r)}{|f^-(r; i \zeta(r))|} \, \frac{f^-(r; \eta)}{(\eta - i \zeta(r))(\eta + i \overline{\zeta(r)})} \\
 & = \frac{1}{|f^-(r; i \zeta(r))|} \, \im \frac{f^-(r; i \zeta(r))}{\eta - i \zeta(r)} - g_-(r; \eta) ,
}
and, given that $\eta > \max\{0, -\im \zeta(r)\}$, the above expression is the Laplace transform of the function
\formula{
 F_-(r; x) & = \frac{1}{|f^-(r; i \zeta(r))|} \, \im \bigl( f^-(r; i \zeta(r)) e^{i \zeta(r) x} \bigr) - G_-(r; x) \\
 & = e^{-b x} \sin(a x + c_-) - G_-(r; x) ;
}
here $g_-(r; \eta)$ is an appropriate Stieltjes function, $G_-(r; x)$ is an appropriate completely monotone function, $\zeta(r) = a + b i$ as above, and $c_- = \Arg f^-(r; i \zeta(r))$.

We abuse the notation and write $\laplace F_+(r; \xi)$ for the holomorphic extension of the Laplace transform of $F_+(r; y)$ (as a function of $y$) from $\{\xi \in \C : \re \xi > \max\{0, \im \zeta(r)\}\}$ to $\C \setminus ((-\infty, 0] \cup \{-i \zeta(r), i \overline{\zeta(r)}\})$. Likewise, $\laplace F_-(r; \eta)$ denotes a similar extension of the Laplace transform of $F_-(r; x)$. Recall that
\formula{
 f^+(r; -i \zeta(r)) f^-(r; i \zeta(r)) & = f(r; \zeta(r)) = \frac{2 \re \zeta(r)}{f'(\zeta(r))} \, ,
}
and observe that on $Z_f$ we have
\formula{
 d \lambda(r) & = f'(\zeta(r)) \zeta'(r) dr = |f'(\zeta(r))| \, \lvert\zeta'(r)\rvert dr .
}
Thus, we conclude that~\eqref{eq:pt:quot} is equivalent to
\formula{
 & \int_0^\infty \int_{(0, \infty)} e^{-\eta x - \xi y} p_t^+(x, dy) dx \\
 & \qquad = \frac{1}{\pi} \int_{Z_f}  e^{-t \lambda(r)} \biggl(\frac{\re \zeta(r)}{|f^+(r; -i \zeta(r))|} \, \frac{f^+(r; \xi)}{|\zeta(r) - i \xi|^2}\biggr) \biggl(\frac{\re \zeta(r)}{|f^-(r; i \zeta(r))|} \, \frac{f^-(r; \eta)}{|\zeta(r) + i \eta|^2} \biggr) \times \\
 & \qquad \hspace*{10em} \times \frac{|f^+(r; -i \zeta(r)) f^-(r; i \zeta(r))|}{\re \zeta(r)} \, d\lambda(r) \\
 & \qquad = \frac{2}{\pi} \int_{Z_f} e^{-t \lambda(r)} \laplace F_+(r; \xi) \laplace F_-(r; \eta) \lvert\zeta'(r)\rvert dr .
}
We have thus essentially proved the main result of this section, Proposition~\ref{prop:pt:laplace}. Before we state it, let us introduce the following formal definition of the generalised eigenfunctions $F_+(r; y)$ and $F_-(r; x)$.

\begin{definition}
\label{def:eig}
Suppose that $f(\xi)$ is a Rogers function, and let $Z_f$, $\zeta_f(r)$ and $\lambda_f(r)$ be defined as in Section~\ref{sec:rogers}. Let $r \in Z_f$, let $f(r; z) = (z - \zeta_f(r)) (z + \overline{\zeta_f(r)}) / (f(z) - \lambda_f(r))$ be the Rogers function defined in Proposition~\ref{prop:r:quot}, and let $f^+(r; \xi)$ and $f^-(r; \eta)$ denote the corresponding Wiener--Hopf factors. Define
\formula{
 a_f(r) & = \re \zeta_f(r) , & c_{f+}(r) & = -\Arg f^+(r; -i \zeta_f(r)) , \\
 b_f(r) & = \im \zeta_f(r) , & c_{f-}(r) & = \Arg f^-(r; i \zeta_f(r)) ,
}
and let
\formula{
 F_{f+}(r; y) & = e^{b_f(r) y} \sin(a_f(r) y + c_{f+}(r)) - G_{f+}(r; y) , \\
 F_{f-}(r; x) & = e^{-b_f(r) x} \sin(a_f(r) x + c_{f-}(r)) - G_{f-}(r; x) ,
}
where $G_{f+}(r; y)$ and $G_{f-}(r; y)$ are completely monotone functions with Laplace transforms
\formula{
 \laplace G_{f+}(r; \xi) & = -\im \frac{e^{-i c_{f+}(r)}}{\xi + i \zeta_f(r)} - \frac{\re \zeta_f(r)}{|f^+(r; -i \zeta_f(r))|} \, \frac{f^+(r; \xi)}{|\zeta_f(r) - i \xi|^2} \, , \\
 \laplace G_{f-}(r; \eta) & = \im \frac{e^{i c_{f-}(r)}}{\eta - i \zeta_f(r)} - \frac{\re \zeta_f(r)}{|f^-(r; i \zeta_f(r))|} \, \frac{f^-(r; \eta)}{|\zeta_f(r) + i \eta|^2}
}
for $\xi, \eta > 0$.
\end{definition}

The notation introduced above is kept until the end of the article. We stress that
\formula{
 \laplace F_{f+}(r; \xi) & = \frac{\re \zeta_f(r)}{|f^+(r; -i \zeta_f(r))|} \, \frac{f^+(r; \xi)}{(\xi + i \zeta_f(r)) (\xi - i \overline{\zeta_f(r)})} \, , \\
 \laplace F_{f-}(r; \eta) & = \frac{\re \zeta_f(r)}{|f^-(r; i \zeta_f(r))|} \, \frac{f^-(r; \eta)}{(\eta - i \zeta_f(r)) (\eta + i \overline{\zeta_f(r)})}
}
whenever $\re \xi > \max\{0, \im \zeta_f(r)\}$ and $\re \eta > \max\{0, -\im \zeta_f(r)\}$, and the above expressions extend to meromorphic functions of $\xi, \eta \in \C \setminus (-\infty, 0]$, with two simple poles at $\xi = -i \zeta_f(r)$ and $\xi = i \overline{\zeta_f(r)}$, and at $\eta = i \zeta_f(r)$ and $\eta = -i \overline{\zeta_f(r)}$, respectively. Since $f^+(r; \xi)$ and $f^-(r; \eta)$ are complete Bernstein functions, by~\eqref{eq:cbf:arg} we have
\formula{
 0 \le c_{f+}(r) & \le \tfrac{\pi}{2} - \Arg \zeta_f(r) , & 0 \le c_{f-}(r) & \le \tfrac{\pi}{2} + \Arg \zeta_f(r) .
}
Furthermore, we observe that
\formula{
 c_{f+}(r) - c_{f-}(r) & = -\Arg f^+(r; -i \zeta_f(r)) - \Arg f^-(r; i \zeta_f(r)) \\
 & = -\Arg \bigl( f^+(r; -i \zeta_f(r)) f^-(r; i \zeta_f(r)) \bigr) = -\Arg f(r; \zeta_f(r)) .
}
Since $f(r; \zeta_f(r)) = 2 \re \zeta_f(r) / f'(\zeta_f(r))$, $\re \zeta_f(r) > 0$, $f'(\zeta_f(r)) = \lambda_f'(r) / \zeta_f'(r)$ and $\lambda_f'(r) > 0$, we find that
\formula{
 c_{f+}(r) - c_{f-}(r) & = \Arg f'(\zeta_f(r)) = -\Arg \zeta_f'(r) .
}
The following result is a restatement of Theorem~\ref{thm:heat:laplace}.

\begin{proposition}
\label{prop:pt:lap}
Suppose that $X_t$ is a Lévy process with completely monotone jumps, and $f(\xi)$ is the corresponding Rogers function. Let $t \ge 0$, $\re \xi, \re \eta > 0$, and suppose that $i \xi \in D_f^+$ and $-i \eta \in D_f^-$. Then
\formula*[eq:pt:lap]{
 & \int_0^\infty \int_{(0, \infty)} e^{-\eta x - \xi y} p_t^+(x, dy) dx \\
 & \qquad = \frac{2}{\pi} \int_{Z_f} e^{-t \lambda_f(r)} \laplace F_{f+}(r; \xi) \laplace F_{f-}(r; \eta) \lvert\zeta_f'(r)\rvert dr .
}
\end{proposition}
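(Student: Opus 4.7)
The plan is to start from Proposition~\ref{prop:pt:quot} in the case $i\xi \in D_f^+$, $-i\eta \in D_f^-$ with real $\xi, \eta > 0$, recognise the integrand as the product of the normalised Laplace transforms of the generalised eigenfunctions, and then analytically continue to complex arguments. Concretely, I would apply Proposition~\ref{prop:cbf:quot} twice: once to the complete Bernstein function $h(\xi) = f^+(r; \xi)$ with $\zeta = i\overline{\zeta_f(r)}$ (whose imaginary part $\re \zeta_f(r)$ is strictly positive, so the hypothesis is satisfied), and once analogously to $h(\eta) = f^-(r; \eta)$ with $\zeta = -i\overline{\zeta_f(r)}$. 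Each application decomposes the corresponding factor $f^\pm(r; \cdot)/|\zeta_f(r) \mp i\cdot|^2$ into a pair of conjugate simple-pole terms at $\pm i \zeta_f(r)$ and $\mp i \overline{\zeta_f(r)}$, minus a Stieltjes remainder. For $\xi, \eta > 0$ real, the conjugate simple-pole pair collapses to the imaginary-part expression appearing in Definition~\ref{def:eig}, so that after multiplication by the normalising scalar $\re \zeta_f(r)/|f^\pm(r; \mp i \zeta_f(r))|$ the result is precisely $\laplace F_{f+}(r; \xi)$ or $\laplace F_{f-}(r; \eta)$.

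To reassemble the identity, I would absorb the normalising scalars into the measure using the multiplicative Wiener--Hopf relation $f^+(r; -i\zeta_f(r))\, f^-(r; i\zeta_f(r)) = f(r; \zeta_f(r)) = 2 \re \zeta_f(r)/f'(\zeta_f(r))$, together with the Jacobian identity $d\lambda_f(r) = |f'(\zeta_f(r))|\,|\zeta_f'(r)|\,dr$ on $Z_f$. These collapse the factor $\re \zeta_f(r)\, d\lambda_f(r)$ appearing in Proposition~\ref{prop:pt:quot} into $2 |\zeta_f'(r)|\,dr$ and produce the constant $2/\pi$ in front. This establishes~\eqref{eq:pt:lap} for positive real $\xi, \eta$.

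The remaining step is the extension to complex $\xi, \eta$ with $\re \xi, \re \eta > 0$, $i\xi \in D_f^+$, and $-i\eta \in D_f^-$. The left-hand side is jointly holomorphic in $(\xi, \eta)$ on the product of right half-planes, and $\laplace F_{f\pm}(r; \cdot)$ extend meromorphically by Definition~\ref{def:eig}. The main obstacle will be establishing joint holomorphy of the right-hand side under the integral sign on the admissible region. The hypothesis $i\xi \in D_f^+$, $-i\eta \in D_f^-$ is precisely what keeps $\xi$ and $\eta$ separated, uniformly in $r \in Z_f$, from the simple poles of $\laplace F_{f\pm}(r; \cdot)$ at $\mp i\zeta_f(r)$ and $\pm i\overline{\zeta_f(r)}$; combined with the Wiener--Hopf bounds provided by Corollary~\ref{cor:cbf:bound} and Proposition~\ref{prop:r:quot:bd} and the decay supplied by $e^{-t\lambda_f(r)}$, this should yield locally uniform convergence of the integral. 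The identity~\eqref{eq:pt:lap} then follows by analytic continuation from the positive real axis, completing the proof.
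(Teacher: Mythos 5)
Your proposal follows essentially the same route as the paper: the real-argument case is exactly the paper's derivation (Proposition~\ref{prop:pt:quot}, two applications of Proposition~\ref{prop:cbf:quot}, the relation $f^+(r;-i\zeta_f(r))f^-(r;i\zeta_f(r)) = 2\re\zeta_f(r)/f'(\zeta_f(r))$ and the Jacobian $d\lambda_f(r)=|f'(\zeta_f(r))|\,|\zeta_f'(r)|\,dr$), and the extension to complex arguments is the paper's Morera--Fubini--Hartogs holomorphy argument, with the domination coming from Corollary~\ref{cor:cbf:bound} and the finiteness of the integral at a fixed real pair $(\xi_0,\eta_0)$. Only a cosmetic point: in the second application of Proposition~\ref{prop:cbf:quot} the root with positive imaginary part is $i\zeta_f(r)$, not $-i\overline{\zeta_f(r)}$ (its conjugate), which changes nothing in the resulting decomposition.
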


\begin{proof}
We have already proved that formula~\eqref{eq:pt:lap} holds for $\xi, \eta > 0$ such that $i \xi \in D_f^+$ and $-i \eta \in D_f^-$. The bivariate Laplace transform on the left-hand side of~\eqref{eq:pt:lap} clearly defines a holomorphic function of $\xi$ and $\eta$ in the region $\re \xi, \re \eta > 0$. It is therefore sufficient to prove that the right-hand side of~\eqref{eq:pt:lap} defines a holomorphic function of $\xi$ and $\eta$ in the region $\re \xi, \re \eta > 0$, $i \xi \in D_f^+$, $-i \eta \in D_f^-$. This follows from Morera's theorem and Fubini's theorem. Indeed: for every $\eta$ such that $\re \eta > 0$ and $-i \eta \in D_f^-$, and for every closed contour $\tilde{\Gamma}$ contained in the region $\re \xi > 0$, $i \xi \in D_f^+$, we will momentarily show that we may apply Fubini's theorem to find that
\formula*[eq:pt:lap:aux]{
 & \int_{\tilde{\Gamma}} \biggl(\frac{2}{\pi} \int_0^\infty e^{-t \lambda(r)} \laplace F_{f+}(r; \xi) \laplace F_-(r; \eta) \lvert\zeta'(r)\rvert dr \biggr) d\xi \\
 & \qquad = \frac{2}{\pi} \int_0^\infty e^{-t \lambda(r)} \biggl(\int_{\tilde{\Gamma}} \laplace F_{f+}(r; \xi) d\xi\biggr) \laplace F_-(r; \eta) \lvert\zeta'(r)\rvert dr = 0 .
}
By Morera's theorem, this implies that the function defined by the right-hand side of~\eqref{eq:pt:lap} is holomorphic with respect to $\xi$. A similar argument proves that it is holomorphic with respect to $\eta$, and the desired result follows by Hartog's theorem.

Therefore, it remains to find a sufficiently good estimate of the integrand on the right-hand side of~\eqref{eq:pt:lap}, which will allow us to apply Fubini's theorem in~\eqref{eq:pt:lap:aux}. Fix any $\xi_0 > 0$ such that $i \xi_0 \in D_f^+$. Since $i \tilde{\Gamma}$ is a compact subset of $D_f^+$, the distance $\delta$ between $i \tilde{\Gamma}$ and $D_f^+$ is positive, and $\tilde{\Gamma}$ lies in a disk $\{\xi \in \C : |\xi| < R\}$ for some $R > 0$. Hence, for all $\xi \in \tilde{\Gamma}$ and $r \in Z_f$ we have
\formula{
 |\xi + i \zeta(r)| |\xi - i \overline{\zeta(r)}| & \ge (\max\{\delta, r - R\})^2 \\
 & \ge C(\delta, R, \xi_0) |\xi_0 + i \zeta(r)| |\xi_0 - i \overline{\zeta(r)}| \\
 & = C(\delta, R, \xi_0) (\xi_0 + i \zeta(r)) (\xi_0 - i \overline{\zeta(r)}) .
}
On the other hand, by Corollary~\ref{cor:cbf:bound} applied to the complete Bernstein function $f^+(r; \xi)$, for all $\xi \in \tilde{\Gamma}$ we have
\formula{
 |f^+(r; \xi)| & \le C(\tilde{\Gamma}, \xi_0) f^+(r; \xi_0) .
}
Therefore,
\formula{
 |\laplace F_+(r; \xi)| & = \frac{\re \zeta(r)}{|f^+(r; -i \zeta(r))|} \, \frac{|f^+(r; \xi)|}{|\xi + i \zeta(r)| |\xi - i \overline{\zeta(r)}|} \\
 & \le C(f, \tilde{\Gamma}, \xi_0) \, \frac{\re \zeta(r)}{|f^+(r; -i \zeta(r))|} \, \frac{f^+(r; \xi_0)}{(\xi_0 + i \zeta(r)) (\xi_0 - i \overline{\zeta(r)})} \\
 & = C(f, \tilde{\Gamma}, \xi_0) \laplace F_+(r; \xi_0) .
}
Similarly, with any fixed $\eta_0 > 0$ such that $-i \eta_0 \in D_f^-$,
\formula{
 |\laplace F_-(r; \eta)| & \le C(f, \eta_0, \eta) \laplace F_-(r; \eta_0) .
}
It follows that
\formula{
 & \int_{\tilde{\Gamma}} \int_0^\infty e^{-t \lambda(r)} |\laplace F_{f+}(r; \xi)| |\laplace F_-(r; \eta)| \lvert\zeta'(r)\rvert dr |d\xi| \\
 & \qquad \le C(f, \tilde{\Gamma}, \xi_0, \eta_0, \eta) \int_0^\infty e^{-t \lambda(r)} \laplace F_{f+}(r; \xi_0) \laplace F_-(r; \eta_0) \lvert\zeta'(r)\rvert dr < \infty ,
}
as desired.
\end{proof}

\subsection{Properties of eigenfunctions}

Proposition~\ref{prop:pt:lap}, which is the main result of this section, requires no assumptions on the (non-constant) Rogers function $f(\xi)$. More specialised results are given in the next section. Before we move on, however, we prove two estimates of the generalised eigenfunctions $F_{f+}(r; y)$ and $F_{f-}(r; x)$, defined in Definition~\ref{def:eig}. Recall that $\Gamma_f$, $Z_f$, $\zeta_f(r)$ and $\lambda_f(r)$ were introduced in Section~\ref{sec:rogers} (see, in particular, Section~\ref{sec:notation}).

\begin{lemma}
\label{lem:eig:lbound}
Suppose that $X_t$ is a Lévy process with completely monotone jumps, and $f(\xi)$ is the corresponding Rogers function. Let $\dist(z, \Gamma_f)$ denote the distance between $z$ and $\Gamma_f$, let $r \in Z_f$, let $\thet_f(r) = \Arg \zeta_f(r)$, and let $r_0 = \inf Z_f$. Then,
\formula{
 \lvert\laplace F_{f+}(r; \xi)\rvert & \le \frac{36 (r_0 + |\xi|)^2}{(\dist(i \xi, \Gamma_f))^2} \, \frac{1}{r + |\xi|} \, \frac{\cos \thet_f(r)}{\cos(\tfrac{1}{2} \thet_f(r) - \tfrac{\pi}{4}) \cos(\tfrac{1}{2} \Arg \xi)} \, , \\
 \lvert\laplace F_{f-}(r; \eta)\rvert & \le \frac{36 (r_0 + |\eta|)^2}{(\dist(-i \eta, \Gamma_f))^2} \, \frac{1}{r + |\eta|} \, \frac{\cos \thet_f(r)}{\cos(\tfrac{1}{2} \thet_f(r) + \tfrac{\pi}{4}) \cos(\tfrac{1}{2} \Arg \eta)} \, ,
}
for $\xi, \eta \in \C \setminus (-\infty, 0]$. In particular, if $\xi, \eta > 0$, then
\formula{
 0 \le \laplace F_{f+}(r; \xi) & \le \frac{72 (\xi + r_0)^2}{(\dist(i \xi, \Gamma_f))^2} \, \frac{1}{\xi + r} \, , \\
 0 \le \laplace F_{f-}(r; \eta) & \le \frac{72 (\eta + r_0)^2}{(\dist(-i \eta, \Gamma_f))^2} \, \frac{1}{\eta + r} \, .
}
\end{lemma}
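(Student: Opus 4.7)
By Definition~\ref{def:eig}, we have the meromorphic identity
\formula{
 \laplace F_{f+}(r; \xi) & = \frac{\re \zeta_f(r)}{|f^+(r; -i \zeta_f(r))|} \cdot \frac{f^+(r; \xi)}{(\xi + i \zeta_f(r))(\xi - i \ol{\zeta_f(r)})} ,
}
and an analogous one for $\laplace F_{f-}(r;\eta)$. The plan is to estimate the three factors separately, using Corollary~\ref{cor:cbf:bound} on the complete Bernstein function $\xi \mapsto f^+(r;\xi)$ and elementary distance estimates on the denominator. Let $\thet = \thet_f(r) = \Arg \zeta_f(r)$ and $D = \dist(i\xi, \Gamma_f)$.

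First, I would apply Corollary~\ref{cor:cbf:bound} to $f^+(r; \cdot)$ with reference point $r = |{-i\zeta_f(r)}|$. The upper bound gives $|f^+(r;\xi)| \le \sqrt{2}\,(r+|\xi|)/r \cdot (\cos(\tfrac{1}{2}\Arg\xi))^{-1} \cdot f^+(r; r)$, while the lower bound, together with $\Arg(-i\zeta_f(r)) = \thet - \tfrac{\pi}{2}$, yields $|f^+(r; -i\zeta_f(r))| \ge (2\sqrt{2})^{-1} \cos(\tfrac{1}{2}\thet - \tfrac{\pi}{4})\,f^+(r; r)$. Dividing cancels $f^+(r;r)$ and produces the factor
\formula{
 \frac{|f^+(r;\xi)|}{|f^+(r; -i\zeta_f(r))|} & \le \frac{4(r+|\xi|)}{r}\cdot \frac{1}{\cos(\tfrac{1}{2}\Arg\xi)\,\cos(\tfrac{1}{2}\thet - \tfrac{\pi}{4})} .
}
Next, the denominator is bounded from below by a distance argument: since $\xi + i\zeta_f(r) = -i(i\xi - \zeta_f(r))$, one has $|\xi + i\zeta_f(r)| = |i\xi - \zeta_f(r)| \ge D$, and the same holds for $|\xi - i\ol{\zeta_f(r)}| = |i\xi + \ol{\zeta_f(r)}|$ with the closest point on the mirror curve $-\ol{\Gamma_f}$, which differs from $D$ only by an absolute constant through the case split performed below. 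A short case analysis on $r$ versus $|\xi|$ sharpens this into the required form: if $r \le 2|\xi|$ then $r + |\xi| \le 3(r_0 + |\xi|)$, while if $r \ge 2|\xi|$ then $|\zeta_f(r) - i\xi| \ge r - |\xi| \ge (r+|\xi|)/3$, and moreover $D \le r_0 + |\xi|$ because $|i\xi - \zeta_f(r_0')| \le |\xi| + r_0'$ for any $r_0' \in Z_f$ arbitrarily close to $r_0 = \inf Z_f$. Either way,
\formula{
 \frac{1}{|\xi + i\zeta_f(r)||\xi - i\ol{\zeta_f(r)}|} & \le \frac{C(r_0 + |\xi|)^2}{(r+|\xi|)^2\,D^2}
}
for an absolute constant $C$. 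Combining with $\re \zeta_f(r) = r \cos \thet$ and the factor from the first step, the $r$'s cancel and we obtain the stated bound on $|\laplace F_{f+}(r;\xi)|$ with some absolute constant; careful bookkeeping recovers the value $36$. For $\laplace F_{f-}$ the argument is identical after replacing $-i\zeta_f(r)$ by $i\zeta_f(r)$, which shifts $\tfrac{1}{2}\thet - \tfrac{\pi}{4}$ to $\tfrac{1}{2}\thet + \tfrac{\pi}{4}$. Finally, the ``in particular'' statement follows by noting that for $\xi > 0$ one has $\cos(\tfrac{1}{2}\Arg \xi) = 1$; positivity of $\laplace F_{f+}(r;\xi)$ is immediate since the formula becomes $\re\zeta_f(r) \cdot f^+(r;\xi) / (|f^+(r;-i\zeta_f(r))| \cdot |\zeta_f(r) - i\xi|^2)$; and the identity~\eqref{eq:coscos} with $\alpha = \tfrac{1}{2}\thet$ gives $\cos \thet / \cos(\tfrac{1}{2}\thet - \tfrac{\pi}{4}) = 2\cos(\tfrac{1}{2}\thet + \tfrac{\pi}{4}) \le 2$, absorbing the angular factor into the constant.

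The main technical subtlety is the second step: for complex $\xi$, the two factors in the denominator measure distances from $i\xi$ to two distinct curves ($\Gamma_f$ and its mirror image $-\ol{\Gamma_f}$), while the stated bound uses only $\dist(i\xi, \Gamma_f)$, so the distance to $-\ol{\Gamma_f}$ must be absorbed into the constant via the case analysis sketched above. Tightening the resulting constant all the way down to $36$ requires careful application of the triangle inequality at each step, but the structural argument is clean and the result is unaffected up to the choice of absolute constant.
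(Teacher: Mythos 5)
Your argument is essentially identical to the paper's proof: the same three-factor decomposition of $\laplace F_{f\pm}$, the same two applications of Corollary~\ref{cor:cbf:bound} with reference point $r$ (yielding the factor $4$ and the angles $\tfrac{1}{2}\thet_f(r)\mp\tfrac{\pi}{4}$), the same case split on $r$ versus $2|\xi|$ giving $|\xi+i\zeta_f(r)|\ge\tfrac{1}{3}\bigl(\dist(i\xi,\Gamma_f)/(r_0+|\xi|)\bigr)(r+|\xi|)$ and hence the constant $36$, and the same use of~\eqref{eq:coscos} for the ``in particular'' part. The only difference is that you explicitly flag the distinction between the two denominator factors for complex $\xi$ (distance to $\Gamma_f$ versus to $-\overline{\Gamma_f}$), a point the paper's proof silently elides by writing the denominator as $|\xi+i\zeta_f(r)|^2$.
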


\begin{proof}
Denote $d = \dist(i \xi, \Gamma)$, $\zeta = \zeta(r)$ and $\thet = \thet(r) = \Arg \zeta$. Clearly, $\Arg \sqrt{-i \zeta} = \tfrac{\thet}{2} - \tfrac{\pi}{4}$. Recall that, by definition,
\formula{
 \laplace F_{f+}(r; \xi) & = \frac{\re \zeta}{|\xi + i \zeta|^2} \, \frac{f^+(r; \xi)}{|f^+(r; -i \zeta)|} \, .
}
Clearly, $\laplace F_{f+}(r; \xi) \ge 0$ when $\xi > 0$. For a general $\xi \in \C \setminus (-\infty, 0]$, by Corollary~\ref{cor:cbf:bound} applied to the complete Bernstein function $f^+(r; \xi)$, we have
\formula{
 \frac{|f^+(r; -i \zeta)|}{f^+(r; r)} & \ge \frac{1}{2 \sqrt{2}} \, \cos(\tfrac{\thet}{2} - \tfrac{\pi}{4})
}
and
\formula{
 \frac{|f^+(r; \xi)|}{f^+(r; r)} & \le \sqrt{2} \, \frac{r + |\xi|}{r} \, \frac{1}{\cos(\tfrac{1}{2} \Arg \xi)} \, .
}
Thus,
\formula{
 \lvert\laplace F_{f+}(r; \xi)\rvert & \le 4 \, \frac{r \cos \thet}{|\xi + i \zeta|^2} \, \frac{r + |\xi|}{r} \, \frac{1}{\cos(\tfrac{1}{2} \Arg \xi) \cos(\tfrac{\thet}{2} - \tfrac{\pi}{4})}
}
By definition, $|\xi + i \zeta| \ge d$, and additionally $|\xi + i \zeta| \ge r - |\xi| \ge \tfrac{1}{3} (r + |\xi|)$ when $r > 2 |\xi|$. Since $d \le r_0 + |\xi|$, we obtain $|\xi + i \zeta| \ge \tfrac{1}{3} (d / (r_0 + |\xi|)) (r + |\xi|)$, and hence
\formula{
 \lvert\laplace F_{f+}(r; \xi)\rvert & \le 36 \, \frac{r (r_0 + |\xi|)^2 \cos \thet}{d^2 (r + |\xi|)^2} \, \frac{r + |\xi|}{r} \, \frac{1}{\cos(\tfrac{1}{2} \Arg \xi) \cos(\tfrac{\thet}{2} - \tfrac{\pi}{4})} \, .
}
This is the desired bound for $\lvert\laplace F_{f+}(r; \xi)\rvert$. When $\xi > 0$, then $\cos(\tfrac{1}{2} \Arg \xi) = 1$, and the desired estimate follows by the trigonometric identity~\eqref{eq:coscos}.

The results for $\laplace F_{f-}(r; \eta)$ are proved in a similar way.
\end{proof}

\begin{lemma}
\label{lem:eig:est}
Suppose that $X_t$ is a Lévy process with completely monotone jumps, and $f(\xi)$ is the corresponding Rogers function. For every $r \in Z_f$, we have
\formula{
 |F_{f+}(r; y)| & \le 22 r y \biggl(1 + \frac{\re \zeta_f(r)}{r} \, \frac{f^+(r; 1 / y)}{|f^+(r; -i \zeta_f(r))|} \biggr) , \\
 |F_{f-}(r; x)| & \le 22 r x \biggl(1 + \frac{\re \zeta_f(r)}{r} \, \frac{f^-(r; 1 / x)}{|f^-(r; i \zeta_f(r))|} \biggr) .
}
when $0 < x, y < 1 / (2 r)$, and
\formula{
 |F_{f+}(r; y)| & \le 2 e^{y \max\{\im \zeta_f(r), 0\}} , & |F_{f-}(r; x)| & \le 2 e^{x \max\{-\im \zeta_f(r), 0\}}
}
for general $x, y > 0$.
\end{lemma}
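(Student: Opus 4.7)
I establish the estimates for $F_{f+}(r;y)$; those for $F_{f-}(r;x)$ follow by the completely analogous argument with the roles of $f^+(r;\cdot)$ and $f^-(r;\cdot)$ interchanged. Throughout I use the decomposition $F_{f+}(r;y) = e^{b_f(r)y}\sin(a_f(r)y + c_{f+}(r)) - G_{f+}(r;y)$ from Definition~\ref{def:eig}, abbreviate $a = a_f(r)$, $b = b_f(r)$, $c_+ = c_{f+}(r)$, $\zeta = \zeta_f(r)$, and $K = \re\zeta/|f^+(r;-i\zeta)|$, and recall that $G_{f+}(r;\cdot)$ is completely monotone, hence nonnegative and non-increasing on $(0, \infty)$.

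For the global bound, the oscillatory term is bounded trivially by $|e^{by}\sin(ay + c_+)| \le e^{y\max\{b,0\}}$. For the completely monotone part, monotonicity gives $G_{f+}(r;y) \le G_{f+}(r;0^+)$, and the initial-value theorem applied to the explicit formula for $\laplace G_{f+}$ from Definition~\ref{def:eig} yields
\begin{align*}
G_{f+}(r;0^+) = \lim_{\xi \to \infty}\xi \laplace G_{f+}(r;\xi) = \sin c_+ - K b^+,
\end{align*}
where $b^+ \ge 0$ denotes the Gaussian coefficient of the complete Bernstein function $f^+(r;\cdot)$. Since $|\sin c_+| \le 1$ and $Kb^+ \ge 0$, I deduce $G_{f+}(r;y) \le 1$, and therefore $|F_{f+}(r;y)| \le 1 + e^{y\max\{b,0\}} \le 2e^{y\max\{b,0\}}$.

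For the small-$y$ estimate, fix $y \in (0, 1/(2r))$, so $|ay|, |by| \le 1/2$. The same initial-value argument applied to $\xi\laplace F_{f+}(r;\xi)$ gives $F_{f+}(r;0^+) = Kb^+$; since $f^+(r;\xi)/\xi$ is non-increasing in $\xi > 0$ (as $f^+(r;\cdot)$ is a complete Bernstein function), evaluation at $\xi = 1/y$ yields $b^+ \le y f^+(r;1/y)$, and hence $F_{f+}(r;0^+) \le y\re\zeta \cdot f^+(r;1/y)/|f^+(r;-i\zeta)|$, which is precisely the second summand in the claimed inequality. To bound the deviation $F_{f+}(r;y) - F_{f+}(r;0^+)$, I estimate the oscillatory contribution by $|e^{by}\sin(ay + c_+) - \sin c_+| \le 3ry$ (using the elementary bounds $|e^t - 1| \le 2|t|$ and $|\sin(t+c) - \sin c| \le |t|$ for $|t| \le 1/2$), and handle the completely monotone contribution by identifying, via the Sokhotski--Plemelj formula applied to $\laplace G_{f+}$, the Bernstein measure of $G_{f+}(r;\cdot)$ as $\mu_B(ds) = (K/\pi)\mu^+(ds)/|\zeta + is|^2$, where $\mu^+$ is the Stieltjes measure of $f^+(r;\cdot)$; then
\begin{align*}
G_{f+}(r;0^+) - G_{f+}(r;y) = \frac{K}{\pi}\int_0^\infty(1 - e^{-sy})\frac{\mu^+(ds)}{|\zeta + is|^2}.
\end{align*}
Splitting this integral at $s = 1/y$ with $1 - e^{-sy} \le \min(sy, 1)$ and bounding each piece using Corollary~\ref{cor:cbf:bound} applied to $f^+(r;\cdot)$ delivers a bound of the form $O(ry + y\re\zeta \, f^+(r;1/y)/|f^+(r;-i\zeta)|)$, with the constant $22$ absorbing all numerical factors.

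The main obstacle is the last step: tracking constants carefully in the Stieltjes-type integral estimate, so that the contributions from $s \in (0, 1/y)$ and $s \in (1/y, \infty)$ are correctly expressed in terms of $f^+(r;1/y)/|f^+(r;-i\zeta)|$ via the complete-Bernstein monotonicity of Corollary~\ref{cor:cbf:bound} and the geometric comparison for $|\zeta + is|^2$ across the regimes $s \lesssim r$ and $s \gg r$.
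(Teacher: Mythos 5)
Your argument is sound in outline and reaches the same estimates, but your treatment of the completely monotone part is genuinely different from the paper's. The paper observes that $h(y) = \sin c_{f+}(r) - G_{f+}(r;y)$ is nonnegative and non-decreasing, so that $h(p) \le \xi e^{\xi p} \laplace h(\xi)$ for all $p, \xi > 0$; evaluating the explicit Laplace transform at $\xi = 1/y$, $p = y$ then yields $h(y) \le 2e\bigl(ry + 2 K y f^+(r;1/y)\bigr)$ in one stroke, with no need to know the Bernstein measure of $G_{f+}(r;\cdot)$. You instead identify that measure as $(K/\pi)\,\mu^+(ds)/|\zeta+is|^2$ (which is correct, and can be read off the proof of Proposition~\ref{prop:cbf:quot} rather than via Sokhotski--Plemelj) and estimate $G_{f+}(r;0^+) - G_{f+}(r;y)$ by splitting the resulting integral. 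That route is more hands-on but workable; your global bound and the identity $F_{f+}(r;0^+) = K b^+ \le K y f^+(r;1/y)$ coincide with the paper's computations, and the oscillatory estimate $3ry$ is fine.

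One step of your plan as stated will not go through, however: on the low-frequency range, say $s \le 2r$, Corollary~\ref{cor:cbf:bound} together with a pointwise lower bound on $|\zeta_f(r)+is|^2$ is not enough. When $\im \zeta_f(r) < 0$ the denominator dips down to $(\re \zeta_f(r))^2 = r^2 \cos^2 (\Arg \zeta_f(r))$ near $s = -\im\zeta_f(r)$, and any comparison of $\int_0^{2r} s\,\mu^+(ds)/|\zeta+is|^2$ with $f^+(r;2r)$ obtained this way picks up a factor $1/\cos^2(\Arg\zeta_f(r))$ that is not uniform over $r \in Z_f$. The correct substitute is already in your write-up: since $1 - e^{-sy} \le sy \le 2ry$ for $s \le 2r$, this piece is at most $2ry \cdot \tfrac{K}{\pi}\int_{(0,\infty)} |\zeta+is|^{-2}\,\mu^+(ds) = 2ry\, G_{f+}(r;0^+) \le 2ry \sin c_{f+}(r) \le 2ry$, i.e.\ you should invoke the total-mass identity you derived for the global bound rather than a pointwise estimate. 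With that substitution, and Corollary~\ref{cor:cbf:bound}-type comparisons reserved for $s > 2r$ (where $|\zeta+is| \ge s - r \ge s/2$), the pieces sum to at most $5ry + 17 K y f^+(r;1/y)$, so the constant $22$ does close.
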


\begin{proof}
Again, we consider only the estimate of $F_+(r; y)$. With $\zeta(r) = a(r) + i b(r)$, we have
\formula*[eq:eig:est:1]{
 F_+(r; y) & = e^{b(r) y} \sin(a(r) y + c_+(r)) - G_+(r; y) \\
 & = \im \bigl( e^{-i c_+(r)} (1 - e^{-i \zeta(r) y}) \bigr) + (\sin c_+(r) - G_+(r; y)) .
}
Using $|\zeta(r)| = r$ and the mean value property, we find that
\formula[eq:eig:est:2]{
 |1 - e^{-i \zeta(r) y}| & \le \lvert-i \zeta(r) y\rvert \max\{1, e^{b(r) y}\} \le r y (1 + e^{b(r) y}) \le r y (1 + e^{r y}) .
}
Furthermore, for $\xi > 0$ we have
\formula{
 & \int_0^\infty e^{-\xi y} (\sin c_+(r) - G_+(r; y)) dy = \frac{\sin c_+(r)}{\xi} - \laplace G_+(r; \xi) \\
 & \hspace*{5em} = \frac{\sin c_+(r)}{\xi} + \im \frac{e^{-i c_+(r)}}{\xi + i \zeta(r)} + \frac{\re \zeta(r)}{|f^+(r; -i \zeta(r))|} \, \frac{f^+(r; \xi)}{|\zeta(r) - i \xi|^2} \, .
}
Since $G_+(r; y)$ is completely monotone, it is a decreasing function of $y > 0$, and we have
\formula{
 G_+(r; 0^+) & = \lim_{\xi \to \infty} \int_0^\infty \xi e^{-\xi y} G_+(r; y) dy \\
 & = \lim_{\xi \to \infty} \biggl( -\xi \im \frac{e^{-i c_+(r)}}{\xi + i \zeta(r)} - \frac{\re \zeta(r)}{|f^+(r; -i \zeta(r))|} \, \frac{\xi f^+(r; \xi)}{|\zeta(r) - i \xi|^2} \biggr) \\
 & = \sin c_+(r) - \frac{\re \zeta(r)}{|f^+(r; -i \zeta(r))|} \, \lim_{\xi \to \infty} \frac{f^+(r; \xi)}{\xi} \le \sin c_+(r) ;
}
the last inequality follows from $f^+(r; \xi) \ge 0$. Thus, $\sin c_+(r) - G_+(r; y) \ge 0$. On the other hand, since $\sin c_+(r) - G_+(r; y)$ is an increasing function of $y > 0$, we have
\formula{
 \int_0^\infty e^{-\xi y} (\sin c_+(r) - G_+(r; y)) dy & \ge \int_p^\infty e^{-\xi y} (\sin c_+(r) - G_+(r; p)) dy \\
 & = \frac{e^{-\xi p} (\sin c_+(r) - G_+(r; p))}{\xi}
}
when $p, \xi > 0$. It follows that
\formula{
 \sin c_+(r) - G_+(r; p) & \le \xi e^{\xi p} \int_0^\infty e^{-\xi y} (\sin c_+(r) - G_+(r; y)) dy \\
 & = \xi e^{\xi p} \biggl(\frac{\sin c_+(r)}{\xi} + \im \frac{e^{-i c_+(r)}}{\xi + i \zeta(r)} + \frac{\re \zeta(r)}{|f^+(r; -i \zeta(r))|} \, \frac{f^+(r; \xi)}{|\zeta(r) - i \xi|^2} \biggr) \\
 & = e^{\xi p} \biggl(\im \frac{-i \zeta(r) e^{-i c_+(r)}}{\xi + i \zeta(r)} + \frac{\re \zeta(r)}{|f^+(r; -i \zeta(r))|} \, \frac{\xi f^+(r; \xi)}{|\zeta(r) - i \xi|^2} \biggr) .
}
When $\xi > r$, then we find that
\formula{
 \sin c_+(r) - G_+(r; p) & \le e^{\xi p} \biggl(\frac{r}{\xi - r} + \frac{\re \zeta(r)}{|f^+(r; -i \zeta(r))|} \, \frac{\xi f^+(r; \xi)}{(\xi - r)^2} \biggr) .
}
Let $y \in (0, 1 / (2 r))$, and set $p = y$ and $\xi = 1 / y$. Then the above estimate reads
\formula*[eq:eig:est:3]{
 \sin c_+(r) - G_+(r; y) & \le e \biggl(\frac{r y}{1 - r y} + \frac{\re \zeta(r)}{|f^+(r; -i \zeta(r))|} \, \frac{y f^+(r; 1 / y)}{(1 - r y)^2} \biggr) \\
 & \le 2 e \biggl( r y + \frac{2 \re \zeta(r)}{|f^+(r; -i \zeta(r))|} \, y f^+(r; 1 / y) \biggr) .
}
By~\eqref{eq:eig:est:1}, \eqref{eq:eig:est:2} and~\eqref{eq:eig:est:3}, whenever $r \in Z_f$, $y > 0$ and $r y < \tfrac{1}{2}$, we have
\formula{
 |F_+(r; y)| & \le |1 - e^{-i \zeta(r) y}| + (\sin c_+(r) - G_+(r; y)) \\
 & \le (1 + e^{r y}) r y + 2 e \biggl( r y + \frac{2 \re \zeta(r)}{|f^+(r; -i \zeta(r))|} \, y f^+(r; 1 / y) \biggr) \\
 & \le 22 \biggl( r y + \frac{\re \zeta(r)}{|f^+(r; -i \zeta(r))|} \, y f^+(r; 1 / y) \biggr) ,
}
as desired. On the other hand, if $r y \ge \tfrac{1}{2}$, we have
\formula{
 |F_{f+}(r; y)| & \le e^{b(r) y} + 1 \le 2 e^{\max\{b(r), 0\} y} ,
}
and the proof is complete.
\end{proof}

%
%

\section{Inversion of spatial Laplace transforms}
\label{sec:inf}

In this section we prove the main results of the paper. We continue to assume that $X_t$ is a (non-killed) Lévy process with completely monotone jumps, and $f(\xi)$ is the corresponding Rogers function --- the characteristic exponent of $X_t$.

\subsection{Transition densities}
\label{sec:inf:pt}

We have the following direct corollary of Proposition~\ref{prop:pt:lap}. Since we give a more general result below in Corollary~\ref{cor:pt:log}, we only sketch the proof.

\begin{proposition}
\label{prop:pt:bounded}
Suppose that $X_t$ is a Lévy process with completely monotone jumps, and $f(\xi)$ is the corresponding Rogers function. Suppose, furthermore, that $\im \zeta(r)$ is bounded on $(0, \infty)$, $t > 0$, and that $\exp(-t \lambda_f(r))$ is integrable over $r \in Z_f$. Then $p_t(x, dy)$ is absolutely continuous with respect to $y \in (0, \infty)$ for every $x > 0$, and the continuous version of the density function is given by
\formula{
 p_t^+(x, y) & = \frac{2}{\pi} \int_{Z_f} e^{-t \lambda_f(r)} F_{f+}(r; y) F_{f-}(r; x) \lvert\zeta_f'(r)\rvert dr .
}
\end{proposition}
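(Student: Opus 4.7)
The plan is to combine Proposition~\ref{prop:pt:lap} with Fubini's theorem to compute the bivariate Laplace transform of the candidate density, and then invert via uniqueness of the two-dimensional Laplace transform. First, set $M = \sup_{r \in Z_f} \lvert\im \zeta_f(r)\rvert$, which is finite by hypothesis. The general-$r$ bound in Lemma~\ref{lem:eig:est} gives $|F_{f+}(r; y)| \le 2 e^{M y}$ and $|F_{f-}(r; x)| \le 2 e^{M x}$ uniformly in $r \in Z_f$, $x, y > 0$; combined with the integrability of $e^{-t \lambda_f(r)} |\zeta_f'(r)|$ over $Z_f$, this makes the integral
\formula{
 q_t(x, y) & := \frac{2}{\pi} \int_{Z_f} e^{-t \lambda_f(r)} F_{f+}(r; y) F_{f-}(r; x) |\zeta_f'(r)| dr
}
absolutely convergent, with integrand dominated by $\tfrac{8}{\pi} e^{M (x + y)} e^{-t \lambda_f(r)} |\zeta_f'(r)|$. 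Dominated convergence then yields that $q_t$ is continuous on $(0, \infty)^2$.

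Next, for real $\xi, \eta > M$, the same dominating function justifies Fubini's theorem, giving
\formula{
 \int_0^\infty \int_0^\infty e^{-\eta x - \xi y} q_t(x, y) dy dx & = \frac{2}{\pi} \int_{Z_f} e^{-t \lambda_f(r)} \laplace F_{f+}(r; \xi) \laplace F_{f-}(r; \eta) |\zeta_f'(r)| dr .
}
The hypothesis $|\im \zeta_f(r)| \le M$ together with $|\zeta_f(r)| = r$ forces $\re \zeta_f(r) \ge \sqrt{r^2 - M^2} > 0$ for $r > M$, so $(M, \infty) \sub Z_f$; and since the symmetrised spine $\Gamma_f^\star$ then lies in the horizontal strip $|\im z| \le M$, one has $i \xi \in D_f^+$ and $-i \eta \in D_f^-$ for all $\xi, \eta > M$. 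Proposition~\ref{prop:pt:lap} therefore delivers the same right-hand side for the bivariate Laplace transform of $p_t^+(x, dy) dx$ on $\{\xi, \eta > M\}$, and uniqueness of the two-dimensional Laplace transform (applied after multiplying both measures by $e^{-(M + 1)(x + y)}$ to make them finite) forces $p_t^+(x, dy) dx = q_t(x, y) dx dy$ on $(0, \infty)^2$.

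This already gives $p_t^+(x, dy) = q_t(x, y) dy$ for Lebesgue-almost every $x > 0$, with the continuous-in-$y$ density $q_t(x, \cdot)$. To upgrade this identity to every $x > 0$, the strategy is to invoke the fact that, under our hypotheses, the one-dimensional distribution of $X_t$ is itself absolutely continuous (derivable from the integrability of $e^{-t \lambda_f(r)} |\zeta_f'(r)|$ by a Fourier-inversion argument along the spine), so that $p_t^+(x, \cdot) \le p_t(x, \cdot) = p_t(0, \cdot - x)$ is absolutely continuous for every $x > 0$; combining this with the joint continuity of $q_t$ and a continuity-in-$x$ argument using the Lévy-process structure then yields $p_t^+(x, dy) = q_t(x, y) dy$ for every $x$. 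This final step from \emph{almost every} to \emph{every} $x$ is the principal obstacle—it is technical but routine in the present simplified setting, while analogous extensions in the proof of Theorem~\ref{thm:heat} (see Corollary~\ref{cor:pt:log}) demand substantially more care.
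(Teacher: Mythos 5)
Your argument is correct and follows essentially the same route as the paper's own (deliberately sketchy) proof: Proposition~\ref{prop:pt:lap} plus Fubini's theorem to match the bivariate Laplace transforms, then uniqueness of the Laplace transform; you in fact supply more detail than the paper, notably the verification that $i\xi \in D_f^+$ and $-i\eta \in D_f^-$ for $\xi, \eta > M$ and the upgrade from almost every $x$ to every $x$. The one point to tighten is that the hypothesis gives integrability of $e^{-t\lambda_f(r)}$ over $Z_f$, not of $e^{-t\lambda_f(r)}\lvert\zeta_f'(r)\rvert$; the latter does follow from the former using the rectifiability bound of Proposition~\ref{prop:r:real}\ref{it:r:real:d} (via a dyadic summation as in Step~2 of the proof of Proposition~\ref{prop:pt:uv}), so this is a gap of bookkeeping rather than substance.
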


\begin{proof}
By Proposition~\ref{prop:pt:lap} and Fubini's theorem (we omit the proof of absolute integrability of the corresponding double integral), when $\xi, \eta > \sup \{ \lvert\im \zeta(r)\rvert : r > 0 \}$, we have
\formula{
 & \int_0^\infty \int_{(0, \infty)} e^{-\eta x - \xi y} p_t^+(x, dy) dx \\
 & \qquad = \frac{2}{\pi} \int_0^\infty \int_0^\infty e^{-\eta x - \xi y} \biggl( \int_{Z_f} e^{-t \lambda(r)} F_+(r; y) F_-(r; x) \lvert\zeta'(r)\rvert dr \biggr) dy dx .
}
The desired result follows by uniqueness of the Laplace transform.
\end{proof}

Below we extend the above identity to a more general class of Lévy processes with completely monotone jumps. To this end, we apply the key idea of this section: we replace the kernel of the the Laplace transform ($e^{-\xi y}$ and $e^{-\eta x}$) by functions $u(y)$ and $v(x)$ admitting a suitable holomorphic extension; a typical example is
\formula{
 u(y) & = \exp(-p y \log (1 + y)), & v(x) & = \exp(-q x \log(1 + x))
}
for any $p, q > 0$. However, we still need to impose a technical assumption about the spine of the Rogers function $f(\xi)$. Before we state the main result, we first recall a simple property of the class of admissible functions $u(y)$ and $v(x)$, which is a minor modification of the result from~\cite{kk:stable}.

\begin{lemma}[Lemma~2.14 in~\cite{kk:stable}]
\label{lem:uv:est}
Let $\eps \in (0, \tfrac{\pi}{2})$, and let $u(y)$ and $v(x)$ be holomorphic functions in the region $\{z \in \C : \lvert\Arg z\rvert < \tfrac{\pi}{2} - \eps\}$, which are real-valued on $(0, \infty)$, and which satisfy
\formula{
 |u(y)| & \le C(u, \eps) e^{-C(u, \eps) |y| \log(1 + |y|)} , & |v(x)| & \le C(v, \eps) e^{-C(v, \eps) |x| \log(1 + |x|)}
}
for $x$ and $y$ in the region $\{z \in \C : \lvert\Arg z\rvert < \tfrac{\pi}{2} - \eps\}$. Then the Laplace transforms of $u(y)$ and $v(x)$ are entire functions, and for every $\delta > 0$ and $p \ge 0$ we have
\formula{
 |\laplace u(\xi)| & \le C(u, \eps, \delta, p) \, \frac{1}{1 + |\xi|} \, , & |\laplace v(\eta)| & \le C(v, \eps, \delta, p) \, \frac{1}{1 + |\eta|}
}
when $\lvert\Arg (p + \xi)\rvert \le \pi - \eps - \delta$ and $\lvert\Arg (p + \eta)\rvert \le \pi - \eps - \delta$.
\end{lemma}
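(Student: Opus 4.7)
\emph{Proof plan.} The first assertion is immediate: since $|u(y)| \le C e^{-c|y|\log(1+|y|)}$ in the sector $\{|\Arg y| < \tfrac{\pi}{2}-\eps\}$, the integral $\int_0^\infty e^{-\xi y} u(y) dy$ converges absolutely for every $\xi \in \C$, and differentiation under the integral sign shows that $\laplace u$ is entire; the same applies to $\laplace v$.

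For the pointwise bound, the plan is to reduce to $p = 0$ and then deform the contour. If $\tilde u(y) = e^{p y} u(y)$, then $\tilde u$ is holomorphic in the same sector, and for $|\Arg y| < \tfrac{\pi}{2}-\eps$ we have $|\tilde u(y)| \le C e^{p |y|} e^{-c |y|\log(1+|y|)} \le C' e^{-c' |y|\log(1+|y|)}$, so $\tilde u$ satisfies the same type of decay estimate as $u$ (with modified constants depending on $p$). Since $\laplace u(\xi) = \laplace \tilde u(p+\xi)$, proving the bound for $\tilde u$ at the point $p + \xi$ in the sector $\lvert\Arg(p+\xi)\rvert \le \pi - \eps - \delta$ is equivalent to proving the bound for $u$ at $\xi$ in the original sector. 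Hence it suffices to treat the case $p = 0$ and show that $|\laplace u(\xi)| \le C/(1+|\xi|)$ whenever $\lvert\Arg \xi\rvert \le \pi - \eps - \delta$.

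Fix such a $\xi$. The key is to choose a rotation angle $\phi$ with $|\phi| \le \tfrac{\pi}{2}-\eps$ such that $\lvert\Arg \xi + \phi\rvert \le \tfrac{\pi}{2}-\delta/2$: for example, take $\phi = -\Arg \xi$ if $\lvert\Arg \xi\rvert \le \tfrac{\pi}{2}-\eps$, and $\phi = \mp(\tfrac{\pi}{2}-\eps)$ with the sign opposite to that of $\Arg \xi$ otherwise. A direct check using $\lvert\Arg \xi\rvert \le \pi-\eps-\delta$ shows that such a $\phi$ always exists, and yields
\formula{
 \re(\xi e^{i\phi}) \ge |\xi| \cos(\tfrac{\pi}{2}-\delta/2) = |\xi| \sin(\delta/2) .
}
Since $u$ is holomorphic in the open sector of opening $\pi-2\eps$ and decays super-exponentially uniformly there, an application of Cauchy's theorem to the sector bounded by the rays $[0,\infty)$ and $e^{i\phi}[0,\infty)$ (together with large circular arcs on which $u$ decays faster than any exponential) allows us to rotate the contour:
\formula{
 \laplace u(\xi) & = e^{i\phi} \int_0^\infty e^{-\xi e^{i\phi} r} u(e^{i\phi} r) dr .
}
Combining the bound on $\re(\xi e^{i\phi})$ with the assumed decay of $u$ gives
\formula{
 |\laplace u(\xi)| & \le C \int_0^\infty e^{-|\xi| \sin(\delta/2) \, r} e^{-c' r \log(1+r)} dr .
}
Bounding the second exponential by $1$ yields $|\laplace u(\xi)| \le C/(|\xi|\sin(\delta/2))$ for the large-$|\xi|$ regime, while for bounded $|\xi|$ the original definition gives $|\laplace u(\xi)| \le \int_0^\infty |u(y)| dy < \infty$. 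Taking the maximum produces the required estimate $|\laplace u(\xi)| \le C(u,\eps,\delta,p)/(1+|\xi|)$, and the argument for $\laplace v$ is identical.

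The only real technicality is justifying the rotation of the contour, which rests on verifying that the integrals along large circular arcs $y = R e^{i \alpha}$, $\alpha$ between $0$ and $\phi$, vanish as $R \to \infty$; but this is immediate from $|u(R e^{i\alpha})| \le C e^{-c R \log(1+R)}$, since the integrand is bounded by $R \cdot e^{|\xi| R} \cdot e^{-c R \log(1+R)}$, which tends to $0$ uniformly in $\alpha$. All remaining steps are elementary estimates, so no single step is a serious obstacle.
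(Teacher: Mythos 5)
Your argument is essentially the paper's own proof: reduce to $p=0$ by absorbing $e^{p y}$ into $u$, rotate the ray of integration inside the sector of holomorphy so that $\re(\xi e^{i\phi}) \ge |\xi|\sin(\delta/2)$, and then estimate the resulting integral. The one correction needed is that your choice of rotation angle can land exactly on the boundary of the \emph{open} sector $\{|\Arg z| < \tfrac{\pi}{2}-\eps\}$ (namely $\phi=\mp(\tfrac{\pi}{2}-\eps)$ in the fallback case, and $\phi=-\Arg\xi$ when $|\Arg\xi|=\tfrac{\pi}{2}-\eps$), where $u$ is not assumed to be defined or bounded; choosing instead $\phi=\mp(\tfrac{\pi}{2}-\eps-\tfrac{\delta}{2})$, as the paper does, keeps the contour strictly inside the sector while still giving $|\Arg(\xi e^{i\phi})|\le\tfrac{\pi}{2}-\tfrac{\delta}{2}$.
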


For reader's convenience, we sketch the proof.

\begin{proof}
By considering $e^{p y} u(y)$ and $e^{p x} v(x)$ rather than $u(y)$ and $v(x)$, it is sufficient to consider the case $p = 0$. If $\xi, y \in \C$ and $\lvert\Arg y\rvert < \tfrac{\pi}{2} - \eps$, then
\formula{
 |u(y) e^{-\xi y}| & \le C(u, \eps, \xi) \exp(-(1 + |\xi|) |y| + |\xi| |y|) \le C(u, \eps, \xi) \exp(-|y|) .
}
Therefore, by a standard contour deformation argument, whenever $|\alpha| < \tfrac{\pi}{2} - \eps$, we have
\formula{
 \laplace u(\xi) & = \int_0^\infty u(y) e^{-\xi y} dy = \int_{[0, e^{i \alpha} \infty)} u(y) e^{-\xi y} dy \\
 & = e^{i \alpha} \int_0^\infty u(e^{i \alpha} r) \exp(-e^{i \alpha} \xi r) dr .
}
Suppose that $0 \le \Arg \xi \le \pi - \eps - \delta$. If we choose $\alpha = -\tfrac{\pi}{2} + \eps + \tfrac{1}{2} \delta$, then we find that
\formula{
 |\exp(-e^{i \alpha} \xi r)| & = \exp(-|\xi| r \cos \tfrac{\pi - \delta}{2})) \le \exp(-C(\delta) |\xi| r) ,
}
and therefore
\formula{
 |\laplace u(\xi)| & \le \int_0^\infty |u(e^{i \alpha} r)| \exp(-C(\delta) |\xi| r) dr \\
 & \le C(u, \eps) \int_0^\infty \exp(-C(u, \eps) r \log (1 + r) - C(\delta) |\xi| r) dr \le C(u, \eps, \delta) \, \frac{1}{1 + |\xi|} \, ,
}
as desired. A similar argument leads to the upper bound for $|\laplace v(\eta)|$.
\end{proof}

Our next result is a restatement of Theorem~\ref{thm:heat:spectral}.

\begin{proposition}
\label{prop:pt:uv}
Suppose that $X_t$ is a Lévy process with completely monotone jumps, and $f(\xi)$ is the corresponding Rogers function. Assume that $\eps > 0$, and
\formula{
 \limsup_{r \to \infty} \lvert\Arg \zeta_f(r)\rvert & < \frac{\pi}{2} - \eps .
}
Let $u(y)$ and $v(x)$ satisfy the assumptions of Lemma~\ref{lem:uv:est}: $u(y)$ and $v(x)$ are holomorphic functions in the region $\{z \in \C : \lvert\Arg z\rvert < \tfrac{\pi}{2} - \eps\}$, which are real-valued on $(0, \infty)$, and
\formula{
 |u(y)| & \le C(u, \eps) e^{-C(u, \eps) |y| \log(1 + |y|)} , & |v(x)| & \le C(v, \eps) e^{-C(v, \eps) |x| \log(1 + |x|)}
}
for $x$ and $y$ in the region $\{z \in \C : \lvert\Arg z\rvert < \tfrac{\pi}{2} - \eps\}$. Then
\formula*[eq:pt:uv]{
 & \int_0^\infty \int_{(0, \infty)} u(y) v(x) p_t^+(x, dy) dx \\
 & \qquad = \frac{2}{\pi} \int_{Z_f} e^{-t \lambda_f(r)} \biggl(\int_0^\infty F_{f+}(r; y) u(y) dy\biggr) \biggl(\int_0^\infty F_{f-}(r; x) v(x) dx\biggr) \lvert\zeta_f'(r)\rvert dr
}
for $t \ge 0$.
\end{proposition}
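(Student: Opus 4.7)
The plan is to promote the Laplace-transform identity of Proposition~\ref{prop:pt:lap} from the exponential kernels $e^{-\xi y}, e^{-\eta x}$ to the more general test functions $u(y), v(x)$ by realising the latter as inverse Laplace transforms and applying Fubini. By Lemma~\ref{lem:uv:est}, the Laplace transforms $\laplace u$ and $\laplace v$ are entire functions with decay $|\laplace u(\xi)|, |\laplace v(\xi)| \le C (1+|\xi|)^{-1}$ in every sector $|\Arg(p+\xi)| \le \pi - \eps - \delta$, so Bromwich contours may be deformed freely within such sectors.

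For each $r \in Z_f$, the central tool is the Parseval-type relation
\formula{
 \int_0^\infty F_{f+}(r; y) u(y) dy & = \frac{1}{2 \pi i} \int_{\gamma_+} \laplace F_{f+}(r; \xi) \laplace u(-\xi) d\xi ,
}
and its analogue for $v$ with contour $\gamma_-$. Since $u$ decays super-exponentially, $\int_0^\infty u(y) e^{(\re\xi) y} dy$ converges for every $\xi \in \C$; consequently, on any contour $\gamma_+$ homotopic in $\C \setminus \{-i\zeta_f(r), i\overline{\zeta_f(r)}\}$ to a vertical Bromwich line $\re \xi = c > |\im \zeta_f(r)|$, the identity follows by Fubini and the residue theorem: substituting the integral representation $\laplace F_{f+}(r; \xi) = \int_0^\infty F_{f+}(r; y) e^{-\xi y} dy$ valid on the Bromwich line and swapping, the inner integral $\frac{1}{2\pi i} \int_{\gamma_+} e^{-\xi y} \laplace u(-\xi) d\xi$ reduces to $u(y)$ by the inverse Laplace formula (the change of variable $\xi \mapsto -\xi$ and the entirety of $\laplace u$ make this admissible).

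The key geometric step is the uniform-in-$r$ choice of $\gamma_+$ and $\gamma_-$. Under the hypothesis $\limsup_{r \to \infty} |\Arg \zeta_f(r)| < \tfrac{\pi}{2} - \eps$, the poles $-i\zeta_f(r)$ and $i\overline{\zeta_f(r)}$ lie, for $r$ large, in the bi-sector $\{\xi : |\Arg \xi| \in [\eps, \pi - \eps]\}$, while for bounded $r$ they stay in a compact set. I therefore intend to take $\gamma_+$ inside the narrow sector $\{|\Arg\xi| < \alpha\}$ for some $\alpha \in (0, \eps)$; by the decay $|\laplace F_{f+}(r; \xi) \laplace u(-\xi)| = O(|\xi|^{-2})$ at infinity, arcs at infinity contribute nothing, and such $\gamma_+$ is homotopic to the required Bromwich line within $\C$ minus the poles. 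The same contour also ensures $i\xi \in D_f^+$ along $\gamma_+$, since $\Arg(i\xi) \in (\tfrac{\pi}{2}-\alpha, \tfrac{\pi}{2}+\alpha)$ lies in the angular range $(\Arg \zeta_f(|\xi|), \pi - \Arg \zeta_f(|\xi|))$ characterising $D_f^+$; so Proposition~\ref{prop:pt:lap} applies pointwise on $\gamma_+$. The contour $\gamma_-$ is constructed symmetrically with $-i\eta \in D_f^-$. Substituting the Parseval representations into the right-hand side of~\eqref{eq:pt:uv} and interchanging the $r$-integral with the $\xi$- and $\eta$-integrals by Fubini produces
\formula{
 \frac{1}{(2\pi i)^2} \int_{\gamma_+} \int_{\gamma_-} \laplace u(-\xi) \laplace v(-\eta) \biggl( \frac{2}{\pi} \int_{Z_f} e^{-t \lambda_f(r)} \laplace F_{f+}(r; \xi) \laplace F_{f-}(r; \eta) |\zeta_f'(r)| dr \biggr) d\eta d\xi .
}
By Proposition~\ref{prop:pt:lap}, the bracketed integral equals $\int_0^\infty \int_{(0, \infty)} e^{-\xi y - \eta x} p_t^+(x, dy) dx$. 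A final Fubini swap moves the $p_t^+$-integral outside, the remaining $\xi$- and $\eta$-integrals collapse to $u(y)$ and $v(x)$ by inverse Laplace, and~\eqref{eq:pt:uv} follows.

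The principal obstacle is the justification of each of these interchanges of integration. This relies on the pointwise bounds of Lemma~\ref{lem:eig:lbound} for $|\laplace F_{f\pm}(r; \cdot)|$ — which involve the factor $(\dist(i\xi, \Gamma_f))^{-2}$ — combined with the $(1+|\xi|)^{-1}$ decay of $|\laplace u(-\xi)|$ from Lemma~\ref{lem:uv:est} and the exponential factor $e^{-t\lambda_f(r)}$. The geometric hypothesis is tailored exactly so that $\dist(i\xi, \Gamma_f)$ is comparable to $r_0 + |\xi|$ uniformly along $\gamma_+$ (and analogously for $\gamma_-$), which renders the triple integral absolutely convergent and legitimates every Fubini application.
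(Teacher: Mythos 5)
Your overall strategy — Parseval/Plancherel representation of $\int_0^\infty F_{f\pm}u$, contour deformation, Fubini, Proposition~\ref{prop:pt:lap}, inverse Laplace — is the same as the paper's, but the contour you choose is on the wrong side of the plane, and this is fatal. You place $\gamma_+$ in the narrow sector $\{\lvert\Arg\xi\rvert<\alpha\}$, $\alpha<\eps$, and evaluate $\laplace u(-\xi)$ there; that is, you evaluate $\laplace u$ in a neighbourhood of the negative real half-line. But Lemma~\ref{lem:uv:est} gives the decay $\lvert\laplace u(\eta)\rvert\le C(1+|\eta|)^{-1}$ only in sectors $\lvert\Arg(p+\eta)\rvert\le\pi-\eps-\delta$, which exclude precisely a neighbourhood of $(-\infty,-p]$. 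In the excluded region $\laplace u$ does not merely fail to decay: for $\xi>0$ real and large, $\laplace u(-\xi)=\int_0^\infty u(y)e^{\xi y}\,dy$ with $|u(y)|\asymp e^{-Cy\log(1+y)}$ grows like $\exp(c\,e^{\xi/C})$. Hence your claimed bound $\lvert\laplace F_{f+}(r;\xi)\laplace u(-\xi)\rvert=O(|\xi|^{-2})$ on $\gamma_+$ is false, the arcs at infinity joining the Bromwich line to $\gamma_+$ do not vanish (they diverge), the Parseval identity on $\gamma_+$ is not valid, and every subsequent Fubini justification collapses. The correct choice (the one the paper makes) is the opposite one: a wedge opening into the \emph{left} half-plane along the rays $-3p+e^{\pm i(\pi-\eps-\delta)}[0,\infty)$, i.e.\ just inside the region where Lemma~\ref{lem:uv:est} still applies; the hypothesis $\limsup_{r\to\infty}\lvert\Arg\zeta_f(r)\rvert<\tfrac{\pi}{2}-\eps$ is then used to guarantee that the poles $i\zeta_f(r)$, $-i\overline{\zeta_f(r)}$ of $\xi\mapsto\laplace F_{f+}(r;-\xi)$ stay out of the region swept by the deformation, uniformly in $r$.

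Two further points would still need repair even if the growth issue were ignored. First, the hypothesis constrains $\Arg\zeta_f(r)$ only as $r\to\infty$; for small and moderate $r$ the argument may approach $\pm\tfrac{\pi}{2}$, so the poles $-i\zeta_f(r)=re^{i(\Arg\zeta_f(r)-\pi/2)}$ can come arbitrarily close to the positive real axis and enter your sector $\{\lvert\Arg\xi\rvert<\alpha\}$; likewise your claim that $i\xi\in D_f^+$ for all $\xi\in\gamma_+$ fails for $\xi$ slightly below the real axis at such moduli. Second, your final interchange moving the $p_t^+$-integral outside still requires an integrability argument in $(x,y)$ jointly; the paper handles this with a Cauchy--Schwarz estimate combined with Hunt's switching identity, a step your sketch omits entirely.
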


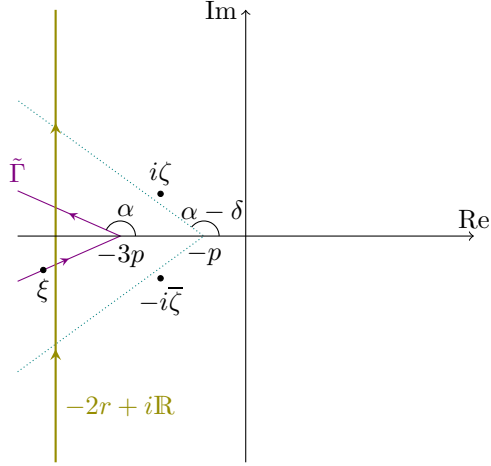
\begin{figure}
\centering
\begin{tikzpicture}[decoration={markings, mark=at position 0.25 with {\arrow{stealth}}, mark=at position 0.75 with {\arrow{stealth}}}]
\footnotesize
\coordinate (X) at (3,0);
\coordinate (Y) at (0,3);
\coordinate (Xn) at (-3,0);
\coordinate (Yn) at (0,-3);
\coordinate (O) at (0,0);
\coordinate (Pn) at (-0.55,0);
\coordinate (PPPn) at (-1.65,0);
\coordinate (RRn) at (-2.5,0);
\coordinate (Z) at (-3,1.8);
\coordinate (Zn) at (-3,-1.8);
\coordinate (ZZ) at (-3,0.6);
\coordinate (ZZn) at (-3,-0.6);
\coordinate (xi) at (-2.6625,-0.45);
\coordinate (zeta) at (-1.12,0.56);
\coordinate (zetabar) at (-1.12,-0.56);
\draw[violet, postaction=decorate] (ZZn) -- (PPPn) -- (ZZ) node[above] {$\tilde{\Gamma}$};
\draw[densely dotted, teal] (Zn) -- (Pn) -- (Z);
\draw[thick, olive, postaction=decorate] (-2.5,-3) -- (RRn) node[near start, right] {$-2 r + i \R$} -- (-2.5,3);
\node[below] at (Pn) {$-p$};
\node[below] at (PPPn) {$-3p$};
\filldraw (xi) circle[radius=1pt] node[below] {$\xi$};
\filldraw (zeta) circle[radius=1pt] node[above] {$i \zeta$};
\filldraw (zetabar) circle[radius=1pt] node[below] {$-i \overline{\zeta}$};
\pic["$\alpha$",draw,angle eccentricity=1.75,angle radius=0.2cm] {angle=X--PPPn--ZZ};
\pic["$\alpha-\delta$",draw,angle eccentricity=1.75,angle radius=0.2cm] {angle=X--Pn--Z};
\draw[->] (Xn) -- (X) node[above] {$\re$};
\draw[->] (Yn) -- (Y) node[left] {$\im$};
\end{tikzpicture}
\caption{Setting for the proof of Proposition~\ref{prop:pt:uv}: $\xi$ lies on the purple contour $\tilde{\Gamma}$, while $i \zeta$ and $-i \overline{\zeta}$ lie to the right of the teal dotted line.}
\label{fig:pt:uv}
\end{figure}

\begin{proof}
By the assumption, there are $\delta > 0$ and $p > 0$ such that
\formula{
 \Arg (\zeta(r) + i p) & \ge -\tfrac{\pi}{2} + \eps + 2 \delta , & \Arg (\zeta(r) - i p) & \le \tfrac{\pi}{2} - \eps - 2 \delta
}
for every $r \in Z_f$. We let $\alpha = \pi - \eps - \delta$. The remaining part of the proof is divided into four steps.

\smallskip

\emph{Step 1.}
Observe that $e^{-2 r y} F_+(r; y)$ and $e^{2 r y} u(y)$ are square integrable. Hence, by Plancherel's theorem,
\formula[]{
 \notag
 \int_0^\infty F_+(r; y) u(y) dy & = \int_0^\infty (e^{-2 r y} F_+(r; y)) (e^{2 r y} u(y)) dy \\
 \label{eq:fpu0}
 & = \frac{1}{2 \pi} \int_{-\infty}^\infty \laplace F_+(r; 2 r - i s) \laplace u(-2 r + i s) ds \\
 \notag
 & = \frac{1}{2 \pi i} \int_{-2 r + i \R} \laplace F_+(r; -\xi) \laplace u(\xi) d\xi .
}
Our goal now is to deform the contour of integration from $-2 r + i \R$ to
\formula{
 \tilde{\Gamma} & = (-3 p + e^{-i \alpha} \infty, -3 p] \cup [-3 p, -3 p + e^{i \alpha} \infty) ,
}
which no longer depends on $r$ (see Figure~\ref{fig:pt:uv}). Note that $\tilde{\Gamma}$ is the boundary of the region $\{\xi \in \C : \lvert\Arg (3 p + \xi)\rvert \le \alpha\}$.

Recall that
\formula{
 \laplace F_+(r; -\xi) & = \frac{\re \zeta(r)}{|f^+(r; -i \zeta(r))|} \, \frac{f^+(r; -\xi)}{(\xi - i \zeta(r)) (\xi + i \overline{\zeta(r)})}
}
is a holomorphic function in $\C \setminus ([0, \infty) \cup \{i \zeta(r), -i \overline{\zeta(r)}\})$. By Lemmas~\ref{lem:eig:lbound} and~\ref{lem:uv:est}, we have
\formula{
 |\laplace F_+(r; -\xi) \laplace u(\xi)| & \le \frac{C(f, p, r)}{1 + |\xi|} \, \frac{C(u, \eps, \delta, p)}{1 + |\xi|} = \frac{C(f, u, \eps, \delta, p, r)}{(1 + |\xi|)^2}
}
in the regions contained between the contours $-2 r + i \R$ and $\tilde{\Gamma}$, namely the triangle
\formula{
 \{\xi \in \C : \lvert\Arg (3 p + \xi)\rvert \ge \alpha \text{ and } \re \xi \ge -2 r\}
}
(if $-2 r < -3 p$), and the unbounded region
\formula{
 \{\xi \in \C : \lvert\Arg (3 p + \xi)\rvert \le \alpha \text{ and } \re \xi \le -2 r\}
}
(which has one component if $-2 r > -3 p$ and two components otherwise). We stress that the poles at $\xi = i \zeta(r)$ and $\xi = -i \overline{\zeta(r)}$ lie outside of this region due to inequality $\Arg(\zeta(r) - i p) \le \tfrac{\pi}{2} - \eps - 2 \delta$. By~\eqref{eq:fpu0} and a standard contour deformation argument,
\formula[eq:pi:1a]{
 \int_0^\infty F_+(r; y) u(y) dy & = \frac{1}{2 \pi i} \int_{\tilde{\Gamma}} \laplace F_+(r; -\xi) \laplace u(\xi) d\xi .
}
Similarly, using the other inequality $\Arg(\zeta(r) + i p) \le \tfrac{\pi}{2} - \eps - 2 \delta$, we obtain
\formula[eq:pi:1b]{
 \int_0^\infty F_-(r; x) v(x) dx & = \frac{1}{2 \pi i} \int_{\tilde{\Gamma}} \laplace F_-(r; -\eta) \laplace v(\eta) d\eta .
}
It follows that if we denote the right-hand side of~\eqref{eq:pt:uv} by $I$, then
\formula[eq:pt:aux1]{
 I & = \frac{2}{\pi} \int_{Z_f} e^{-t \lambda(r)} \biggl( \frac{1}{2 \pi i} \int_{\tilde{\Gamma}} \laplace F_+(r; -\xi) \laplace u(\xi) d\xi \biggr) \biggl( \frac{1}{2 \pi i} \int_{\tilde{\Gamma}} \laplace F_-(r; -\eta) \laplace v(\eta) d\eta \biggr) \lvert\zeta'(r)\rvert dr ,
}
and in particular $I$ is well-defined if the outer integral on the right-hand side of~\eqref{eq:pt:aux1} is well-defined. We claim that in fact the triple integral on the right-hand side is absolutely convergent, and hence not only indeed both sides of~\eqref{eq:pt:aux1} are well-defined, but also we may apply Fubini's theorem to change the order of integration on the right-hand side.

\smallskip

\emph{Step 2.}
Let $r \in Z_f$ and $\xi \in \tilde{\Gamma}$. Observe that (see Figure~\ref{fig:pt:uv})
\formula{
 \lvert\Arg(2 p + \xi)\rvert & \ge \lvert\Arg(3 p + \xi)\rvert = \alpha
}
and
\formula{
 \lvert\Arg(2 p + i \zeta(r))\rvert & \ge \lvert\Arg(p + i \zeta(r))\rvert \ge \alpha - \delta ,
}
so that, by~\eqref{eq:triangle}, we have $|\xi - i \zeta(r)| \ge (r + |\xi|) \sin \tfrac{\delta}{2}$. Also, $|\xi - i \zeta(r)| \ge 2 p \sin \delta$. It follows that
\formula{
 \dist(-i \xi, \Gamma) & \ge C(\delta, p) (1 + |\xi|) .
}
Also, $\lvert\Arg(-\xi)\rvert < \tfrac{\pi}{2}$. Thus, by Lemma~\ref{lem:eig:lbound}, if $\thet(r) = \Arg \zeta(r)$, we have
\formula{
 |\laplace F_+(r; -\xi)| & \le C(f, \delta, p) \, \frac{1}{r + |\xi|} \, \frac{\cos \thet(r)}{\cos(\tfrac{1}{2} \thet(r) - \tfrac{\pi}{4})}
}
On the other hand, by Lemma~\ref{lem:uv:est},
\formula{
 |\laplace u(\xi)| & \le C(u, \eps, \delta, p) \, \frac{1}{1 + |\xi|} \, .
}
Combining the above two estimates, we obtain
\formula{
 \int_{\tilde{\Gamma}} |\laplace F_+(r; -\xi) \laplace u(\xi)| |d\xi| & \le C(f, u, \eps, \delta, p) \, \frac{\cos \thet(r)}{\cos(\tfrac{1}{2} \thet(r) - \tfrac{\pi}{4})} \int_{\tilde{\Gamma}} \frac{1}{(1 + |\xi|) (r + |\xi|)} |d\xi|
}
The integral on the right-hand side is bounded by
\formula{
 C(\eps, \delta) \int_p^\infty \frac{1}{(1 + s) (r + s)} \, ds & \le C(\eps, \delta) \int_0^\infty \frac{1}{(1 + s) (r + s)} \, ds = C(\eps, \delta) \, \frac{\log r}{r - 1} \, .
}
Thus,
\formula[eq:pi:2]{
 \int_{\tilde{\Gamma}} |\laplace F_+(r; -\xi) \laplace u(\xi)| |d\xi| & \le C(f, u, \eps, \delta, p) \, \frac{\cos \thet(r)}{\cos(\tfrac{1}{2} \thet(r) - \tfrac{\pi}{4})} \, \frac{\log r}{r - 1} \, .
}
An analogous estimate holds for the integral of $|\laplace F_-(r; -\eta) \laplace v(\eta)|$:
\formula[eq:pi:3]{
 \int_{\tilde{\Gamma}} |\laplace F_-(r; -\eta) \laplace v(\eta)| |d\eta| & \le C(f, v, \eps, \delta, p) \, \frac{\cos \thet(r)}{\cos(\tfrac{1}{2} \thet(r) + \tfrac{\pi}{4})} \, \frac{\log r}{r - 1} \, .
}
Additionally, $\cos(\tfrac{\thet}{2} - \tfrac{\pi}{4}) \cos(\tfrac{\thet}{2} + \tfrac{\pi}{4}) = \tfrac{1}{2} \cos \thet$ (see~\eqref{eq:coscos}), and so we find that
\formula{
 & \biggl( \int_{\tilde{\Gamma}} |\laplace F_+(r; -\xi) \laplace u(\xi)| |d\xi| \biggr) \biggl( \int_{\tilde{\Gamma}} |\laplace F_-(r; -\eta) \laplace v(\eta)| |d\eta| \biggr) \\
 & \qquad \le C(f, u, \eps, \delta, p) \biggl(\frac{\log r}{r - 1}\biggr)^2 \cos \thet(r) .
}
It follows that
\formula{
 & \int_{Z_f} e^{-t \lambda(r)} \biggl( \int_{\tilde{\Gamma}} |\laplace F_+(r; -\xi) \laplace u(\xi)| |d\xi| \biggr) \biggl( \int_{\tilde{\Gamma}} |\laplace F_-(r; -\eta) \laplace v(\eta)| |d\eta| \biggr) \lvert\zeta'(r)\rvert dr \\
 & \qquad \le C(u, v, \eps, \delta, p) \int_0^\infty \biggl(\frac{\log r}{r - 1} \biggr)^2 \lvert\zeta'(r)\rvert dr ,
}
and by Proposition~\ref{prop:r:real}\ref{it:r:real:d} and a simple calculation, the right-hand side is finite. Indeed: since $\log r / (r - 1)$ is decreasing on $(0, \infty)$, we have
\formula{
 \int_0^\infty \biggl(\frac{\log r}{r - 1} \biggr)^2 \lvert\zeta'(r)\rvert dr & = \sum_{n = -\infty}^\infty \int_{2^n}^{2^{n + 1}} \biggl(\frac{\log r}{r - 1} \biggr)^2 \lvert\zeta'(r)\rvert dr \\
 & \le \sum_{n = -\infty}^\infty \biggl(\frac{n \log 2}{2^n - 1}\biggr)^2 \int_{2^n}^{2^{n + 1}} \lvert\zeta'(r)\rvert dr \\
 & \le 300 (\log 2)^2 \sum_{n = -\infty}^\infty \frac{2^n n^2}{(2^n - 1)^2} < \infty ,
}
where, abusing the notation, we agree that $n \log 2 / (2^n - 1) = 1$ when $n = 0$. Our claim from Step~1 is thus proved.

\smallskip

\emph{Step 3.}
We are now almost ready to complete the proof. By~\eqref{eq:pt:aux1} and Fubini's theorem,
\formula{
 I & = \frac{2}{\pi} \, \frac{1}{2 \pi i} \, \frac{1}{2 \pi i} \int_{\tilde{\Gamma}} \int_{\tilde{\Gamma}} \biggl( \int_{Z_f} e^{-t \lambda(r)} \laplace F_+(r; -\xi) \laplace F_-(r; -\eta) \lvert\zeta'(r)\rvert dr \biggr) \laplace u(\xi) \laplace v(\eta) d\xi d\eta .
}
Using Proposition~\ref{prop:pt:lap}, we find that
\formula[eq:pt:aux3]{
 I & = \frac{2}{\pi} \, \frac{1}{2 \pi i} \, \frac{1}{2 \pi i} \int_{\tilde{\Gamma}} \int_{\tilde{\Gamma}} \biggl( \int_0^\infty \int_{(0, \infty)} e^{\eta x + \xi y} p_t^+(x, dy) dx \biggr) \laplace u(\xi) \laplace v(\eta) d\xi d\eta .
}
We will momentarily prove that the quadruple integral on the right-hand side is absolutely convergent. Thus, by Fubini's theorem,
\formula[eq:pt:aux2]{
 I & = \frac{2}{\pi} \int_0^\infty \int_{(0, \infty)} \biggl( \frac{1}{2 \pi i} \int_{\tilde{\Gamma}} e^{\xi y} \laplace u(\xi) d\xi \biggr) \biggl( \frac{1}{2 \pi i} \int_{\tilde{\Gamma}} e^{\eta x} \laplace v(\eta) d\eta \biggr) p_t^+(x, dy) dx .
}
To complete the proof, we apply an appropriate contour deformation and the Fourier inversion formula. Suppose that $y > 0$ and observe that
\formula{
 \frac{1}{2 \pi i} \int_{\tilde{\Gamma}} e^{\xi y} \laplace u(\xi) d\xi & = \frac{1}{2 \pi i} \lim_{R \to \infty} \int_{[-3 p + e^{-i \alpha} R, -3 p] \cup [-3 p, -3 p + e^{i \alpha} R]} e^{\xi y} \laplace u(\xi) d\xi .
}
Furthermore,
\formula{
 \biggl|\int_{[-3 p + e^{i \alpha} R, i R \sin \alpha]} e^{\xi y} \laplace u(\xi) d\xi\biggr| & \le \int_{-3 p + R \cos \alpha}^0 |e^{(s + i R \sin \alpha) y} \laplace u(s + i R \sin \alpha)| ds \\
 & \le \int_{-\infty}^0 e^{s y} |\laplace u(s + i R \sin \alpha)| ds .
}
By Lemma~\ref{lem:uv:est}, we have $|\laplace u(\xi)| \le C(u, \eps, \delta, p) (1 + |\xi|)^{-1}$ in the region contained between $\tilde{\Gamma}$ and $i \R$, that is, in $\{\xi \in \C : \lvert\Arg (3 p + \xi)\rvert \le \alpha , \, \re \xi \le 0\}$, and hence, by the dominated convergence theorem,
\formula{
 \lim_{R \to \infty} \int_{[-3 p + e^{i \alpha} R, i R \sin \alpha]} e^{\xi y} \laplace u(\xi) d\xi & = 0 .
}
Similarly,
\formula{
 \lim_{R \to \infty} \int_{[-3 p + e^{-i \alpha} R, -i R \sin \alpha]} e^{\xi y} \laplace u(\xi) d\xi & = 0 .
}
By Cauchy's theorem,
\formula{
 & \biggl(\int_{[-3 p + e^{-i \alpha} R, -3 p] \cup [-3 p, -3 p + e^{i \alpha} R]} + \int_{[-3 p + e^{i \alpha} R, i R \sin \alpha]} \\
 & \hspace*{10em} - \int_{[-i R \sin \alpha, i R \sin \alpha]} - \int_{[-3 p + e^{-i \alpha} R, -i R \sin \alpha]}\biggr) e^{\xi y} \laplace u(\xi) d\xi = 0 ,
}
and therefore
\formula{
 \frac{1}{2 \pi i} \int_{\tilde{\Gamma}} e^{\xi y} \laplace u(\xi) d\xi & = \frac{1}{2 \pi i} \lim_{R \to \infty} \int_{[-i R \sin \alpha, i R \sin \alpha]} e^{\xi y} \laplace u(\xi) d\xi .
}
Finally, using the Fourier inversion formula (see, for example, Theorem~7.6 in~\cite{folland}), we conclude that
\formula{
 \frac{1}{2 \pi i} \int_{\tilde{\Gamma}} e^{\xi y} \laplace u(\xi) d\xi & = u(y) .
}
A similar argument shows that
\formula[eq:pi:4]{
 \frac{1}{2 \pi i} \int_{\tilde{\Gamma}} e^{\eta x} \laplace v(\eta) d\eta & = v(x)
}
for $x > 0$, and the desired result follows from~\eqref{eq:pt:aux2}.

\emph{Step 4.} It remains to prove that the integral on the right-hand side of~\eqref{eq:pt:aux3} is absolutely convergent. Denote
\formula{
 J & = \int_0^\infty \int_{(0, \infty)} \int_{\tilde{\Gamma}} \int_{\tilde{\Gamma}} |e^{\eta x + \xi y} \laplace u(\xi) \laplace v(\eta)| |d\xi| |d\eta| p_t^+(x, dy) dx .
}
Using Lemma~\ref{lem:uv:est}, we obtain
\formula{
 J & \le C(u, v, \eps, \delta, p) \int_0^\infty \int_{(0, \infty)} \int_{\tilde{\Gamma}} \int_{\tilde{\Gamma}} \frac{e^{x \re \eta + y \re \xi}}{(1 + |\xi|) (1 + |\eta|)} \, |d\xi| |d\eta| p_t^+(x, dy) dx .
}
The estimate
\formula{
 \int_{\tilde{\Gamma}} \frac{e^{y \re \xi}}{1 + |\xi|} \, |d\xi| & \le 2 \int_0^\infty \frac{e^{-3 p y + s y \cos \alpha}}{1 + s} \, ds \le C(\alpha) e^{-3 p y} \, \frac{\log(e + y^{-1})}{1 + y}
}
and a similar estimate for the integral with respect to $\eta$ lead to
\formula{
 J & \le C(u, v, \eps, \delta, p) \int_0^\infty \int_{(0, \infty)} \frac{\log(e + x^{-1}) \log(e + y^{-1})}{(1 + x) (1 + y)} \, p_t^+(x, dy) dx .
}
By the Cauchy--Schwarz inequality,
\formula{
 J & \le C(u, v, \eps, \delta, p) \biggl(\int_0^\infty \int_{(0, \infty)} \biggl(\frac{\log(e + x^{-1})}{1 + x}\biggr)^2 p_t^+(x, dy) dx\biggr)^{1/2} \times \\
 & \hspace*{10em} \times \biggl(\int_0^\infty \int_{(0, \infty)} \biggl(\frac{\log(e + y^{-1})}{1 + y}\biggr)^2 p_t^+(x, dy) dx\biggr)^{1/2} .
}
Finally, $\int_{(0, \infty)} p_t^+(x, dy) \le 1$ for every $x$, and, by Hunt's switching identity (Theorem~II.5 in~\cite{bertoin}), $\int_0^\infty p_t^+(x, A) dx \le |A|$ for every Borel $A \sub \R$. Thus,
\formula{
 J & \le C(u, v, \eps, \delta, p) \biggl(\int_0^\infty \biggl(\frac{\log(e + x^{-1})}{1 + x}\biggr)^2 dx\biggr)^{1/2} \biggl(\int_0^\infty \biggl(\frac{\log(e + y^{-1})}{1 + y}\biggr)^2 dy\biggr)^{1/2} < \infty ,
}
and the proof is complete.
\end{proof}

\begin{remark}
\label{rem:pi}
Under the assumptions of Proposition~\ref{prop:pt:uv}, we have
\formula{
 \biggl|\int_0^\infty F_{f+}(r; y) u(y) dy\biggr| & \le C(f, u) \, \frac{\log r}{r - 1} \, , \\
 \biggl|\int_0^\infty F_{f-}(r; x) v(x) dx\biggr| & \le C(f, v) \, \frac{\log r}{r - 1} \, ;
}
see~\eqref{eq:pi:1a}, \eqref{eq:pi:1b} and~\eqref{eq:pi:2} (and~\eqref{eq:coscos}). Combining these estimates with Proposition~\ref{prop:r:real}\ref{it:r:real:d}, we find that
\formula{
 \int_{Z_f} \biggl| \int_0^\infty F_{f+}(r; y) u(y) dy\biggr|^2 |\zeta_f'(r)| dr & < \infty
}
and
\formula{
 \int_{Z_f} \biggl| \int_0^\infty F_{f-}(r; x) v(x) dx\biggr|^2 |\zeta_f'(r)| dr & < \infty .
}
\end{remark}

The following restatement of Theorem~\ref{thm:heat} seems to be the best possible variant of Proposition~\ref{prop:pt:bounded} that can be proved by using only Proposition~\ref{prop:pt:uv} and Fubini's theorem.

\begin{corollary}
\label{cor:pt:log}
Suppose that $X_t$ is a Lévy process with completely monotone jumps, and $f(\xi)$ is the corresponding Rogers function. Let $\eps > 0$, and assume that
\formula{
 \limsup_{r \to \infty} \lvert\Arg \zeta_f(r)\rvert & < \frac{\pi}{2} - \eps .
}
Assume, furthermore, that for some $\beta \in (1, \tfrac{\pi}{2} / (\tfrac{\pi}{2} - \eps))$ we have
\formula[eq:pt:log]{
 \int_{Z_f} e^{s \lvert\im \zeta_f(r)\rvert - t \lambda_f(r)} \lvert\zeta_f'(r)\rvert dr & \le e^{C(f, t) (1 + s)^\beta}
}
whenever $t, s > 0$. Then $p_t^+(x, dy)$ has a continuous density function, given by
\formula[eq:pt]{
 p_t^+(x, y) & = \frac{2}{\pi} \int_{Z_f} e^{-t \lambda_f(r)} F_{f+}(r; y) F_{f-}(r; x) \lvert\zeta_f'(r)\rvert dr
}
for $t \ge 0$ and $x, y > 0$.
\end{corollary}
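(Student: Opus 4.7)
The plan is to obtain~\eqref{eq:pt} from Proposition~\ref{prop:pt:uv} (Theorem~\ref{thm:heat:spectral}) via Fubini's theorem and a density argument, followed by a continuity upgrade from almost everywhere to everywhere. First, set
$$
q(x,y) := \frac{2}{\pi}\int_{Z_f} e^{-t\lambda_f(r)}\,F_{f+}(r;y)\,F_{f-}(r;x)\,|\zeta_f'(r)|\,dr.
$$
Using the bounds $|F_{f+}(r;y)|\le 2e^{y\max\{\im\zeta_f(r),0\}}$ and $|F_{f-}(r;x)|\le 2e^{x\max\{-\im\zeta_f(r),0\}}$ from Lemma~\ref{lem:eig:est}, and splitting $Z_f$ into $Z_f^\pm = \{r\in Z_f : \pm\im\zeta_f(r)\ge 0\}$, assumption~\eqref{eq:pt:log} yields
$$
|q(x,y)|\le \frac{8}{\pi}\bigl(e^{C(f,t)(1+y)^\beta}+e^{C(f,t)(1+x)^\beta}\bigr),
$$
locally uniformly in $(x,y)$. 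Dominated convergence then shows $q$ is continuous on $(0,\infty)\times(0,\infty)$.

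Next, one selects a rich class of test functions for which Fubini justifies swapping the three integrals on the right-hand side of~\eqref{eq:pt:uv}. Since $\beta<\tfrac{\pi}{\pi-2\eps}$, one may choose $\alpha$ with $\beta<\alpha<\tfrac{\pi}{\pi-2\eps}$, so that the principal branch of $z^\alpha$ has strictly positive real part throughout the sector $\{z : |\Arg z|<\tfrac{\pi}{2}-\eps\}$. For $s,s'\ge 0$ and a sufficiently large constant $A>0$, the functions
$$
u_s(y) := e^{-sy}\exp(-Ay^\alpha), \qquad v_{s'}(x) := e^{-s'x}\exp(-Ax^\alpha)
$$
are holomorphic in that sector, satisfy there $|u_s(z)|\le Ce^{-c|z|^\alpha}$ (hence the $e^{-C|z|\log(1+|z|)}$ bound required by Proposition~\ref{prop:pt:uv}), and, since $\alpha>\beta$, obey $\int_0^\infty u_s(y)\,e^{C(1+y)^\beta}dy<\infty$ (and similarly for $v_{s'}$). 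Reproducing the estimate from the continuity argument for $q$ with $|u_s(y)v_{s'}(x)|$ inserted then shows that the triple integral
$$
\int_{Z_f}\int_0^\infty\int_0^\infty e^{-t\lambda_f(r)}|v_{s'}(x)u_s(y)F_{f+}(r;y)F_{f-}(r;x)|\,|\zeta_f'(r)|\,dy\,dx\,dr
$$
is finite. Applying Proposition~\ref{prop:pt:uv} with $u=u_s$, $v=v_{s'}$ and then Fubini's theorem gives
$$
\int_0^\infty\int_{(0,\infty)} v_{s'}(x)u_s(y)\,p_t^+(x,dy)\,dx = \int_0^\infty\int_0^\infty v_{s'}(x)u_s(y)\,q(x,y)\,dy\,dx
$$
for all $s,s'\ge 0$.

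Varying $s,s'\ge 0$, this asserts the equality of two-variable Laplace transforms of the finite positive measures $\exp(-Ax^\alpha-Ay^\alpha)\,p_t^+(x,dy)\,dx$ and $\exp(-Ax^\alpha-Ay^\alpha)\,q(x,y)\,dy\,dx$ on $(0,\infty)^2$; by uniqueness of the two-variable Laplace transform these measures coincide, and since the weight $\exp(-Ax^\alpha - Ay^\alpha)$ is strictly positive, $p_t^+(x,dy)\,dx = q(x,y)\,dy\,dx$ as measures on $(0,\infty)^2$. In particular, $p_t^+(x,\cdot)$ is absolutely continuous with density $q(x,\cdot)$ for almost every $x>0$. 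To promote this to pointwise equality everywhere one invokes continuity of $q$ together with regularity of the Dirichlet heat kernel: assumption~\eqref{eq:pt:log} (taken with $s=0$) forces $\lambda_f(r)\to\infty$, so the free kernel $p_t(z)$ is continuous, and the Markov property applied at the first exit from $(0,\infty)$ then furnishes joint continuity of $p_t^+(x,y)$, which therefore agrees with $q(x,y)$ identically.

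The principal obstacle is the Fubini step, which forces the test functions to satisfy three competing conditions simultaneously: (i) holomorphy in the sector $\{|\Arg z|<\tfrac{\pi}{2}-\eps\}$, (ii) at least $e^{-C|z|\log(1+|z|)}$ decay in that sector (to fit Proposition~\ref{prop:pt:uv}), and (iii) integrability on $(0,\infty)$ against $e^{C(1+y)^\beta}$ (to dominate the growth of the eigenfunction side). The admissible window $\alpha\in(\beta,\tfrac{\pi}{\pi-2\eps})$ is exactly what accommodates all three, and this is precisely where the quantitative hypothesis relating $\beta$ and $\eps$ is used; the argument would collapse if this inequality were only permitted with equality.
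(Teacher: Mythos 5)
Your proof is correct and follows essentially the same route as the paper's: apply Proposition~\ref{prop:pt:uv} to a family of rapidly decaying holomorphic test functions, justify Fubini for the resulting triple integral via the crude bounds of Lemma~\ref{lem:eig:est} combined with~\eqref{eq:pt:log}, identify the two sides by uniqueness of the bivariate Laplace transform, and upgrade from almost-everywhere to everywhere equality by continuity of both sides (dominated convergence for the eigenfunction integral, integrability of $e^{-t f}$ plus the Dynkin--Hunt formula for $p_t^+$). The only cosmetic difference is the choice of test family: the paper takes $u(y)=e^{-\xi y^\beta}$ with $\xi$ large and substitutes $\tilde y=y^\beta$ before invoking Laplace uniqueness, whereas you take $e^{-sy}e^{-Ay^\alpha}$ with $\alpha\in(\beta,\tfrac{\pi}{2}/(\tfrac{\pi}{2}-\eps))$ fixed and vary $s$; both are valid.
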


\begin{proof}
The argument is quite simple. Observe that since $\beta \in (1, \tfrac{\pi}{2} / (\tfrac{\pi}{2} - \eps))$, the functions $u(y) = \exp(-\xi y^\beta)$ and $v(x) = \exp(-\eta x^\beta)$ satisfy the condition in Proposition~\ref{prop:pt:uv} when $\xi, \eta > 0$. We use this result and Fubini's theorem to find that
\formula*[eq:pt:uv:aux]{
 & \int_0^\infty \int_{(0, \infty)} e^{-\xi y^\beta - \eta x^\beta} p_t^+(x, dy) dx \\
 & \qquad = \int_0^\infty \int_0^\infty \biggl( \frac{2}{\pi} \int_{Z_f} e^{-t \lambda(r)} F_+(r; y) F_-(r; x) \lvert\zeta'(r)\rvert dr \biggr) e^{-\xi y^\beta - \eta x^\beta} dx dy ;
}
we will momentarily justify the application of Fubini's theorem when $\xi$ and $\eta$ are large enough. Now, we exploit uniqueness of the (bivariate) Laplace transform. In variables $\tilde{x} = x^\beta$ and $\tilde{y} = y^\beta$, formula~\eqref{eq:pt:uv:aux} is an equality of Laplace transforms, evaluated at $(\eta, \xi)$, of two functions of $(\tilde{x}, \tilde{y})$. This gives the desired equality~\eqref{eq:pt} almost every pair $(\tilde{x}, \tilde{y})$, and hence for almost every pair $(x, y)$. At the end of the proof we use a continuity argument to conclude that in fact the equality holds everywhere.

We now prove absolute integrability of the integrand on the right-hand side of~\eqref{eq:pt:uv:aux}, thus justifying the use of Fubini's theorem. Let $b(r) = \im \zeta(r)$. Clearly, $|F_+(r; y)| \le e^{b(r) y} + 1 \le 2 e^{|b(r)| y}$ and similarly $|F_-(r; x)| \le e^{-b(r) x} + 1 \le 2 e^{|b(r)| x}$. It follows that
\formula{
 |e^{-t \lambda(r)} F_+(r; y) F_-(r; x) u(y) v(x) \zeta'(r)| & \le 4 e^{|b(r)| (x + y) - t \lambda(r) - \eta x^\beta - \xi y^\beta} \lvert\zeta'(r)\rvert .
}
By assumption, there is a constant $A$ (which depends only on $f$ and $t$) such that
\formula{
 \int_{Z_f} |e^{-t \lambda(r)} F_+(r; y) F_-(r; x) u(y) v(x) \zeta'(r)| dr & \le e^{A (1 + x + y)^\beta - \eta x^\beta - \xi y^\beta} .
}
Observe that $(1 + x + y)^\beta \le 3^\beta (1 + x^\beta + y^\beta)$. Thus, if $\xi, \eta > 1 + 3^\beta A$, we find that
\formula{
 \int_{Z_f} |e^{-t \lambda(r)} F_+(r; y) F_-(r; x) u(y) v(x) \zeta'(r)| dr & \le e^{3^\beta - x^\beta - y^\beta} ,
}
and the right-hand side is clearly integrable with respect to $x, y > 0$, as desired.

We thus know that~\eqref{eq:pt} holds for almost all $x, y > 0$. In order to extend this equality to all $x, y > 0$, we now show that both sides of~\eqref{eq:pt} are continuous functions of $x, y > 0$ with values in $[0, \infty]$. By assumption, $\lvert\Arg \zeta(r)\rvert \le \tfrac{\pi}{2} - \eps$ for $r$ large enough, and so, by Proposition~\ref{prop:r:bound}, we have $|f(r)| \ge C(\eps) \lambda(r)$ for $r$ large enough. It follows that with $R$ sufficiently large, we have
\formula{
 \int_{-\infty}^\infty |e^{-t f(\xi)}| d\xi & \le 2 \int_0^\infty e^{-t |f(r)|} dr \le 2 R + 2 \int_R^\infty e^{-t C(\eps) \lambda(r)} dr \\
 & \le 2 R + 2 \int_R^\infty e^{-t C(\eps) \lambda(r)} \lvert\zeta'(r)\rvert dr < \infty .
}
Therefore, for each $t > 0$ and $x \in \R$, the distribution of the random variable $X_t$ under $\pr^x$ has a density function $p_t(y - x)$ such that $p_t(y - x)$ is a jointly continuous function of $t > 0$ and $x, y \in \R$. By the Dynkin--Hunt formula, we have
\formula{
 p_t^+(x, y) & = p_t(y - x) - \ex^x(p_{t - \tau_{(0, \infty)}}(y - X_{\tau_{(0, \infty)}}) \ind_{\{\tau_{(0, \infty)} < t\}}) ,
}
and a standard argument shows that $p_t^+(x, y)$ is a jointly continuous function of $t, x, y > 0$; we refer to the proof of Theorem~2.4.3 in~\cite{ps:brownian}, which is written for the Brownian motion in $\R^d$, but applies verbatim to the present setting. On the other hand, the right-hand side of~\eqref{eq:pt} is a continuous function of $x, y > 0$ by the dominated convergence theorem, as we have already proved that
\formula{
 |e^{-t \lambda(r)} F_+(r; y) F_-(r; x) \zeta'(r)| & \le 4 e^{|b(r)| (x + y) - t \lambda(r)} \lvert\zeta'(r)\rvert \le 4 e^{2 B |b(r)| - t \lambda(r)} \lvert\zeta'(r)\rvert
}
for $x, y \in [0, B]$, and the right-hand side is integrable with respect to $r \in Z_f$ by assumption. This completes the proof.
\end{proof}

Corollary~\ref{cor:pt:log} is not optimal, in the sense that it does not cover all Lévy processes with completely monotone jumps for which formula~\eqref{eq:pt} holds true. For example, not all stable Lévy processes with index greater than $1$ satisfy~\eqref{eq:pt:log}, but~\eqref{eq:pt} is known to hold in this case; see Section~\ref{sec:ex} for further discussion. Nevertheless, any essential extension of Corollary~\ref{cor:pt:log} would require new methods or ideas. We remark that for the case of stable Lévy processes, the spine $\Gamma_f$ of $f(\xi)$ is a half-line originating at $0$, and $\lambda_f(r)$ is a power function, so that $\lambda_f(r)$ has an appropriate holomorphic extension. This was exploited in~\cite{kk:stable}, and the same concept was used in the related problem of finding the distribution of the hitting time of a point in~\cite{mucha}. Further applications of this technique, however, appear problematic: no general conditions seem to be known which would assert that $\lambda_f(r)$ extends to a holomorphic function in a sufficiently large sector.

\subsection{Infimum functional}
\label{sec:inf:inf}

Before we state our main results in this section, we prove a variant of Proposition~\ref{prop:pt:uv}.

\begin{proposition}
\label{prop:lpt:uv}
Suppose that $X_t$ is a Lévy process with completely monotone jumps, and $f(\xi)$ is the corresponding Rogers function. Assume that $\eps > 0$, and
\formula{
 \liminf_{r \to \infty} \Arg \zeta_f(r) > -\frac{\pi}{2} + \eps .
}
Let $v(x)$ satisfy the assumptions of Lemma~\ref{lem:uv:est}: $v(x)$ is a holomorphic function in the region $\{x \in \C : \lvert\Arg x\rvert < \tfrac{\pi}{2} - \eps\}$, which is real-valued on $(0, \infty)$, and such that
\formula{
 |v(x)| & \le C(v, \eps) e^{-C(v, \eps) |x| \log(1 + |x|)}
}
in the region $\{x \in \C : \lvert\Arg x\rvert < \tfrac{\pi}{2} - \eps\}$. Finally, suppose that $\xi > 0$ and $i \xi \in D_f^+$. Then
\formula*[eq:lpt:uv]{
 & \int_0^\infty \int_{(0, \infty)} v(x) e^{-\xi y} p_t^+(x, dy) dx \\
 & \qquad = \frac{2}{\pi} \int_{Z_f} e^{-t \lambda_f(r)} \laplace F_{f+}(r; \xi) \biggl(\int_0^\infty F_{f-}(r; x) v(x) dx\biggr) \lvert\zeta_f'(r)\rvert dr
}
for $t > 0$.
\end{proposition}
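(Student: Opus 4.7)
The argument is a direct adaptation of the proof of Proposition~\ref{prop:pt:uv}, simplified on the $F_{f+}$ side. Since $u(y) = e^{-\xi y}$ is already a pure exponential, the integral $\int_0^\infty F_{f+}(r;y) e^{-\xi y}\,dy$ equals $\laplace F_{f+}(r;\xi)$ directly and requires neither Plancherel nor contour deformation; on the $F_{f-}$ side, however, one proceeds exactly as in Step~1 of Proposition~\ref{prop:pt:uv}. The hypothesis $\liminf_{r\to\infty}\Arg\zeta_f(r) > -\tfrac{\pi}{2}+\eps$ provides $\delta > 0$ and $p > 0$ such that $\Arg(\zeta_f(r)+ip) \ge -\tfrac{\pi}{2}+\eps+2\delta$ for every $r \in Z_f$, and with $\alpha = \pi - \eps - \delta$ and $\tilde\Gamma = (-3p+e^{-i\alpha}\infty,-3p] \cup [-3p,-3p+e^{i\alpha}\infty)$, Plancherel's identity followed by a contour deformation from $-2r + i\R$ to $\tilde\Gamma$ yields
\[
 \int_0^\infty F_{f-}(r;x) v(x)\,dx = \frac{1}{2\pi i}\int_{\tilde\Gamma} \laplace F_{f-}(r;-\eta)\laplace v(\eta)\,d\eta ,
\]
the poles of $\laplace F_{f-}(r;-\eta)$ at $\eta = -i\zeta_f(r)$ and $\eta = i\overline{\zeta_f(r)}$ being excluded from the region swept during the deformation as in Proposition~\ref{prop:pt:uv}.

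Substituting this into the right-hand side of~\eqref{eq:lpt:uv} and exchanging the order of integration by Fubini's theorem reduces the claim to the evaluation of
\[
 \frac{1}{\pi^2 i}\int_{\tilde\Gamma} \laplace v(\eta)\biggl(\int_{Z_f} e^{-t\lambda_f(r)}\laplace F_{f+}(r;\xi)\laplace F_{f-}(r;-\eta)\lvert\zeta_f'(r)\rvert\,dr\biggr) d\eta .
\]
To justify Fubini at this step, I would combine the bound $\laplace F_{f+}(r;\xi) \le C(f,\xi)/(1+r)$ from Lemma~\ref{lem:eig:lbound} (valid because $\dist(i\xi,\Gamma_f) > 0$ since $i\xi \in D_f^+$) with the estimate $\int_{\tilde\Gamma}\lvert\laplace F_{f-}(r;-\eta)\laplace v(\eta)\rvert\lvert d\eta\rvert \le C(f,v,\eps,\delta,p)(\log r/(r-1))\cos\theta_f(r)/\cos(\theta_f(r)/2+\pi/4)$, obtained by the same calculation as in Step~2 of Proposition~\ref{prop:pt:uv} (where $\theta_f(r) = \Arg\zeta_f(r)$). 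The cancellation in~\eqref{eq:coscos} used there is no longer available, but the remaining ratio equals $2\cos(\theta_f(r)/2 - \pi/4)$ and is uniformly bounded by $2$, so dyadic summation together with Proposition~\ref{prop:r:real}\ref{it:r:real:d} yields $\int_{Z_f}(1+r)^{-1}(\log r/\lvert r-1\rvert)\lvert\zeta_f'(r)\rvert\,dr < \infty$.

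By Proposition~\ref{prop:pt:lap} applied with $\tilde\xi = \xi$ and $\tilde\eta = -\eta$ (the hypothesis $i\xi \in D_f^+$ and the defining property of $\tilde\Gamma$ that $i\eta \in D_f^-$ for $\eta \in \tilde\Gamma$ furnish the required conditions), the inner bracket in the previous display equals $\tfrac{\pi}{2}\int_0^\infty\int_{(0,\infty)} e^{\eta x - \xi y} p_t^+(x,dy)\,dx$. A second application of Fubini (justified by a bound analogous to Step~4 of Proposition~\ref{prop:pt:uv}, using Lemma~\ref{lem:uv:est} and the trivial estimate $\int_{(0,\infty)} p_t^+(x,dy) \le 1$) transforms the resulting expression into
\[
 \int_0^\infty \int_{(0,\infty)} e^{-\xi y}\biggl(\frac{1}{2\pi i}\int_{\tilde\Gamma} e^{\eta x}\laplace v(\eta)\,d\eta\biggr) p_t^+(x,dy)\,dx ,
\]
and Fourier inversion (the computation preceding~\eqref{eq:pi:4}) identifies the parenthesized expression with $v(x)$, yielding~\eqref{eq:lpt:uv}. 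The principal technical difficulty is the verification of integrability when only the $F_{f-}$-side contour deformation is present; it is handled by the bounded factor $\cos\theta_f(r)/\cos(\theta_f(r)/2+\pi/4) \le 2$ noted above.
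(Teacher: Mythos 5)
Your argument is correct and follows essentially the same route as the paper's proof, which likewise adapts Proposition~\ref{prop:pt:uv} by performing the Plancherel--contour-deformation step only on the $F_{f-}$ side (using the one-sided condition $\Arg(\zeta_f(r)+ip)\ge -\tfrac{\pi}{2}+\eps+2\delta$), bounding $\laplace F_{f+}(r;\xi)$ via Lemma~\ref{lem:eig:lbound}, applying Proposition~\ref{prop:pt:lap} with arguments $(\xi,-\eta)$, and finishing with Fubini and Fourier inversion; your variant of the integrability estimate (bounding $\cos\thet_f(r)/\cos(\tfrac12\thet_f(r)+\tfrac{\pi}{4})$ by $2$ rather than pairing it with the $F_{f+}$ angle factor through~\eqref{eq:coscos}) is an equivalent bookkeeping choice. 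One small caveat: your opening remark that $\int_0^\infty F_{f+}(r;y)e^{-\xi y}\,dy$ equals $\laplace F_{f+}(r;\xi)$ holds literally only when $\xi>\max\{0,\im\zeta_f(r)\}$ (otherwise the integral diverges and $\laplace F_{f+}(r;\xi)$ is the analytic continuation), but since the right-hand side of~\eqref{eq:lpt:uv} is already stated in terms of $\laplace F_{f+}(r;\xi)$, nothing in your proof depends on that identification.
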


\begin{proof}
We follow closely the proof of Proposition~\ref{prop:pt:uv}, and only indicate necessary changes. Again we choose $\delta > 0$ and $p > 0$ such that
\formula{
 \Arg (\zeta_f(r) + i p) & \ge -\tfrac{\pi}{2} + \eps + 2 \delta
}
for every $r > 0$, and we let $\alpha = \pi - \eps - \delta$.

\smallskip

\emph{Step 1.}
This part is the same as the corresponding part of the proof of Proposition~\ref{prop:pt:uv}: as in~\eqref{eq:pi:1b}, we find that
\formula{
 \int_0^\infty F_-(r; x) v(x) dx & = \frac{1}{2 \pi i} \int_{\tilde{\Gamma}} \laplace F_-(r; -\eta) \laplace v(\eta) d\eta ,
}
where
\formula{
 \tilde{\Gamma} & = (-3 p + e^{-i \alpha} \infty, -3 p] \cup [-3 p, -3 p + e^{i \alpha} \infty) .
}
Note that the proof of the above identity only required the one-sided bound $\Arg (\zeta_f(r) + i p) \ge -\tfrac{\pi}{2} + \eps + 2 \delta$ (for the contour deformation argument; compare with the derivation of~\eqref{eq:pi:1a}), and it did not depend on the inequality $\Arg (\zeta_f(r) - i p) \le \tfrac{\pi}{2} - \eps - 2 \delta$ (assumed in Proposition~\ref{prop:pt:uv}, but not here).

It follows that if we denote the right-hand side of~\eqref{eq:lpt:uv} by $I$, then
\formula[eq:lpt:uv:aux1]{
 I & = \frac{2}{\pi} \int_{Z_f} e^{-t \lambda(r)} \laplace F_+(r; \xi) \biggl( \frac{1}{2 \pi i} \int_{\tilde{\Gamma}} \laplace F_-(r; -\eta) \laplace v(\eta) d\eta \biggr) \lvert\zeta'(r)\rvert dr ,
}
and in particular $I$ is well-defined if the outer integral on the right-hand side of~\eqref{eq:lpt:uv:aux1} is well-defined. As before, we claim that the double integral on the right-hand side is absolutely convergent, so that both sides of~\eqref{eq:lpt:uv:aux1} are well-defined, and additionally Fubini's theorem allows us to change the order of integration on the right-hand side.

\smallskip

\emph{Step 2.}
As in Step~2 of the proof of Proposition~\ref{prop:pt:uv} (see~\eqref{eq:pi:3}), we use Lemmas~\ref{lem:eig:lbound} and~\ref{lem:uv:est} to find that
\formula{
 \int_{\tilde{\Gamma}} |\laplace F_-(r; -\eta) \laplace v(\eta)| |d\eta| & \le C(f, v, \eps, \delta, p) \, \frac{\cos \thet(r)}{\cos(\tfrac{1}{2} \thet(r) + \tfrac{\pi}{4})} \, \frac{\log r}{r - 1} \, .
}
Again we note that for the proof of the above bound we only needed the inequality $\Arg (\zeta_f(r) + i p) \ge -\tfrac{\pi}{2} + \eps + 2 \delta$.

In addition to the above bound, we apply Lemma~\ref{lem:eig:lbound} again to find that
\formula{
 |\laplace F_+(r; \xi)| & \le C(f, \eps, \delta, p, \xi) \, \frac{\cos \thet(r)}{\cos(\tfrac{1}{2} \thet(r) - \tfrac{\pi}{4})} \, \frac{1}{1 + r} \, .
}
It follows that
\formula{
 & |\laplace F_+(r; \xi)| \biggl( \int_{\tilde{\Gamma}} |\laplace F_-(r; -\eta) \laplace v(\eta)| |d\eta| \biggr) \\
 & \qquad \le C(f, v, \eps, \delta, p, \xi) \, \frac{1}{1 + r} \, \frac{\log r}{r - 1} \, \cos \thet(r) .
}
As in the proof of Proposition~\ref{prop:pt:uv}, we conclude that
\formula{
 \int_{Z_f} e^{-t \lambda(r)} |\laplace F_+(r; \xi)| \biggl( \int_{\tilde{\Gamma}} |\laplace F_-(r; -\eta) \laplace v(\eta)| |d\eta| \biggr) \lvert\zeta'(r)\rvert & < \infty .
}

\smallskip

\emph{Step 3.}
The remaining part of the proof is very similar to Step~3 in the proof of Proposition~\ref{prop:pt:uv}. By~\eqref{eq:lpt:uv:aux1} and Fubini's theorem,
\formula{
 I & = \frac{2}{\pi} \frac{1}{2 \pi i} \int_{\tilde{\Gamma}}\biggl( \int_{Z_f} e^{-t \lambda(r)} \laplace F_+(r; \xi) \laplace F_-(r; -\eta) \lvert\zeta'(r)\rvert dr \biggr) \laplace v(\eta) d\eta .
}
Using Proposition~\ref{prop:pt:lap}, we find that
\formula{
 I & = \frac{2}{\pi} \frac{1}{2 \pi i} \int_{\tilde{\Gamma}} \biggl( \int_0^\infty \int_{(0, \infty)} e^{\eta x - \xi y} p_t^+(x, dy) dx \biggr) \laplace v(\eta) d\eta .
}
The triple integral on the right-hand side is absolutely convergent, by an argument very similar to the one used in the proof of Proposition~\ref{prop:pt:uv}:
\formula{
 & \int_0^\infty \int_{(0, \infty)} \int_{\tilde{\Gamma}} |e^{\eta x - \xi y} \laplace v(\eta)| |d\eta| p_t^+(x, dy) dx \\
 & \qquad \le C(v, \eps, \delta, p) \int_0^\infty \int_{(0, \infty)} \int_{\tilde{\Gamma}} \frac{e^{x \re \eta - \xi y}}{1 + |\eta|} \, |d\eta| p_t^+(x, dy) dx \\
 & \qquad \le C(v, \eps, \delta, p) \int_0^\infty \int_{(0, \infty)} \frac{e^{-3 p x} \log(e + x^{-1})}{1 + x} \, e^{-\xi y} p_t^+(x, dy) dx \\
 & \qquad \le C(v, \eps, \delta, p) \int_0^\infty \frac{e^{-3 p x} \log(e + x^{-1})}{1 + x} \, dx < \infty .
}
Thus, by Fubini's theorem,
\formula{
 I & = \frac{2}{\pi} \int_0^\infty \int_{(0, \infty)} \biggl( \frac{1}{2 \pi i} \int_{\tilde{\Gamma}} e^{\eta x} \laplace v(\eta) d\eta \biggr) e^{-\xi y} p_t^+(x, dy) dx .
}
As in the proof of Proposition~\ref{prop:pt:uv} (see~\eqref{eq:pi:4}),
\formula{
 \frac{1}{2 \pi i} \int_{\tilde{\Gamma}} e^{\eta x} \laplace v(\eta) d\eta & = v(x)
}
for $x > 0$, and the proof is complete.
\end{proof}

We provide a corollary analogous to Corollary~\ref{cor:pt:log}.

\begin{corollary}
\label{cor:lpt:log}
Suppose that $X_t$ is a Lévy process with completely monotone jumps, and $f(\xi)$ is the corresponding Rogers function. Assume that $\eps > 0$ and
\formula{
 \liminf_{r \to \infty} \Arg \zeta_f(r) & > -\frac{\pi}{2} + \eps .
}
Assume, furthermore, that for some $\beta \in (1, \tfrac{\pi}{2} / (\tfrac{\pi}{2} - \eps))$ we have
\formula[eq:lpt:log]{
 \int_{Z_f} e^{s \max\{-\im \zeta_f(r), 0\} - t \lambda_f(r)} \, \frac{1}{1 + r} \, \lvert\zeta_f'(r)\rvert dr & \le e^{C(f, t) (1 + s)^\beta}
}
whenever $t, s > 0$. Then
\formula[eq:lpt]{
 \int_{(0, \infty)} e^{-\xi y} p_t^+(x, dy) & = \frac{2}{\pi} \int_{Z_f} e^{-t \lambda_f(r)} \laplace F_{f+}(r; \xi) F_{f-}(r; x) \lvert\zeta_f'(r)\rvert dr
}
for $t > 0$, $x > 0$ and $\xi > 0$ such that $[i \xi, i \infty) \sub D_f^+$.
\end{corollary}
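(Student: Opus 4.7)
The plan is to reduce this to Proposition~\ref{prop:lpt:uv} in essentially the same way that Corollary~\ref{cor:pt:log} was reduced to Proposition~\ref{prop:pt:uv}. Fix $\beta$ as in the statement, and for each large enough $\eta > 0$ apply Proposition~\ref{prop:lpt:uv} with the holomorphic test function $v(x) = \exp(-\eta x^\beta)$, which satisfies the hypotheses of Lemma~\ref{lem:uv:est} precisely because $\beta < \tfrac{\pi/2}{\pi/2 - \eps}$. Writing $H_t(x,\xi) = \int_{(0,\infty)} e^{-\xi y} p_t^+(x,dy)$, this yields
\begin{align*}
\int_0^\infty e^{-\eta x^\beta} H_t(x,\xi) dx = \frac{2}{\pi} \int_{Z_f} e^{-t\lambda_f(r)} \laplace F_{f+}(r;\xi) \biggl(\int_0^\infty F_{f-}(r;x) e^{-\eta x^\beta} dx\biggr) \lvert\zeta_f'(r)\rvert dr.
\end{align*}

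The next step is to move the $x$-integral on the right-hand side to the outside via Fubini's theorem, and this is where assumption~\eqref{eq:lpt:log} enters. Since $[i\xi, i\infty) \sub D_f^+$, the point $i\xi$ stays a positive distance from the spine $\Gamma_f$, so Lemma~\ref{lem:eig:lbound} gives $\lvert\laplace F_{f+}(r;\xi)\rvert \le C(f,\xi)/(1+r)$. Combined with the elementary bound $\lvert F_{f-}(r;x)\rvert \le 2 \exp(\max\{-\im \zeta_f(r), 0\} x)$ from Lemma~\ref{lem:eig:est} and assumption~\eqref{eq:lpt:log} applied with $s = x$, Tonelli gives
\begin{align*}
\int_0^\infty \int_{Z_f} e^{-t\lambda_f(r)} \lvert\laplace F_{f+}(r;\xi)\rvert \lvert F_{f-}(r;x)\rvert e^{-\eta x^\beta} \lvert\zeta_f'(r)\rvert dr dx \le 2C(f,\xi) \int_0^\infty e^{C(f,t)(1+x)^\beta - \eta x^\beta} dx,
\end{align*}
which is finite as soon as $\eta > 2^\beta C(f,t)$, because $(1+x)^\beta \le 2^\beta(1 + x^\beta)$. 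After the swap, denoting the right-hand side of~\eqref{eq:lpt} by $\Phi_t(x,\xi)$, we have $\int_0^\infty e^{-\eta x^\beta} (H_t(x,\xi) - \Phi_t(x,\xi)) dx = 0$ for every sufficiently large $\eta$. Substituting $\tilde x = x^\beta$ and using uniqueness of the Laplace transform in the variable $\eta$ gives $H_t(x,\xi) = \Phi_t(x,\xi)$ for Lebesgue-almost every $x > 0$.

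The last step promotes this a.e.\ equality to an equality for every $x > 0$. Continuity of $\Phi_t(\cdot,\xi)$ on $(0,\infty)$ is immediate by dominated convergence: on any compact $[x_1, x_2] \sub (0,\infty)$, the integrand in~\eqref{eq:lpt} is dominated by $2C(f,\xi) e^{-t\lambda_f(r)} (1+r)^{-1} e^{x_2 \max\{-\im \zeta_f(r),0\}} \lvert\zeta_f'(r)\rvert$, which is integrable over $Z_f$ by assumption~\eqref{eq:lpt:log}. For $H_t(\cdot,\xi)$ it suffices to establish left-continuity, which I would obtain from monotone convergence: by translation invariance, $H_t(x_n,\xi) = e^{-\xi(x_n - x)} \ex^x\bigl(e^{-\xi X_t} \ind_{\{\underline X_t > x - x_n\}}\bigr)$, and as $x_n \uparrow x$ the events $\{\underline X_t > x - x_n\}$ increase to $\{\underline X_t > 0\}$, so $H_t(x_n,\xi) \to H_t(x,\xi)$. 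Combining left-continuity of $H_t(\cdot,\xi)$ with continuity of $\Phi_t(\cdot,\xi)$ and the a.e.\ equality, choosing for each $x_0$ a sequence $x_n \uparrow x_0$ along which $H_t = \Phi_t$ (possible because the exceptional set has Lebesgue measure zero), we conclude $H_t(x_0,\xi) = \Phi_t(x_0,\xi)$ for all $x_0 > 0$.

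I expect the one genuinely nontrivial point to be the Fubini justification, since it is the step where the growth assumption~\eqref{eq:lpt:log} is used in an essential way and where the subtle matching between $\beta$ in the assumption and $\beta$ in the test function $v(x) = e^{-\eta x^\beta}$ has to be tracked. The left-continuity of $H_t$ is straightforward once phrased correctly, but is worth noting because the assumptions here, unlike those of Corollary~\ref{cor:pt:log}, do not yield joint continuity of $p_t^+(x,y)$ and therefore rule out the Dynkin--Hunt argument used there.
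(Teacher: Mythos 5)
Your proposal follows the paper's own proof almost step for step: the same application of Proposition~\ref{prop:lpt:uv} with $v(x) = e^{-\eta x^\beta}$, the same Fubini justification via Lemma~\ref{lem:eig:lbound} (using $\dist(i\xi,\Gamma_f)>0$), Lemma~\ref{lem:eig:est} and assumption~\eqref{eq:lpt:log} with $s=x$, and the same uniqueness-of-Laplace-transform argument in the variable $\tilde x = x^\beta$. The only deviation is the final step: the paper gets continuity of $x \mapsto \int_{(0,\infty)} e^{-\xi y} p_t^+(x,dy)$ from the strong Feller property of the killed semigroup (citing Chung), whereas you establish only left-continuity by a translation-invariance/monotone-convergence argument; this is correct, suffices when combined with continuity of the right-hand side, and is arguably more self-contained, but it is a cosmetic rather than substantive difference.
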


\begin{proof}
The argument is very similar to the proof of Corollary~\ref{cor:pt:log}. We use Proposition~\ref{prop:lpt:uv} for $v(x) = \exp(-\eta x^\beta)$ and Fubini's theorem, to find that
\formula*[eq:lpt:uv:aux]{
 & \int_0^\infty \biggl(\int_{(0, \infty)} e^{-\xi y} p_t^+(x, dy)\biggr) e^{-\eta x^\beta} dx \\
 & \qquad = \int_0^\infty \biggl( \frac{2}{\pi} \int_{Z_f} e^{-t \lambda(r)} \laplace F_+(r; \xi) F_-(r; x) \lvert\zeta'(r)\rvert dr \biggr) e^{-\eta x^\beta} dx .
}
Later, we justify the application of Fubini's theorem when $\eta$ is large enough. If we set $\tilde{x} = x^\beta$, then $v(x) = e^{-\eta \tilde{x}}$. Thus, both sides of the above equality are Laplace transforms (evaluated at $\eta$) of appropriate functions of $\tilde{x}$. By uniqueness of the Laplace transform, we conclude that these functions are equal for almost all $\tilde{x}$, or, equivalently, that
\formula[eq:lpt:ae]{
 \int_{(0, \infty)} e^{-\xi y} p_t^+(x, dy) & = \frac{2}{\pi} \int_{Z_f} e^{-t \lambda(r)} \laplace F_+(r; \xi) F_-(r; x) \lvert\zeta'(r)\rvert dr
}
for almost all $x$. A continuity argument extends this result to all $x > 0$, as indicated at the end of the proof.

We now prove that Fubini's theorem indeed can be applied to the integral on the right-hand side of~\eqref{eq:lpt:uv:aux}. Let $b(r) = \im \zeta(r)$. Clearly,
\formula{
 |F_-(r; x)| & \le e^{-b(r) x} + 1 \le 2 e^{\max\{-b(r), 0\} x} .
}
Furthermore, by Lemma~\ref{lem:eig:lbound},
\formula{
 |\laplace F_+(r; \xi)| & \le C(f, \xi) \, \frac{1}{1 + r} \, .
}
Therefore,
\formula{
 |e^{-t \lambda(r)} \laplace F_+(r; \xi) F_-(r; x) v(x) \zeta'(r)| & \le C(f, \xi) e^{-t \lambda(r) + \max\{-b(r), 0\} x - \eta x^\beta} \, \frac{1}{1 + r} \, \lvert\zeta'(r)\rvert .
}
By assumption, there is a constant $A$ (which depends only on $f$ and $t$) such that
\formula[eq:lpt:bound]{
 \int_0^\infty |e^{-t \lambda(r)} \laplace F_+(r; \xi) F_-(r; x) v(x) \zeta'(r)| dr & \le C(f, \xi) e^{A (1 + x)^\beta - \eta x^\beta} .
}
With $\eta > A$, the right-hand side is clearly integrable with respect to $x > 0$. This completes the justification of the use of Fubini's theorem.

It remains to discuss the continuity argument that extends~\eqref{eq:lpt:ae} to all $x > 0$. The left-hand side of~\eqref{eq:lpt:ae} is continuous in $x > 0$ by the strong Feller property of $p_t^+(x, dy)$; see the Corollary on page~71 in~\cite{chung}. Continuity of the right-hand side follows by the dominated convergence theorem: we have already seen that
\formula{
 |e^{-t \lambda(r)} \laplace F_+(r; \xi) F_-(r; x) \zeta'(r)| & \le C(f, \xi) e^{-t \lambda(r) + \max\{-b(r), 0\} B} \, \frac{1}{1 + r} \, \lvert\zeta'(r)\rvert
}
for $x \in [0, B]$ (see~\eqref{eq:lpt:bound}), and the right-hand side is integrable with respect to $r \in Z_f$ by assumption.
\end{proof}

In order to set $\xi = 0$ and obtain the following restatement of Theorem~\ref{thm:extrema}\ref{thm:extrema:b}, we need stronger assumptions.

\begin{corollary}
\label{cor:inf:log}
Suppose that $X_t$ is a Lévy process with completely monotone jumps, and $f(\xi)$ is the corresponding Rogers function. Let $\eps > 0$, $\delta \in [0, \tfrac{\pi}{2})$, $\ro > 0$, and assume that
\formula[eq:sup]{
 \liminf_{r \to \infty} \Arg \zeta_f(r) & > -\frac{\pi}{2} + \eps
}
and
\formula[eq:inf]{
 \inf \{ \Arg f(i e^{-i \delta} r) : r \in (0, \ro) \} > 0
}
(if $\delta = 0$, we understand that $\Arg f(i e^{-i \delta} r) = \Arg f(i r) = \ph(-r)$, where $\ph(s)$ is the function in the exponential representation~\eqref{eq:r:exp} of the Rogers function $f(\xi)$). Assume, furthermore, that for some $\beta \in (1, \tfrac{\pi}{2} / (\tfrac{\pi}{2} - \eps))$ we have
\formula[eq:inf:log]{
 \int_{Z_f} e^{s \max\{-\im \zeta_f(r), 0\} - t \lambda_f(r)} \, \frac{1}{1 + r} \, \lvert\zeta_f'(r)\rvert dr & \le e^{C(f, t) (1 + s)^\beta}
}
whenever $t, s > 0$. Then
\formula{
 p_t^+(x, (0, \infty)) & = \frac{2}{\pi} \int_{Z_f} e^{-t \lambda_f(r)} \laplace F_{f+}(r; 0^+) F_{f-}(r; x) \lvert\zeta_f'(r)\rvert dr
}
for $t > 0$ and $x > 0$.
\end{corollary}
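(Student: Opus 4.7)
The plan is to derive Corollary~\ref{cor:inf:log} from Corollary~\ref{cor:lpt:log} by passing to the limit $\xi \to 0^+$ in the identity
\formula[eq:plan:start]{
 \int_{(0, \infty)} e^{-\xi y} p_t^+(x, dy) & = \frac{2}{\pi} \int_{Z_f} e^{-t \lambda_f(r)} \laplace F_{f+}(r; \xi) F_{f-}(r; x) \lvert\zeta_f'(r)\rvert dr .
}
The assumptions of Corollary~\ref{cor:lpt:log} must first be verified for $\xi > 0$ sufficiently small. Assumption~\eqref{eq:sup} is exactly the geometric hypothesis of Corollary~\ref{cor:lpt:log}, and the integrability condition~\eqref{eq:inf:log} is identical to~\eqref{eq:lpt:log}. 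The only remaining requirement, $[i\xi, i\infty) \sub D_f^+$, follows for small $\xi > 0$ by combining \eqref{eq:inf} and \eqref{eq:sup}: by Proposition~\ref{prop:r:real}\ref{it:r:real:b}, condition \eqref{eq:inf} forces $\Arg \zeta_f(r) < \tfrac{\pi}{2} - \delta$ for $r \in (0, \ro)$, so the positive imaginary ray near $0$ lies above the spine, while \eqref{eq:sup} guarantees the same property at infinity.

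With \eqref{eq:plan:start} in hand, I would pass to the limit $\xi \to 0^+$. The left-hand side converges to $p_t^+(x, (0, \infty))$ by monotone convergence. On the right-hand side, pointwise convergence
\formula{
 \laplace F_{f+}(r; \xi) & = \frac{\re \zeta_f(r)}{|f^+(r; -i \zeta_f(r))|} \, \frac{f^+(r; \xi)}{|\zeta_f(r) - i \xi|^2} \longrightarrow \laplace F_{f+}(r; 0^+)
}
is immediate from continuity of the complete Bernstein function $\xi \mapsto f^+(r; \xi)$ at $0^+$ together with $|\zeta_f(r) - i \xi|^2 \to r^2$. The main work is to justify passing the limit under the integral sign via dominated convergence. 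For the factor $F_{f-}(r; x)$, Lemma~\ref{lem:eig:est} gives the uniform bound $|F_{f-}(r; x)| \le 2 e^{x \max\{-\im \zeta_f(r), 0\}}$, which combined with assumption~\eqref{eq:inf:log} contributes an integrable factor on $Z_f$ (after absorbing the $1/(1 + r)$ weight appropriately).

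The main obstacle — and the reason for introducing the extra hypothesis~\eqref{eq:inf} — is bounding $\laplace F_{f+}(r; \xi)$ uniformly for $\xi$ in a right neighbourhood $(0, \xi_0]$ of $0$. I would split the range of $r$ at $r = 2\xi_0$. For $r \ge 2 \xi_0$, we have $|\zeta_f(r) - i \xi| \ge r - \xi \ge r/2$, so the denominator is bounded below by $r^2/4$, and monotonicity of the complete Bernstein function $f^+(r; \cdot)$ gives $\laplace F_{f+}(r; \xi) \le 4 \laplace F_{f+}(r; \xi_0)$. For $r < 2\xi_0$ the argument is more delicate: one must control $f^+(r; \xi) / |\zeta_f(r) - i \xi|^2$ when both factors are small. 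Here one combines \eqref{eq:inf} (which, via Lemma~\ref{lem:r:growth}, forces a power-type lower bound on $|f(\xi)|$ near $0$ and hence, by an argument parallel to that of Lemma~\ref{lem:r:balance:quot} applied to the Wiener--Hopf decomposition of $f(r; \cdot)$, a power-type upper bound $f^+(r; \xi) \le C(\eps, \delta) \xi^{1 - C_2(\eps, \delta)} f^+(r; r)$ on an appropriate range) with the Hölder continuity of $\zeta_f$ from Proposition~\ref{prop:zeta:holder} and the bound of Proposition~\ref{prop:r:real}\ref{it:r:real:d} on the rectifiable length of $\Gamma_f$.

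Once dominated convergence is established, the identity of Corollary~\ref{cor:inf:log} follows by sending $\xi \to 0^+$ in~\eqref{eq:plan:start}. The hardest step will be the power-law estimate on $f^+(r; \xi)$ for small $r$ and small $\xi$: in contrast to Lemma~\ref{lem:r:balance:quot}, where the analogous bound on $f^-(r; \xi)$ is obtained by analysing $\xi^2/f(r; \xi)$, here one needs the parallel analysis for $f^+$ which, under \eqref{eq:inf}, is not directly covered by Lemma~\ref{lem:r:balance:quot} as stated; a small adaptation of its proof — essentially checking that $\Arg(f(\xi) - \lambda_f(r)) \ge -\eps/2$ on the opposite boundary ray and applying Corollary~\ref{cor:r:balance} to the corresponding auxiliary Rogers function — supplies the missing estimate.
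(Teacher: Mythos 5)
Your overall strategy---apply Corollary~\ref{cor:lpt:log} for small $\xi>0$ and pass to the limit $\xi\to 0^+$ by dominated convergence---is exactly the paper's, and your treatment of the left-hand side, of the pointwise limit $\laplace F_{f+}(r;\xi)\to\laplace F_{f+}(r;0^+)$, and of the range $r\ge 2\xi_0$ is sound. The gap is in how you propose to dominate the integrand near $r=0$ (relevant when $\inf Z_f=0$, e.g.\ for the classical risk process). You place the entire burden on $\laplace F_{f+}(r;\xi)$ and pair it only with the crude bound $|F_{f-}(r;x)|\le 2e^{x\max\{-\im\zeta_f(r),0\}}$, which is merely bounded as $r\to 0^+$. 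But no uniform-in-$\xi$ bound on $\laplace F_{f+}(r;\xi)$ better than $C/(\xi+r)\le C/r$ can exist, because the pointwise limit $\laplace F_{f+}(r;0^+)$ is itself generically of order $1/r$ near $r=0$ (for the classical risk process the analogous quantity equals $\sqrt{4-r^2}/(2r)$). A dominating function of order $1/r$ times a bounded factor is not integrable against $\lvert\zeta_f'(r)\rvert\,dr$ near $0$: since $|\zeta_f(r)|=r$, each dyadic block satisfies $2^{-n-1}\le\int_{2^{-n-1}}^{2^{-n}}\lvert\zeta_f'(r)\rvert\,dr\le C\,2^{-n}$ by Proposition~\ref{prop:r:real}\ref{it:r:real:d}, so $\int_0^R r^{-1}\lvert\zeta_f'(r)\rvert\,dr$ diverges. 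Your scheme therefore cannot close as written.

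The missing ingredient is decay of the \emph{other} factor as $r\to 0^+$. The paper keeps the crude uniform bound $\lvert\laplace F_{f+}(r;\xi)\rvert\le C(\delta)/(\xi+r)$ from Lemma~\ref{lem:eig:lbound} (here \eqref{eq:inf} enters only through the geometric consequence $\Arg\zeta_f(r)\le\tfrac{\pi}{2}-\delta$ on $(0,\ro)$, giving $\dist(i\xi,\Gamma_f)\ge C(\delta)\xi$), and instead invokes the refined small-$r$ estimate of Lemma~\ref{lem:eig:est}, $|F_{f-}(r;x)|\le 22rx\bigl(1+\tfrac{\re\zeta_f(r)}{r}\,\tfrac{f^-(r;1/x)}{|f^-(r;i\zeta_f(r))|}\bigr)$ for $r<1/(2x)$, together with Lemma~\ref{lem:r:balance:quot}---whose \emph{first} part is stated under precisely hypothesis~\eqref{eq:inf} and controls $f^-(r;\cdot)$---to obtain $|F_{f-}(r;x)|\le C(x)\,r^{c}$ with $c>0$ on an initial interval; multiplied by $C/r$ this yields $r^{c-1}\lvert\zeta_f'(r)\rvert$, which is integrable. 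Your alternative of adapting Lemma~\ref{lem:r:balance:quot} to bound $f^+(r;\cdot)$ is doubly problematic: under~\eqref{eq:inf} one controls $\Arg$ of the auxiliary function $\xi^2/f(r;\xi)$ only on the ray $\Arg\xi=\tfrac{\pi}{2}-\delta$, which via Corollary~\ref{cor:r:balance} yields the bound on the factor $f^-(r;\cdot)$; the corresponding bound on $f^+(r;\cdot)$ would require the mirror hypothesis $\Arg f(-ie^{i\delta}r)\le-\eps$, which Corollary~\ref{cor:inf:log} does not assume (and cannot, e.g.\ for spectrally one-sided processes). Even if that estimate were available, it would not repair the argument, for the reason in the first paragraph.
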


\begin{proof}
The desired result follows immediately from Corollary~\ref{cor:lpt:log}, once we show that we may apply the dominated convergence theorem to the right-hand side of~\eqref{eq:lpt} as $\xi \to 0^+$. By assumption, we have $\Arg \zeta_f(r) \le \tfrac{\pi}{2} - \delta$ when $0 < r < \ro$, and hence $\delta \le \Arg (i \xi / \zeta_f(r)) \le \pi$ when additonally $\xi > 0$. By~\eqref{eq:triangle}, $|i \xi - \zeta_f(r)| \ge (\xi + r) \sin \tfrac{\delta}{2} \ge C(\delta) \xi$ when $0 < r < \ro$ and $\xi > 0$. If $0 < \xi < \tfrac{1}{2} \ro$ and $r \ge \ro$, then clearly $|i \xi - \zeta_f(r)| \ge r - \xi \ge \xi$, and hence $\dist(i \xi, \Gamma) \ge C(\delta) \xi$ if $0 < \xi < \tfrac{1}{2} \ro$. By Lemma~\ref{lem:eig:lbound},
\formula{
 |\laplace F_+(r; \xi)| & \le C(\delta) \, \frac{1}{\xi + r} \le C(\delta) \, \frac{1}{r}
}
when $r \in Z_f$ and $0 < \xi < \tfrac{1}{2} \ro$. Furthermore, by Lemma~\ref{lem:eig:est} and Corollary~\ref{cor:cbf:bound},
\formula{
 |F_-(r; x)| & \le C r x \biggl(1 + \frac{\re \zeta(r)}{r} \, \frac{f^-(r; 1/x)}{|f^-(r; i \zeta(r))|} \biggr) \le C r x + C(\delta) r x \, \frac{f^-(r; 1/x)}{f^-(r; r)}
}
when $0 < r < 1 / (2 x)$, and thus, by Lemma~\ref{lem:r:balance:quot},
\formula{
 |F_-(r; x)| & \le C r x + C(f, \delta, \eta, \ro) (r x)^{C(f, \eta, \ro)} \le C(f, \delta, \eta, \ro, x) r^{C(f, \eta, \ro)}
}
when $0 < r < C(f, \eta, \ro, x)$. It follows that
\formula{
 |e^{-t \lambda(r)} \laplace F_{f+}(r; \xi) F_{f-}(r; x) \zeta'(r)| & \le C(f, \delta, \eta, \ro, x) r^{C(f, \eta, \ro) - 1} \lvert\zeta'(r)\rvert
}
on some initial interval $(0, R)$, with $R = C(f, \eta, \ro, x)$. Note that by Proposition~\ref{prop:r:real}\ref{it:r:real:d}, the right-hand side is integrable with respect to $r$ over $(0, R)$. On the other hand,
\formula{
 |F_-(r; x)| & \le 2 e^{x \max\{-\im \zeta_f(r), 0\}} ,
}
so that
\formula{
 |e^{-t \lambda(r)} \laplace F_{f+}(r; \xi) F_{f-}(r; x) \zeta'(r)| & \le C(\delta) \, \frac{e^{x \max\{-\im \zeta_f(r), 0\} - t \lambda(r)}}{r} \, \lvert\zeta'(r)\rvert ,
}
and by assumption, the right-hand side is integrable with respect to $r$ over $[R, \infty)$. Consequently, we may apply the dominated convergence theorem to find that
\formula{
 \lim_{\xi \to 0^+} \int_0^\infty e^{-t \lambda(r)} \laplace F_{f+}(r; \xi) F_{f-}(r; x) \zeta'(r) dr & = \int_0^\infty e^{-t \lambda(r)} \laplace F_{f+}(r; 0^+) F_{f-}(r; x) \zeta'(r) dr ,
}
and the desired result follows.
\end{proof}

Corollary~\ref{cor:inf:log} seems to be the best result of its kind which can be obtained by the above method. However, just as it was the case with Corollary~\ref{cor:pt:log}, it is not optimal, in the sense that it does not cover all Lévy processes with completely monotone jumps for which formulae~\eqref{eq:sup} and~\eqref{eq:inf} hold true. One way to extend Corollary~\ref{cor:pt:log} involves contour deformation of the integral with respect to $r$, as in~\cite{kk:stable} and, in a different context, in~\cite{mucha}. Another possible approach is to follow the same argument for $\kappa^-(\sigma, \eta) / \kappa^-(\sigma, \xi) = f_\sigma^-(\eta) / f_\sigma^-(\xi)$ rather than $\kappa^\circ(\sigma) \kappa^+(\sigma, \xi) \kappa^-(\sigma, \eta) = f_\sigma^+(\xi) f_\sigma^-(\eta)$, then set $\xi \to 0^+$, and use the Pecherskii--Rogozin identity~\eqref{eq:pr}. This, however, is beyond the scope of the present paper.

We conclude this section by observing that
\formula{
 p_t^+(x, (0, \infty)) & = \pr^x(\tau_{(0, \infty)} > t) = \pr^x(\underline{X}_t > 0) = \pr^0(\underline{X}_t > -x) .
}
Therefore, Corollary~\ref{cor:inf:log} is indeed a restatement of Theorem~\ref{thm:extrema}\ref{thm:extrema:b}. Clearly, Theorem~\ref{thm:extrema}\ref{thm:extrema:a} follows from Theorem~\ref{thm:extrema}\ref{thm:extrema:b} applied to the dual Lévy process $\hat{X}_t = 2 X_0 - X_t$.

%
%

\section{Examples}
\label{sec:ex}

Below we discuss several classes of Lévy processes with completely monotone jumps, and we check whether the assumptions of our main results are satisfied. Needless to say, closed-form expressions for the generalised eigenfunctions are not available unless the corresponding Rogers function $f(\xi)$ has a particularly simple form --- for example, it is a rational function.

\subsection{Processes with Brownian component}

We claim that if $X_t$ has a non-zero Gaussian coefficient, that is, if $a > 0$ in the Stieltjes representation~\eqref{eq:r:int} of the characteristic exponent $f(\xi)$ of $X_t$, then Theorem~\ref{thm:heat} applies.

In this case it is easy to see that, by Stieltjes representation~\eqref{eq:r:int} and the dominated convergence theorem, $f(\xi) / \xi^2$ converges to $a$ as $|\xi| \to \infty$, uniformly in the sector $\{\xi \in \C : \lvert\Arg \xi\rvert \le \tfrac{\pi}{2} - \eps\}$ for every $\eps > 0$. Therefore, $\lim_{r \to \infty} \Arg f(r e^{i \alpha}) = 2 \alpha$ for every $\alpha \in (-\tfrac{\pi}{2}, \tfrac{\pi}{2})$, and consequently
\formula{
 \lim_{r \to \infty} \lvert\Arg \zeta_f(r)\rvert & = 0 .
}
Informally speaking, this means that the spine $\Gamma_f$ of $f(\xi)$ is asymptotically horizontal. In particular, assumption~\eqref{eq:heat:a1} in Theorem~\ref{thm:heat} holds with an arbitrary $\eps \in (0, \tfrac{\pi}{2})$.

The above observation additionally implies that $\lvert\im \zeta_f(r)\rvert \le C (1 + r)$ for $r \in Z_f$. Furthermore, by Proposition~\ref{prop:r:bound}, we have $\lambda_f(r) \ge C |f(r)| \ge C r^2$ for $r$ large enough, and hence $\lambda_f(r) \ge C r^2 - 1$ for all $r \in Z_f$. These estimates imply that condition~\eqref{eq:heat:a2} holds with $\beta = 2$: we have
\formula{
 \int_{Z_f} e^{s \lvert\im \zeta_f(r)\rvert - t \lambda_f(r)} \lvert\zeta_f'(r)\rvert dr & \le \int_{Z_f} e^{C s (1 + r) - C t r^2 + t} \lvert\zeta_f'(r)\rvert dr \le \frac{C}{\sqrt{t}} \, e^{C s^2 / t + C s + t} .
}
By choosing $\eps > \tfrac{\pi}{4}$ and $\beta = 2$, we see that indeed all assumptions of Theorem~\ref{thm:heat} are satisfied when $X_t$ has a non-zero Gaussian coefficient.

Similarly, conditions~\eqref{eq:extrema:sup:a2} and~\eqref{eq:extrema:sup:a4} in Theorem~\ref{thm:extrema}\ref{thm:extrema:a}, as well as conditions~\eqref{eq:extrema:inf:a2} and~\eqref{eq:extrema:inf:a4} in Theorem~\ref{thm:extrema}\ref{thm:extrema:b}, are satisfied with $\eps > \tfrac{\pi}{4}$ and $\beta = 2$. The remaining assumptions~\eqref{eq:extrema:sup:a3} and~\eqref{eq:extrema:inf:a3} require certain balance between large jumps and thus they are essentially independent of the Gaussian coefficients. Therefore, when applying Theorem~\ref{thm:extrema} for Lévy processes with completely monotone jumps and non-zero Gaussian coefficient, these conditions need to be imposed separately.

\subsection{Strictly stable processes}
\label{sec:ex:strictly}

Suppose that $X_t$ is a strictly stable Lévy process with index $\alpha \in (0, 2]$ and positivity parameter $\ro = \pr(X_t > 0)$. The admissible range for $\ro$ is
\formula{
 \max\{0, 1 - \tfrac{1}{\alpha}\} & \le \ro \le \min\{1, \tfrac{1}{\alpha}\} .
}
If we let
\formula{
 \thet & = (2 \ro - 1) \tfrac{\pi}{2} ,
}
then there is $k > 0$ such that for $\xi > 0$ the characteristic exponent is given by
\formula{
 f(\xi) & = k (e^{-i \thet} \xi)^\alpha .
}
Clearly, the above equality extends to the half-plane $\re \xi > 0$. Note that
\formula[eq:stable:theta]{
 |\thet| & \le \min\biggl\{\frac{\pi}{2}, \frac{2 - \alpha}{\alpha} \, \frac{\pi}{2}\biggr\} .
}
This example is briefly mentioned as Example~5.2(b) in~\cite{kwasnicki:rogers}; for a general discussion of stable processes, we refer to~\cite{bertoin,sato}.

With the above notation, we have $\zeta_f(r) = e^{i \thet} r$ and $\Arg \zeta_f(r) = \thet$: the spine $\Gamma_f$ of $f(\xi)$ is a ray $(0, e^{i \thet} \infty)$. Hence, condition~\eqref{eq:heat:a1} in Theorem~\ref{thm:heat} is satisfied unless $\ro = 0$ or $\ro = 1$ (and necessarily $\alpha \le 1$), while condition~\eqref{eq:heat:a2} takes form
\formula[eq:stable]{
 \int_0^\infty e^{s r \lvert\sin \thet\rvert - k t r^\alpha} dr & \le e^{C(f, t) (1 + s)^\beta} ,
}
with an arbitrary $\beta \in (1, \tfrac{\pi}{2 |\thet|})$. If $\thet = 0$ (that is, $\ro = \tfrac{1}{2}$), then the left-hand side is bounded by $C(f, t)$, and so the estimate is clearly satisfied. When $\thet \ne 0$ and $\alpha \le 1$, then~\eqref{eq:stable} fails to hold with any $\beta$. Finally, if $\thet \ne 0$ and $\alpha > 1$, then a simple calculation shows that
\formula{
 s r \lvert\sin \thet\rvert - \tfrac{1}{2} k t r^\alpha & \le C(\alpha, \thet, k, t) s^{\alpha / (\alpha - 1)} ,
}
so that
\formula{
 \int_0^\infty e^{s r \lvert\sin \thet\rvert - k t r^\alpha} dr & \le \int_0^\infty e^{C(\alpha, \thet, k, t) s^{\alpha / (\alpha - 1)} - \tfrac{1}{2} k t r^\alpha} dr \le C(\alpha, \thet, k, t) e^{C(\alpha, \thet, k, t) s^{\alpha / (\alpha - 1)}} .
}
Thus, condition~\eqref{eq:stable} holds true if (and only if) $\beta \ge \alpha / (\alpha - 1)$.

We conclude that in order to apply Theorem~\ref{thm:heat}, we need to assume that either:
\begin{itemize}
\item $\alpha \in (0, 1]$ and $\thet = 0$, that is, $\ro = \tfrac{1}{2}$;
\item $\alpha \in (1, \tfrac{3}{2}]$ and
\formula{
 |\thet| & < \frac{\alpha - 1}{\alpha} \, \frac{\pi}{2} \quad \text{or, equivalently,} \quad \frac{1}{2 \alpha} < \ro < 1 - \frac{1}{2 \alpha} \, ;
}
\item $\alpha \in (\tfrac{3}{2}, 2]$.
\end{itemize}
Thus, we only partially recover the results of~\cite{kk:stable}, where there are no restrictions on the parameter $\thet$ when $\alpha \in (1, \tfrac{3}{2}]$. As explained at the end of Section~\ref{sec:inf:pt}, the method applied in~\cite{kk:stable} relied on a deformation of the contour in the integral with respect to $r$. This approach can be repeated with the methods developed in the present paper, but it does not seem applicable in the more general setting of Lévy processes with completely monotone jumps considered here, and so we do not pursue this direction.

A very similar analysis shows that Theorem~\ref{thm:extrema}\ref{thm:extrema:a} applies under the following assumptions:
\begin{itemize}
\item $\alpha \in (0, 1]$ and $-\tfrac{\pi}{2} < \thet \le 0$, that is, $0 < \ro \le \tfrac{1}{2}$;
\item $\alpha \in (1, \tfrac{3}{2}]$ and
\formula{
 \thet & < \frac{\alpha - 1}{\alpha} \, \frac{\pi}{2} \quad \text{or, equivalently,} \quad \ro < 1 - \frac{1}{2 \alpha} \, ;
}
\item $\alpha \in (\tfrac{3}{2}, 2]$;
\end{itemize}
we omit the details. Similarly, the conditions required in order to apply Theorem~\ref{thm:extrema}\ref{thm:extrema:b} are obtained from the above ones by replacing $\thet$ by $-\thet$ (or, equivalently, $\ro$ by $1 - \ro$). Again, we only partially recover the results of~\cite{kk:stable}, where the restrictions on $\thet$ are not needed if $\alpha \in (1, \tfrac{3}{2}]$.

\subsection{Stable Lévy processes}
\label{sec:ex:stable}

For Lévy processes which are stable, but not strictly stable, the picture is very similar. If the index of stability $\alpha$ is not equal to $1$, we have
\formula{
 f(\xi) & = k (e^{-i \thet} \xi)^\alpha - i b \xi
}
when $\xi > 0$, and the above formula extends holomorphically to all $\xi$ with $\re \xi > 0$. Here $k > 0$, $\thet$ satisfies~\eqref{eq:stable:theta}, and $b \in \R \setminus \{0\}$. When $\alpha = 1$, then (the holomorphic extension of) the Rogers function $f(\xi)$ is given by
\formula{
 f(\xi) & = k e^{-i \thet} \xi + \frac{2 i \beta k \cos \thet}{\pi} \, \xi \log \xi
}
when $\re \xi > 0$, with $k > 0$, $\thet \in (-\tfrac{\pi}{2}, \tfrac{\pi}{2})$, and $\beta \in [-1, 1] \setminus \{0\}$.

If $\alpha \in (1, 2]$, the spine $\Gamma_f$ of $f(\xi)$ has an asymptote $(0, e^{i \thet} \infty)$ at infinity, and it is asymptotically vertical near $0$. More formally, $\Arg \zeta_f(r)$ converges to $\thet$ as $r \to \infty$, and to $\tfrac{\pi}{2} \sign b$ as $r \to 0^+$. For $\alpha = 1$, the spine $\Gamma_f$ is asymptotically vertical both at $0$ and at infinity. Finally, if $\alpha \in (0, 1)$, the spine $\Gamma_f$ is tangent to $(0, e^{i \thet} \infty)$ at $0$, and it is asymptotically vertical at infinity.

It can be verified that Theorem~\ref{thm:heat} applies only when $\alpha > 1$, under the same conditions as in the strictly stable case, namely if:
\begin{itemize}
\item $\alpha \in (1, \tfrac{3}{2}]$ and $|\thet| < \tfrac{\alpha - 1}{\alpha} \tfrac{\pi}{2}$;
\item $\alpha \in (\tfrac{3}{2}, 2]$.
\end{itemize}
Similarly, Theorem~\ref{thm:extrema}\ref{thm:extrema:a} applies if:
\begin{itemize}
\item $\alpha \in (0, 1)$, $b < 0$ and $\thet > -\tfrac{\pi}{2}$;
\item $\alpha = 1$ and $\beta > 0$;
\item $\alpha \in (1, \tfrac{3}{2}]$, $b > 0$ and $\thet < \tfrac{\alpha - 1}{\alpha} \tfrac{\pi}{2}$;
\item $\alpha \in (\tfrac{3}{2}, 2]$,
\end{itemize}
and conditions for applicability of Theorem~\ref{thm:extrema}\ref{thm:extrema:b} are obtained from the above ones by replacing $\thet$ by $-\thet$, $b$ by $-b$ and $\beta$ by $-\beta$. We omit the details.

\subsection{Stable-like processes: power-type growth}
\label{sec:ex:stable-like}

The analysis of the strictly stable case partially carries over to more general processes. Although the conditions of Theorem~\ref{thm:extrema} are generally more difficult to verify, it is relatively easy to extend the discussion of applicability of Theorem~\ref{thm:heat} in Sections~\ref{sec:ex:strictly} and~\ref{sec:ex:stable}. Indeed, the same argument applies if, for some $\alpha \in (1, 2)$ and a slowly varying function $\ell(r)$, we have
\formula[eq:stable-like:1]{
 \liminf_{r \to \infty} \frac{\lambda_f(r)}{r^\alpha \ell(r)} & > 0 , & \limsup_{r \to \infty} \lvert\Arg \zeta_f(r)\rvert & < \frac{\alpha - 1}{\alpha} \, \frac{\pi}{2} \, ;
}
we omit the details. In this section we provide a sufficient condition for the above two inequalities to hold true.

Note that by Proposition~\ref{prop:r:bound}, the two conditions in~\eqref{eq:stable-like:1} are equivalent to
\formula[eq:stable-like:2]{
 \liminf_{\xi \to \infty} \frac{|f(\xi)|}{\xi^\alpha \ell(\xi)} & > 0 , & \limsup_{r \to \infty} \lvert\Arg \zeta_f(r)\rvert & < \frac{\alpha - 1}{\alpha} \, \frac{\pi}{2} \, .
}
We remark that the former inequality in~\eqref{eq:stable-like:2} is satisfied if the \emph{lower Blumenthal--Getoor index} of $X_t$ is greater than $\alpha$; in particular it holds true if the \emph{lower Matuszewska index} of $|f(\xi)|$ at infinity is greater than $\alpha$. The latter condition in~\eqref{eq:stable-like:2} can be thought of as a quantitative version of the \emph{sector condition}.

Let us write $F \sim G$ if the ratio $F / G$ converges to $1$. We claim that both parts of~\eqref{eq:stable-like:2} are satisfied if $X_t$ has no Gaussian coefficient, and if the density $\nu(z)$ of the Lévy measure of $X_t$ satisfies
\formula{
 \nu(z) & \sim \frac{A_\pm}{|z|^{1 + \alpha} \ell(|z|^{-1})} && \text{as $z \to 0^\pm$,}
}
where $A_+, A_- \ge 0$, $A_+ + A_- > 0$, and, if $1 < \alpha \le \tfrac{3}{2}$, additionally
\formula{
 \biggl| \frac{A_+ - A_-}{A_+ + A_-} \tan \frac{\alpha \pi}{2} \biggr| & < \tan \frac{(\alpha - 1) \pi}{2} \, .
}
Of course, we continue to assume that $X_t$ is a Lévy process with completely monotone jumps, $\alpha \in (1, 2)$ and $\ell$ is a slowly varying function.

The proof of our claim is a standard, although lengthy, application of the theory of regular variation. Recall that for $z > 0$,
\formula{
 \nu(\pm z) & = \int_{(0, \infty)} e^{-z s} \mu_\pm(ds) ,
}
where $\mu_\pm$ are measures concentrated on $(0, \infty)$ such that $\mu(ds) = \mu_+(ds) + \mu_-(-ds)$ is the measure in the Stieltjes representation~\eqref{eq:r:int} of the Rogers function $f(\xi)$ (see Remark~3.4(c) in~\cite{kwasnicki:rogers}). By Karamata's Tauberian theorem (Theorem~1.7.1 in~\cite{bgt}), we have
\formula{
 \mu_{\pm}((0, s)) & \sim \frac{A_\pm s^{1 + \alpha} \ell(s)}{\Gamma(1 + \alpha)} && \text{as $s \to \infty$.}
}
Hence, by Karamata's Abelian theorem for Stieltjes transforms (Theorem~2.2 in~\cite{mst}), we find that for every $\thet \in (-\pi, \pi)$,
\formula{
 \int_1^\infty \frac{1}{(r e^{i \thet} + s)^3} \, \mu_\pm((0, s)) ds & \sim \frac{-\Gamma(1 - \alpha) A_\pm}{2} \, (r e^{i \thet})^{\alpha - 1} \ell(r) && \text{as $r \to \infty$;}
}
note that although Theorem~2.2 in~\cite{mst} is stated for $\thet = 0$, its proof carries over almost verbatim to the general case. Now we use Karamata's Abelian theorem for the integral of a regularly varying function over $(0, r)$ to find that
\formula{
 \int_1^\infty \biggl(\frac{1}{s^2} - \frac{1}{(r e^{i \thet} + s)^2} \biggr) \mu_\pm((0, s)) ds & \sim \Gamma(-\alpha) A_\pm (r e^{i \thet})^\alpha \ell(r) && \text{as $r \to \infty$.}
}
By a simple application of Fubini's theorem (or, informally, integration by parts), we arrive at
\formula{
 \int_{(0, 1]} \frac{e^{i \thet}}{s (e^{i \thet} + s)} \, \mu_\pm(ds) + \int_{(1, \infty)} \frac{r e^{i \thet}}{s (r e^{i \thet} + s)} \, \mu_\pm(ds) & \sim \Gamma(-\alpha) A_\pm (r e^{i \thet})^\alpha \ell(r) && \text{as $r \to \infty$.}
}
Adding a constant to the left-hand side does not affect the above asymptotic equality, and hence we conclude that
\formula[eq:ex:stable-like:aux]{
 \int_{(0, \infty)} \biggl(\frac{r e^{i \thet}}{1 + s} - \frac{r e^{i \thet}}{r e^{i \thet} + s}\biggr) \frac{\mu_\pm(ds)}{s} & \sim \Gamma(-\alpha) A_\pm (r e^{i \thet})^\alpha \ell(r) && \text{as $r \to \infty$.}
}
This is exactly what is needed in order to verify~\eqref{eq:stable-like:2}. Indeed: by the Stieltjes representation~\eqref{eq:r:int} of the Rogers function $f(\xi)$, we have
\formula{
 f(\xi) & = - i b \xi + c + \frac{1}{\pi} \int_{(0, \infty)} \biggl(\frac{\xi}{\xi + i s} + \frac{i \xi}{1 + s}\biggr) \frac{\mu_+(ds)}{s} + \frac{1}{\pi} \int_{(0, \infty)} \biggl(\frac{\xi}{\xi - i s} - \frac{i \xi}{1 + s}\biggr) \frac{\mu_-(ds)}{s} \, .
}
Substituting $\xi = r e^{i \thet}$ with $\thet \in (-\tfrac{\pi}{2}, \tfrac{\pi}{2})$, we obtain
\formula{
 f(r e^{i \thet}) = - i b r e^{i \thet} + c & + \frac{1}{\pi} \int_{(0, \infty)} \biggl(\frac{r e^{i \thet - i \pi/2}}{r e^{i \thet - i \pi/2} + s} - \frac{r e^{i \thet - i \pi/2}}{1 + s}\biggr) \frac{\mu_+(ds)}{s} \\
 & \qquad + \frac{1}{\pi} \int_{(0, \infty)} \biggl(\frac{r e^{i \thet + i \pi/2}}{r e^{i \thet + i \pi/2} + s} - \frac{r e^{i \thet + i \pi/2}}{1 + s}\biggr) \frac{\mu_-(ds)}{s} \, ,
}
and so~\eqref{eq:ex:stable-like:aux} shows that
\formula{
 f(r e^{i \thet}) & \sim \Gamma(-\alpha) (-e^{-i \pi \alpha/2} A_+ - e^{i \pi \alpha/2} A_-) (r e^{i \thet})^\alpha \ell(r) && \text{as $r \to \infty$.}
}
In particular, when $\thet = 0$, we obtain the first part of~\eqref{eq:stable-like:2}. Furthermore, for a general $\thet \in (-\tfrac{\pi}{2}, \tfrac{\pi}{2})$, the above asymptotic equality implies that
\formula{
 \lim_{r \to \infty} \Arg f(r e^{i \thet}) & = \Arg(-e^{-i \pi \alpha/2} A_+ - e^{i \pi \alpha/2} A_-) + \alpha \thet \\
 & = \arctan \biggl( \frac{A_+ - A_-}{A_+ + A_-} \tan \frac{\alpha \pi}{2} \biggr) + \alpha \thet .
}
Let us write $\beta = \arctan((A_+ - A_-) (A_+ + A_-)^{-1} \tan \tfrac{\alpha \pi}{2})$. The right-hand side of the above equality is positive for $\thet > \beta / \alpha$ and negative for $\thet < -\beta / \alpha$. Consequently,
\formula{
 \limsup_{r \to \infty} \lvert\Arg \zeta_f(r)\rvert & \le \frac{\beta}{\alpha} \, .
}
Recall that we have
\formula{
 |\beta| & = \biggl| \arctan \biggl( \frac{A_+ - A_-}{A_+ + A_-} \tan \frac{\alpha \pi}{2} \biggr) \biggr| < \frac{(\alpha - 1) \pi}{2} \, ;
}
indeed: this is always true when $\tfrac{3}{2} < \alpha < 2$, and it is our additional assumption when $1 < \alpha \le \tfrac{3}{2}$. We conclude that
\formula{
 \limsup_{r \to \infty} \lvert\Arg \zeta_f(r)\rvert & \le \frac{\beta}{\alpha} < \frac{\alpha - 1}{\alpha} \, \frac{\pi}{2} \, ,
}
as desired: this is the latter part of~\eqref{eq:stable-like:2}.

\subsection{Brownian motion with drift}

Below we consider the characteristic exponent of one of the simplest Lévy processes with completely monotone jumps: the standard Brownian motion with non-negative drift. It has no jumps, and its characteristic exponent is given by
\formula{
 f(\xi) & = \tfrac{1}{2} \xi^2 - i b \xi = \tfrac{1}{2} (\xi - i b)^2 + \tfrac{1}{2} b^2 ,
}
where $b \ge 0$. In this case $f(\xi)$ is a real number if and only if $\re \xi = 0$ or $\im \xi = b$. Therefore, $Z_f = (b, \infty)$, and for $r \in Z_f$ we have
\formula{
 \zeta_f(r) & = \sqrt{r^2 - b^2} + b i && \text{and} & \lambda_f(r) & = \tfrac{1}{2} r^2 .
}
We find that
\formula{
 f(r; \xi) & = \frac{(\xi - \sqrt{r^2 - b^2} - b i) (\xi + \sqrt{r^2 - b^2} - b i)}{\frac{1}{2} \xi^2 - i b \xi - \frac{1}{2} r^2} = 2 .
}
It follows that $f^+(r; \xi) = f^-(r; \xi) = \sqrt{2}$. In particular, with the notation introduced in Definition~\ref{def:eig},
\formula{
 c_{f+}(r) & = -\Arg f^+(r; -i \zeta_f(r)) = 0 , & c_{f-}(r) & = \Arg f^-(r; i \zeta_f(r)) = 0 .
}
Finally,
\formula{
 \laplace G_{f+}(r; \xi) & = -\im \frac{e^{-i c_{f+}(r)}}{\xi + i \zeta_f(r)} - \frac{\re \zeta_f(r)}{|f^+(r; -i \zeta_f(r))|} \, \frac{f^+(r; \xi)}{|\zeta_f(r) - i \xi|^2} \, , \\
 & = -\im \frac{1}{\xi + i \sqrt{r^2 - b^2} - b} - \frac{\sqrt{r^2 - b^2}}{\sqrt{2}} \, \frac{\sqrt{2}}{\lvert\sqrt{r^2 - b^2} + i b - i \xi\rvert^2} = 0 ,
}
so that $G_{f+}(r; y) = 0$, and similarly $G_{f-}(r; x) = 0$. We conclude that
\formula{
 F_{f+}(r; y) & = e^{b y} \sin(\sqrt{r^2 - b^2} \, y) , & F_{f-}(r; x) & = e^{-b x} \sin(\sqrt{r^2 - b^2} \, x) .
}

The expression in Theorem~\ref{thm:heat} reduces to a well-known formula:
\formula{
 p_t^+(x, y) & = \frac{2}{\pi} \int_b^\infty e^{-t r^2 / 2} \times e^{b y} \sin(\sqrt{r^2 - b^2} \, y) \times e^{-b x} \sin(\sqrt{r^2 - b^2} \, x) \times \frac{r}{\sqrt{r^2 - b^2}} \, dr \\
 & = \frac{2}{\pi} \, e^{b (y - x) - t b^2 / 2} \int_0^\infty e^{-t s^2 / 2} \sin(s y) \sin(s x) ds ,
}
and the right-hand side can be evaluated explicitly to give
\formula{
 p_t^+(x, y) & = e^{b (y - x) - t b^2 / 2} \times \frac{1}{\sqrt{2 \pi t}} (e^{-(x - y)^2 / (2 t)} - e^{-(x + y)^2 / (2 t)}) .
}
The result of Theorem~\ref{thm:extrema}\ref{thm:extrema:a} is not new either: using the fact that $\laplace F_{f-}(r; 0^+) = r^{-2} \sqrt{r^2 - b^2}$, we obtain
\formula{
 \pr^0(\overline{X}_t < y) & = \frac{2}{\pi} \int_b^\infty e^{-t r^2 / 2} \times e^{b y} \sin(\sqrt{r^2 - b^2} \, y) \times \frac{\sqrt{r^2 - b^2}}{r^2} \times \frac{r}{\sqrt{r^2 - b^2}} \, dr \\
 & = \frac{2}{\pi} \, e^{b y - t b^2 / 2} \int_0^\infty e^{-t s^2 / 2} \sin(s y) \, \frac{s}{s^2 + b^2} \, ds ;
}
the last integral can be written in terms of the error function. Theorem~\ref{thm:extrema}\ref{thm:extrema:b} does not apply in the present situation, and it can be verified that the equality in~\eqref{eq:extrema:inf} does not hold.

\subsection{Classical risk process: martingale case}

Our next example is the classical risk process, with characteristic exponent
\formula{
 f(\xi) & = i \xi + \frac{\xi}{\xi + i} = \frac{i \xi^2}{\xi + i} \, .
}
This process has positive jumps following the standard exponential distribution, and a unit negative drift. It is easily verified that
\formula{
 \im f(x + i y) & = \frac{x (x^2 + y^2 + 2 y)}{x^2 + (y + 1)^2} \ ,
}
and therefore $\Gamma_f$ is the semi-circle $|\xi + i| = 1$, $\re \xi > 0$. It follows that $Z_f = (0, 2)$, and
\formula{
 \zeta_f(r) & = -i + i e^{-i \alpha}
}
for an appropriate $\alpha \in (0, \pi)$ (depending on $r$). A simple calculation shows that
\formula{
 r^2 & = 2 - 2 \cos \alpha = 4 \sin^2 \tfrac{\alpha}{2} ,
}
that is, $\alpha = 2 \arcsin(\tfrac{1}{2} r)$. Furthermore,
\formula{
 \zeta_f(r) & = \sin \alpha - (1 - \cos \alpha) i = \tfrac{1}{2} r \sqrt{4 - r^2} - \tfrac{1}{2} r^2 i ,
}
and
\formula{
 \lambda_f(r) & = f(\zeta_f(r)) = \frac{i (-i + i e^{-i \alpha})^2}{i e^{-i \alpha}} = 2 - e^{i \alpha} - e^{-i \alpha} = 2 - 2 \cos \alpha = r^2 .
}
By a direct calculation,
\formula{
 f(r; \xi) & = 1 - i \xi ,
}
and therefore
\formula{
 f^+(r; \xi) & = 1 + \xi , & f^-(r; \xi) & = 1 .
}
It follows that
\formula{
 c_{f+}(r) & = -\Arg f^+(r; -i \zeta_f(r)) = -\Arg (1 - i \zeta_f(r)) = -\Arg e^{-i \alpha} = \alpha , \\
 c_{f-}(r) & = \Arg f^-(r; i \zeta_f(r)) = 0 .
}
Additionally, by a short calculation,
\formula{
 \laplace G_{f+}(r; \xi) & = -\im \frac{e^{-i c_{f+}(r)}}{\xi + i \zeta_f(r)} - \frac{\re \zeta_f(r)}{|f^+(r; -i \zeta_f(r))|} \, \frac{f^+(r; \xi)}{|\zeta_f(r) - i \xi|^2} = 0 \, , \\
 & = -\im \frac{e^{-i \alpha}}{\xi + 1 - e^{-i \alpha}} - \frac{\sin \alpha}{1} \, \frac{1 + \xi}{|\xi + 1 - e^{-i \alpha}|^2} = 0 ,
}
and
\formula{
 \laplace G_{f-}(r; \eta) & = \im \frac{e^{i c_{f-}(r)}}{\eta - i \zeta_f(r)} - \frac{\re \zeta_f(r)}{|f^-(r; i \zeta_f(r))|} \, \frac{f^-(r; \eta)}{|\zeta_f(r) + i \eta|^2} \\
 & = \im \frac{1}{\eta - i \zeta_f(r)} - \frac{\re \zeta_f(r)}{1} \, \frac{1}{|\zeta_f(r) + i \eta|^2} = 0 \, ,
}
so that again $G_{f+}(r; y) = G_{f-}(r; x) = 0$. We conclude that
\formula{
 F_{f+}(r; y) & = e^{-(1 - \cos \alpha) y} \sin(y \sin \alpha + \alpha) , \\
 F_{f-}(r; x) & = e^{(1 - \cos \alpha) x} \sin(x \sin \alpha) ,
}
where $\alpha = 2 \arcsin(\tfrac{1}{2} r)$ and $r \in (0, 2)$. Finally, $\cos \alpha = 1 - \tfrac{1}{2} r^2$ and $\sin \alpha = \tfrac{1}{2} r \sqrt{4 - r^2}$, and so we can rewrite the above expressions without using $\alpha$ as
\formula{
 F_{f+}(r; y) & = e^{-y r^2 / 2} \sin(\tfrac{1}{2} r y \sqrt{4 - r^2} + 2 \arcsin(\tfrac{1}{2} r)) , \\
 F_{f-}(r; x) & = e^{x r^2 / 2} \sin(\tfrac{1}{2} r x \sqrt{4 - r^2}) ,
}
with $r \in (0, 2)$.

Theorems~\ref{thm:extrema}\ref{thm:extrema:b} and~\ref{thm:heat} do not apply in the present case. On the other hand, since $\laplace F_{f-}(r; 0^+) = \sqrt{4 - r^2} / (2 r)$, Theorem~\ref{thm:extrema}\ref{thm:extrema:a} implies that
\formula{
 \pr^0(\overline{X}_t < y) & = \frac{2}{\pi} \int_0^2 e^{-r^2 t} \times e^{-y r^2 / 2} \sin(\tfrac{1}{2} r y \sqrt{4 - r^2} + 2 \arcsin(\tfrac{1}{2} r)) \times \\
 & \hspace*{20em} \times \frac{\sqrt{4 - r^2}}{2 r} \times \frac{2}{\sqrt{4 - r^2}} \, dr \\
 & = \frac{2}{\pi} \int_0^2 e^{-r^2 (t + y / 2)} \sin(\tfrac{1}{2} r y \sqrt{4 - r^2} + 2 \arcsin(\tfrac{1}{2} r)) \, \frac{1}{r} \, dr .
}
In terms of variable $\alpha$, we obtain
\formula{
 \pr^0(\overline{X}_t < y) & = \frac{1}{\pi} \int_0^\pi e^{-(1 - \cos \alpha) (2 t + y)} \sin(y \sin \alpha + \alpha) \, \frac{\sin \alpha}{1 - \cos \alpha} \, d\alpha .
}
This formula is due to Asmussen, see Proposition~IV.1.3 in~\cite{aa}, and it provides an efficient way to evaluate the distribution function of time to ruin in the classical risk model.

\subsection{Classical risk process: small drift case}

The same approach applies to the process with exponentially distributed positive jumps and small negative drift:
\formula{
 f(\xi) & = i \xi + R^2 \, \frac{\xi}{\xi + i} = \frac{i \xi^2 + (R^2 - 1) \xi}{\xi + i} \, ,
}
where $R \ge 1$. We omit all details, and only list the final result and some intermediate expressions. The curve $\Gamma_f$ is the semi-circle $|\xi + i| = R$, $\re \xi > 0$, we have $Z_f = (R - 1, R + 1)$, and
\formula{
 \zeta_f(r) & = -i + i R e^{-i \alpha} , & \lambda_f(r) & = 1 + R^2 - 2 R \cos \alpha = r^2,
}
where $\alpha = \arccos((1 + R^2 - r^2) / (2 R)) \in (0, \pi)$. Furthermore,
\formula{
 f(r; \xi) & = 1 - i \xi ,
}
and hence once again
\formula{
 f^+(r; \xi) & = 1 + \xi , & f^-(r; \xi) & = 1	 .
}
Consequently, as in the previous example,
\formula{
 c_{f+}(r) & = \alpha , & c_{f-}(r) & = 0 ,
}
and
\formula{
 \laplace G_{f+}(r; \xi) & = \laplace G_{f-}(r; \eta) = 0 ,
}
so that $G_{f+}(r; y) = G_{f-}(r; x) = 0$. It follows that
\formula{
 F_{f+}(r; y) & = e^{-(1 - R \cos \alpha) y} \sin(R y \sin \alpha + \alpha) , \\
 F_{f-}(r; x) & = e^{(1 - R \cos \alpha) x} \sin(R x \sin \alpha) .
}
The assumptions of Theorems~\ref{thm:extrema}\ref{thm:extrema:b} and~\ref{thm:heat} are not satisfied, and by Theorem~\ref{thm:extrema}\ref{thm:extrema:a} we have
\formula{
 \pr^0(\overline{X}_t < y) & = \frac{2}{\pi} \int_0^\pi e^{-(1 + R^2 - 2 R \cos \alpha) t - (1 - R \cos \alpha) y} \sin(R y \sin \alpha + \alpha) \, \frac{R^2 \sin \alpha}{1 + R^2 - 2 R \cos \alpha} \, d\alpha .
}
Noteworthy, the above formula, although virtually the same as Asmussen's result given in Proposition~IV.1.3 in~\cite{aa}, covers the complementary case: here the drift is assumed to be smaller than in the martingale case (that is, $R \ge 1$), while Asmussen's formula requires it to be larger ($R \le 1$; our parameter $R$ corresponds to $\sqrt{\beta}$ in the notation of Chapter~IV in~\cite{aa}).

\subsection{Brownian motion with exponentially distributed jumps}

Let us now consider the Lévy process with characteristic exponent
\formula{
 f(\xi) & = \tfrac{1}{2} \xi^2 + i \xi + \frac{\xi}{\xi + i} \, .
}
This process is the sum of the standard Brownian motion and an independent classical risk process. By a direct calculation, $Z_f = (0, \infty)$ and
\formula{
 \zeta_f(r) & = \frac{\sqrt{1 - \beta^3}}{\sqrt{\beta}} - (1 - \beta) i , & \lambda_f(r) & = \frac{(1 - \beta) (1 + 2 \beta)^2}{2 \beta} \, ,
}
with $r^2 = (1 + \beta - 2 \beta^2) / \beta$ and
\formula{
 \beta & = \frac{1 - r^2 + \sqrt{(1 - r^2)^2 + 8}}{4} \in (0, 1) .
}
By another direct calculation,
\formula{
 f(r; \xi) & = \frac{2 (\xi + i)}{\xi + (1 + 2 \beta) i} \, ,
}
and thus
\formula{
 f^+(r; \xi) & = \frac{2 (\xi + 1)}{\xi + (1 + 2 \beta)} \, , & f^-(r; \xi) & = 1 .
}
It can be further calculated that, with the notation $\alpha = \re \zeta_f(r) = \sqrt{1 - \beta^3} / \sqrt{\beta}$,
\formula{
 |f^+(r; -i \zeta_f(r))| & = \biggl| \frac{2 (\beta - i \alpha)}{3 \beta - i \alpha} \biggr| = \frac{2 \sqrt{\beta^2 + \alpha^2}}{\sqrt{9 \beta^2 + \alpha^2}} = \frac{2}{\sqrt{1 + 8 \beta^3}}
}
and
\formula{
 c_{f+}(r) & = -\Arg \frac{2 (\beta - i \alpha)}{3 \beta - i \alpha} = -\Arg \frac{2}{1 + 2 \beta^3 + 2 i \beta^{3/2} \sqrt{1 - \beta^3}} = \arctan \frac{2 \beta^{3/2} \sqrt{1 - \beta^3}}{1 + 2 \beta^3} \, .
}
Since $e^{-i c_{f+}(r)} = f^+(r; -i \zeta_f(r)) / |f^+(r; -i \zeta_f(r))|$, after a short calculation we find that
\formula{
 \laplace G_{f+}(r; \xi) & = -\sqrt{1 + 8 \beta^3} \biggl(\im \frac{\beta - i \alpha}{(3 \beta - i \alpha)(\xi + 1 - \beta + i \alpha)} + \frac{\alpha (\xi + 1)}{|\xi + 1 - \beta + i \alpha|^2 (\xi + 1 + 2 \beta)} \biggr) \\
 & = \frac{2 \beta^{3/2} \sqrt{1 - \beta^3}}{\sqrt{1 + 8 \beta^3}} \, \frac{1}{\xi + 1 + 2 \beta} \, .
}
Furthermore, $c_{f-}(r) = 0$ and $\laplace G_{f-}(r; \eta) = 0$, and therefore
\formula{
 G_{f+}(r; y) & = \frac{2 \beta^{3/2} \sqrt{1 - \beta^3}}{\sqrt{1 + 8 \beta^3}} \, e^{-(1 + 2 \beta) y} , \\
 G_{f-}(r; x) & = 0 .
}
Finally,
\formula{
 F_{f+}(r; y) & = e^{-(1 - \beta) y} \sin\biggl(\frac{\sqrt{1 - \beta^3}}{\sqrt{\beta}} \, y + \arctan \frac{2 \beta^{3/2} \sqrt{1 - \beta^3}}{1 + 2 \beta^3}\biggr) - \frac{2 \beta^{3/2} \sqrt{1 - \beta^3}}{\sqrt{1 + 8 \beta^3}} \, e^{-(1 + 2 \beta) y} , \\
 F_{f-}(r; x) & = e^{(1 - \beta) x} \sin\biggl(\frac{\sqrt{1 - \beta^3}}{\sqrt{\beta}} \, x\biggr) .
}
All our main results apply, and so $p_t^+(x, y)$, $\pr^0(\overline{X}_t < y)$ and $\pr^0(\underline{X}_t > -y)$ can be written as explicit integrals. It seems more convenient to write the integrals in terms of variable $\beta \in (0, 1)$, using
\formula{
 \laplace F_{f+}(r; 0^+) & = \frac{\alpha \sqrt{1 + 8 \beta^3}}{2} \, \frac{2}{(1 + 2 \beta) r^2} = \frac{\sqrt{\beta} \sqrt{1 - \beta^3} \sqrt{1 + 8 \beta^3}}{(1 + 2 \beta) (1 + \beta - 2 \beta^2)} \, , \\
 \laplace F_{f-}(r; 0^+) & = \frac{\alpha}{r^2} = \frac{\sqrt{\beta} \sqrt{1 - \beta^3}}{1 + \beta - 2 \beta^2} \, , \\
 \lvert\zeta_f'(r)\rvert dr & = \sqrt{1 + \biggl(\frac{d}{d \beta} \frac{\sqrt{1 - \beta^3}}{\sqrt{\beta}}\biggr)^2} d\beta = \frac{\sqrt{1 + 8 \beta^3}}{2 \beta^{3/2} \sqrt{1 - \beta^3}} \, d\beta
}
The formulae are nevertheless complicated. For example, by Theorem~\ref{thm:extrema}\ref{thm:extrema:b}, we have
\formula{
 \pr^0(\underline{X}_t > -x) & = \frac{2}{\pi} \int_0^1 e^{-t (1 - \beta) (1 + 2 \beta)^2 / (2 \beta)} \times \frac{\sqrt{\beta} \sqrt{1 - \beta^3} \sqrt{1 + 8 \beta^3}}{(1 + 2 \beta) (1 + \beta - 2 \beta^2)} \\
 & \hspace*{10em} \times e^{(1 - \beta) x} \sin\biggl(\frac{\sqrt{1 - \beta^3}}{\sqrt{\beta}} \, x\biggr) \times \frac{\sqrt{1 + 8 \beta^3}}{2 \beta^{3/2} \sqrt{1 - \beta^3}} \, d\beta \\
 & = \frac{2}{\pi} \int_0^1 e^{-t (1 - \beta) (1 + 2 \beta)^2 / (2 \beta)} e^{(1 - \beta) x} \sin\biggl(\frac{\sqrt{1 - \beta^3}}{\sqrt{\beta}} \, x\biggr) \frac{1 - 2 \beta + 4 \beta^2}{2 \beta (1 + \beta - 2 \beta^2)} \, d\beta .
}

%
%

%
%

\end{document}